\newcommand{\N}{\ensuremath{\mathbb{N}}}
\newcommand{\R}{\ensuremath{\mathbb{R}}}
\newcommand{\Prob}{\ensuremath{\mathbb{P}}}
\newcommand{\Esp}{\ensuremath{\mathbb{E}}}
\newcommand{\B}{\mathcal{B}}
\newcommand{\daw}{\downarrow}
\newcommand{\Ind}{\mathlarger{\mathbbm{1}}}
\newcommand{\captionstring}[1]{\noexpand\noexpand\noexpand\string\string#1}
\newcommand{\cl}[1]{\mathcal{#1}}
\newcommand{\msf}[1]{\mathsf{#1}}
\newcommand{\mbf}[1]{\mathbf{#1}}
\newcommand{\mrm}[1]{\mathrm{#1}}
\newcommand{\comillas}[1]{``\,#1\,"}
\def\mi{\,\middle|\,}
\def\l{\left}
\def\r{\right}
\def\deq{\stackrel{d}{=}}
\def\aseq{\stackrel{a.s.}{=}}
\def\dto{\stackrel{d}{\to}}
\def\wto{\stackrel{w}{\to}}
\def\dwto{\stackrel{wd}{\longrightarrow}}
\def\L2to{\stackrel{\cl{L}_2}{\to}}
\def\iid{\stackrel{\mbox{\scriptsize{iid}}}{\sim}}
\def\ind{\stackrel{\mbox{\scriptsize{ind}}}{\sim}}
\def\Y{{\mathbb{Y}}}
\def\X{{\mathbb{X}}}
\def\D{{\mbf{D}}}
\def\bpsi{{\psi}}
\def\bmpsi{{\bm{\psi}}}
\def\Be{{\msf{Be}}}
\def\Un{{\msf{Unif}}}
\def\Bin{{\msf{Bin}}}
\def\Var{{\msf{Var}}}
\def\Cov{{\msf{Cov}}}
\def\oDk{{{\Delta^{\mathrm{o}}_k}}}
\def\Dk{{{\Delta}_k}}
\def\oD2{{{\Delta^{\mathrm{o}}_2}}}
\def\D2{{{\Delta}_2}}
\def\2F1{{{}_2{F}_1}}
\theoremstyle{plain}
\newtheorem{theo}{Theorem}[section]
\newtheorem{prop}[theo]{Proposition}
\newtheorem{lem}[theo]{Lemma}
\newtheorem{cor}[theo]{Corollary}
\theoremstyle{remark}
\newtheorem{rem}[theo]{Remark}
\title{Markov stick-breaking processes}
\author{ \href{https://orcid.org/0000-0003-0811-5083}{\includegraphics[scale=0.06]{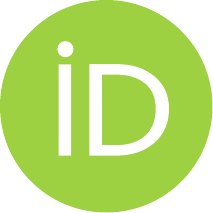}\hspace{1mm}Mar\'ia F.~Gil--Leyva}\\
	Department of Probability and Statistics\\
	IIMAS, UNAM\\
	\texttt{marifer@sigma.iimas.unam.mx} \\
	\And
	\href{https://orcid.org/0000-0001-6159-2650}{\includegraphics[scale=0.06]{orcid.pdf}\hspace{1mm}Antonio~Lijoi} \\
	Bocconi Institute for Data Science and Analytics\\
	Bocconi University\\
	\texttt{antonio.lijoi@unibocconi.it} \\
        \And
	\href{https://orcid.org/0000-0001-9608-8059}{\includegraphics[scale=0.06]{orcid.pdf}\hspace{1mm}Rams\'es H.~Mena} \\
	Department of Probability and Statistics\\
	IIMAS, UNAM\\
	\texttt{ramses@sigma.iimas.unam.mx} \\
	\And
	\href{https://orcid.org/0000-0003-2860-1476}{\includegraphics[scale=0.06]{orcid.pdf}\hspace{1mm}Igor~Pr\"unster} \\
	Bocconi Institute for Data Science and Analytics\\
	Bocconi University\\
	\texttt{igor@unibocconi.it} \\
}
\date{}
\begin{document}
\maketitle

\begin{abstract}
Stick-breaking has a long history and is one of the most popular procedures for constructing random discrete distributions in Statistics and Machine Learning. In particular, due to their intuitive construction and computational tractability they are ubiquitous in modern Bayesian nonparametric inference. Most widely used models, such as the Dirichlet and the Pitman-Yor processes, rely on iid or independent length variables. Here we pursue a completely unexplored research direction by considering Markov length variables and investigate the corresponding general class
of stick-breaking processes, which we term \textit{Markov stick-breaking processes}. 
We establish conditions under which the associated species sampling process is proper and the distribution of a Markov stick-breaking process has full topological support, two fundamental \textit{desiderata} for Bayesian nonparametric models. We also analyze the stochastic ordering of the weights and provide a new characterization of the Pitman-Yor process as the only stick-breaking process invariant under size-biased permutations, under mild conditions. Moreover, we identify two notable subclasses of Markov stick-breaking processes that enjoy appealing properties and include Dirichlet, Pitman-Yor and Geometric priors as special cases. Our findings include distributional results enabling posterior inference algorithms and methodological insights.
\end{abstract}

\keywords{Bayesian Nonparametrics 
\and Dirichlet process 
\and Markovian dependence
\and Species sampling model
\and Size-biased permutation
\and Stick-breaking construction
\and Pitman-Yor process}

%\begin{keyword}[class=MSC]
%	\kwd{62F15}
%	\kwd{62C10}
%	\kwd{60G57}
%\end{keyword}

%--------------------------------------------------------------------------------------------------------------------------------
\section{Introduction}\label{sec:intro}
%--------------------------------------------------------------------------------------------------------------------------------
\subsection{Species sampling processes}
Species sampling processes \citep{P96,P06} are random probability measures that are defined as mixtures of a discrete and a diffuse component. The former is characterized by the structural assumption of independence between the (random) locations of the atoms and their associated (random) weights. They provide a natural framework for the analysis of discrete random probability measures, enjoy intuitive structural properties and constitute the backbone of Bayesian nonparametric analysis beyond the Dirichlet process. Formally a species sampling process over a complete and separable metric space $\X$ endowed with its Borel $\sigma$-field $\B_\X$, is a random probability measure with decomposition 
	\begin{equation}\label{eq:ssp}
		\bm{P} = \sum_{j=1}^{\infty}w_j\delta_{\theta_j} + \Bigl(1-\sum_{j=1}^{\infty}w_j\Bigr)P_0,
	\end{equation}
where the atoms $\bm{\theta} = (\theta_j)_{j = 1}^{\infty}$ are
iid from a diffuse distribution, $P_0$, over  $(\X,\B_\X)$, called base measure of $\bm{P}$; the weights $\bm{w} = (w_j)_{j=1}^{\infty}$ are non-negative random variables (r.v.) that satisfy $\sum_{j=1}^{\infty} w_j \leq 1$ almost surely (a.s.), and crucially $\bm{\theta}$ is assumed independent of  $\bm{w}$. When $\sum_{j=1}^{\infty}w_j = 1$ a.s., $\bm{P}$ becomes an a.s.~discrete random probability measure, termed \textit{proper}. 
From a Bayesian perspective, interest lies in proper species sampling processes, since including a deterministic diffuse component would be pointless, as it cannot learn from the data.

Concrete instances of discrete random probability measures arise by fully specifying the law of a species sampling process $\bm{P}$: this requires assigning a base measure $P_0$ and the law of the weights $\bm{w}$, which can be a challenging task. There is a rich literature spanning Statistics, Probability and Machine Learning on strategies to specify laws for $\bm{w}$ either directly or indirectly. Indirect characterizations leverage the one-to-one correspondence, ensured by de Finetti's Representation Theorem, between random probability measures and their marginal counterparts given by exchangeable sequences, exchangeable random partitions or predictive distributions. Popular approaches include generalized P\'olya urn constructions \citep[see e.g.][]{BM73,P96,P06,caron07,caron17,ruggiero18} and Gibbs-type partitions and associated prediction rules \citep[see e.g.][]{gnedin06,LMP07b,lpw2008, gnedin, flp13,DeBlasi2015,linero,camerlengo23}. Direct strategies to assign the law of  $\bm{w}$ can be divided into two main categories. The first consists in normalizing the jumps of a subordinator or, more generally, of a completely random measure \citep[see e.g.][]{F73,K75,P97,Regazzini03,P03}. The second approach, which is the focus of the present contribution, relies on the so--called \textit{stick-breaking} procedure.

\subsection{Stick-breaking representations}
Perhaps the most popular direct strategy to assign the law of  $\bm{w}$ consists of decomposing the weights' sequence $\bm{w}$ by means of a stick-breaking procedure as
	\begin{equation}\label{eq:sb}
		w_1 = v_1, \quad w_j = v_j \prod_{i=1}^{j-1}(1-v_i), \quad j \geq 2,
	\end{equation}
where $\bm{v} = (v_j)_{j=1}^{\infty}$ are $[0,1]$-valued r.v. which we refer to as \textit{length variables}. Stick-breaking representations date back to \cite{H44} and were greatly popularized by the stick-breaking representation of the Dirichlet process in \cite{S94}, which arises from \eqref{eq:sb} by taking iid $\mathsf{Be}(1, \theta)$ length variables with $\theta>0$ and $\mathsf{Be}$ denoting the beta distribution. Other key early contributions include \citet{FR63,McCloskey65,PPY92,P96b,IJ01}. One of the main merits of the stick-breaking approach is to translate the problem of defining a distribution on the infinite dimensional simplex into the simpler one of defining the law of a sequence of $[0,1]$-valued r.v. 
Computationally, this advantage has proved decisive, leading to a proliferation 
of papers on complex models based on stick-breaking constructions.
 These include the seminal papers on dependent \citep{MacE99,mac00,MacKG2001} and hierarchical processes \citep{teh06}. We refer to \citep{HHMW10,MQJH,GvdV17,qmjm22} for exhaustive accounts and also point to recent novel perspectives in \cite{soriano,giordano,canale,hu,horiguchi}.

Another appealing feature of the stick--breaking approach lies in the fact that every weights' sequence $\bm{w}$ admits a stick-breaking decomposition \citep[see e.g.][]{P96b}. Hence, any species sampling process \eqref{eq:ssp} can be recovered with suitable stick-breaking weights \eqref{eq:sb}, which is not the case for alternative constructions of $\bm{w}$. Henceforth we refer to a species sampling process \eqref{eq:ssp} with weights \eqref{eq:sb} as a \textit{stick--breaking process}, keeping in mind that they are in  correspondence with the class of species sampling processes.

\subsection{Motivation and main contributions}

Although, in principle, the stick--breaking approach allows one to recover the whole class of species sampling processes, this richness has not yet been fully exploited. To date most efforts have been concentrated on cases of either iid or independent length variables, which represent only a narrow subset.  Although a handful of examples of stick-breaking processes with explicitly dependent length variables  have been derived \citep{FLP12,FLNNPT16,FMW10, GMN20}, the general dependent case remained somehow elusive. The only exception is \cite{GM21}, which considers exchangeable length variables, though this is a limited form of dependence when broader structures are allowed.

Our goal is to define and investigate stick--breaking processes with length variables having Markov dependence structure, in short \textit{Markov stick--breaking processes}. While this is a natural form of dependence, widely studied in many different contexts, in the framework of stick-breaking constructions it represents unexplored territory. Therefore we first focus on general properties of Markov stick--breaking processes, which are pivotal to their use for Bayesian nonparametric inference. In particular, we establish conditions ensuring the fundamental property of properness, namely $\sum_{j=1}^{\infty}w_j = 1$ a.s., and also for them to have full topological support. The latter represents a key requirement for any Bayesian nonparametric model, as stated already in the landmark paper \cite{F73}, and is equivalent to the law of the random probability measure
assigning positive mass to arbitrarily small neighbourhoods of any possible data generating distribution. 
We also 
provide a posterior characterization of the length variables, essential for devising computational schemes for posterior inference.
Furthermore, we recover as limiting cases 
well-known models such as the Dirichlet \citep{F73,S94}, Geometric \citep{FMW10} and Pitman-Yor \citep{PPY92,P97} processes.

\noindent 
The Pitman-Yor process, also known as the two-parameter Poisson-Dirichlet process \citep{P06}, is the most popular generalization of the Dirichlet process and has found countless applications in Statistics and Machine Learning. Part of its popularity is due to its simple stick--breaking representation: indeed, it arises from \eqref{eq:sb} by taking independent length variables distributed as $v_i \sim \mathsf{Be}(1-\sigma, \theta+i \sigma)$ for $\sigma \in (0,1)$, $\theta>-\sigma$ and $i\geq 1$. A fundamental characterization \citep[][Theorem 2]{P96b} essentially states that, within the class of weight sequences $\bm{w}$ representable in terms of independent length variables, invariance under size--biased permutations holds if and only if the length variables correspond to those of a Pitman-Yor process. This result is relevant to deriving marginal properties and computational schemes for stick--breaking processes, as well as to applications in probability and population genetics \citep[see e.g.][]{P96b,ABT,P06,FLP12}. Here we generalize the result to Markov length variables and obtain a highly non-trivial characterization: again, and somehow surprisingly, invariance under size--biased permutations holds essentially only for the Pitman--Yor regime.

\subsection{Organization of the paper}
Section \ref{sec:MSB} defines Markov stick-breaking processes and establishes their fundamental properties. 
Section \ref{sec:examples} deals with the two degenerate extreme cases, independent and completely dependent length variables.
We then identify and study two notable subclasses 
of Markov stick-breaking processes:
Section \ref{sec:BBSB} is devoted to Beta Markov stick-breaking processes, whereas Section \ref{sec:LMSB} deals with lazy Markov stick-breaking processes. 
For both classes we derive the key ingredients for posterior inference and their performance is illustrated in Section \ref{sec:illustration}.
Additional research directions are highlighted in Section \ref{sec:conclusion}.
In the Appendix (App.), Appendix \ref{sec:app:illust} presents the algorithm for mixture models and an additional multivariate simulation study, while Appendix \ref{sec:app:results}--\ref{sec:app:LMSB_sup} provide proofs, technical derivations, and background. Throughout the main text, we refer to these appendices as App.~\ref{sec:app:illust}, App.~\ref{sec:app:results}, etc.

%--------------------------------------------------------------------------------------------------------------------------------
\section{Markov stick-breaking processes}\label{sec:MSB}
%--------------------------------------------------------------------------------------------------------------------------------
We consider species sampling processes $\bm{P}$, as in \eqref{eq:ssp}, whose weights, $\bm{w} = (w_j)_{j =1}^{\infty}$, admit the stick-breaking decomposition \eqref{eq:sb} for a Markov process $\bm{v} = (v_j)_{j =1}^{\infty}$. Hence, for each $j \geq 1$ and $B \in \B_{[0,1]}$,
\[
\Prob[v_{j+1}\in B \mid v_1,\ldots,v_j] = \Prob[v_{j+1} \in B \mid v_j] = \bpsi_j(v_j,B),
\]
with $\bpsi_j:[0,1]\times \B_{[0,1]} \to [0,1]$ the (one step ahead) transition probability kernels. Denote by $\pi_j$ the marginal distribution of $v_j$. The initial distribution of the process, $\pi = \pi_1$, and the collection of transition probability kernels, $\bm{\psi} = (\psi_j)_{j=1}^{\infty}$, completely characterize the law of $\bm{v}$. In particular, the marginal distributions of $\bm{v}$ can be computed recursively through
\begin{equation}\label{eq:vj_marg}
\pi_{j+1}(B) = \int_{[0,1]} \psi_{j}(v,B)\pi_{j}(\mrm{d}v).
\end{equation}
For these length variables, $\bm{w}$ as in \eqref{eq:sb} is termed \emph{Markov stick-breaking weights sequence} (MSBw) with parameters $(\pi,\bm{\psi})$. Accordingly, we call $\bm{P}$ as in \eqref{eq:ssp} a  \emph{Markov stick-breaking process} (MSBp) with parameters $(\pi,\bm{\psi},P_0)$, recalling that the base measure, $P_0$, is the distribution of the atoms of $\bm{P}$. 

\noindent When all probability kernels are identical, i.e. $\psi_j = \psi_1$ for every $j \geq 1$, the Markov process $\bm{v}$ becomes time homogeneous and we write $\psi = \psi_1$. If we further have that the initial distribution, $\pi$, is invariant for $\psi$, i.e. for every $B \in \B_{[0,1]}$
$$
\pi(B) = \int_{[0,1]} \psi(v,B)\pi(\mrm{d}v),
$$
$\bm{v}$ is stationary. This means that $(v_1,\ldots,v_n)$ is equal in distribution to $(v_{j+1},\ldots,v_{j+n})$ for every $j, n \geq 1$ and, in particular, all marginal distributions are identical i.e. $\pi_j = \pi$ for all $j \geq 1$. In this case, we refer to $\bm{w}$ as a stationary Markov stick-breaking weights sequence (sMSBw) with parameters $(\pi,\psi)$, and we call $\bm{P}$ a stationary Markov stick-breaking process (sMSBp) with parameters $(\pi,\psi,P_0)$.

\subsection{Properness, support and convergence}\label{subsec:1}

Since, from an inferential perspective, interest lies in \textit{proper} species sampling processes, the first property one should verify is that the stick-breaking weights sum up to one. From the stick-breaking decomposition it follows that $1-\sum_{j=1}^m w_j = \prod_{j=1}^m(1-v_j)$ for every $m \geq 1$. Thus $\sum_{j=1}^{\infty}w_j =1$ if and only if $\prod_{j=1}^m (1-v_j) \to 0$ as $m \to \infty$. Being that $0 \leq v_j \leq 1$, this in turn is equivalent to 
\begin{equation}\label{eq:sum1_MSB}
\sum_{j=1}^{\infty} v_j = \infty, \text{ a.s.} \quad \text{ or } \quad \Prob\l[\bigcup_{j=1}^{\infty}\{v_j = 1\}\r] = 1.
\end{equation}
For sMSBw, these considerations and the ergodic theorem yield the following result.

\begin{theo}\label{theo:sMSB_1}
Consider a sMSBw, $\bm{w}$, with parameters $(\pi,\psi)$, and let $\bm{v}$ be its underlying Markov process of length variables. 
\begin{enumerate}
\item[\emph{(i)}] If $\Prob\l[\bigcup_{j=1}^{\infty}\{v_j = 1\}\r] = 1$, then $\sum_{j=1}^{\infty}w_j = 1$ a.s.
\item[\emph{(ii)}] If $\pi(\{0\}) = 0$, then $\sum_{j=1}^{\infty}w_j = 1$ a.s.
\item[\emph{(iii)}] If $\pi$ is $\psi$-ergodic, then $\pi \neq \delta_0$ if and only if $\sum_{j=1}^{\infty}w_j = 1$ a.s.
\end{enumerate} 
\end{theo}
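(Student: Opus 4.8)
The plan is to dispatch the three parts in increasing order of difficulty, using throughout the characterization recalled immediately before the statement: since $1-\sum_{j=1}^m w_j=\prod_{j=1}^m(1-v_j)$, one has $\sum_{j\ge 1}w_j=1$ a.s. if and only if $\prod_{j=1}^m(1-v_j)\to 0$, equivalently iff $\sum_{j\ge 1}v_j=\infty$ a.s. or $\Prob[\bigcup_{j}\{v_j=1\}]=1$. \emph{Part (i)} is then immediate: on the event $\bigcup_j\{v_j=1\}$, which by hypothesis has probability one, there is a.s. an index $j_0$ with $v_{j_0}=1$, so $\prod_{j=1}^m(1-v_j)=0$ for every $m\ge j_0$ and the weights' partial sums already reach $1$.

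\emph{Part (ii)} is the crux. I would argue that $\Prob[A]=0$ for $A:=\{\sum_j w_j<1\}=\{\prod_j(1-v_j)>0\}$. On $A$ every factor is positive, so $v_j<1$ for all $j$ and $\sum_j v_j\le\sum_j(-\log(1-v_j))<\infty$, whence $v_j\to 0$; therefore $A\subseteq\bigcap_{\epsilon>0}E_\epsilon$, where $E_\epsilon:=\{v_j<\epsilon\text{ for all sufficiently large }j\}$. The key estimate is for $\Prob[E_\epsilon]$, and here \emph{stationarity} enters: writing $E_\epsilon=\bigcup_{k\ge 1}G_k$ with $G_k:=\{v_j<\epsilon\ \forall j\ge k\}$ increasing in $k$, continuity from below gives $\Prob[E_\epsilon]=\lim_k\Prob[G_k]$, while stationarity — the law of $(v_k,v_{k+1},\dots)$ coincides with that of $(v_1,v_2,\dots)$ — gives $\Prob[G_k]=\Prob[G_1]\le\Prob[v_1<\epsilon]=\pi([0,\epsilon))$. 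Letting $\epsilon\downarrow 0$ and invoking $\pi(\{0\})=0$ yields $\Prob\bigl[\bigcap_{\epsilon>0}E_\epsilon\bigr]=\lim_{\epsilon\downarrow 0}\pi([0,\epsilon))=\pi(\{0\})=0$, hence $\Prob[A]=0$. I expect this step — ruling out, with stationarity alone and no irreducibility, that the chain of length variables drifts to $0$ — to be the main obstacle; everything surrounding it is routine bookkeeping.

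\emph{Part (iii)}: the implication ``$\sum_j w_j=1$ a.s. $\Rightarrow\pi\neq\delta_0$'' is the contrapositive of a triviality, since $\pi=\delta_0$ forces $v_j=0$ for all $j$ a.s. (countable union bound over the marginals) and then $\sum_j w_j=0$. For the converse I would invoke the ergodic theorem alluded to before the statement: as $\bm v$ is stationary with $\pi$ being $\psi$-ergodic, Birkhoff's theorem applied to the bounded measurable map $x\mapsto x$ gives $n^{-1}\sum_{j=1}^n v_j\to\int_{[0,1]}x\,\pi(\mrm{d}x)$ a.s. If $\pi\neq\delta_0$ then, $x$ being nonnegative, $\int_{[0,1]}x\,\pi(\mrm{d}x)>0$, so $\sum_{j=1}^n v_j\to\infty$, i.e. $\sum_j v_j=\infty$ a.s., and the characterization \eqref{eq:sum1_MSB} gives $\sum_j w_j=1$ a.s. I would finally remark that this ergodic argument would also reprove (ii) in the ergodic case, but since (ii) assumes only stationarity the separate continuity-of-measure argument above is genuinely needed.
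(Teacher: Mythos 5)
Your proof is correct. Parts (i) and (iii) coincide with the paper's argument: (i) is just the right-hand condition in \eqref{eq:sum1_MSB}, and (iii) is handled exactly as in the paper (the trivial direction via $\pi=\delta_0\Rightarrow v_j=0$ a.s., the converse via Birkhoff's theorem in the ergodic case, noting $\int x\,\pi(\mrm{d}x)>0$ whenever $\pi\neq\delta_0$). Part (ii), which you correctly identify as the crux, is where you genuinely diverge: the paper also proves (ii) with the ergodic theorem, in its general stationary (non-ergodic) form, writing $n^{-1}\sum_{j\le n}v_j\to\Esp[v_1\mid\cl{I}]$ and then showing $\Esp[v_1\mid\cl{I}]>0$ a.s. by a small contradiction argument with the defining property of conditional expectation; this yields $\sum_j v_j=\infty$ a.s., i.e.\ the left-hand condition in \eqref{eq:sum1_MSB}. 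Your route instead bounds $\Prob[\sum_j w_j<1]$ directly: on that event $v_j\to 0$, and by shift-invariance of the law of $(v_j)_{j\ge1}$ together with continuity from below, $\Prob[v_j<\epsilon \text{ eventually}]=\Prob[v_j<\epsilon\ \forall j]\le\pi([0,\epsilon))\to\pi(\{0\})=0$. This is more elementary — no ergodic theorem, no invariant $\sigma$-algebra, only stationarity of the sequence (neither approach uses Markovianity, which matches the paper's remark that (ii) extends to non-Markovian stationary length variables). What the paper's argument buys in exchange is the quantitatively stronger conclusion that the Cesàro averages of the $v_j$ converge to a strictly positive limit, which is the same mechanism it reuses for (iii); your final remark slightly misreads this, since the paper's (ii) already is the ergodic-theorem argument in conditional form, but that does not affect the validity of your proof.
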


To clarify Theorem \ref{theo:sMSB_1} (iii) we recall that for a transition, $\psi$, the set of invariant measures, $\mathcal{I}[\psi]$, is convex, and that $\pi \in \mathcal{I}[\psi]$ is called $\psi$-ergodic if it is extremal in $\mathcal{I}[\psi]$. In particular, if $\pi$ is the only invariant measure, i.e.  $\mathcal{I}[\psi] = \{\pi\}$, then  $\pi$ is $\psi$-ergodic. Further details on invariant and $\psi$-ergodic measures are provided in App.~\ref{sec:app:Markov}.
Theorem \ref{theo:sMSB_1} shows that most stationary Markov length variables define proper stick-breaking processes, as it suffices to require $\Prob[v_1 = 0] = 0$. Furthermore, if $\pi$ is $\psi$-ergodic this condition can be relaxed to $\Prob[v_1 = 0] < 1$. 
As Theorem \ref{theo:sMSB_1} follows from the ergodic theorem, a more general version of the result can be attained for length variables, $\bm{v}$, that are stationary  but not necessarily Markovian. For non-stationary Markov length variables $\bm{v}$, the relaxation of the conditions in \eqref{eq:sum1_MSB} ensuring $\sum_{j=1}^{\infty}w_j = 1$, is difficult to achieve. Nonetheless, as we will show in Sections \ref{sec:examples}--\ref{sec:LMSB}, for many interesting examples the condition $\sum_{j=1}^{\infty}v_j = \infty$ a.s.~can be proven to hold. Still, some interesting facts can be established for general MSBw, as we discuss next.

\begin{prop}\label{prop:MSB_supp_points}
Let $\bm{w}$ be a MSBw with parameters $(\pi,\bm{\psi})$ and let $\bm{v}$ be the underlying Markov process of length variables. If the marginal distributions $\pi_j$ given by \eqref{eq:vj_marg} satisfy $\pi_j(\{0\}) = 0$ for every $j \geq 1$,  then $w_j > 0$ if and only if $j \leq t = \inf\{j \geq 1: v_j = 1\}$ a.s. Furthermore, if $\pi_j(\{1\}) = 0$ for every $j \geq 1$, $\Prob[w_j > 0, \forall \, j \geq 1] = \Prob[t = \infty] = 1$.
\end{prop}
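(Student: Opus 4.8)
The plan is to analyze the random index $t = \inf\{j \geq 1 : v_j = 1\}$ directly from the stick-breaking identity $1 - \sum_{j=1}^m w_j = \prod_{j=1}^m (1-v_j)$, which gives $w_j = v_j \prod_{i=1}^{j-1}(1-v_i)$. First I would show that, under $\pi_j(\{0\}) = 0$ for all $j$, the event $\{v_j = 0\}$ is null for every $j$, so almost surely $v_j > 0$ for all $j$; consequently, on the event $\{t = \infty\}$ every factor $v_j$ is strictly positive and every partial product $\prod_{i=1}^{j-1}(1-v_i)$ is strictly positive (since each $1 - v_i > 0$ when $v_i < 1$, and $v_i < 1$ for $i < t$), whence $w_j > 0$ for all $j$. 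On the complementary event $\{t < \infty\}$, for $j \leq t$ we still have $v_i < 1$ for $i < j \leq t$ so the product $\prod_{i=1}^{j-1}(1-v_i) > 0$ and $v_j > 0$, giving $w_j > 0$; while for $j > t$ the factor $1 - v_t = 0$ appears in the product, forcing $w_j = 0$. This establishes the first claim: $w_j > 0$ if and only if $j \leq t$, almost surely.

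For the second claim, I would argue that $\pi_j(\{1\}) = 0$ for every $j$ forces $\Prob[t = \infty] = 1$. Observe that $\{t < \infty\} = \bigcup_{j \geq 1}\{v_j = 1\}$, and by countable subadditivity $\Prob[t < \infty] \leq \sum_{j \geq 1}\Prob[v_j = 1] = \sum_{j \geq 1}\pi_j(\{1\}) = 0$. Hence $\Prob[t = \infty] = 1$, and combining with the first part gives $\Prob[w_j > 0 \text{ for all } j \geq 1] = 1$.

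The only genuinely delicate point is the careful bookkeeping on the event $\{t < \infty\}$: one must keep track of the fact that the indicator "$w_j > 0$" flips precisely at $j = t$, which requires separating the cases $v_i < 1$ for $i < t$ (giving positive factors) from $v_t = 1$ (killing all subsequent weights), and also using $v_j > 0$ a.s.\ to ensure the leading factor $v_j$ does not itself vanish. Since both $\{v_j = 0\}$ and $\{v_j = 1\}$ are handled by the marginal hypotheses $\pi_j(\{0\}) = 0$ and $\pi_j(\{1\}) = 0$ together with a union bound over the countably many indices, there is no real obstacle — the argument is essentially a bounded-convergence/null-set manipulation, and the Markov structure plays no role beyond supplying the marginals $\pi_j$ via the recursion \eqref{eq:vj_marg}.
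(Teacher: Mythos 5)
Your argument is correct and matches the paper's own proof: both reduce the claim to the observation that $w_j = v_j\prod_{i<j}(1-v_i)$ vanishes exactly when $v_j = 0$ or some $v_i = 1$ with $i<j$, then dispose of the exceptional events $\{v_j=0\}$ and $\{v_j=1\}$ by a countable union bound using the marginal hypotheses. No gaps; your case analysis around $j=t$ (where $w_t>0$ but $w_{t+1}=0$) is exactly the bookkeeping the paper performs.
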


For the special case of stationary $\bm{v}$ we see that whether the initial distribution $\pi$ has atoms on the extreme points of the interval $[0,1]$ or not, determines important properties of the sMSBp. While $\Prob[v_1= 0] = \pi(\{0\}) = 0$ ensures $\bm{P}$ is proper and that the number of support points of $\bm{P} = \sum_{j=1}^{t}w_j\delta_{\theta_j}$ is the stopping time $t$, the condition $\Prob[v_1 = 1] = \pi(\{1\}) = 0$ assures $\bm{P}$ has infinitely many points. We will mainly focus on the case of diffuse marginal distributions $\pi_j$, so $t = \infty$. Nonetheless, it is important to acknowledge that the class of MSBp also contains species sampling processes with a random number of support points.

Another essential requirement from an inferential perspective is for the nonparametric prior to have full topological support. In the context of discrete random probability measures, this corresponds to a property of the random weights, which guarantees that all probability distributions (whose support is contained in the support of the base measure $P_0$) belong to the weak topological support of $\bm{P}$. In particular, if the support of $P_0$ is $\X$,  $\bm{P}$ has full weak support if its topological support coincides with the whole space $\cl{P}(\X)$ of probability measures over $(\X, \B_\X)$. 
Conditions for MSBp to enjoy this desirable property are given in the following.

\begin{prop}\label{prop:MSB_full_supp}
Let $\bm{P}$ be a MSBp with parameters $(\pi,\bmpsi,P_0)$. If for each $\varepsilon > 0$ there exists $\delta > 0$ such that 
$$
\int_{(\delta,\varepsilon)^{m}}\bpsi_{m-1}(v_{m-1},\mrm{d}v_m)\cdots \bpsi_1(v_1,\mrm{d}v_2)\,\pi(\mrm{d}v_1) >0,
$$
for every $m \geq 1$, then $\bm{P}$ has full support. In particular, if there exists $\epsilon > 0$ such that $(0,\epsilon)$ is contained in the support of $\pi$, and for each $v \in (0,\epsilon)$ and $j \geq 1$, $(0,\epsilon)$ is also contained in the support of $\bpsi_j(v,\cdot)$, then $\bm{P}$ has full support.
\end{prop}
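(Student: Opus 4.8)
The plan is to reduce full weak support of $\bm{P}$ to a statement about the joint law of finitely many initial weights $(w_1,\dots,w_m)$, and then use the stick-breaking recursion to convert that into the displayed integral condition on $(\pi,\bm\psi)$. Recall that, since the atoms $\bm\theta$ are iid from $P_0$ and independent of $\bm w$, a standard argument (see e.g.\ the references on stick-breaking support cited in the paper) shows that $\bm P$ has full weak support provided that for every $m\ge 1$ and every $\varepsilon>0$ one has $\Prob[\max_{1\le j\le m} w_j < \varepsilon] > 0$, i.e.\ the first $m$ weights can all be simultaneously small with positive probability; this lets the discrete measure place comparable mass on arbitrarily many of the first atoms and hence approximate, in the weak topology, any target $Q\in\cl P(\X)$ with $\mathrm{supp}(Q)\subseteq\mathrm{supp}(P_0)$. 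So the first step is to state this reduction carefully: it suffices to show $\Prob[w_1<\varepsilon,\dots,w_m<\varepsilon]>0$ for all $m$ and all small $\varepsilon$.

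The second step is to translate $\{w_1<\varepsilon,\dots,w_m<\varepsilon\}$ into an event on the length variables $\bm v$. From \eqref{eq:sb}, if $v_i\in(\delta,\varepsilon)$ for all $i\le m$ then $w_j=v_j\prod_{i<j}(1-v_i)\le v_j<\varepsilon$ for each $j\le m$; hence the event $\{v_1\in(\delta,\varepsilon),\dots,v_m\in(\delta,\varepsilon)\}$ is contained in $\{w_1<\varepsilon,\dots,w_m<\varepsilon\}$. Therefore it is enough to exhibit, for each $\varepsilon>0$, some $\delta>0$ with
\[
\Prob[v_1\in(\delta,\varepsilon),\dots,v_m\in(\delta,\varepsilon)]
=\int_{(\delta,\varepsilon)^m}\bpsi_{m-1}(v_{m-1},\mrm dv_m)\cdots\bpsi_1(v_1,\mrm dv_2)\,\pi(\mrm dv_1)>0
\]
for every $m\ge 1$, which is exactly the hypothesis. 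This closes the main implication. For the second, more concrete, sufficient condition, suppose $(0,\epsilon)\subseteq\mathrm{supp}(\pi)$ and $(0,\epsilon)\subseteq\mathrm{supp}(\bpsi_j(v,\cdot))$ for all $v\in(0,\epsilon)$ and $j\ge 1$. Given $\varepsilon>0$, put $\varepsilon'=\min(\varepsilon,\epsilon)$; one then builds the required $\delta$ by a short backward induction on $m$: since $\pi$ charges every subinterval of $(0,\epsilon)$ there is $\delta_1>0$ with $\pi((\delta_1,\varepsilon'))>0$, and given that $v_1,\dots,v_k$ lie in an interval bounded away from $0$ inside $(0,\epsilon)$, the kernel property gives a uniform lower bound (over that compact sub-interval, via a covering/continuity argument on the relevant sub-probability) for $\bpsi_k(v_k,(\delta_{k+1},\varepsilon'))$ with some $\delta_{k+1}>0$; taking $\delta=\delta_m$ makes the iterated integral strictly positive. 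Plugging this into the first part yields full support.

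The main obstacle I anticipate is the first, ``soft'' step: rigorously arguing that simultaneous smallness of the first $m$ weights, for all $m$, forces the weak topological support of $\bm P$ to be all of $\{Q:\mathrm{supp}(Q)\subseteq\mathrm{supp}(P_0)\}$. One has to handle the leftover mass $1-\sum_{j\le m}w_j$ (which need not be negligible even when each $w_j$ is small) and show it does not obstruct weak approximation --- typically by noting that on the event considered one can also force $\sum_{j\le m}w_j$ to be of order $1$ by choosing $m$ large, or by a separate conditioning argument, and then invoking that the atoms $\theta_1,\dots,\theta_m$ are iid $P_0$ so that a suitable empirical-type combination of $\delta_{\theta_j}$ approximates any target supported in $\mathrm{supp}(P_0)$. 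The remaining steps are essentially bookkeeping: the reduction to the $v$-event via \eqref{eq:sb} is immediate, and the passage from the abstract positivity hypothesis to the explicit support condition is the routine induction sketched above.
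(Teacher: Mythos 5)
Your overall route is the same as the paper's: reduce full weak support to a smallness condition on the weights, pass to the event $\{v_j\in(\delta,\varepsilon),\ j\le m\}$, and invoke the displayed positivity; the paper simply delegates the ``soft'' reduction to Proposition 7 of Bissiri and Ongaro (2014). Two points in your write-up need repair. First, your stated reduction --- that $\Prob[\max_{1\le j\le m}w_j<\varepsilon]>0$ for all $m$ and $\varepsilon$ suffices for full support --- is false as literally phrased: one can construct weight sequences for which, for every $m$, the first $m$ weights are simultaneously small with positive probability while some later weight always exceeds $1/2$, so the random measure never weakly approximates, say, an equal mixture of two point masses. The criterion actually needed (Corollary 3 / Proposition 7 of BO14) involves $\sup_{j\ge1}w_j$, and it is precisely the lower bound $\delta$ in your $v$-event that rescues the argument: on $\bigcap_{j=1}^m\{\delta<v_j<\varepsilon\}$ one has $w_j\le v_j<\varepsilon$ for $j\le m$ and $\sum_{j>m}w_j\le(1-\delta)^m$, so taking $m$ large controls all remaining weights at once. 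You do flag this in your final paragraph and sketch the right fix, so this is imprecision rather than a fatal gap, but the sentence ``it suffices to show $\Prob[w_1<\varepsilon,\dots,w_m<\varepsilon]>0$'' should not stand as written.

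Second, for the explicit sufficient condition you propose a backward induction producing shrinking thresholds $\delta_{k+1}$ via ``a uniform lower bound (over that compact sub-interval, via a covering/continuity argument)'' for $\bpsi_k(v,(\delta_{k+1},\varepsilon'))$. No continuity of $v\mapsto\bpsi_k(v,\cdot)$ is assumed in the proposition, so that step is not justified --- and it is also unnecessary. Fix $\delta=\varepsilon'/2$ once and for all: then $(\delta,\varepsilon')$ is a nonempty open subset of $(0,\epsilon)$, so $\pi((\delta,\varepsilon'))>0$ and $\bpsi_j(v,(\delta,\varepsilon'))>0$ pointwise for every $v\in(\delta,\varepsilon')$ and every $j$. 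The iterated integral over $(\delta,\varepsilon')^m$ is then the integral of a strictly positive measurable function against a measure that is positive by the inductive hypothesis, hence positive; no uniform lower bound is required. This is exactly how the paper argues.
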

In practice, this means that, as long as the Markov chain can be initialized at an arbitrarily small value and remain close to its initial state for any finite number of steps, the MSBp prior assigns positive mass to any distribution whose support is contained in that of $P_0$.

We now provide a general convergence result for MSBp. To this aim let us denote by $\wto$, $\dto$ and $\overset{wd}{\to}$ weak convergence, convergence in distribution, and weak convergence in distribution, respectively. See App.~\ref{app:theo:MSB_conv} 
for further background.

\begin{theo}\label{theo:MSB_conv}
	For $n \geq 1$, let $\bm{v}$ and $\bm{v}^{(n)}$ be Markov processes of  length variables with initial distributions $\pi$ and $\pi^{(n)}$, and collections of transition probability kernels $\bmpsi = (\psi_j)_{j=1}^{\infty}$ and $\bmpsi ^{(n)}= \big(\bpsi_j^{(n)}\big)_{j=1}^{\infty}$, respectively. Consider the corresponding MSBw, $\bm{w}$ and $\bm{w}^{(n)}$, with parameters $(\pi,\bmpsi)$ and $\l(\pi^{(n)},\bmpsi^{(n)}\r)$, and the proper MSBp over $(\X,\B_\X)$, $\bm{P}$ and $\bm{P}^{(n)}$, with parameters $\l(\pi,\bmpsi,P_0\r)$ and $\big(\pi^{(n)},\bmpsi^{(n)},P_0^{(n)}\big)$. If $\pi^{(n)} \wto \pi$, $P_0^{(n)} \wto P_0$, and for each $j \geq 1$ and $u_n \to u$ in $[0,1]$, $\bpsi_j^{(n)}(u_n,\cdot) \wto \bpsi_j(u,\cdot)$, as $n \to \infty$, then $\bm{v}^{(n)} \dto \bm{v}$, $\bm{w}^{(n)} \dto \bm{w}$, and $\bm{P}^{(n)} \dwto \bm{P}$.
\end{theo}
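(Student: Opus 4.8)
The plan is to obtain the three convergences in order, each of them — except the first — by a continuous–mapping argument reducing it to the preceding one. The substantive step is the convergence $\bm v^{(n)}\dto\bm v$ of the Markov chains of length variables. Since $[0,1]^{\infty}$ equipped with the product topology is compact and metrizable, $\bm v^{(n)}\dto\bm v$ is equivalent to convergence of all finite–dimensional distributions, i.e. $(v_1^{(n)},\dots,v_k^{(n)})\dto(v_1,\dots,v_k)$ for every $k\ge 1$, which I would prove by induction on $k$. The case $k=1$ is the hypothesis $\pi^{(n)}\wto\pi$. For the inductive step, fix a bounded continuous $f:[0,1]^{k+1}\to\R$ and use the Markov property to write
\[
\Esp\big[f(v_1^{(n)},\dots,v_{k+1}^{(n)})\big]=\Esp\big[g_n(v_1^{(n)},\dots,v_k^{(n)})\big],\qquad g_n(\u)=\int_{[0,1]}f(\u,t)\,\bpsi_k^{(n)}(u_k,\mrm{d}t),
\]
and similarly $\Esp[f(v_1,\dots,v_{k+1})]=\Esp[g(v_1,\dots,v_k)]$ with $g$ defined through $\bpsi_k$. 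By the induction hypothesis and Skorokhod's representation theorem, on a common probability space there are copies $\tw^{(n)}\deq(v_1^{(n)},\dots,v_k^{(n)})$ and $\tw\deq(v_1,\dots,v_k)$ with $\tw^{(n)}\to\tw$ a.s. Along this sequence the last coordinate converges, so the assumption that $\bpsi_k^{(n)}(u_n,\cdot)\wto\bpsi_k(u,\cdot)$ whenever $u_n\to u$ gives weak convergence of the corresponding kernels a.s.; combined with the uniform continuity of $f$ on the compact cube this yields $g_n(\tw^{(n)})\to g(\tw)$ a.s. As $|g_n|\le\|f\|_\infty$, bounded convergence gives $\Esp[g_n(\tw^{(n)})]\to\Esp[g(\tw)]$, proving the inductive step. (Equivalently, one invokes the extended continuous mapping theorem with the maps $g_n,g$.)

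For the weights, the stick-breaking map $\bm v\mapsto\bm w$ of \eqref{eq:sb} is continuous from $[0,1]^{\infty}$ to $[0,1]^{\infty}$, since each $w_j=v_j\prod_{i<j}(1-v_i)$ depends continuously on $(v_1,\dots,v_j)$ and a map into a product space is continuous iff all its coordinates are; hence $\bm w^{(n)}\dto\bm w$ by the continuous mapping theorem. Writing $\bm\theta^{(n)}=(\theta_j^{(n)})_j$ with $\theta_j^{(n)}$ i.i.d.\ from $P_0^{(n)}$, the hypothesis $P_0^{(n)}\wto P_0$ gives $\bm\theta^{(n)}\dto\bm\theta$ in $\X^{\infty}$ by the standard argument (the finite–dimensional product laws $(P_0^{(n)})^{\otimes k}\wto P_0^{\otimes k}$ converge, and tightness of $\{(P_0^{(n)})^{\otimes\infty}\}$ follows from tightness of the weakly convergent family $\{P_0^{(n)}\}$). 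Since $\bm\theta^{(n)}$ is independent of $\bm w^{(n)}$ and $\bm\theta$ of $\bm w$, we obtain the joint convergence $(\bm w^{(n)},\bm\theta^{(n)})\dto(\bm w,\bm\theta)$ in $[0,1]^{\infty}\times\X^{\infty}$.

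Because $\bm P^{(n)}$ and $\bm P$ are proper, $\sum_j w_j^{(n)}=\sum_j w_j=1$ a.s., so $\bm P^{(n)}=\sum_j w_j^{(n)}\delta_{\theta_j^{(n)}}$ and $\bm P=\sum_j w_j\delta_{\theta_j}$ a.s., i.e. $\bm P^{(n)}=\Phi(\bm w^{(n)},\bm\theta^{(n)})$ and $\bm P=\Phi(\bm w,\bm\theta)$ a.s., where $\Phi(\u,\x)=\sum_j u_j\delta_{x_j}$ on the Borel set $\{\sum_j u_j=1\}$ and is extended measurably elsewhere. On $\{\sum_j u_j=1\}\times\X^{\infty}$ the map $\Phi$ is continuous into $\cl P(\X)$ with the weak topology: if $(\u^{(n)},\x^{(n)})\to(\u,\x)$ with $\sum_j u_j^{(n)}=\sum_j u_j=1$, then $u_j^{(n)}\to u_j$ for every $j$, Scheff\'e's lemma gives $\sum_j|u_j^{(n)}-u_j|\to 0$, and for any bounded continuous $h:\X\to\R$ one splits $\sum_j u_j^{(n)}h(x_j^{(n)})-\sum_j u_j h(x_j)$ into a term bounded by $\|h\|_\infty\sum_j|u_j^{(n)}-u_j|$ and a term $\sum_j u_j\big(h(x_j^{(n)})-h(x_j)\big)\to 0$ by dominated convergence for series. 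Since $\Prob[(\bm w,\bm\theta)\in\{\sum_j u_j=1\}\times\X^{\infty}]=1$ by properness, the continuous mapping theorem yields $\bm P^{(n)}\dwto\bm P$.

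\textbf{Main obstacle.} The delicate point is the first step, where the limit has to be taken simultaneously in the initial law and in the transition kernels: the conditional expectation $g_n$ carries $\bpsi_k^{(n)}$, so the ordinary continuous mapping theorem does not apply directly, and the kernel hypothesis must be used as a joint (continuous–convergence) statement — handled here via Skorokhod coupling. The remaining steps are routine, the only mild subtlety being that the species–sampling map $\Phi$ is continuous merely on the properness set, which is where Scheff\'e's lemma and properness of the limit enter.
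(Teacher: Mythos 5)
Your proof is correct and follows essentially the same route as the paper: induction on finite-dimensional distributions via the joint kernel-continuity hypothesis (your Skorokhod-coupling step is the paper's Lemma on convergence of regular conditional distributions in slightly different clothing), then continuous mapping for the stick-breaking map, joint convergence with the independent atoms, and continuity of $(\u,\x)\mapsto\sum_j u_j\delta_{x_j}$ on the properness set (where the paper uses a generalized dominated convergence argument in place of your Scheff\'e step, to the same effect).
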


Under mild extra requirements over $(\X,\B_\X)$, $\bm{P}^{(n)} \dwto \bm{P}$ is equivalent to $\bm{P}^{(n)} \dto \bm{P}$; see App.~\ref{app:theo:MSB_conv}.
This result specializes nicely for many sub-families as we will show in Sections \ref{sec:BBSB} and \ref{sec:LMSB}.  In particular, we will explain  how to recover well-known models such as Dirichlet, Geometric and Pitman-Yor processes as limiting cases of MSBp. 

\subsection{Weights' orderings and clustering probabilities}\label{subsec:2}

One can always associate a species sampling sequence, $\bm{x} = (x_i)_{i=1}^{\infty}$ to a $\bm{P}$ in \eqref{eq:ssp}: conditionally on $\bm{P}$, its elements are sampled independently from $\bm{P}$, namely $x_i|\bm{P}\iid \bm{P}$ for any $i\ge 1$. 
Clearly, $\bm{x}$ is exchangeable, and, by de Finetti's theorem, $\bm{P}$ is the limit of the empirical distributions $n^{-1}\sum_{i=1}^{n}\delta_{x_i}$ as $n \to \infty$ a.s. Hence, the laws of $\bm{x}$ and $\bm{P}$ characterize each other. A random object that encodes important information about $\bm{x}$ and $\bm{P}$ is the random partition of $\{1,2,\ldots\}$ generated by the ties in $\bm{x}$. Namely, we can define the equivalence relation $i \bm{\sim}_{\bm{x}} j$ if and only if $x_i = x_j$, and for each $n \geq 1$, consider the random partition, $\Pi_n$, of $[n] = \{1,\ldots,n\}$,  generated by $\bm{\sim}_{\bm{x}}$. Using the terminology of \cite{P95} the collection $\bm{\Pi} = (\Pi_n)_{n=1}^{\infty}$ is consistent in the sense that $\Pi_n \vert_{[m]} = \{A \cap [m] \neq \emptyset: A \in \Pi_n\} = \Pi_m$, a.s.~for every $m < n$. Moreover, $\bm{\Pi}$ is exchangeable meaning that there exists a symmetric function $\Phi : \bigcup_{k=1}^{\infty} \N^k \to [0,1]$, such that
$$
\Prob[\Pi_n = \{A_1,\ldots,A_k\}] = \Phi(|A_1|,\ldots,|A_k|),
$$
for every $n \geq 1$ and each partition $\{A_1,\ldots,A_k\}$ of $[n]$. To emphasize the number of blocks $k$ and $n = \sum_{j=1}^{k} |A_j|$ we will often use the notation $\Phi^{(n)}_k$ instead of simply $\Phi$.
This function is called exchangeable partition probability function (EPPF) and plays an important role in several areas, including Population Genetics and Combinatorics; see \cite{P06} and references therein. 

The EPPF and the law of the weights, $\bm{w}$, are in one to one correspondence up to weights permutations. In fact, it follows from the definition of $\bm{\Pi}$ that:
\begin{equation}\label{eq:EPPF_sum}
\Phi_k^{(n)}(n_1,\ldots,n_k) = \sum_{(j_1,\ldots,j_k)}\Esp\l[\prod_{i=1}^k w_{j_i}^{n_i}\r],
\end{equation}
where the sum ranges over all vectors of size $k$ of distinct positive integers.  The almost sure connection between $\bm{w}$ and $\bm{\Pi}$ is made explicit by Kingman's representation theorem \citep{King72,King78}. It states that if $A_{n,1}^{\daw}$, \ldots $A_{n,k}^{\daw}$ are the blocks of $\Pi_n$ with non--increasing sizes, i.e. $|A_{n,j}^{\daw}| \geq |A_{n,j+1}^{\daw}|$, then $\lim_{n \to \infty} n^{-1}|A^\daw_{n,j}| = w^\daw_j$  where $w^\daw_1 \geq w^\daw_2 \geq \cdots$ are the weights of $\bm{P}$ in the decreasing order.  Alternatively, if $\bm{P}$ is proper and $\tilde{A}_{n,1},\ldots,\tilde{A}_{n,k}$ are the blocks of $\Pi_n$ in the least element order, so that $\min(\tilde{A}_{n,j}) \leq \min(\tilde{A}_{n,j+1})$, then $\lim_{n \to \infty} n^{-1}|\tilde{A}_{n,j}| = \tilde{w}_j := w_{\varrho_j}$, where  $(w_{\varrho_1},w_{\varrho_2},\ldots)$
are obtained by sampling without replacement from $(w_1,w_2,\ldots)$ with probabilities proportional to $(w_1,w_2,\ldots)$. Formally,
\begin{equation}\label{eq:sb_pick}
\Prob[\varrho_1 = j \mid \bm{w}]  = w_j, \quad	\Prob[\varrho_{k+1} = j \mid \bm{w},\bm{\varrho}_{[k]}]  = \frac{w_j}{1-\sum_{j=1}^k w_{\varrho_{j}}}\Ind\{j \not\in \bm{\varrho}_{[k]}\}, \quad k \geq 1,
\end{equation}
under the event $\sum_{j=1}^k w_{\varrho_{j}} < 1$ with  $\bm{\varrho}_{[k]} = (\varrho_1,\ldots,\varrho_k)$. Otherwise, if $\sum_{j=1}^k w_{\varrho_{j}} = 1$, we set $  \Prob[\varrho_{k+1} = j \mid \bm{w},\bm{\varrho}_{[k]}]  = \Ind\{j = \inf (\N\setminus\bm{\varrho}_{[k]})\}$. 
The collection $(w_{\varrho_j})_{j=1}^{\infty}$ is known as a \emph{size-biased permutation} of $(w_j)_{j=1}^{\infty}$ and we say that $\bm{w}$ is \emph{invariant under size-biased permutations}, or that it is in \emph{size-biased random order} if it satisfies 
$(w_j)_{j=1}^{\infty} \deq (w_{\varrho_j})_{j=1}^{\infty}$. 

In general, working with one ordering of the weights or another does not change the distribution of model, as illustrated by \eqref{eq:EPPF_sum}. Nonetheless, for historical and operational reasons the two aforementioned weights' rearrangements have gained relevance. Our next result gives necessary and sufficient conditions for a MSBw to be a.s. decreasing.
\begin{prop}\label{cor:MSB_dec_sup}
Let $\bm{w}$ be a MSBw  with parameters $(\pi,\bmpsi)$ and such that $w_j > 0$, for every $j \geq 1$, so that $t = \inf\{j \geq 1:v_j = 1\} = \infty$, a.s. Then $\bm{w}$ is decreasing a.s.~if and only if for every $j \geq 1$ the support of $\psi_j(v_j,\cdot\,)$ is contained in $[0,c(v_j)]$, $\pi_j$-a.s. where  $c(v) = 1\wedge v(1-v)^{-1}$ for $v \in (0,1)$.
\end{prop}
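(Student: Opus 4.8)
The plan is to reduce the monotonicity of $\bm{w}$ to a pointwise constraint relating consecutive length variables, and then convert that constraint into a support condition on the transition kernels via disintegration. Throughout I read ``decreasing'' in the non-strict (non-increasing) sense, which is what makes the \emph{closed} threshold interval $[0,c(v_j)]$ the right object. Since $w_j>0$ for all $j\ge 1$ and $t=\infty$ a.s., on an event of probability one we have $v_j\in(0,1)$ for every $j$, so all the ratios below are well defined. From \eqref{eq:sb} one computes $w_{j+1}/w_j = v_{j+1}(1-v_j)/v_j$ for every $j\ge 1$, whence, using $v_{j+1}\le 1$ and the definition $c(v)=1\wedge v(1-v)^{-1}$,
$$
w_{j+1}\le w_j \iff v_{j+1}\le \frac{v_j}{1-v_j} \iff v_{j+1}\le c(v_j),
$$
where the last equivalence is trivial when $v_j\ge 1/2$ (both sides hold) and an identity when $v_j<1/2$. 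Consequently $\bm{w}$ is decreasing a.s.\ if and only if $\Prob[\,v_{j+1}\le c(v_j)\ \text{for all }j\ge 1\,]=1$, and, since this is a countable intersection of events, if and only if $\Prob[v_{j+1}\le c(v_j)]=1$ for every $j\ge 1$.

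Next I would disintegrate each of these events along the Markov kernel. By the defining Markov property of $\bm{v}$, the joint law of $(v_j,v_{j+1})$ is $\pi_j(\mrm{d}v)\,\psi_j(v,\mrm{d}v')$, so
$$
\Prob[v_{j+1}\le c(v_j)] = \int_{[0,1]}\psi_j\big(v,[0,c(v)]\big)\,\pi_j(\mrm{d}v),
$$
the integrand being measurable because $v\mapsto \psi_j(v,(a,1])$ is measurable for each fixed $a$ and monotone in $a$ (a Carath\'eodory-type function), while $v\mapsto c(v)$ is measurable. Since $0\le \psi_j(v,[0,c(v)])\le 1$ and $\pi_j$ is a probability measure, this integral equals $1$ if and only if $\psi_j(v,[0,c(v)])=1$ for $\pi_j$-almost every $v$. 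Finally, because $[0,c(v)]$ is a closed subset of $[0,1]$, the equality $\psi_j(v,[0,c(v)])=1$ is equivalent to $\mathrm{supp}\,\psi_j(v,\cdot)\subseteq[0,c(v)]$: a closed set of full mass always contains the support, and conversely a support point in $(c(v),1]$ would force every one of its neighbourhoods to have positive mass, contradicting $\psi_j(v,(c(v),1])=0$. Chaining these equivalences over $j$ gives both implications of the stated ``if and only if''.

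The argument is essentially bookkeeping, so I do not anticipate a genuine obstacle; the only points needing a little care are the joint measurability of $v\mapsto\psi_j(v,[0,c(v)])$ (handled by the standard fact that a map measurable in one variable and monotone in the other is jointly measurable) and the passage between ``$\psi_j(v,\cdot)$ puts full mass on the closed set $[0,c(v)]$'' and ``$\mathrm{supp}\,\psi_j(v,\cdot)\subseteq[0,c(v)]$''. I would also flag explicitly at the start that, because $w_j>0$ for all $j$, the case $v_j\in\{0,1\}$ is excluded on the relevant full-probability event, which is exactly what legitimises dividing by $v_j$ and $1-v_j$.
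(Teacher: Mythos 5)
Your argument is correct and is essentially the paper's own proof: the paper routes the same computation through Proposition~\ref{prop:dec_Ew}(ii), which establishes $\Prob[w_{j+1}\le w_j]=\Prob[t>j]\int_{(0,1)}\psi_j\bigl(v,(0,c(v)]\bigr)\pi_j(\mrm{d}v)+\Prob[t\le j]$ via the identity $w_{j+1}\le w_j\iff v_{j+1}\le v_j(1-v_j)^{-1}$, and then specializes to $t=\infty$ a.s. Your additional remarks on measurability of $v\mapsto\psi_j(v,[0,c(v)])$ and on the equivalence between full mass on the closed set and the support inclusion are points the paper leaves implicit, and they are handled correctly.
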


The characterization of almost sure decreasing weights $\bm{w}$ is beneficial for some computational algorithms in mixture models as highlighted, e.g., in \cite{MW15}. Nonetheless, the size-biased random order is arguably the most important weights' permutation in practice, as the EPPF  and some posterior or predictive distributions can be explicitly computed when the distribution of this weights' ordering is known. To explain this, let $\tilde{x}_1,\ldots,\tilde{x}_{K_n}$ denote the distinct values in $(x_i)_{i=1}^{n}$ in order of discovery so that $\tilde{x}_1 = x_1$ and $\tilde{x}_j = x_{m_j}$ with $m_j = \inf \{i > m_{j-1}: x_i \not\in \{x_1,\ldots,x_{i-1}\} \}$. Note that $\{i \leq n: x_i = \tilde{x}_j\} = \tilde{A}_{n,j}$, recalling that $\tilde{A}_{n,j}$ is the $j$th block of $\Pi_n$ in the least element order. 
Hence, if $\bm{P}$ is proper, the long-run proportion of $x_i$'s that coincide with $\tilde{x}_j$ is
$\tilde{w}_j = w_{{\varrho}_j}$, for $\bm{\varrho} = (\varrho_j)_{j=1}^{\infty}$ as in \eqref{eq:sb_pick}. It follows that given $\tilde{\bm{w}}$, one has a tractable conditional prediction rule:
\begin{equation}\label{eq:cond_pred_rule}
\Prob[x_{n+1} \in \cdot \mid x_1,\ldots,x_{n}, \tilde{\bm{w}}] = \sum_{j=1}^{K_n} \tilde{w}_j\delta_{\tilde{x}_j} + \l(1-\sum_{j=1}^{K_n}\tilde{w}_j\r)P_0, \quad  n \geq 1.
\end{equation}
Noting that $\bm{P} = \lim_{n \to \infty} n^{-1}\sum_{i=1}^{n} \delta_{x_i} = \sum_{j=1}^{\infty}\tilde{w}_j\delta_{\tilde{x}_j}$, \eqref{eq:cond_pred_rule} implies that $\bm{P}$ can be decomposed as  $\bm{P} = \bm{P}' + \bm{P}''$. The first component $\bm{P}' = \sum_{j=1}^{K_n} \tilde{w}_j\delta_{\tilde{x}_j}$ is finite-dimensional and captures the part of $\bm{P}$ that learns directly from $n$ data points $(x_i)_{i=1}^{n}$. The second component $\bm{P}'' = \sum_{j>K_n} \tilde{w}_j\delta_{x_j}$ only learns from the data indirectly through $\bm{P}'$. This fact is key for devising some \emph{conditional} sampling schemes such as the \emph{ordered allocation sampler} by \cite{DG23}. It also follows from \eqref{eq:cond_pred_rule} that for any partition $B$ of $[n]$,
$
\Prob[\Pi_n = B \mid \tilde{\bm{w}}] = \prod_{j=1}^k {\tilde{w}_j}^{n_j-1} \prod_{j=1}^{k-1} \l(1-\sum_{i=1}^j \tilde{w}_i\r),
$ 
where $n_j = |\tilde{B}_j|$ and $\tilde{B}_1,\ldots,\tilde{B}_k$ are the blocks of $B$ in the \emph{least element order}. Thus, one obtains the notable expression for the EPPF \citep[cf.][]{P95,P96,P96b}: 
\begin{equation}\label{eq:EPPF_sb}
\Phi_k^{(n)}(n_1,\ldots,n_k) = \Esp\l[\prod_{j=1}^k {\tilde{w}_j}^{n_j-1} \prod_{j=1}^{k-1} \l(1-\sum_{i=1}^j \tilde{w}_i\r)\r].
\end{equation}
Furthermore, by integrating out $\tilde{\bm{w}}$ in \eqref{eq:cond_pred_rule}, with respect to its conditional distribution given the data, one can also compute the (marginal) prediction rule $\Prob[x_{n+1} \in \cdot \mid x_1,\ldots,x_{n}]$. This in turn allows the description of the model as a generalized P\'olya urn scheme in the spirit of the seminal contributions of \cite{BM73} and \cite{P96}, as well as posterior inference by means of \emph{marginal} sampling schemes first proposed by \cite{EW95} and \cite{MacE94}.

In view of this, it is natural to ask whether there exist MSBw, other than the Pitman-Yor weights, that remain invariant under size-biased permutations. Given the relevance of this question, we devote the next section to addressing it.

\subsection{Pitman-Yor process characterization}\label{subsec:3}
A deep and celebrated characterization by \citet[Theorem 2]{P96b} essentially states that for stick-breaking weights $\bm{w}$ satisfying $\sum_{j=1}^{\infty} w_j = 1$,  $w_j > 0$, for every $j \geq 1$ a.s.~and featuring \textit{independent length variables},  $\bm{w}$ is invariant under size-biased permutations if and only if $v_j \sim \Be(1-\sigma,\theta+j\sigma)$ for some $\sigma \in [0,1)$ and $\theta > -\sigma$. These weights, first identified in \cite{PPY92}, are known as the size-biased permuted Pitman-Yor weights and the resulting stick-breaking process is the Pitman-Yor process \citep{P97}, also known as two parameter Poisson-Dirichlet process. We are able to derive a highly non-trivial and 
somehow surprising generalization of this result replacing the independence assumption for Markovianity: under mild conditions involving only the first two length variables or weights, the Pitman-Yor process is the only MSBp whose weights are invariant under size-biased permutations.

\begin{theo}\label{theo:MSB_sb_PY}
Let $\bm{w}$ be a MSBw 
such that $\sum_{j=1}^{\infty}w_j = 1$, and $w_j > 0$, for every $j \geq 1$, a.s. Let $\bm{v}$ be the length variables of $\bm{w}$ and say that at least one of following  
 holds: 
\begin{itemize}
\item[$(A)$]  The law of $\bm{v}_{[2]} = (v_1,v_2)$ and the Lebesgue measure over $([0,1]^2,\B_{[0,1]^2})$ are mutually absolutely continuous.
\item[$(B)$] The support of $\bm{w}_{[2]} = (w_1,w_2)$ is convex and there is a version of the conditional law $\Prob[v_3 \in \cdot \mid v_2]$ that is left continuous with respect to the weak topology.
\item[$(C)$] There exists $0 < \varepsilon < 1$ such that the set $\{(x_1,x_2) \in [0,1]^2: x_2 = \varepsilon\}$ is contained in the support of $\bm{v}_{[2]} = (v_1,v_2)$. Moreover, there exists a version, $\psi_2$, of the conditional law $\Prob[v_3 \in \cdot \mid v_2]$ that is continuous with respect to the weak topology, i.e. $\psi_2(u_n,\cdot) \wto \psi_2(u,\cdot)$  as $u_n \to u$ in the support of $v_2$.
\end{itemize}
Then $\bm{w}$ is invariant under size-biased permutations if and only if the length variables are independent with $v_j \sim \Be(1-\sigma, \theta+j\sigma)$ for some $\sigma \in [0,1)$ and $\theta > -\sigma$.
\end{theo}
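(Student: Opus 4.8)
The plan is to prove the two implications separately. The ``if'' direction is classical: when $v_j\sim\Be(1-\sigma,\theta+j\sigma)$ are independent, $\bm{w}$ is the residual allocation model $\mathsf{GEM}(\sigma,\theta)$, and its invariance under size-biased permutation follows from the Beta--Gamma calculus exactly as in \citet{PPY92,P96b}; these length variables are, in particular, Markov. So the substance lies in the ``only if'' direction, where one assumes $\bm{w}$ is a proper MSBw with $w_j>0$ a.s., Markov length chain $\bm{v}$, and $(w_j)_{j\ge1}\deq(w_{\varrho_j})_{j\ge1}$, and must deduce that $\bm{v}$ is independent with the stated Beta marginals. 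The first step is to observe that a size-biased permutation of a size-biased permutation is again one in law, so the reordered sequence $\tilde{\bm{w}}=(w_{\varrho_j})_{j\ge1}$ is itself a stick-breaking sequence whose length variables $\tilde v_j=w_{\varrho_j}/(1-\sum_{i<j}w_{\varrho_i})$ form, by the hypothesis $\bm{w}\deq\tilde{\bm{w}}$, a Markov chain with the same initial law $\pi$ and transition kernels $\bmpsi$ as $\bm{v}$. This converts invariance under size-biased permutation into a family of distributional identities; the ones that will be exploited involve only the first three length variables, namely $\tilde v_1\deq v_1$, $(\tilde v_1,\tilde v_2)\deq(v_1,v_2)$, together with the Markov constraint $\mathcal{L}(\tilde v_3\mid\tilde v_1,\tilde v_2)=\psi_2(\tilde v_2,\cdot)$.

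The second step turns these identities into a functional equation for the law of $(v_1,v_2)$, with $\psi_2=\mathcal{L}(v_3\mid v_2)$ entering as the only further ingredient. Writing $\tilde v_1=w_{\varrho_1}$ and conditioning on the index $\varrho_1$ of the first size-biased pick, on $\{\varrho_1=1\}$ (which occurs with conditional probability $v_1$ given $\bm{v}$) the deletion residual is exactly the stick-breaking sequence of the shifted chain $(v_2,v_3,\dots)$, whereas on $\{\varrho_1=n\}$, $n\ge2$, the residual is, after the change of variables $w_k\mapsto w_k/(1-w_n)$, a stick-breaking sequence whose length variables are a rescaling of $v_1,\dots,v_{n-1}$ followed by $v_{n+1},v_{n+2},\dots$ unchanged. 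Summing this mixture over $n$, using the Markov property to integrate out the tail, and matching against the Markov description of $\tilde{\bm{v}}$ from the first step yields a self-consistency equation of the schematic form $\mathcal{L}(v_1,v_2)=T\bigl(\mathcal{L}(v_1,v_2,v_3)\bigr)$ with $T$ a size-biasing-type operator; this is why only the first three length variables are constrained by the hypotheses.

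The third step solves this equation under each of (A), (B), (C) and then propagates. Under (A) the law of $(v_1,v_2)$ has a Lebesgue density, the equation holds a.e., and the Beta--Gamma calculus forces the density both to factor --- so $v_1\perp v_2$ --- and to be a product of the asserted Beta densities; moreover the equation pins down $\mathcal{L}(v_3\mid v_2)$ to the corresponding Beta law, independent of $v_2$. One then restricts to the positive-probability event $\{\varrho_1=1\}$: there the deletion residual is again an invariant-under-size-biased-permutation Markov stick-breaking sequence, with length chain $(v_2,v_3,\dots)$ and regularity now furnished by the explicit Beta form of $v_2$ and $\psi_2$ just obtained, so re-running the argument gives $v_2\perp v_3$ with $v_3\sim\Be(1-\sigma,\theta+3\sigma)$, and inductively the full conclusion (equivalently, once mutual independence is established one may simply invoke \citet[Theorem~2]{P96b}). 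Under (B) or (C) the same functional equation is obtained, but a priori only on a subset of the support of $(v_1,v_2)$; one first shows it on a dense subset and then uses the weak left-continuity of $\mathcal{L}(v_3\mid v_2)$ (case (B), together with convexity of the support of $(w_1,w_2)$) or the presence of a full coordinate line in the support of $(v_1,v_2)$ together with weak continuity of $\psi_2$ (case (C)) to extend the identity to the whole support, after which the same Beta--Gamma solution applies. The hardest part will be precisely the second and third steps: a size-biased deletion does not preserve the stick-breaking form with a tractable length chain, so deriving the functional equation requires careful bookkeeping of the rescaled pre-deletion variables, the shifted post-deletion variables, the mixing over the deleted index, and the limit obtained after the tail is integrated out; and upgrading an almost-everywhere identity to one holding on a possibly lower-dimensional or irregular support is exactly what conditions (B) and (C) are engineered to allow, so verifying that these are essentially the weakest hypotheses under which the argument still runs is the delicate point.
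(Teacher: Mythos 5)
There is a genuine gap at the heart of your ``only if'' direction. Everything hinges on your Step 2--3 ``self-consistency equation'' $\mathcal{L}(v_1,v_2)=T\bigl(\mathcal{L}(v_1,v_2,v_3)\bigr)$, but this equation is never actually derived (you yourself flag the bookkeeping over deleted indices and the tail limit as the hard part), and its solution is simply asserted: the claim that ``the Beta--Gamma calculus forces the density both to factor and to be a product of the asserted Beta densities'' is not an argument. The distributional identities you propose to feed into it ($\tilde v_1\deq v_1$, $(\tilde v_1,\tilde v_2)\deq(v_1,v_2)$, plus the Markov form of $\mathcal{L}(\tilde v_3\mid \tilde v_2)$) are weak consequences of $\bm{w}\deq\tilde{\bm{w}}$ and do not by themselves pin down anything; even in the independent case the analogous characterization (Pitman's Theorem~2) requires the full machinery of acceptable laws and moment identities, so in the Markov case one cannot expect a factorization plus Beta marginals to ``fall out'' of an unspecified operator equation. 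Relatedly, your use of conditions (A)--(C) as devices ``to extend an a.e.\ identity to the whole support'' is only loosely in the right spirit: in the actual proof they play a sharper geometric role, and your propagation step via conditioning on $\{\varrho_1=1\}$ is not the standard deletion statement (which conditions on $\bm{w}_{[k]}$, not on the event that the first pick is $w_1$) and would need separate justification.

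For contrast, the mechanism the paper uses, and which your outline is missing, is the following two-versions argument. Invariance under size-biased permutations gives, via Pitman's Corollary~7, a version $\mu$ of $\Prob[v_3\in\cdot\mid w_1,w_2]$ that is \emph{symmetric} in $(w_1,w_2)$; Markovianity gives another version $\nu=\phi\circ f$ with $f(w_1,w_2)=w_2/(1-w_1)$, hence \emph{constant along the level curves} $L_\varepsilon=\{w_2/(1-w_1)=\varepsilon\}$. Since both are versions of the same conditional law they agree a.s., and conditions (A), (B), (C) are exactly what is needed (absolute continuity, or density of the agreement set plus weak (left-)continuity of $\phi$ and a rich enough support) to combine symmetry with constancy on the $L_\varepsilon$'s and conclude that the conditional law of $v_3$ given $(v_1,v_2)$ is constant, i.e.\ $v_3$ is independent of $(v_1,v_2)$. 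Only then do Pitman's acceptability results (Corollary~7 with Lemmas~10 and~12) identify Beta laws and the independence of $(v_1,v_2)$, and the induction proceeds by shifting to $(v_{k-1},v_k,v_{k+1},\dots)$ with renormalized weights, which remain invariant under size-biased permutations by Pitman's Remark~8, and by noting that the already-established Beta structure supplies condition (A) for the shifted pair; Pitman's Theorem~2 then fixes the parameters $(1-\sigma,\theta+j\sigma)$. Your high-level skeleton (reduce to a statement about $(v_1,v_2,v_3)$, then induct along shifts, regularity for the induction coming from the Beta laws already obtained) matches the paper, but without the symmetric-versus-Markov-version argument the central step --- independence of $v_3$ from the first two length variables under (A)/(B)/(C) --- is not proved.
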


This result is of independent interest, beyond Bayesian modeling, and its proof is technically challenging. App.~\ref{sec:app:MSB_sb}
is devoted to proving and discussing this remarkable characterization. Here we highlight the key argument behind this result by focusing on the conditional law of $v_3$ given $(w_1,w_2)$. In the proof of Corollary~7 in \cite{P96b} it is shown 
	that when the weights are invariant under size-biased permutations, there is a version, say $\mu$, of the mapping $(w_1,w_2) \mapsto \Prob[v_3 \in \cdot \mid w_1,w_2]$ that is symmetric, i.e. $\mu(\,\cdot\,;w_1,w_2) = \mu(\,\cdot\,;w_2,w_1)$. At the same time, 
	Markovianity of the length variables implies that 
	there is another version, $\nu$, of the same mapping $(w_1,w_2) \mapsto \Prob[v_3 \in \cdot \mid w_1,w_2]$ that only depends on the value of $v_2 = w_2/(1-w_1)$. In other words, for any set $A$ the function $(w_1,w_2) \mapsto \nu(A;w_1,w_2)$ is constant over the sets 
	$L_\varepsilon = \{(w_1,w_2): w_2/(1-w_1) = \varepsilon\}\cap \Delta_2$, where $\varepsilon \in [0,1]$ and $\Delta_2 = \{(w_1,w_2) \in [0,1]^2: w_1+w_2 \leq 1\}$. Since both $\mu$ and $\nu$ are 
	versions of the same conditional probability distribution, they must coincide (a.s.). 
	For the sake of illustration say $\mu = \nu$ over the whole space,  $\Delta_2$. Then, for any two points $\bm{w}' = (w'_1,w'_2)$ and $\bm{w}^* = (w^*_1,w^*_2)$ in the interior of $\Delta_2$, 
	there exist $\bm{z}' = (z_1,z_2)$ and $\bm{z}^* = (z_2,z_1)$  in $\Delta_2$ such that $\bm{w}',\bm{z}' \in L_{\varepsilon'}$ and $\bm{w}^*,\bm{z}^* \in L_{\varepsilon^*}$ for some $\varepsilon', \varepsilon^* \in (0,1)$  as illustrated in Figure \ref{fig:thm2_8}. Since $\nu$ is constant over $L_{\varepsilon'}$ and  $L_{\varepsilon^*}$ we get $\nu(\,\cdot\,;\bm{w}') = \nu(\,\cdot\,;\bm{z}' )$ and  $\nu(\,\cdot\;\bm{w}^*) = \nu(\,\cdot\,;\bm{z}^*)$. Similarly, the symmetry of $\mu$ yields $\mu(\,\cdot\,;\bm{z}' ) = \mu(\,\cdot\,;\bm{z}^*)$. Recalling that $\mu = \nu$, we obtain $\mu(\,\cdot\,;\bm{w}') = \mu(\,\cdot\,;\bm{w}^*)$, i.e., both $\mu$ and $\nu$ are constant. This means $v_3$ is independent the first two weights (or equivalently of the first two length variables). In general, we cannot assure $\mu=\nu$  all over $\Delta_2 $, but the conditions (\emph{A})--(\emph{C}) guarantee $\mu$ and $\nu$ coincide in sufficient points so as to apply a similar argument. Broadly speaking, the application of the aforementioned trick along consecutive shifts of the length variables $(v_1,v_2,\ldots) \mapsto (v_2,v_3,\ldots)$ results in the independence of all length variables, and so the result follows.

\begin{figure}
	\centering
	\includegraphics[width=0.5\textwidth]{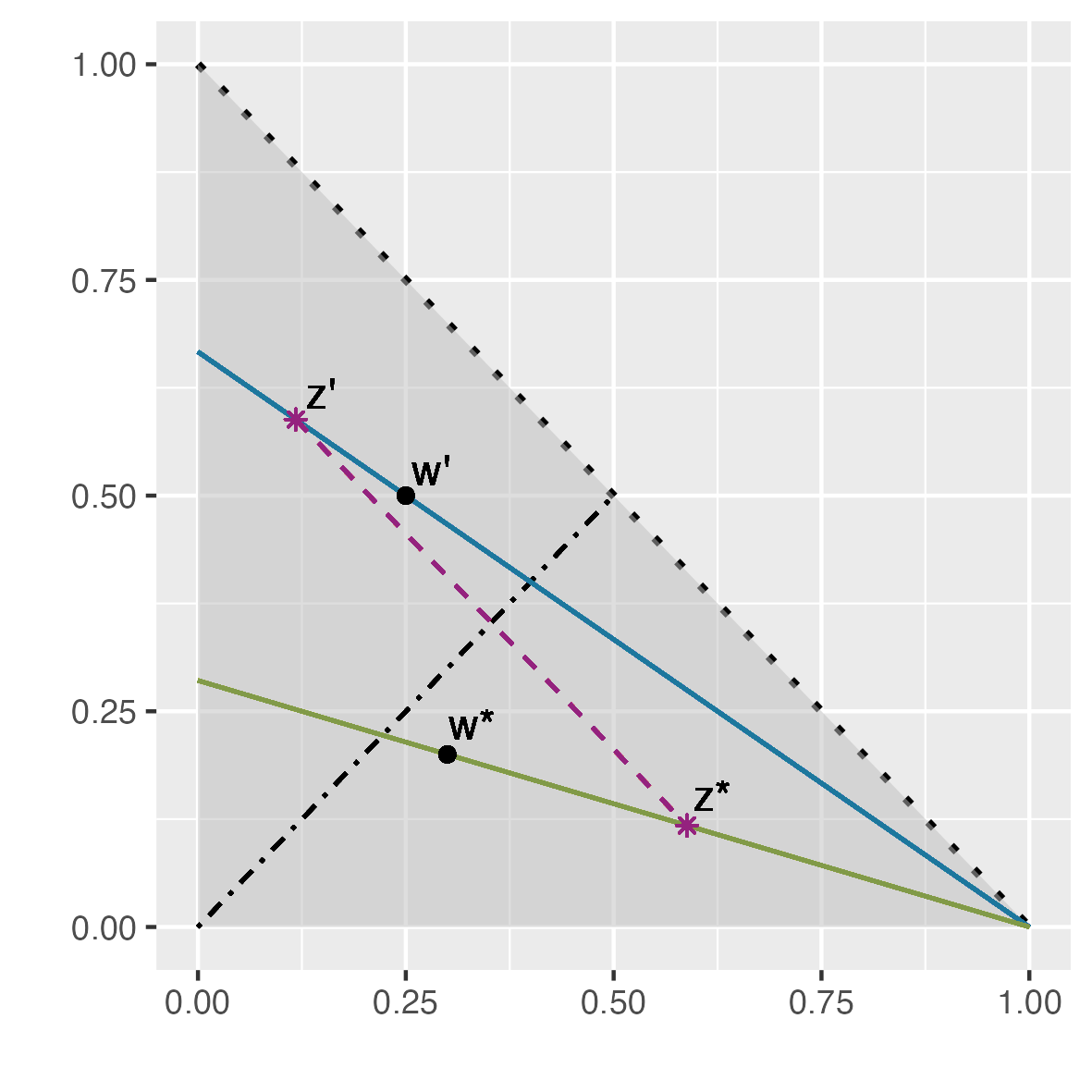}
	\begin{small} 
		\caption{\footnotesize{Illustration of Theorem \ref{theo:MSB_sb_PY}. The upper and lower solid lines stand for $L_{\varepsilon'}$ and $L_{\varepsilon^*}$, respectively. The red dashed line shows that $\bm{z}'$ is a reflection of $\bm{z}^*$ with respect to the identity function (dotted-dashed line).}}
		\label{fig:thm2_8}
	\end{small}
\end{figure}

\subsection{Posterior inference and simulation}\label{subsec:2_old}

For	general classes of species sampling processes, the explicit computation of the distribution of the size-biased weights and the EPPF is out of reach. By virtue of Theorem \ref{theo:MSB_sb_PY}, this is also the case for the vast majority of MSBp. However, the lack of explicit formulae does not hinder the investigation of prior and posterior properties. 
	A similar challenge arises for other well-known and tractable random probability measures, such as mixtures of the Dirichlet process. Even for these models, one must rely on sampling schemes to investigate the key features.
One of the strengths of 
MSBp is the possibility of easily generating $\tilde{w}_1,\ldots,\tilde{w}_j$ by sampling the MSBw $\bm{w}$ in the original order as well as the sequence $\bm{\varrho}$, which is defined in \eqref{eq:sb_pick}. 
Using the samples of $\tilde{w}_1,\ldots,\tilde{w}_j$ quantities of interest, such as  \eqref{eq:EPPF_sb} and other functional moments of $\bm{P}$,
can be numerically approximated.

The application of simulation-based approximation methods will be showcased through: (i) the distribution of the number of partition blocks $K_n = |\Pi_n|$ and (ii) the \emph{tie probability} $\tau_{p} = \Prob[x_i = x_j] = \Phi_1^{(2)} (2) = \Esp[\tilde{w}_1^2]$. Fundamental functional moments of the associated species sampling process depend on $\tau_p$. An example is 
$$
\Esp[\bm{P}(f)\bm{P}(g)] = \tau_p P_0(fg) + (1-\tau_p)P_0(f)P_0(g),
$$
for any pair $f,g: \X \to \R$ of bounded measurable functions, where $\bm{P}(f) = \int_{\X} \, f \, \mathrm{d}\bm{P}$ and similarly $P_0(f) = \int_{\X} \, f \, \mathrm{d}P_0$. From this, one can deduce $\Cov(\bm{P}(f),\bm{P}(g)) =\tau_p\{P_0(fg)-P_0(f)P_0(g)\}$ and $\Var(\bm{P}(f))=\tau_p\{P_0(f^2)-P_0^2(f)\}$. In particular, if $f=\Ind_A$ and $g=\Ind_B$ for any $A,B \in \B_{\X}$, 
one has
$$
		\Cov(\bm{P}(A),\bm{P}(B)) = \tau_p\,\left[P_0(A\cap B)-P_0(A)P_0(B)\right],
$$
and $\Var(\bm{P}(A)) = \tau_p \, P_0(A)[1-P_0(A)]$; see App.~\ref{app:funct_moments}. For any diffuse probability measure $P_0$ and bounded measurable $f:\X\to\R$, larger values of the tie probability $\tau_p$ lead to larger values of $\Var(\bm{P}(f))$. 
From a methodological perspective, this means that by increasing $\tau_p$ one obtains a less informative prior on $\bm{P}$. 
However, it should be noted that large values of $\tau_p$ have the opposite effect on 
the number of partition blocks 
$K_n = |\Pi_n|$. Indeed, the more likely two observations are to be clustered together, the fewer the number of clusters will be. This typically leads to 
smaller values of both $\Esp[K_n]$ and $\Var(K_n)$ as illustrated in the following sections.
When the EPPF and the distribution of $\tilde{w}_1$ are not available, 
$\tau_p$ can be approximated by averaging over different samples of $\tilde{w}_1$. In turn, to draw a sample of $\tilde{w}_1$ it suffices to sample $u \sim \mathsf{Unif}(0,1)$, the MSBw $w_1,\ldots,w_{\varrho_1}$ with $\varrho_1 = \min\{j: \sum_{i=1}^{j}w_i > u$\} and set $\tilde{w}_1 = w_{\varrho_1}$. Similarly, we can readily draw samples of $K_n = \max\{j: \sum_{i=1}^j G_i \leq n\}$ where $G_1,G_2,\ldots$ are conditionally independent given $(\tilde{w}_j)_{j=1}^{\infty}$, with  $G_1 = 1$ and $G_i \sim \mathsf{Geo}(1-\sum_{l=1}^{i-1} \tilde{w}_l)$, for $i > 1$; see App.~\ref{app:Kn_sum_geo}.

For general stick-breaking processes, numerical approximation of prior distributional features of $\bm{P}$ is feasible if one is able to sample from the joint distribution of the length variables $\bm{v}_{[m]} = (v_1,\ldots,v_m)$, for any $m \geq 1$. For MSBp, this simplifies to sampling from the initial distribution $\pi$ and the transition kernels $\psi_j(v,\cdot)$, for any $v \in [0,1]$ and $j \geq 1$.
As for posterior inference, 
it can 
be addressed via \textit{conditional algorithms} such as, e.g., slice \citep{W07}, retrospective \citep{PR08}, or order allocation \citep{DG23} sampling.
They rely on the use of latent allocation variables $\bm{d} = (d_i)_{i=1}^{\infty}$ with $d_i = j$ if and only if $x_i = \theta_j$, so that $x_i = \theta_{d_i}$. The ties among elements of $\bm{d}$ define the same partition structure, $\bm{\Pi}$, generated by the ties in $\bm{x}$ a.s. We have $d_i \mid \bm{w} \iid \sum_{j=1}^{\infty}w_j \delta_j$, and, due to the stick-breaking decomposition, the joint distribution of $(d_1,\ldots,d_n)$ given $\bm{v}$ is
\begin{equation}\label{eq:all_var_cond}
	\bm{p}(d_1,\ldots,d_n\mid \bm{v}) =  \prod_{j=1}^{\kappa} w_j^{a_j} = \prod_{j=1}^{\kappa} v_j^{a_j} (1-v_j)^{b_j},
\end{equation}
where $\kappa = \max_{i \leq n}d_i$, $a_j = |\{i \leq n: d_i = j\}|$ and $b_j = \sum_{l > j}a_l$. 
In order to adapt any conditional method
to a stick-breaking process, the key quantity is the posterior density
\begin{equation}\label{eq:v_post}
	\bm{p}(\bm{v}_{[m]}\mid d_1,\ldots,d_n) \propto \bm{p}(\bm{v}_{[m]})\prod_{j=1}^{\kappa} v_j^{a_j} (1-v_j)^{b_j},
\end{equation}
for every $m \geq 1$, where $a_j$ and $b_j$ are as in \eqref{eq:all_var_cond},  and $\bm{p}(\bm{v}_{[m]})$ is the prior density of $\bm{v}_{[m]}$. 
When sampling from \eqref{eq:v_post} is hard, one can instead work with the element-wise full conditional
\begin{equation*}\label{eq:v_full_cond}
	\bm{p}(v_j\mid \bm{v}_{-j},d_1,\ldots,d_n) \propto v_j^{a_j} (1-v_j)^{b_j} \bm{p}(v_j \mid \bm{v}_{-j}),
\end{equation*}
where $\bm{v}_{-j} = (v_1,\ldots,v_{j-1},v_{j+1},\ldots)$, for every $j \geq 1$.

For proper MSBp with initial distribution, $\pi$, and transition probability kernels, $\psi_j(v,\cdot)$, admitting densities (denoted by the same symbols), it follows that 
\begin{equation*}
\bm{p}(\bm{v}_{[m]}\mid d_1,\ldots,d_n) \propto \pi(v_1)v_1^{a_1}(1-v_1)^{b_1}\, \prod_{j=2}^{m}v_j^{a_j}(1-v_j)^{b_j}\bpsi_{j-1}(v_{j-1},v_j),
\end{equation*}
for every $m \geq 1$, with $a_j$ and $b_j$ as in \eqref{eq:all_var_cond}. We also have
\begin{equation}\label{eq:MSB_post_v1}
\bm{p}(\bm{v}_1\mid \bm{v}_{-1},d_1,\ldots,d_n) \propto v_1^{a_1}(1-v_1)^{b_1}\pi(v_1)
\bpsi_1(v_{1},v_{2}),
\end{equation}
and, for each $j \geq 2$,
\begin{equation}\label{eq:MSB_post_vj}
\bm{p}(\bm{v}_j\mid \bm{v}_{-j},d_1,\ldots,d_n) \propto v_j^{a_j}(1-v_j)^{b_j}\bpsi_{j-1}(v_{j-1},v_j)
\bpsi_j(v_{j},v_{j+1}).
\end{equation}
If tractable transitions are specified, as in Sections~\ref{sec:BBSB} and \ref{sec:LMSB}, it is straightforward to sample from $\bm{p}(\bm{v}_{[m]})$, $\bm{p}(\bm{v}_{[m]}\mid d_1,\ldots, d_n)$ or $\bm{p}(v_j\mid \bm{v}_{-j},d_1,\ldots,d_n)$ for MSBp.

Another key feature of sampling algorithms is that they must have a reasonable run time. Namely,  many algorithms require to sample weights $w_1,\ldots,w_J$, where $J = \min\{j \geq 1: \sum_{i=1}^{j} w_i > U\}$ and $U$ is some random threshold in $(0,1)$ that changes from one algorithm to another. Such thresholds appear in posterior sampling schemes such as the \emph{slice sampler} \citep{KGW11} and the \emph{ordered allocation sampler} \citep{DG23}. Note that $\sum_{i=1}^{J}w_i > U$ is equivalent to $\prod_{i=1}^{J}(1-v_i) = 1-\sum_{i=1}^{J}w_i < 1-U$. For length variables $v_i$ that place a lot of mass on values close to $0$, a larger $J$ (sometimes, orders of magnitude larger) is needed to satisfy the inequality, thereby increasing the computational burden. 
In extreme cases, where weights are heavy-tailed, for example if $v_i \sim \mathsf{Be}(\alpha,\beta_i)$, with $\beta_i$ very large, it may well happen that $v_i$ becomes so negligible that the algorithm will return a numerical value equal to $0$ and the condition $\prod_{i=1}^{J}(1-v_i)  < 1-U$ will not be met. In this case, the algorithm will not terminate in finite time.
In view of this, Sections \ref{sec:BBSB} and \ref{sec:LMSB} present examples of MSBp with flexible and tractable transitions $\psi_j$ and such that the marginal distributions of the length variables $\pi_j$ can be chosen in advance.
We next specialize these general notions to two extreme dependence regimes, which will subsequently serve as building blocks for more flexible subclasses.

%--------------------------------------------------------------------------------------------------------------------------------
\section{Classes of degenerate MSBp}\label{sec:examples}
%--------------------------------------------------------------------------------------------------------------------------------
The first subclasses of MSBp we consider correspond to the degenerate extreme cases: independent and completely dependent length variables.

\subsection{Independent stick-breaking processes}
The independent case arises by taking the transition probability kernels of the form
\[
\psi_j(v_j,B) = \Prob[v_{j+1}\in B\mid v_j] = \Prob[v_{j+1} \in B] = \pi_{j+1}(B),
\]
i.e. $\pi_{j+1}$ is both the transition probability kernel at time $j$ and the marginal distribution of $v_{j+1}$. The corresponding $\bm{w}$ and $\bm{P}$ are termed, respectively, \emph{independent stick-breaking weights sequence} (ISBw) with parameter $\bm{\pi} = (\pi_j)_{j=1}^{\infty}$ and \emph{independent stick-breaking process} (ISBp) with parameters $(\bm{\pi},P_0)$. This represents the class of stick-breaking processes most widely studied in the literature. It is well-known \citep[see e.g.][]{GvdV17} that for ISBw the \textit{properness} condition $\sum_{j=1}^{\infty}v_j = \infty$ in \eqref{eq:sum1_MSB} can be relaxed to 
\begin{equation} \label{eq:proper}
\sum_{j=1}^{\infty}\int_{[0,1]}v\pi_j(\mrm{d}v) = \sum_{j = 1}^{\infty} \Esp[v_j] = \infty.
\end{equation}
For iid length variables,
where $\pi_j = \pi$, for $j \geq 1$, $\sum_{j=1}^{\infty}w_j = 1$ if and only if $\pi \neq \delta_0$. The condition on $\pi$ matches that in Theorem \ref{theo:sMSB_1}(iii): iid length variables form a stationary Markov process with transition kernel $\psi(v,\cdot)=\pi$, whose unique invariant measure is $\pi$.
Moreover, if $(0,\varepsilon)$ belongs to the support of $\pi_j$ for some $\varepsilon > 0$ and every $j \geq 1$, the ISBp $\bm{P}$ has full support \citep{BO14}.   Moreover, the posterior distribution of the length variables given the allocation variables was first derived in \cite{IJ01}.

The most popular instance of ISBp is the Pitman-Yor process, introduced in the previous section: it is proper, which follows by noting that $\Esp[v_j]=(1-\sigma)/(1+\theta + (j-1)\sigma)$ thus satisfying \eqref{eq:proper}, and clearly has full support.
Furthermore, if $\sigma=0$, 
$v_j \iid \Be(1,\theta)$ and $\bm{P}$ becomes a Dirichlet process \citep{F73,S94}, the most widely used nonparametric model.
There are other ISBw that remain invariant under size-biased permutations; however, for all of these there exist $m \geq 1$ such that $w_j = 0$ a.s.~for every $j > m$, which means that the associated species sampling process will not have full support.

\subsection{Completely dependent stick-breaking processes}
The other extreme case 	corresponds to complete dependence. Consider first the special case of identical length variables $\bm{v}$, which can be seen as the counterpart to the iid case. The resulting stick-breaking process $\bm{P}$ is known as Geometric process \citep{FMW10}. Within the MSBp framework we recover it by noting that
$\bm{v} = (v,v,\ldots)$ is trivially a stationary Markov process with transition probability kernel $\bpsi(v,B) = \delta_v(B)$ for every $v \in [0,1]$ and $B \in \B_{[0,1]}$. For this transition it holds that
$
\pi(B) = \int_{[0,1]} \delta_v(B)\pi(\mrm{d}v)
$
for any  probability measure $\pi$ over $([0,1],\B_{[0,1]})$.  Hence, every probability measure, $\pi$, is invariant for $\bpsi$, which means that it is important to require $\pi(\{0\}) = \Prob[v = 0] = 0$ to attain properness, i.e., $\sum_{j=1}^{\infty} w_j =1$ a.s. Moreover, $t$, as defined in Proposition \ref{prop:MSB_supp_points}, is either infinite or equal to one a.s.~and $\Prob[t = 1] = \pi(\{1\})$: thus, we require $\pi(\{1\}) = 0$, to avoid $\bm{P} = \delta_{\theta_1}$ with positive probability. As for the full support of Geometric processes, it is achieved if $(0,\varepsilon)$ is contained in the support of $\pi$, for some $\varepsilon > 0$. For this process, the MSBw reduce to $w_j = v(1-v)^{j-1}$ and, thus, $\bm{w}$ is decreasing a.s. 
Also the posterior distribution given the allocation variables simplifies to
$
\bm{p}(v\mid d_1,\ldots,d_n) \propto v^{n}(1-v)^{\sum_{j=1}^mb_j}\,\pi(v).
$

Beyond the geometric process, a general approach to define a new class of nonparametric priors corresponding to the extreme case of complete dependence is as follows.
Given a collection of probability measures, $\bm{\pi} = (\pi_j)_{j=1}^{\infty}$, on $([0,1],\B_{[0,1]})$,  the goal is to define a completely dependent sequence of length variables, $\bm{v}$, with marginals $\bm{\pi}$. This can be easily achieved if the distribution function $F_j$ of $\pi_j$ is continuous and strictly increasing on $[0,1]$, thus assuring existence of the inverse, $F_j^{-1}$. In this case, we can define $\Upsilon_j : [0,1] \to [0,1]$ as
\begin{equation}\label{eq:Upsilon}
\Upsilon_j(v) = (F_{j+1}^{-1} \circ {F_j})(v) = F_{j+1}^{-1}({F_j}(v)),
\end{equation}
for every $j \geq 1$. At this stage simply choose $v_1 \sim \pi_1$ and recursively, for $j \geq 1$, set $v_{j+1} = \Upsilon_j(v_j)$. If  $v_j \sim \pi_j$ then $u_j = F_j(v_j) \sim \mathsf{Unif}(0,1)$ which yields $v_{j+1} = F^{-1}_{j+1}(u_j) \sim \pi_{j+1}$. Thus, $\bm{v}$ is completely determined by $v_1$, marginally $v_j \sim \pi_j$ as desired, and it defines a Markov process with initial distribution $\pi = \pi_1$ and transition probability kernels 
\begin{equation}\label{eq:trans_Upsilon}
\psi_{j}(v_j,\cdot) = \Prob[v_{j+1}\in \cdot \mid v_j] = \delta_{\Upsilon_j(v_j)}.
\end{equation}
We call the corresponding $\bm{w}$ and $\bm{P}$, respectively, \emph{(completely) dependent stick-breaking weights sequence} (CDSBw) with parameter $\bm{\pi} = (\pi_j)_{j=1}^{\infty}$ and \emph{(completely) dependent stick-breaking process} (CDSBp) with parameters $(\bm{\pi},P_0)$.

We first investigate the two key properties, namely properness and full support, of CDSBp.
The requirement for $F_j$ to be continuous and strictly increasing assures that $\pi_j$ is diffuse. Hence, in particular, $\pi(\{1\}) = \Prob[v_j = 1] = 0$, which implies $\Prob[\bigcup_{j=1}^{\infty}\{v_j = 1\}] = 0$. From Proposition \ref{prop:MSB_supp_points}, the respective stick-breaking weights $\bm{w}$ satisfy $w_j > 0$ a.s. Additionally, from \eqref{eq:sum1_MSB} we get $\sum_{j=1}^{\infty}w_j = 1$ if and only if
\begin{equation}\label{eq:sum_1_Upsilon}
\sum_{j=1}^{\infty}v_j = \sum_{j=1}^{\infty}\Upsilon^{[j]}(v) = \infty \quad \text{a.s.}
\end{equation}
with $v \sim \pi_1$,  $\Upsilon^{[j]} = \Upsilon_{j-1}\circ \cdots \circ \Upsilon_0 = F^{-1}_{j}\circ F_1$ and $\Upsilon_0$ standing for the identity function. 
The following result provides a sufficient and a necessary condition for \eqref{eq:sum_1_Upsilon} to hold. Additionally, it gives conditions on the distribution functions $F_j$ to assure $\bm{P}$ has full support and that the weights are decreasingly ordered analogous to Geometric weights.

\begin{theo}\label{theo:Upsilon}
Let $\bm{\Upsilon} = (\Upsilon_j)_{j=1}^{\infty}$ be as in \eqref{eq:Upsilon}. Consider the completely dependent Markov process, $\bm{v}$, with initial distribution $\pi = \pi_1$ and transition probability kernels as in \eqref{eq:trans_Upsilon}, and the corresponding CDSBw, $\bm{w}$, and CDSBp, $\bm{P}$.
\begin{enumerate}
\item[\emph{(i)}]
\begin{enumerate}
\item[\emph{a)}]If for some $\varepsilon > 0$ and any $v \in (0,\varepsilon)$ there exists $ J \geq 1$  and a divergent series $\sum_{j=1}^{\infty} a_j = \infty$ with $0 < a_{j+1} \leq a_j \leq 1$, such that
\[
F_j(v) \geq F_{j+1}(v \,a_{j+1}/a_{j}),
\]
for each $j \geq J$, then \eqref{eq:sum_1_Upsilon} holds, i.e. $\sum_{j=1}^{\infty}w_j = 1$ a.s.
\item[\emph{b)}] If $\sum_{j=1}^{\infty}\Esp[v_j] < \infty$ then $\sum_{j=1}^{\infty} w_j < 1$ a.s.
\end{enumerate} 
\item[\emph{(ii)}] If $F_j(v) \leq F_{j+1}(v)$ for each $v \in [0,1]$ and $j \geq 1$ then $\bm{w}$ is decreasing. Furthermore, under the same condition, if $\bm{P}$ is proper then $\bm{P}$ has full support.
\end{enumerate}
\end{theo}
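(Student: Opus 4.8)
\noindent I would establish the three items in the order (i)(b), (ii), (i)(a), the last being the delicate one. For (i)(b): by Tonelli $\Esp[\sum_{j\ge1}v_j]=\sum_{j\ge1}\Esp[v_j]<\infty$, so $\sum_{j\ge1}v_j<\infty$ a.s.; since each $F_j$ is continuous the $\pi_j$ are diffuse and $v_j<1$ a.s., so (using $v_j\to0$ and $-\log(1-x)\le2x$ on $[0,1/2]$) $\sum_{j\ge1}|\log(1-v_j)|<\infty$ and hence $\prod_{j\ge1}(1-v_j)>0$, a.s.; letting $m\to\infty$ in $1-\sum_{j\le m}w_j=\prod_{j\le m}(1-v_j)$ gives $\sum_{j\ge1}w_j<1$ a.s. For the first claim in (ii): on $[0,1]$ the assumption $F_j\le F_{j+1}$ is equivalent to $F_{j+1}^{-1}\le F_j^{-1}$, whence $v_{j+1}=\Upsilon_j(v_j)=F_{j+1}^{-1}(F_j(v_j))\le F_j^{-1}(F_j(v_j))=v_j$, i.e.\ the orbit $(v_j)_j$ is non-increasing; since $w_j>0$ a.s.\ (Proposition \ref{prop:MSB_supp_points}) and $w_{j+1}/w_j=v_{j+1}(1-v_j)/v_j\le v_j(1-v_j)/v_j\le1$, the weights are decreasing a.s.

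For the full support statement in (ii), assume in addition $\bm{P}$ proper. Given $Q\in\cl{P}(\X)$ with support contained in that of $P_0$ and a weak neighbourhood $U$ of $Q$, I would exhibit a positive-probability event on which $\bm{P}\in U$. Since finitely supported measures are weakly dense, take $Q=\sum_{l=1}^Lq_l\delta_{y_l}$ with $y_l$ in the support of $P_0$, and fix a small $\varepsilon'>0$. By properness $\sum_j\Upsilon^{[j]}(v_1)=\infty$ --- equivalently $\prod_{j\le k}(1-\Upsilon^{[j]}(v_1))\to0$ as $k\to\infty$ --- for Lebesgue-a.e.\ $v_1$; pick $v^*\in(0,\varepsilon')$ with this property and $k$ with $\prod_{j\le k}(1-\Upsilon^{[j]}(v^*))<\varepsilon'$. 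By continuity in $v_1$ of the finitely many maps $v_1\mapsto\Upsilon^{[j]}(v_1)$, $j\le k$, there is an interval $(a,b)\subset(0,\varepsilon')$ around $v^*$ on which $\prod_{j\le k}(1-\Upsilon^{[j]}(v_1))<\varepsilon'$ and the partial sums of $w_1(v_1),\dots,w_k(v_1)$ vary by less than $\varepsilon'$. As $\bm{w}$ is decreasing, $w_j(v^*)\le w_1(v^*)=v^*<\varepsilon'$, so $\sum_{j\le m}w_j(v^*)$ climbs from $0$ past $1-\varepsilon'$ in steps smaller than $\varepsilon'$; hence $\{1,\dots,k\}$ splits into consecutive blocks $B_1,\dots,B_L$ with $|\sum_{j\in B_l}w_j(v^*)-q_l|<3\varepsilon'$. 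On the event that $v_1\in(a,b)$ and each $\theta_j$, $j\le k$, lies in a fixed small ball about $y_{l(j)}$ (where $l(j)$ is the block of $j$) --- of positive probability by independence of $v_1$ and $\bm{\theta}$ and since each $y_l$ is in the support of $P_0$ --- $\bm{P}$ agrees with $\sum_lq_l\delta_{y_l}$ up to $O(\varepsilon')$ on the test functions defining $U$, the tail $\sum_{j>k}w_j<\varepsilon'$ being negligible; choosing $\varepsilon'$ small puts $\bm{P}$ in $U$. I would use this direct construction rather than Proposition \ref{prop:MSB_full_supp}, whose general sufficient condition need not hold here.

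Finally, for (i)(a), put $u=F_1(v_1)\sim\Un(0,1)$, so $v_j=F_j^{-1}(u)$; each $u\mapsto F_j^{-1}(u)$ is non-decreasing, so $\{u:\sum_jF_j^{-1}(u)=\infty\}$ is an up-set, and it suffices to show $\sum_j\Upsilon^{[j]}(v_1)=\infty$ for all $v_1$ in some $(0,\varepsilon_0)$ --- then the up-set has full measure and, by \eqref{eq:sum_1_Upsilon}, $\sum_jw_j=1$ a.s. Fix such a $v_1$. If $\limsup_jv_j>0$ the series diverges trivially; otherwise $v_j\to0$, so $v_j\in(0,\varepsilon)$ for all $j\ge j_0$, where we may take $j_0\ge J$, with $\varepsilon$, $J$ and the non-increasing divergent series $(a_j)$ supplied by the hypothesis. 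Rewriting the hypothesis through $F_{j+1}^{-1}$, it says $\Upsilon_j(v)\ge v\,a_{j+1}/a_j$ for $v\in(0,\varepsilon)$, $j\ge J$ --- i.e.\ the orbit cannot contract faster than $(a_j)$. Feeding this into $v_{j+1}=\Upsilon_j(v_j)$ and using that each $\Upsilon_j$ is non-decreasing, an induction should yield $v_j\ge v_{j_0}\,a_j/a_{j_0}$ for all $j\ge j_0$, and then $\sum_{j\ge j_0}v_j\ge(v_{j_0}/a_{j_0})\sum_{j\ge j_0}a_j=\infty$. The main obstacle is precisely this induction: its step $v_{j+1}=\Upsilon_j(v_j)\ge\Upsilon_j(v_{j_0}a_j/a_{j_0})\ge(v_{j_0}a_j/a_{j_0})(a_{j+1}/a_j)$ invokes the hypothesis at the moving point $v_{j_0}a_j/a_{j_0}\in(0,\varepsilon)$ but with the \emph{same} comparison series $(a_j)$. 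This is immediate when $J$ and $(a_j)$ can be taken uniformly over $(0,\varepsilon)$ --- as in all the concrete families of Sections \ref{sec:BBSB}--\ref{sec:LMSB} --- and in general requires a careful bookkeeping of the witnesses along the orbit; parts (i)(b) and (ii) are routine by comparison.
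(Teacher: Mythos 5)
Your treatments of (i)(b) and of the monotonicity claim in (ii) are correct and essentially the paper's arguments (the paper is terser on (i)(b), leaving implicit the step from $\sum_j v_j<\infty$ to $\prod_j(1-v_j)>0$ that you spell out). The two places where you diverge are (i)(a) and the full-support claim in (ii).

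For (i)(a), the obstacle you flag is not a real one under the reading of the hypothesis that the paper's proof uses: $\varepsilon$, $J$ and the non-increasing divergent series $(a_j)$ are fixed \emph{once}, uniformly in $v\in(0,\varepsilon)$, so that $F_j(v)\geq F_{j+1}(v\,a_{j+1}/a_j)$, equivalently $\Upsilon_j(v)\geq v\,a_{j+1}/a_j$, holds for every $v\in(0,\varepsilon)$ and every $j\geq J$ with the same witnesses. With that reading your induction closes verbatim: after discarding the trivial case $\limsup_j v_j>0$, you start at $j_0\geq J$ with $v_{j_0}<\varepsilon$, and the comparison point $v_{j_0}a_j/a_{j_0}$ satisfies $v_{j_0}a_j/a_{j_0}\leq v_{j_0}<\varepsilon$ because $(a_j)$ is non-increasing, so the hypothesis applies at every step with the fixed series; this gives $v_j\geq v_{j_0}a_j/a_{j_0}$, hence divergence, and the monotonicity of $u\mapsto F_j^{-1}(u)$ upgrades divergence from small $v_1$ to all $v_1\in(0,1)$, exactly as in the paper. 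In fact your restart at $j_0$ handles the domain issue more cleanly than the paper's own induction, which compares at $v^*a_j$ with $v^*=\Upsilon^{[J]}(v)/a_J$ without checking that this point lies in $(0,\varepsilon)$. So you should simply state the uniform reading and carry the induction out; as written, ``should yield'' and ``requires careful bookkeeping'' leave the central step of (i)(a) unproven, and this is the one genuine shortfall of the proposal.

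For full support in (ii), the paper takes a much shorter route: by Corollary 3 of \cite{BO14}, a proper species sampling process has full support if and only if $\Prob[\max_{j\geq 1} w_j<\varepsilon]>0$ for every $\varepsilon>0$; since the weights are decreasing, $\max_j w_j=w_1=v_1$ and $\Prob[v_1<\varepsilon]=F_1(\varepsilon)>0$. Your direct construction (finitely supported targets, blocks of consecutive small weights, atoms placed near the target atoms) is in essence a self-contained reproof of that criterion and is sound in outline, but it buys you only extra length here; also note that ``for Lebesgue-a.e.\ $v_1$'' should read ``for $\pi$-a.e.\ $v_1$'': a continuous, strictly increasing $F_1$ does not make $\pi$ equivalent to Lebesgue measure, and $\pi$-a.e.\ divergence together with $F_1(\varepsilon')>0$ and the strict monotonicity of $F_1$ is all you need to find $v^*\in(0,\varepsilon')$ and to give the interval $(a,b)$ positive probability.
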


When the marginals $\bm{\pi}$ are identical, i.e. $\pi_j = \pi$ for every $j \geq 1$, the conditions (i)a) and (ii) in  Theorem \ref{theo:Upsilon} are trivially satisfied, $\Upsilon_j$ becomes the identity function and we recover the identical length variables of a Geometric process. Other noteworthy cases of CDSBp which are proper and have full support are highlighted next. When $\pi_j = \Be(\alpha_j,1)$ with $1 \leq \alpha_{j+1} \leq \alpha_{j}$,  $\Upsilon_j(v) = v^{\alpha_j/\alpha_{j+1}}$ and $\Upsilon^{[j]}(v) = v^{\alpha_1/\alpha_{j}}$. Whilst the requirement $\alpha_{j} \geq 1$ implies $\Upsilon^{[j]}(v) \geq v^{\alpha_1}$ for $j \geq 1$, which in turn yields \eqref{eq:sum_1_Upsilon}, the requirement $\alpha_{j+1} \leq \alpha_{j}$ implies $F_j(v) \leq F_{j+1}(v)$ as in Theorem \ref{theo:Upsilon} (ii). Similarly, for $\pi_j = \Be(1,\beta_j)$ we get closed form expressions $\Upsilon_j(v) = 1-(1-v)^{\beta_j/\beta_{j+1}}$ and $\Upsilon^{[j]}(v) = 1-(1-v)^{\beta_1/\beta_{j}}$. In this case, if $\beta_j \leq \beta_{j+1}$, then $F_j(v) \leq F_{j+1}(v)$, as required by Theorem \ref{theo:Upsilon} (ii). Furthermore, if $\beta_j \leq 1$ for each $j \geq 1$, then $\Upsilon^{[j]}(v) \geq 1-(1-v)^{\beta_1}$, which yields \eqref{eq:sum_1_Upsilon}. If instead we choose $\beta_j = \theta + j\sigma$ for $0 \leq \sigma \leq 1$ and $\theta > -\sigma$, it can be shown that \eqref{eq:sum_1_Upsilon} also holds via Theorem \ref{theo:Upsilon} (i.a). More generally, despite the lack of a closed form expression for $\Upsilon_j$ and $\Upsilon^{[j]}$, one can prove that the choice $\pi_j = \Be(\alpha,\theta+j\sigma)$, with $0 < \alpha \leq 1$ and $\theta$ and $\sigma$ as above, also defines a proper CDSBp with full support; see Lemma 
\ref{lem:C}
in the Appendix. The case $\alpha = 1-\sigma$ and $\sigma < 1$ will be particularly important in the following sections as the marginals $\pi_j$ equal to those of a Pitman-Yor model, though the resulting CDSBp is clearly not a Pitman-Yor process. 

\begin{cor}\label{cor:CDSB_PY}
Let $\bm{P}$ be a CDSBp with parameters $\bm{\pi} = (\pi_j)_{j=1}^{\infty}$ where $\pi_j = \Be(1-\sigma,\theta+j\sigma)$ for $0 < \sigma < 1$ and $\theta > -\sigma$. Then $\bm{P}$ is proper and it has full support. Moreover the CDSBw, $\bm{w}$, of $\bm{P}$ satisfy $w_{j} > 0$ and $w_{j+1}\leq w_j$ for every $j \geq 1$, a.s.
\end{cor}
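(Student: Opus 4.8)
The statement is the specialization of Theorem~\ref{theo:Upsilon} to the Pitman--Yor marginals $\pi_j = \Be(1-\sigma,\theta+j\sigma)$, so the plan is simply to verify the three hypotheses of that theorem. First, the three properties to establish are: (a) $w_j>0$ for every $j\geq1$ a.s., (b) properness $\sum_{j=1}^\infty w_j = 1$ a.s., and (c) $w_{j+1}\leq w_j$ a.s. and full support. Property (a) is immediate: since $\pi_j=\Be(1-\sigma,\theta+j\sigma)$ with $1-\sigma>0$ and $\theta+j\sigma>0$ (because $\theta>-\sigma$ forces $\theta+j\sigma > (j-1)\sigma\geq0$), each $\pi_j$ is absolutely continuous, hence diffuse, so $\pi_j(\{1\})=0$ for all $j$; Proposition~\ref{prop:MSB_supp_points} (or the discussion following \eqref{eq:sum_1_Upsilon}) then gives $w_j>0$ a.s.

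For property (c), recall $F_j$ is the c.d.f.\ of $\Be(1-\sigma,\theta+j\sigma)$. I would check the monotonicity condition $F_j(v)\leq F_{j+1}(v)$ for all $v\in[0,1]$ and $j\geq1$ required by Theorem~\ref{theo:Upsilon}(ii). This amounts to a standard stochastic-ordering fact: holding the first beta parameter fixed and increasing the second parameter from $\theta+j\sigma$ to $\theta+(j+1)\sigma$ makes the distribution stochastically smaller, i.e.\ shifts mass toward $0$, hence increases the c.d.f.\ pointwise. One clean way to see this is to write $\Be(a,b)$ as $G_a/(G_a+G_b)$ with independent Gammas $G_a\sim\Ga(a,1)$, $G_b\sim\Ga(b,1)$; increasing $b$ (coupling $G_{b'} = G_b + G_{b'-b}$ with an independent extra Gamma term) can only decrease this ratio pathwise, giving the stochastic dominance and hence $F_j\leq F_{j+1}$. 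Theorem~\ref{theo:Upsilon}(ii) then yields that $\bm{w}$ is decreasing, and — once properness is known — full support.

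The remaining and only substantive point is properness, property (b), via Theorem~\ref{theo:Upsilon}(i.a): I must exhibit $\varepsilon>0$, an index $J$, and a divergent non-increasing sequence $0<a_{j+1}\leq a_j\leq1$ with $\sum a_j=\infty$ such that $F_j(v)\geq F_{j+1}(v\,a_{j+1}/a_j)$ for all $v\in(0,\varepsilon)$ and $j\geq J$. The natural guess is $a_j = 1/j$ (so $a_{j+1}/a_j = j/(j+1)$), which is divergent and decreasing, mirroring the harmonic-type decay of $\Esp[v_j]=(1-\sigma)/(1+\theta+(j-1)\sigma)\asymp 1/j$. Near $v=0$ the beta c.d.f.\ behaves like $F_j(v)\sim c_j v^{1-\sigma}$ with $c_j = \Beta(1-\sigma,\theta+j\sigma)^{-1}/(1-\sigma)$, and using $\Beta(1-\sigma,b) = \Gamma(1-\sigma)\Gamma(b)/\Gamma(b+1-\sigma)\sim \Gamma(1-\sigma) b^{-(1-\sigma)}$ as $b\to\infty$, one gets $c_j \asymp (\theta+j\sigma)^{1-\sigma}$. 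Then $F_j(v)/F_{j+1}(v a_{j+1}/a_j) \to (c_j/c_{j+1})(a_j/a_{j+1})^{1-\sigma} \asymp \big((\theta+j\sigma)/(\theta+(j+1)\sigma)\big)^{1-\sigma}\big((j+1)/j\big)^{1-\sigma}\to 1$, and a careful estimate should show the ratio is $\geq1$ for all small $v$ once $j$ is large. The hard part will be turning this asymptotic heuristic into a genuine inequality valid for \emph{all} $v\in(0,\varepsilon)$ (not just $v\to0$) and all $j\geq J$: this requires uniform control of the beta c.d.f.\ away from $0$, e.g.\ bounding $F_j(v)/v^{1-\sigma}$ between two positive constants on $(0,\varepsilon)$ uniformly in $j$, or instead invoking the more delicate analysis of $\Upsilon^{[j]}$ already developed for the general case $\pi_j=\Be(\alpha,\theta+j\sigma)$ — indeed the excerpt points to Lemma~\ref{lem:C} in the Appendix for exactly this, so the cleanest route may be to simply cite that lemma with $\alpha=1-\sigma$ and combine it with the monotonicity check above. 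I would present both: the short proof citing Lemma~\ref{lem:C} for properness and full support, plus the self-contained stochastic-ordering argument for the decreasing property.
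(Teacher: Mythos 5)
Your proposal is correct, and its backbone --- reduce everything to Theorem~\ref{theo:Upsilon} and, for the delicate properness step, invoke Lemma~\ref{lem:C} with $\alpha = 1-\sigma$ --- is exactly what the paper does: its proof of the corollary is literally the observation that it is the special case $\alpha=1-\sigma$ of Lemma~\ref{lem:C}. The one place where you genuinely diverge is the monotonicity $F_j(v)\leq F_{j+1}(v)$. The paper proves that $b \mapsto \cl{I}_v(\alpha,b)$ is increasing by combining log-concavity of the regularized incomplete beta function in $b$ (citing Karp) with the unit-increment identity $\cl{I}_v(\alpha,b+1)>\cl{I}_v(\alpha,b)$, whereas your Gamma-sum coupling $\Be(a,b)\deq G_a/(G_a+G_b)$ with $G_{b'}=G_b+G_{b'-b}$ yields the same stochastic dominance pathwise for an arbitrary real increment; your argument is more elementary and self-contained, and is a perfectly valid substitute. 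Regarding the direct route to properness that you sketch and then set aside: it is good that you set it aside, because with $a_j=1/j$ the leading-order comparison $F_j(v)\geq F_{j+1}(vj/(j+1))$ reduces, after cross-multiplying the first-order constants $c_j\asymp(\theta+j\sigma)^{1-\sigma}$, to the condition $\theta\geq 0$, so it genuinely fails for $-\sigma<\theta<0$ rather than merely being hard to make rigorous. The paper's Lemma~\ref{lem:C} sidesteps this by first choosing $J$ with $\theta+J\sigma\geq 1$ and comparing the shifted chain against marginals $\Be(\alpha,\hat\theta+j)$ with integer increments, for which a mean-value/concavity estimate (Lemma~\ref{lem:B}) delivers the exact inequality $\hat F_j(v)\geq \hat F_{j+1}(vj/(j+1))$, not just an asymptotic one; the monotonicity of $b\mapsto\cl{I}_v(\alpha,b)$ then transfers the divergence back to the original chain.
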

	The preceding constructions formalize two limiting dependence regimes: independent lengths, which give rise to Dirichlet and Pitman--Yor process priors, and completely dependent lengths, which yield geometric and CDSBp priors. The processes studied next interpolate between these extremes while preserving tractability.
 
%------------------------------------------------------------------------------------------------------------
\section{Beta Markov stick-breaking processes}\label{sec:BBSB}
%------------------------------------------------------------------------------------------------------------

In the previous section we have investigated degenerate classes of MSBp corresponding to the extreme cases of independence and complete dependence. 
It is therefore natural to look for a non-degenerate subclass of MSBp, which encompasses both dependence structures while preserving tractability. 

\subsection{Construction, properness and support}\label{subsec:BMSBP_1}

Consider $\pi_j = \Be(\alpha_j,\beta_j)$, for $j\ge 1$, with $\bm{\alpha} = (\alpha_j)_{j=1}^{\infty}$ and $\bm{\beta} = (\beta_j)_{j=1}^{\infty}$ two sequences of positive numbers and let $\Upsilon_j$ be as in \eqref{eq:Upsilon} with $F_j(v) = \mathcal{I}_v(\alpha_j,\beta_j)$, where $\cl{I}_x(a,b) = \int_0^x \Be(\mrm{d}u\mid a,b)$ is the regularized incomplete Beta function. Define the transition probability kernels  
\begin{equation}\label{eq:BB_trans}
{\psi_j}(v,\mathrm{d}u) = \sum_{z=0}^N  \Be(\mrm{d}u\mid \alpha_{j+1} + z, \beta_{j+1}+N-z) \, \Bin(z\mid N,\Upsilon_j(v)),
\end{equation}
for some $N \in \{0,1,\ldots\}$. We call $\bmpsi = (\psi_j)_{j=1}^{\infty}$  Beta-Binomial transitions with parameters $(N,\bm{\alpha},\bm{\beta})$. 
It is easy to check that for a Markov process $\bm{v}$  with initial distribution $\pi = \pi_1 = \Be(\alpha_1,\beta_1)$ and Beta-Binomial transitions $\bm{\psi}$ as in \eqref{eq:BB_trans} marginally $v_j \sim \Be(\alpha_j,\beta_j)$, for every $j \geq 1$; see App.~\ref{app:prop:BB_trans_marg}.
Consequently, we call  the corresponding $\bm{w}$ and $\bm{P}$, respectively, \emph{Beta Markov stick-breaking weights sequence} (BMSBw) with parameters $(N,\bm{\alpha},\bm{\beta})$ and \emph{Beta Markov stick-breaking processes} (BMSBp) with parameters $(N,\bm{\alpha},\bm{\beta},P_0)$. In practice, the parameter $N$ controls the degree of dependence: for small $N$ the BMSBp prior is close to independent regimes such as the Dirichlet or Pitman-Yor process, whereas for large $N$ it approaches completely dependent regimes such as the CDSBp or geometric case. 
The next result shows that BMSBp have full support and, under suitable conditions, are also proper.

\begin{theo}\label{theo:BMSBp}
Let $\bm{P}$ be any BMSBp with parameters $(N,\bm{\alpha},\bm{\beta},P_0)$ and $\bm{w} = (w_j)$ be the associated BMSBw. Then,
\begin{itemize}
\item[\emph{(i)}] If $\ \sum_{j=1}^{\infty} \alpha_j/(\alpha_j+\beta_j) = \infty \ $ and $ \ \limsup_{j \to \infty} (\alpha_j+\beta_j)^{-1} < \infty$, $ \ \bm{P}$ is proper.
\item[\emph{(ii)}] For every $j \geq 1$, $w_j > 0$ a.s.
\item[\emph{(iii)}] $\bm{P}$ has full support. 
\end{itemize}
\end{theo}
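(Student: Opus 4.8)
\textbf{Proof proposal for Theorem \ref{theo:BMSBp}.}

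The plan is to tackle the three parts in order, leveraging the fact that a BMSBp is a special CDSBp-type construction ``noised'' by the Beta-Binomial mechanism, so that the earlier machinery (Theorem \ref{theo:sMSB_1}, Proposition \ref{prop:MSB_supp_points}, Proposition \ref{prop:MSB_full_supp}, and the criterion \eqref{eq:sum1_MSB}) applies directly. For part (i), since the marginals are $v_j \sim \Be(\alpha_j,\beta_j)$, we have $\Esp[v_j] = \alpha_j/(\alpha_j+\beta_j)$, and the hypothesis $\sum_j \alpha_j/(\alpha_j+\beta_j) = \infty$ says exactly $\sum_j \Esp[v_j] = \infty$. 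This is the properness criterion \eqref{eq:proper} for the \emph{independent} case, but for Markov (here, non-independent) length variables it is not immediately sufficient, which is why the extra condition $\limsup_j (\alpha_j+\beta_j)^{-1} < \infty$ is imposed. The idea is to control the fluctuations of $\sum_{j=1}^m v_j$ around its mean: bound $\Var(v_j) = \alpha_j\beta_j/[(\alpha_j+\beta_j)^2(\alpha_j+\beta_j+1)] \le \Esp[v_j]/(\alpha_j+\beta_j+1)$, which under the $\limsup$ condition is $\le C\,\Esp[v_j]$ for some constant $C$. One then needs a second-moment / Borel--Cantelli argument along a subsequence $m_k$ chosen so that $S_k := \sum_{j \le m_k}\Esp[v_j]$ grows, say, like $k^2$; writing $T_k = \sum_{j\le m_k} v_j$ and estimating $\Esp[(T_k - S_k)^2]$ using the Markov covariance structure shows $T_k/S_k \to 1$ a.s., hence $\sum_j v_j = \infty$ a.s., and \eqref{eq:sum1_MSB} gives properness. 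The covariance bound is where the Beta-Binomial transition structure must be used: one shows $\Cov(v_i,v_j)$ decays (or is at worst summable in the relevant double sum) because $\Upsilon_j$ is $1$-Lipschitz-like after the $F_{j+1}^{-1}\circ F_j$ reparametrization and the Binomial mixing contracts dependence.

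Part (ii) is the easiest: the marginal law of $v_j$ is $\Be(\alpha_j,\beta_j)$, which is diffuse, so $\pi_j(\{1\}) = 0$ for every $j$ (and also $\pi_j(\{0\}) = 0$). Proposition \ref{prop:MSB_supp_points} then immediately gives $\Prob[w_j > 0,\ \forall j \ge 1] = \Prob[t = \infty] = 1$, i.e. every $w_j > 0$ a.s.

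Part (iii) follows from the second (sufficient) condition in Proposition \ref{prop:MSB_full_supp}: it suffices to show that there is an $\epsilon > 0$ with $(0,\epsilon) \subseteq \operatorname{supp}(\pi)$ and, for every $v \in (0,\epsilon)$ and every $j \ge 1$, $(0,\epsilon) \subseteq \operatorname{supp}(\psi_j(v,\cdot))$. The first is automatic since $\pi = \Be(\alpha_1,\beta_1)$ has support $[0,1]$. For the transition kernels, note from \eqref{eq:BB_trans} that $\psi_j(v,\cdot)$ is a finite mixture of $\Be(\alpha_{j+1}+z,\beta_{j+1}+N-z)$ distributions, $z = 0,\ldots,N$, with mixing weights $\Bin(z\mid N,\Upsilon_j(v))$; the $z=0$ component, $\Be(\alpha_{j+1},\beta_{j+1}+N)$, has full support $[0,1]$ and receives weight $(1-\Upsilon_j(v))^N > 0$ whenever $\Upsilon_j(v) < 1$, which holds for all $v \in (0,1)$ because $\Upsilon_j$ maps into $(0,1)$ (as $F_{j+1}^{-1}$ of a value in $(0,1)$). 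Hence $(0,1) \subseteq \operatorname{supp}(\psi_j(v,\cdot))$ for every $v \in (0,1)$ and every $j$, so a fortiori $(0,\epsilon) \subseteq \operatorname{supp}(\psi_j(v,\cdot))$, and Proposition \ref{prop:MSB_full_supp} yields full support.

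I expect the main obstacle to be part (i): turning $\sum_j \Esp[v_j] = \infty$ plus the uniform lower bound on $\alpha_j + \beta_j$ into an almost-sure divergence of $\sum_j v_j$ requires a genuine argument about the dependence, and the cleanest route is a truncation plus variance/covariance estimate exploiting the specific Beta-Binomial transition (in particular a quantitative bound on $|\Upsilon_j(v) - \Upsilon_j(v')|$ or on $\Cov(v_i,v_j)$). Parts (ii) and (iii) are short consequences of results already established in Section \ref{sec:MSB}.
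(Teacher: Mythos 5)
Parts (ii) and (iii) are correct and coincide with the paper's argument: diffuseness of the Beta marginals gives (ii) via Proposition \ref{prop:MSB_supp_points}, and the fact that every mixture component $\Be(\alpha_{j+1}+z,\beta_{j+1}+N-z)$ in \eqref{eq:BB_trans} is supported on all of $[0,1]$ gives (iii) via Proposition \ref{prop:MSB_full_supp}.

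Part (i), however, has a genuine gap. Your route hinges on a quantitative bound on $\Cov(v_i,v_j)$ (or on $\Var\bigl(\sum_{j\le m}v_j\bigr)$) that you never establish; you justify it by saying that $\Upsilon_j$ is ``$1$-Lipschitz-like'' and that the Binomial mixing contracts dependence. The Lipschitz claim is false in general: $\Upsilon_j=F_{j+1}^{-1}\circ F_j$ is monotone and continuous but can have unbounded derivative (e.g.\ for $\pi_j=\Be(\alpha_j,1)$ one gets $\Upsilon_j(v)=v^{\alpha_j/\alpha_{j+1}}$, whose derivative blows up at $0$ when $\alpha_j<\alpha_{j+1}$). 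A correct covariance analysis is possible --- conditioning on $v_{j-1}$ gives $\Cov(v_i,v_j)=\tfrac{N}{\alpha_j+\beta_j+N}\,\Cov\bigl(v_i,\Upsilon_{j-1}(v_{j-1})\bigr)$, and one could try to iterate --- but relating $\Cov(v_i,\Upsilon_{j-1}(v_{j-1}))$ back to $\Cov(v_i,v_{j-1})$ requires an additional correlation inequality, and this is precisely the hard step you skip. Moreover the intermediate claim $T_k/S_k\to 1$ a.s.\ is stronger than what you need and is exactly what the missing estimate would have to deliver. As written, the heart of (i) is asserted rather than proved.

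The paper avoids all of this with a one-step conditional expectation bound applied to the product $\prod_{j\le m}(1-v_j)$ rather than to the sum $\sum_{j\le m}v_j$. Augmenting with $z_j\mid v_j\sim\Bin(N,\Upsilon_j(v_j))$ one computes
\[
\Esp[1-v_{j+1}\mid v_j]=\frac{\beta_{j+1}+N\bigl(1-\Upsilon_j(v_j)\bigr)}{\alpha_{j+1}+\beta_{j+1}+N}\le \frac{\beta_{j+1}+N}{\alpha_{j+1}+\beta_{j+1}+N},
\]
and the Markov property lets one peel off the last factor of the product, yielding by induction
\[
\Esp\l[\prod_{j=1}^{m}(1-v_j)\r]\le \frac{\beta_1}{\alpha_1+\beta_1}\prod_{j=2}^{m}\frac{\beta_j+N}{\alpha_j+\beta_j+N}.
\]
The right-hand side tends to $0$ provided $\sum_j \alpha_j/(\alpha_j+\beta_j+N)=\infty$, which follows from $\sum_j\alpha_j/(\alpha_j+\beta_j)=\infty$ by a limit-comparison using $\limsup_j(\alpha_j+\beta_j)^{-1}<\infty$ (the ratio of the two terms is $1+N/(\alpha_j+\beta_j)$, which stays bounded). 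Since the partial products $\prod_{j\le m}(1-v_j)$ are nonnegative and decreasing in $m$, their a.s.\ limit has expectation zero and hence vanishes a.s., which is properness. No covariance structure, variance bound, or Borel--Cantelli argument is needed; I would recommend replacing your second-moment scheme with this conditional-expectation argument.
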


In summary, Theorem~\ref{theo:BMSBp} identifies the range of parameters for which the BMSBp defines a proper random probability measure, ensuring that the associated mixture models are well defined and suitable for practical Bayesian inference. For $\alpha_j = \alpha$ and $\beta_j = \beta$, for  $j \geq 1$, the conditions in Theorem \ref{theo:BMSBp} (i) are trivially satisfied and $\bm{P}$ is proper. In this case  
$\Upsilon_j(v) = v$,  simplifying \eqref{eq:BB_trans}, and we recover the models considered by \cite{GMN20} as special cases. The most interesting scenario arises when the marginal distributions of the length variables coincide with those of a Pitman-Yor model, i.e. $\alpha_j = 1-\sigma$ and $\beta_j = \theta + j\sigma$. We have that $\sum_{j=1}^{\infty} \alpha_j/(\alpha_j+\beta_j) = (1-\sigma)\sum_{j=1}^{\infty} (1+\theta+(j-1)\sigma)^{-1}  = \infty$ and $\lim_{j\to \infty} (1+\theta+(j-1)\sigma)^{-1} = 0$. Hence, the corresponding BMSBp is proper and has full support. With this in mind, the following result shows that the class of BMSBp achieves the goal of recovering 
the Pitman-Yor process and the CDSBp in Corollary \ref{cor:CDSB_PY} as limiting cases. For $n \in \{0,1,\ldots\}$, let $P_0^{(n)}$ be a diffuse probability measure over $(\X,\B_\X)$, let $N^{(n)} \in \{0,1,\ldots\}$ and consider some positive sequences $\bm{\alpha}^{(n)} = \big(\alpha^{(n)}_j\big)_{j=1}^{\infty}$ and $\bm{\beta}^{(n)} = \big(\beta^{(n)}_j\big)_{j=1}^{\infty}$ such that:
\begin{itemize}
    \item[(H0)] $N^{(0)} = 0$ and $N^{(n)} \to \infty$ as $n \to \infty$.
	\item[(H1)] $\alpha_j^{(n)} \to \alpha_j>0$ and $\beta_j^{(n)} \to \beta_j>0$ as $n \to \infty$, for any $j \geq 1$;
	\item[(H2)] the limiting sequences where $\bm{\alpha} = (\alpha_j)_{j=1}^{\infty}$ and $\bm{\beta} = (\beta_j)_{j=1}^{\infty}$ are such that $\bm{\Upsilon} = (\Upsilon_j)_{j=1}^{\infty}$ satisfies \eqref{eq:sum_1_Upsilon}, with $\Upsilon_j$ as in \eqref{eq:Upsilon} for $F_j(v) = \mathcal{I}_{v}(\alpha_j,\beta_j)$.
	\item[(H3)] $P_0^{(n)} \wto P_0$, as $n \to \infty$, for some diffuse probability measure, $P_0$, over $(\X,\B_\X)$.
\end{itemize}

\begin{theo}\label{cor:BMSBp_conv}
For each $n \in \{0,1,\ldots\}$ let $\bm{P}^{(n)}$ be a proper BMSBp
with parameters $\big(N^{(n)},\bm{\alpha}^{(n)},\bm{\beta}^{(n)},P_0^{(n)}\big)$ and $\bm{w}^{(n)}$ the associated BMSBw. Moreover, the parameters satisfy {\rm (H0)}--{\rm (H3)}. Then the following hold true
\begin{itemize}
\item[\emph{(i)}] For $n = 0$, 
$\bm{P}^{(0)}$ is an ISBp with parameters $\big(\bm{\pi}^{(0)},P^{(0)}_0\big)$ where $\bm{\pi}^{(0)} = \big(\pi^{(0)}_j\big)_{j=1}^{\infty}$ with $\pi^{(0)}_j = \Be\big(\alpha^{(0)}_j,\beta^{(0)}_j\big)$. In particular, if $\alpha^{(0)}_j = 1-\sigma$, and $\beta^{(0)}_j = \theta + j \sigma$, for $0 \leq \sigma < 1$ and $\theta > -\sigma$, $\bm{P}^{(0)}$ is a Pitman-Yor process. 
\item[\emph{(ii)}] 
If $P_0^{(n)} \wto P_0$, 
then $\bm{P}^{(n)} \dwto \bm{P}$ and $\bm{w}^{(n)} \dto \bm{w}$, as $n\to\infty$, for some CDSBp $\bm{P}$, with parameters $(\bm{\pi},P_0)$, and a CDSBw, $\bm{w}$, with parameter $\bm{\pi} = (\pi_j)_{j=1}^{\infty}$, where $\pi_j = \Be(\alpha_j,\beta_j)$. In particular, if $\alpha_j = 1-\sigma$ and $\beta_j = \theta+j\sigma$, for some $0 \leq \sigma < 1$ and $\theta > -\sigma$, $\bm{P}$ is as in Corollary \ref{cor:CDSB_PY} and $\bm{w}$ is decreasing.
\end{itemize}
\end{theo}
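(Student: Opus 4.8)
The plan is to establish part (i) as an immediate consequence of the construction, and then to obtain part (ii) by applying the general convergence result, Theorem~\ref{theo:MSB_conv}, to the sequence of BMSBp.

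For part (i): when $N^{(0)} = 0$, the Beta--Binomial transition \eqref{eq:BB_trans} collapses, since the sum over $z$ has a single term $z = 0$, giving $\psi_j^{(0)}(v,\mrm{d}u) = \Be(\mrm{d}u \mid \alpha^{(0)}_{j+1},\beta^{(0)}_{j+1})$, which does not depend on $v$. Hence the length variables are independent with $v_j \sim \Be(\alpha^{(0)}_j,\beta^{(0)}_j)$, so $\bm{P}^{(0)}$ is the ISBp with parameters $\big(\bm{\pi}^{(0)},P^{(0)}_0\big)$. The specialization to $\alpha^{(0)}_j = 1-\sigma$, $\beta^{(0)}_j = \theta+j\sigma$ then recovers exactly the size-biased Pitman-Yor length variables recalled in the Introduction, so $\bm{P}^{(0)}$ is a Pitman-Yor process; properness for $0\le \sigma<1$, $\theta>-\sigma$ follows from \eqref{eq:proper}, as already noted.

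For part (ii): I first verify the hypotheses of Theorem~\ref{theo:MSB_conv}. The initial distributions are $\pi^{(n)} = \Be\big(\alpha^{(n)}_1,\beta^{(n)}_1\big)$, and by (H1) together with continuity of the Beta density in its parameters we get $\pi^{(n)} \wto \pi = \Be(\alpha_1,\beta_1)$; similarly (H3) gives $P_0^{(n)} \wto P_0$. The limiting transitions should be $\psi_j(v,\cdot) = \delta_{\Upsilon_j(v)}$, the CDSBp transitions \eqref{eq:trans_Upsilon} associated with the marginals $\pi_j = \Be(\alpha_j,\beta_j)$; by (H2) these marginals satisfy \eqref{eq:sum_1_Upsilon}, so the limiting CDSBp $\bm{P}$ is proper, as required for the hypotheses. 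The crux is therefore to show that, for each fixed $j$ and for $u_n \to u$ in $[0,1]$, the Beta--Binomial kernel $\psi_j^{(n)}(u_n,\cdot)$ with parameters $\big(N^{(n)},\bm{\alpha}^{(n)},\bm{\beta}^{(n)}\big)$ converges weakly to $\delta_{\Upsilon_j(u)}$. Here $N^{(n)}\to\infty$ by (H0). Writing $p_n = \Upsilon_j^{(n)}(u_n)$ with $\Upsilon_j^{(n)}$ the map \eqref{eq:Upsilon} built from $F^{(n)}_i(v) = \cl{I}_v\big(\alpha^{(n)}_i,\beta^{(n)}_i\big)$, one notes $p_n \to \Upsilon_j(u)$ by (H1) and joint continuity of the regularized incomplete Beta function and its inverse in both the argument and the parameters. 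Under $\psi_j^{(n)}(u_n,\cdot)$, a draw $U$ has the form: $Z \sim \Bin(N^{(n)},p_n)$, then $U \sim \Be\big(\alpha^{(n)}_{j+1}+Z,\ \beta^{(n)}_{j+1}+N^{(n)}-Z\big)$. By the law of large numbers $Z/N^{(n)} \to \Upsilon_j(u)$ in probability, and a Beta$(a+z,b+N-z)$ distribution with $z/N\to q$ and $N\to\infty$ concentrates at $q$ (mean $(a+z)/(a+b+N)\to q$, variance $O(1/N)\to 0$); combining these via Chebyshev plus a conditioning argument yields $U \to \Upsilon_j(u)$ in probability, hence $\psi_j^{(n)}(u_n,\cdot)\wto\delta_{\Upsilon_j(u)}$. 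This is the step I expect to be the main obstacle: it requires care with the double limit (the Binomial mixing over $Z$ and the Beta concentration) and with the simultaneous convergence of the parameters $\alpha^{(n)},\beta^{(n)}$, and with justifying continuity/invertibility of $\cl{I}_v(a,b)$ jointly in $(v,a,b)$ near the relevant values so that $p_n\to\Upsilon_j(u)$.

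Having verified the hypotheses, Theorem~\ref{theo:MSB_conv} delivers $\bm{v}^{(n)}\dto\bm{v}$, $\bm{w}^{(n)}\dto\bm{w}$ and $\bm{P}^{(n)}\dwto\bm{P}$, where $\bm{v}$ is the completely dependent Markov process with transitions \eqref{eq:trans_Upsilon}, i.e.\ $\bm{w}$ is the CDSBw and $\bm{P}$ the CDSBp with parameter $\bm{\pi}=(\pi_j)_{j=1}^\infty$, $\pi_j=\Be(\alpha_j,\beta_j)$. For the final assertion, set $\alpha_j = 1-\sigma$, $\beta_j = \theta+j\sigma$ with $0\le\sigma<1$, $\theta>-\sigma$; these are precisely the marginals of Corollary~\ref{cor:CDSB_PY}, so the limiting $\bm{P}$ is the CDSBp described there, and the same corollary gives $w_{j+1}\le w_j$ a.s., i.e.\ $\bm{w}$ is decreasing. (One should also note that (H2) is met in this case, as recorded in the discussion preceding Corollary~\ref{cor:CDSB_PY} via Theorem~\ref{theo:Upsilon}(i.a), so the hypotheses genuinely apply.) This completes the proof.
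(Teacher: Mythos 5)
Your proposal is correct and follows the same overall skeleton as the paper: part (i) is immediate because $N^{(0)}=0$ collapses the Beta--Binomial kernel to the marginal Beta, and part (ii) reduces to verifying the hypotheses of Theorem~\ref{theo:MSB_conv}, the only substantive work being the weak convergence $\psi_j^{(n)}(u_n,\cdot)\wto\delta_{\Upsilon_j(u)}$. Where you genuinely diverge is in how that kernel convergence is proved. The paper (Lemma~\ref{lem:BB_trans_conv}) first couples the binomial draws so that $z'^{(n)}/N^{(n)}\to\Upsilon_j(u)$ almost surely, then computes the conditional moment generating function of the Beta draw as a series in the rising factorials and passes to the limit term by term via dominated convergence, identifying the limit MGF $e^{t\Upsilon_j(u)}$. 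You instead run a second-moment argument: decompose $\Esp[(U-\Upsilon_j(u))^2]$ into $\Esp[\Var(U\mid Z)]+\Esp[(\Esp[U\mid Z]-\Upsilon_j(u))^2]$, note the conditional variance is $O(1/N^{(n)})$ and the conditional mean is $(\alpha^{(n)}_{j+1}+Z)/(\alpha^{(n)}_{j+1}+\beta^{(n)}_{j+1}+N^{(n)})$, and use the $\cl{L}_2$ convergence of the binomial proportion (the paper's own Lemma~\ref{lem:Bin_L2}) to conclude $U\to\Upsilon_j(u)$ in $\cl{L}_2$, hence in distribution. This is more elementary (no coupling, no MGF, no interchange of limit and infinite sum) and buys a shorter argument; the paper's MGF route is more mechanical but requires the dominated-convergence justification for the series. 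One point you flag but should not gloss over: the convergence $p_n=\Upsilon_j^{(n)}(u_n)\to\Upsilon_j(u)$ with a \emph{moving} argument $u_n$ does not follow from pointwise continuity alone; it needs uniform convergence of $F_{n,j}$ and of $F_{n,j+1}^{-1}$, which the paper secures through P\'olya's theorem and Lemma~\ref{lem:Gn-1_G-1} (packaged as Lemma~\ref{lem:Upsilon_UC}). Your appeal to ``joint continuity of $\cl{I}_v(a,b)$ and its inverse'' would need to be fleshed out into exactly that uniform-convergence statement, but this is a known, fillable gap rather than a flaw in the approach.
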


The key idea behind Theorem \ref{cor:BMSBp_conv} is that for a Markov chain of length variables, $\bm{v}$, with Beta-Binomial transitions $\Prob[v_{j+1} \in \cdot\mid v_j] = \psi_j(v_j,\cdot)$ as in \eqref{eq:BB_trans}, the parameters $\bm{\alpha}$ and $\bm{\beta}$ determine the marginal distributions $v_j \sim \Be(\alpha_j,\beta_j)$, while the parameter $N$ tunes the dependence among length variables. In fact, for $N = 0$, we recover independent length variables and, as $N \to \infty$, the length variables converge in distribution to a completely dependent sequence; see Lemma 
\ref{lem:BB_trans_conv}
in the Appendix. This appealing feature then allows us to recover any ISBp and CDSBp with the desired Beta marginals as limiting cases of a BMSBp.

\subsection{Weights' orderings and simulation} \label{subsec:BMSBP_2}
For this type of BMSBp, $N$ also modulates the stochastic ordering of the weights. 
When $\alpha_j = 1-\sigma$ and $\beta_j = \theta+j\sigma$, if $N = 0$ the weights $\bm{w}$ are in size-biased random order and satisfy 
$
\Esp\big[w_{j}\big] \geq \Esp\big[w_{j+1}\big],
$
for every $j \geq 1$. At the other extreme, as $N \to \infty$, we obtain decreasing weights. This suggests that for $N \in \{1,2,\ldots\}$ the ordering of the BMSBw, $\bm{w}$, is somewhere in between a size-biased random order and a decreasing rearrangement. It is worth pointing out that since the support of the Beta-Binomial transition is always the interval $[0,1]$, Proposition \ref{cor:MSB_dec_sup} assures that no BMSBw is decreasing a.s.~and the decreasing order is only achieved in the limit as $N \to \infty$.

Moreover, we can use Theorem \ref{theo:MSB_sb_PY} to establish, without having to impose even mild conditions, that the only proper BMSBw that is invariant under size-biased permutations are the Pitman-Yor weights. Indeed, by Theorem \ref{theo:BMSBp} we have that $w_j > 0$ a.s.,~for every $j \geq 1$, and that $\bm{P}$ is proper. For BMSBw, condition $(A)$ of Theorem \ref{theo:MSB_sb_PY}  is always met and so the result follows. 
Thus, the two notable weights' orderings, namely decreasing and size-biased permuted orderings, are only achieved in the limits $N = 0$ (if $\alpha_j = 1-\sigma$ and $\beta_j = \theta+j\sigma$) or $N \to \infty$. App.~\ref{sec:app:pr_w_dec}.
includes more details on the probability that weights are decreasing.

The clustering properties of stick-breaking random probability measures are available  only for a few noteworthy examples, including the Pitman--Yor process and the Dirichlet process. 
For any other stick-breaking process, with independent length variables, no analytical result is available. For the other extreme case of CDSBp there has been some progress in the literature \citep{DMP,HMW23}, though very little is known. 
Hence, it comes as no surprise 
that obtaining analytical results for MSBp, and its notable subclasses, is even more challenging. Nonetheless, Markovianity of the length variables has the merit of allowing straightforward 
sampling 
of $\tilde{w}_1$ and $K_n$ 
and it, thus, paves the way for performing also an empirical investigation. 
The key requirement is to be able to sample the length variables. To this aim we introduce a sequence of r.v.~$\bm{z} = (z_j)_{j=1}^{\infty}$ such that the joint density function of $\bm{v}_{[m]} = (v_j)_{j=1}^m$ and $\bm{z}_{[m]} = (z_j)_{j=1}^{m}$ is 
\[
\bm{p}(\bm{v}_{[m]},\bm{z}_{[m]}) = \prod_{j=1}^{m}\Be(v_{j}\mid \alpha'_j,\beta'_j)\Bin(z_j\mid N, \Upsilon_j(v_j)),
\]
for every $m \geq 1$, where $\alpha'_1 = \alpha_1$, $\beta'_1 = \beta_1$,  $\alpha'_j = \alpha_j+z_{j-1}$ and $\beta'_j = \beta_j +N -z_{j-1}$, for $j \geq 2$. In this way, integrating over $\bm{z}$ we 
recover the Markov process $\bm{v}$ with initial distribution $\Be(\alpha_1,\beta_1)$ and transition probability kernels $\bm{\psi}$ as in \eqref{eq:BB_trans}. 
In our framework, it allows us to easily draw samples of the length variables by first sampling $v_1 \sim \mathsf{Be}(\alpha'_1,\beta'_1)$ and sequentially for $j \geq 1$, $z_j\mid v_j \sim \mathsf{Bin}(N,\Upsilon_j(v_j))$ and $v_{j+1}\mid z_j \sim \mathsf{Be}(\alpha'_j,\beta'_j)$. As discussed below, the inclusion of $\bm{z}$ also allows us attain a posterior characterization of the length variables.

\begin{figure}
	\centering
	\includegraphics[width=1\textwidth]{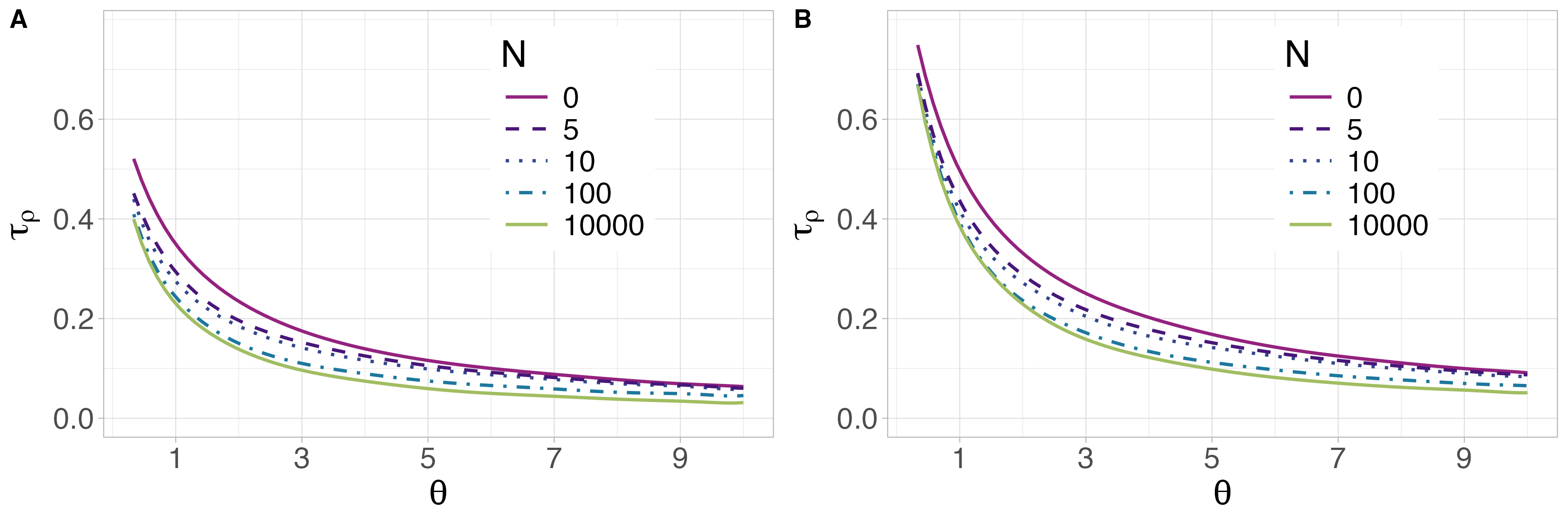}
	\begin{small} 
		\caption{\footnotesize{Tie probability $\tau_p$ as $\theta$ varies in $[0,10]$ for BMSBp for different 
		values of $N$ where $v_j \sim \Be(1-\sigma,\theta+j\sigma)$. Moreover, $\sigma = 0.3$ in panel A and $\sigma = 0$ in panel B. }}
		\label{fig:Tp_BMSB}
	\end{small}
\end{figure}

\begin{figure}
	\centering
	\includegraphics[width=0.9\textwidth]{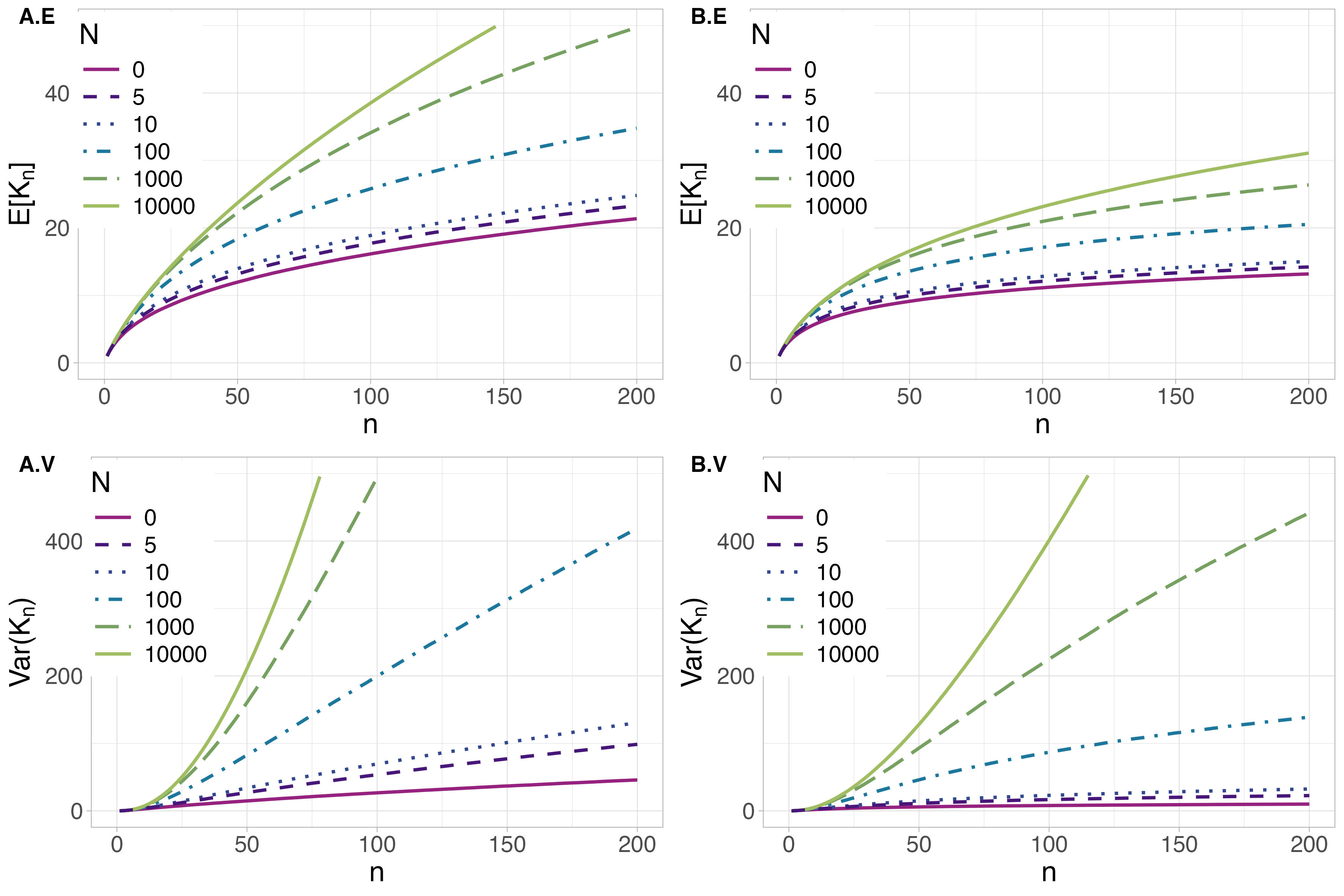}
	\begin{small} 
	\caption{\footnotesize{
		Plots of the mappings $n \mapsto \Esp[K_n]$ and $n \mapsto \Var(K_n)$ for BMSBp with distinct values of $N$ where $v_j \sim \Be(1-\sigma,\theta+j\sigma)$ with $\sigma = 0.3,\theta = 2$ (A) and $\sigma = 0,\theta = 3$ (B) . }}
		\label{fig:Kn_BMSB}
	\end{small}
\end{figure}

In Figure \ref{fig:Tp_BMSB} we display the tie probability $\tau_p$ for BMSBp with $v_j \sim \Be(1-\sigma,\theta+j\sigma)$ and $\sigma = 0.3$ (Figure A) and $\sigma = 0$ (Figure B), as $\theta$ and $N$ vary. In particular, for fixed $\theta$, smaller values of $N$ correspond to larger tie probabilities and, consequently, larger values of $\Var(\bm{P}(A))$. In contrast, Figure~\ref{fig:Kn_BMSB} displays the mappings $n \mapsto \Esp[K_n]$ and $n \mapsto \Var(K_n)$ for BMSBp with different choices of $N$, where $v_j \sim \Be(1-\sigma,\theta+j\sigma)$ with $\sigma = 0.3,\theta = 2$ (left column) and $\sigma = 0,\theta = 3$ (right column). Here we see an inverse effect, so larger values of $N$ are associated to larger values of $\Esp[K_n]$ and $\Var(K_n)$. This outcome was expected, as a larger number of clusters tends to form when the probability of any two observations being clustered together is low. Remarkably, changing the dependence of the length variables leads to subtle changes in $\tau_p$ and to drastically different behaviours of $\Esp[K_n]$ and $\Var(K_n)$. 

\noindent Another interesting insight follows by contrasting the BMSBp with parameters $N = 100$ and $\sigma = 0$ (dotted-dashed blue lines in right columns) against the Pitman-Yor model with parameter $\sigma = 0.3$ (solid purple line in left columns). In both models $n \mapsto \Esp[K_n]$, for $n \leq 200$, and $\tau_p$ behave somewhat similarly, with the strict BMSBp exhibiting smaller values of $\tau_p$ for smaller values of $\theta$. Regardless, $\Var(K_n)$ is notably higher for the BMSBp with $N = 100$ compared to Pitman-Yor model.

\subsection{Posterior inference} \label{subsec:BMSBP_3}

Now we focus on the fundamental tools for posterior inference, by tailoring the general results of Section \ref{sec:MSB} to BMSBps. In particular, we derive a posterior characterization of the length variables relying on an augmentation that incorporates $\bm{z}$, as discussed previously.

\begin{prop}\label{prop:BMSB_post}
Let $\bm{w} = (w_j)_{j=1}^{\infty}$ be a proper BMSBw with parameters $(N, \bm{\alpha},\bm{\beta})$, and $\bm{v} = (v_j)_{j=1}^{\infty}$ the  Markov process for the length variables. Consider the allocation variables $(d_1,\ldots,d_n)$, $d_i \mid \bm{w} \iid \sum_{j=1}^{\infty}w_j \delta_j$. Then, there exists a sequence $\bm{z} = (z_j)_{j=1}^{\infty}$ such that
\begin{align*}
\bm{p}(\bm{z}_{[m]}\mid \bm{v}_{[m]}, d_1,\ldots,d_n) \propto \prod_{j=1}^{m-1}\binom{N}{z_j}&\frac{[\Upsilon_j(v_j)v_{j+1}]^{z_j}[(1-\Upsilon_j(v_j))(1-v_{j+1})]^{N-z_j}}{(\alpha_{j+1})_{z_j}(\beta_{j+1})_{N-z_j}}\\
&\quad \quad \quad \quad \quad \quad\quad \quad \quad \quad \quad \times  \Bin(z_m\mid N,\Upsilon_m(v_m)),
\end{align*}
and 
\begin{equation*}
\bm{p}(\bm{v}_{[m]}\mid\bm{z}_{[m]}, d_1,\ldots,d_n)  \propto \prod_{j=1}^{m} v_j^{\alpha'_j+a_j-1}(1-v_j)^{\beta'_j+b_j-1}[\Upsilon_j(v_j)]^{z_j}[(1-\Upsilon_j(v_j))]^{N-z_j}.
\end{equation*}
with  $a_j, b_j$ as in \eqref{eq:all_var_cond}, $\alpha'_1 = \alpha_1$, $\beta'_1 = \beta_1$,  $\alpha'_j = \alpha_j+z_{j-1}$ and $\beta'_j = \beta_j +N -z_{j-1}$, for $j \geq 2$.
\end{prop}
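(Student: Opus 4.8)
The plan is to start from the augmented joint law of $(\bm v,\bm z)$ introduced just before the statement, namely
\[
\bm{p}(\bm{v}_{[m]},\bm{z}_{[m]}) = \prod_{j=1}^{m}\Be(v_{j}\mid \alpha'_j,\beta'_j)\,\Bin(z_j\mid N, \Upsilon_j(v_j)),
\]
and to multiply it by the conditional likelihood of the allocations given $\bm v$, which by \eqref{eq:all_var_cond} is $\prod_{j} v_j^{a_j}(1-v_j)^{b_j}$ (up to $j$ running only over blocks $\le \kappa$; padding the remaining $a_j,b_j$ with zeros causes no harm). Since $\bm z$ enters only through $\bm v$ in \eqref{eq:all_var_cond}, Bayes' rule gives
\[
\bm{p}(\bm{v}_{[m]},\bm{z}_{[m]}\mid d_1,\ldots,d_n) \propto \prod_{j=1}^{m} \Be(v_j\mid \alpha'_j,\beta'_j)\,\Bin(z_j\mid N,\Upsilon_j(v_j))\, v_j^{a_j}(1-v_j)^{b_j},
\]
for every $m\ge 1$. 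This single display is the workhorse; both displayed full conditionals in the proposition are obtained from it by holding one coordinate block fixed and reading off proportionality in the other.

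First I would derive the conditional $\bm{p}(\bm{v}_{[m]}\mid \bm{z}_{[m]}, d_1,\ldots,d_n)$: treat the $z_j$'s as fixed, so $\alpha'_j,\beta'_j$ are fixed constants; then $\Be(v_j\mid\alpha'_j,\beta'_j)\propto v_j^{\alpha'_j-1}(1-v_j)^{\beta'_j-1}$ and $\Bin(z_j\mid N,\Upsilon_j(v_j))\propto [\Upsilon_j(v_j)]^{z_j}[1-\Upsilon_j(v_j)]^{N-z_j}$ (the binomial coefficient $\binom{N}{z_j}$ is constant in $v_j$), and multiplying in $v_j^{a_j}(1-v_j)^{b_j}$ yields exactly the stated expression. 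Second, for $\bm{p}(\bm{z}_{[m]}\mid \bm{v}_{[m]}, d_1,\ldots,d_n)$ I would instead hold $\bm v_{[m]}$ fixed: the factors $v_j^{a_j}(1-v_j)^{b_j}$ drop out as constants, leaving $\prod_{j=1}^m \Be(v_j\mid\alpha'_j,\beta'_j)\Bin(z_j\mid N,\Upsilon_j(v_j))$. The point is that $z_j$ appears in two places — inside $\Bin(z_j\mid N,\Upsilon_j(v_j))$ at index $j$, and through $\alpha'_{j+1}=\alpha_{j+1}+z_j$, $\beta'_{j+1}=\beta_{j+1}+N-z_j$ inside $\Be(v_{j+1}\mid\alpha'_{j+1},\beta'_{j+1})$ at index $j+1$. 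So I would expand, for $j\le m-1$,
\[
\Bin(z_j\mid N,\Upsilon_j(v_j))\,\Be(v_{j+1}\mid \alpha_{j+1}+z_j,\beta_{j+1}+N-z_j)
\]
and use the Beta-function identity $\Beta(\alpha_{j+1}+z_j,\beta_{j+1}+N-z_j)=\Beta(\alpha_{j+1},\beta_{j+1})\,(\alpha_{j+1})_{z_j}(\beta_{j+1})_{N-z_j}/(\alpha_{j+1}+\beta_{j+1})_N$ (Pochhammer symbols), which isolates the $z_j$-dependence of the normalizing constant into $1/[(\alpha_{j+1})_{z_j}(\beta_{j+1})_{N-z_j}]$; collecting the powers of $v_{j+1}$ and $1-v_{j+1}$ against $[\Upsilon_j(v_j)]^{z_j}[1-\Upsilon_j(v_j)]^{N-z_j}$ gives the product $\prod_{j=1}^{m-1}\binom{N}{z_j}[\Upsilon_j(v_j)v_{j+1}]^{z_j}[(1-\Upsilon_j(v_j))(1-v_{j+1})]^{N-z_j}/[(\alpha_{j+1})_{z_j}(\beta_{j+1})_{N-z_j}]$. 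The remaining index $j=m$ contributes $\Bin(z_m\mid N,\Upsilon_m(v_m))$ as a genuine factor since there is no $v_{m+1}$ to couple with, matching the stated trailing term.

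The one genuinely delicate point — and the step I expect to be the main obstacle — is the bookkeeping of the ``edge'' indices and constants: one must be careful that $\alpha'_1,\beta'_1$ carry no $z_0$, that the $j=m$ binomial factor is not absorbed into a Beta normalizer, and that all factors constant in the conditioned variable are correctly discarded while the Pochhammer factors that do depend on $z_j$ are retained. I would also note that these densities are stated only up to a normalizing constant (the $\propto$), so I need not verify integrability beyond the fact, already established in the proof of properness (Theorem~\ref{theo:BMSBp}) and the construction preceding the statement, that $(\bm v,\bm z)$ has a well-defined joint law with the prescribed marginals; the posterior is then automatically a proper probability density by Bayes' theorem since the likelihood $\prod_j v_j^{a_j}(1-v_j)^{b_j}$ is bounded. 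Finally I would remark that the augmentation is self-consistent: integrating $\bm{p}(\bm v_{[m]},\bm z_{[m]})$ over $\bm z_{[m]}$ returns the Beta-Binomial Markov chain with transitions \eqref{eq:BB_trans} (as recalled in App.~\ref{app:prop:BB_trans_marg}), so the conditionals above are indeed the full conditionals of a valid Gibbs sampler targeting \eqref{eq:v_post}.
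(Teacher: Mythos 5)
Your proposal is correct and follows essentially the same route as the paper's proof: start from the augmented joint density $\bm{p}(\bm{v}_{[m]},\bm{z}_{[m]})$, multiply by the allocation likelihood \eqref{eq:all_var_cond} via Bayes, and read off each full conditional, extracting the $z_j$-dependence of the Beta normalizer for $v_{j+1}$ as the Pochhammer factors $(\alpha_{j+1})_{z_j}(\beta_{j+1})_{N-z_j}$ (the paper writes these via Gamma functions, which is the same identity) and handling the terminal $j=m$ binomial factor exactly as stated. No gaps; your edge-index bookkeeping matches the paper's argument.
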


The previous result allows the implementation of Bayesian nonparametric models with BMSBp priors. Indeed, the model augmentation through the sequence $\bm{z}$ makes it simple to adapt a conditional Gibbs sampling method. Specifically, at each iteration, we can first update $\bm{z}_{[m]}$ by sampling from $\bm{p}(\bm{z}_{[m]}\mid \bm{v}_{[m]}, d_1,\ldots,d_n)$. This is straightforward because the $z_j$ are conditionally independent, given $\bm{v}_{[m]}$, and their support is the finite set $\{0,\ldots,N\}$. Separately, we can update $\bm{v}_{[m]}$ by sampling from $\bm{p}(\bm{v}_{[m]}\mid\bm{z}_{[m]}, d_1,\ldots,d_n)$. Since the $v_j$ are also conditionally independent, given $\bm{z}_{[m]}$, the length variables are easily updated (e.g. via Adaptive Rejection Metropolis Sampling, ARMS). 
In particular, if $\bm{v}$ is stationary, i.e. $\alpha_j = \alpha$ and $\beta_j = \beta$ for $j \geq 1$, the full conditional of $\bm{v}_{[m]}$ simplifies to
\begin{equation*}
\bm{p}(\bm{v}_{[m]}\mid\bm{z}_{[m]}, d_1,\ldots,d_n)  = \prod_{j=1}^{m}\Be(v_{j}\mid \alpha'_j+z_j+a_j,\beta'_j+N-z_j+b_j),
\end{equation*}
where $\alpha'_j$ and $\beta'_j$ are as in Proposition \ref{prop:BMSB_post}. 

From an inferential perspective 
one would like to learn $N$ from the data, as this variable determines the dependence of length variables. To this aim, we can assign a prior distribution $\bm{p}(N)$ to $N$, so that Proposition \ref{prop:BMSB_post} holds, given $N$. Then, we need to compute the posterior distribution of $N$ and update it in the algorithm. Given $(\bm{v}_{[m]},\bm{z}_{[m]})$, with $m \geq \max_{i\leq n}d_i$, $N$ is conditionally independent of $(d_1,\ldots,d_n)$, yielding the posterior distribution
\begin{equation}\label{eq:BMSB_post_N}
\bm{p}(N \mid \bm{v}_{[m]},\bm{z}_{[m]}) \propto \bm{p}(N)\prod_{j=1}^{m}\Be(v_{j}\mid \alpha'_j,\beta'_j)\Bin(z_j\mid N, \Upsilon_j(v_j)).
\end{equation}
The methodological implications of its behaviour are clear-cut: if $\bm{p}(N \mid \bm{v}_{[m]},\bm{z}_{[m]})$ favours values close to $0$, a BMSBp model with nearly independent length variables (such as a Dirichlet or Pitman-Yor model) is preferred over one with strongly dependent length variables; alternatively, if $\bm{p}(N \mid \bm{v}_{[m]},\bm{z}_{[m]})$ is concentrated on larger values, a model with strongly dependent length variables is more suitable. In practice, how easy it is to sample from \eqref{eq:BMSB_post_N} depends on the specific prior assigned to $N$ and, in some cases, this task might be challenging. The noteworthy class we construct in the next section does not suffer from such an issue and allows learning of the dependence parameter in a straightforward way.

%--------------------------------------------------------------------------------------------------------------------------------
\section{Lazy Markov stick-breaking processes}\label{sec:LMSB}
%--------------------------------------------------------------------------------------------------------------------------------

We now introduce another notable subclass of MSBp, which we term 
\textit{lazy Markov stick-breaking processes}
due to the structure of the transition probability. A first advantage over the previously considered class of BMSBp is represented by the fact that the marginal distribution of the length variables is not restricted to the beta distribution.

\subsection{Construction, properness and support} \label{subsec:LMSBP_1}
Let $\bm{\pi} = (\pi_j)_{j=1}^{\infty}$ be a collection of diffuse probability measures over $([0,1],\B_{[0,1]})$. For simplicity, we assume throughout that the distribution functions $F_j$ of $\pi_j$ are continuous and strictly increasing in $[0,1]$. Consider Markovian length variables, $\bm{v}$, with initial distribution $\pi = \pi_1$ and transition probability kernels
\begin{equation}\label{eq:SS_trans}
	\psi_j(v_j,\cdot\,) = \Prob[v_{j+1} \in \cdot\mid v_j] = \rho\,\delta_{\Upsilon_j(v_j)} + (1-\rho)\pi_{j+1}, 
\end{equation}
with $\rho\in [0,1]$ and $\Upsilon_j$ as in \eqref{eq:Upsilon}. It is easy to check that $v_j \sim \pi_j$; see App.~\ref{app:prop:SS_trans_marg}. We call the transition probability kernels $\bm{\psi} = (\psi_j)_{j=1}^{\infty}$  in \eqref{eq:SS_trans} \emph{lazy transitions} with parameters $(\rho,\bm{\pi})$. The corresponding $\bm{w}$ and $\bm{P}$ are termed, respectively, \emph{Lazy Markov stick-breaking weights sequence} (LMSBw) with parameters $(\rho,\bm{\pi})$ and \emph{Lazy Markov stick-breaking process} (LMSBp) with parameters $(\rho,\bm{\pi},P_0)$.
 
\noindent Analogously to $N$ for the BMSBp, the parameter $\rho$ controls how close the LMSBp is to either an independent or a completely dependent regime.
	Values of $\rho$ close to zero yield a behaviour similar to independent priors such as Dirichlet or Pitman-Yor process, whereas values close to one push the LMSBp towards the CDSBp/Geometric regime.

The next result shows full support and, under suitable conditions, properness of LMSBp. 
 \begin{theo}\label{theo:LMSBp}
Let $\bm{P}$ be a LMSBp with parameters $(\rho,\bm{\pi},P_0)$ and $\bm{w}$ the associated LMSBw. Then, the following hold true
\begin{itemize}
\item[\emph{(i)}]
(a) If $\ \sum_{j=1}^{\infty}\int_{[0,1]}v \pi_j(\mrm{d}v) = \infty \ $, $\bm{P}$ is proper for $\rho < 1$.
(b) If $\ \bm{\Upsilon} = (\Upsilon_j)_{j=1}^{\infty} \ $ in \eqref{eq:Upsilon} satisfies the condition \eqref{eq:sum_1_Upsilon}, then $\bm{P}$ is proper.
\item[\emph{(ii)}] For every $j \geq 1$, one has $w_j > 0$ a.s.
\item[\emph{(iii)}] $\bm{P}$ has full support.
\end{itemize}
\end{theo}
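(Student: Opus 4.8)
\textbf{Proof proposal for Theorem~\ref{theo:LMSBp}.}

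The plan is to treat the three parts in order, since each relies on the previous general results of Section~\ref{sec:MSB} applied to the lazy transition \eqref{eq:SS_trans}. For part (i)(a), I would use the criterion \eqref{eq:sum1_MSB}: it suffices to show $\sum_{j=1}^\infty v_j = \infty$ a.s.~when $\rho<1$. The key observation is that the lazy transition has an independent ``restart'' component: at each step, independently with probability $1-\rho$, the chain draws a fresh sample $v_{j+1}\sim\pi_{j+1}$ regardless of $v_j$. Formally I would introduce i.i.d.~$\mathsf{Be}(\rho)$ coin flips $(c_j)_{j\ge1}$, coupling $v_{j+1}=\Upsilon_j(v_j)$ when $c_j=1$ and $v_{j+1}\sim\pi_{j+1}$ when $c_j=0$. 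On the (infinitely many, by Borel--Cantelli, since $\rho<1$) indices $j$ with $c_j=0$, the values $v_{j+1}$ are independent with $v_{j+1}\sim\pi_{j+1}$, and the hypothesis $\sum_j\int v\,\pi_j(\mrm{d}v)=\sum_j\Esp[v_j]=\infty$ together with a Borel--Cantelli / second-moment argument (exactly as in the ISBp case around \eqref{eq:proper}, since the subsampled terms are independent) forces $\sum_j v_j=\infty$ a.s. For part (i)(b), when $\bm{\Upsilon}$ satisfies \eqref{eq:sum_1_Upsilon} the CDSBp built from the same marginals is already proper by Theorem~\ref{theo:Upsilon}; I would couple the lazy chain with the completely dependent chain on the event that all coins are $1$ up to a stopping time, but more cleanly: whenever $c_j=1$ we have $v_{j+1}=\Upsilon_j(v_j)$, and between two consecutive $0$-coins at times $j_0<j_1$ the chain evolves as $v_{j}=\Upsilon^{[j-j_0]}$ applied to the value $v_{j_0+1}\sim\pi_{j_0+1}$; since \eqref{eq:sum_1_Upsilon} says $\sum_k\Upsilon^{[k]}(v)=\infty$ a.s.~for $v\sim\pi_1$ (and analogously starting from any $\pi_{j_0+1}$ by the same $F^{-1}$-composition structure), each such block already contributes an infinite sum, so $\sum_j v_j=\infty$ a.s.

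For part (ii), I would invoke Proposition~\ref{prop:MSB_supp_points}. The marginal distributions are $\pi_j$, which are diffuse by hypothesis, so $\pi_j(\{0\})=\pi_j(\{1\})=0$ for every $j\ge1$; moreover the transition $\psi_j(v,\cdot)=\rho\,\delta_{\Upsilon_j(v)}+(1-\rho)\pi_{j+1}$ never charges $\{1\}$ when $v<1$ since $\Upsilon_j(v)=F_{j+1}^{-1}(F_j(v))<1$ for $v<1$ (as $F_j(v)<1$ and $F_{j+1}^{-1}$ is continuous and maps $[0,1)$ into $[0,1)$) and $\pi_{j+1}(\{1\})=0$. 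Hence $t=\inf\{j:v_j=1\}=\infty$ a.s.~and Proposition~\ref{prop:MSB_supp_points} gives $w_j>0$ for every $j$ a.s. For part (iii), I would apply the second, more concrete, criterion of Proposition~\ref{prop:MSB_full_supp}: I need some $\epsilon>0$ with $(0,\epsilon)$ in the support of $\pi$ and, for every $v\in(0,\epsilon)$ and $j\ge1$, $(0,\epsilon)$ contained in the support of $\psi_j(v,\cdot)$. Since $\pi_1=\pi$ is diffuse with distribution function $F_1$ continuous and strictly increasing on $[0,1]$, its support is all of $[0,1]$, so any $\epsilon\le1$ works for the first requirement; likewise each $\pi_{j+1}$ has full support $[0,1]$, and since $\psi_j(v,\cdot)\ge(1-\rho)\pi_{j+1}$ when $\rho<1$, the support of $\psi_j(v,\cdot)$ contains the support of $\pi_{j+1}$, i.e.~all of $[0,1]\supset(0,\epsilon)$. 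The edge case $\rho=1$ is exactly the CDSBp already covered by Theorem~\ref{theo:Upsilon}(ii) under \eqref{eq:sum_1_Upsilon}, whose hypothesis on the $F_j$ I would need to assume (or note that full support here requires the monotonicity $F_j\le F_{j+1}$ as in that theorem). Thus in all cases $\bm{P}$ has full support.

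The main obstacle I anticipate is part (i)(b): making the block decomposition rigorous requires care with the coupling between the lazy and completely dependent dynamics, in particular verifying that \eqref{eq:sum_1_Upsilon} — a statement about $\Upsilon^{[k]}=F_k^{-1}\circ F_1$ started from $\pi_1$ — transfers to sums $\sum_k\Upsilon_{j_0+k}\circ\cdots\circ\Upsilon_{j_0+1}$ started from $\pi_{j_0+1}$. This is true because the relevant composition telescopes to $F_{j_0+1+k}^{-1}\circ F_{j_0+1}$ and the hypothesis \eqref{eq:sum_1_Upsilon} is stated for the tail behaviour of the $F_j$'s (divergence is a tail property, unaffected by shifting the starting index); still, this needs to be spelled out. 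A cleaner alternative avoiding blocks: observe that $1-\sum_{i=1}^m w_i=\prod_{i=1}^m(1-v_i)\le\prod_{i\in S_m}(1-v_i)$ for any subset $S_m\subset\{1,\dots,m\}$, take $S_m$ to be the indices immediately following a $0$-coin up to their next $0$-coin, i.e.~isolate within each regeneration block a single ``$\Upsilon$-subsum'' that already diverges, and conclude. Parts (ii) and (iii) are essentially bookkeeping against Propositions~\ref{prop:MSB_supp_points} and~\ref{prop:MSB_full_supp} and should be routine.
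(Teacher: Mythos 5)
Your parts (ii) and (iii) match the paper's treatment (diffuse marginals with full support, then Propositions~\ref{prop:MSB_supp_points} and~\ref{prop:MSB_full_supp}); your explicit handling of the $\rho=1$ edge case in (iii) via Theorem~\ref{theo:Upsilon}(ii) is in fact more careful than the paper, which treats all $\rho$ through the second criterion of Proposition~\ref{prop:MSB_full_supp} even though that criterion's support condition only holds when the transition carries the $(1-\rho)\pi_{j+1}$ component. For (i)(a) you take a genuinely different route. The paper avoids any regeneration structure and simply bounds
\begin{equation*}
\Esp\Bigl[\prod_{j=1}^{m}(1-v_j)\Bigr]\;\leq\;\prod_{j=1}^{m}\bigl\{1-(1-\rho)\Esp[v_{j}]\bigr\},
\end{equation*}
by conditioning on $v_1,\ldots,v_{m-1}$ and using $\Esp[(1-v_{m})\mid v_{m-1}]\leq 1-(1-\rho)\Esp[v_{m}]$; the right-hand side vanishes when $(1-\rho)\sum_j\Esp[v_j]=\infty$, and monotonicity upgrades this to a.s.\ convergence of the partial products to zero. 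Your coupling argument can be made to work, but it hides a nontrivial step: the restart indices form a Bernoulli$(1-\rho)$ thinning of $\N$, and you need $\sum_{j\in R}\Esp[v_{j+1}]=\infty$ a.s.\ for the random set $R$, which does not follow from divergence of the full sum alone — it requires a Kolmogorov three-series (or second Borel--Cantelli) argument for the independent bounded terms $\Ind\{c_j=0\}\Esp[v_{j+1}]$. The paper's one-line expectation bound buys you exactly the same conclusion without that detour.

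The genuine gap is in (i)(b). Your block decomposition does not work as stated: when $\rho<1$ every inter-restart block is a.s.\ finite (of geometric length), so no single block ``contributes an infinite sum'', and the divergence in \eqref{eq:sum_1_Upsilon} of $\sum_k\Upsilon^{[k]}(v)$ is only directly usable on an infinite block, i.e.\ when $\rho=1$. Your ``cleaner alternative'' at the end suffers from the same problem. The repair is much simpler than a coupling between the lazy and completely dependent dynamics: since $\Upsilon^{[j]}(v)\sim\pi_j$ for $v\sim\pi_1$, condition \eqref{eq:sum_1_Upsilon} together with monotone convergence gives
\begin{equation*}
\infty=\Esp\Bigl[\sum_{j=1}^{\infty}\Upsilon^{[j]}(v)\Bigr]=\sum_{j=1}^{\infty}\int_{[0,1]} x\,\pi_j(\mrm{d}x),
\end{equation*}
so the hypothesis of (i)(b) implies that of (i)(a); for $\rho<1$ you are done by part (a), and for $\rho=1$ the process is exactly the CDSBp, whose properness under \eqref{eq:sum_1_Upsilon} is immediate. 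This reduction is the paper's argument (and is stated in the remark following the theorem), and it is the piece your proposal is missing.
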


Note that if $\bm{v}$ is a completely dependent Markov process with $v_1 \sim \pi_1$ and $v_{j+1} = \Upsilon_j(v_j)$, the condition in Theorem \ref{theo:LMSBp} (i.b) becomes $\sum_{j=1}^{\infty}v_j = \infty$. If this holds, we also get $\sum_{j=1}^{\infty}\!\Esp[v_j] \!=\! \sum_{j=1}^{\infty}\!\!\int_{[0,1]}\!v \pi_j(\mrm{d}v)\!=\! \infty$. In this sense, (i.b) on $\bm{\pi}$ is stronger than (i.a).

Theorem \ref{theo:LMSBp} allows one to establish properness for many instances of LMSBp in a straightforward way. First consider the stationary case: $\pi_j = \pi$ for every $j \geq 1$, $\Upsilon_j$ becomes the identity function and the transitions in \eqref{eq:SS_trans} reduce to 
\begin{equation}\label{eq:sSS_trans}
	\psi(v_j,\cdot\,) = \Prob[v_{j+1} \in \cdot\mid v_j] = \rho\,\delta_{v_j} + (1-\rho)\pi.
\end{equation}
Condition \eqref{eq:sum_1_Upsilon} trivially holds as well as (i.b) yielding properness. 
A further subclass satisfying (i.b) arises for $\pi_j = \mathrm{Be}(1-\sigma, \theta + j\sigma)$, as shown in Corollary \ref{cor:CDSB_PY}. In this instance, setting $\rho = 0$ recovers the Pitman--Yor process, while $\rho = 1$ yields a CDSBp. The next result shows that both subclasses can be approximated by non--degenerate LMSBp.

\begin{theo}\label{cor:LMSB_conv}
Let $\bm{P}$ and $\bm{P}^{(n)}$ be proper LMSBp
with parameters $(\rho,\bm{\pi},P_0)$  and $(\rho^{(n)},\bm{\pi}^{(n)},P_0^{(n)})$, respectively. Moreover, $\bm{w}$ and $\bm{w}^{(n)}$ are the associated LMSBw. If $\rho^{(n)} \to \rho$ in $[0,1]$, $P_0^{(n)} \wto P_0$ and  $\pi^{(n)}_j \wto \pi_j$, for $j \geq 1$, then $\bm{P}^{(n)} \dwto \bm{P}$ and $\bm{w}^{(n)} \dto \bm{w}$. In particular,
\begin{itemize}
\item[\emph{(i)}] If $\rho = 0$, $\bm{P}$ is an ISBp with parameters  $(\bm{\pi},P_0)$. Moreover, if $\pi_j = \Be(1-\sigma,\theta+j\sigma)$, for some $0 \leq \sigma <1$ and $\theta  > -\sigma$, $\bm{P}$ is a Pitman-Yor process and $\bm{w}$ is invariant under size-biased permutations.
\item[\emph{(ii)}] If  $\rho = 1$, $\bm{P}$ is CDSBp with parameters $(\bm{\pi},P_0)$. In particular, if $\pi_j = \Be(1-\sigma,\theta+j\sigma)$, for some $0 \leq \sigma <1$ and $\theta  > -\sigma$, $\bm{P}$ is as in Corollary \ref{cor:CDSB_PY}, and $\bm{w}$ is decreasing.
\end{itemize}
\end{theo}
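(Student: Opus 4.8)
The plan is to deduce all three conclusions ($\bm{P}^{(n)} \dwto \bm{P}$, $\bm{w}^{(n)} \dto \bm{w}$, and the identification of the two limiting regimes) from the general convergence result, Theorem~\ref{theo:MSB_conv}, by verifying its hypotheses for the lazy transition kernels. The key observation is that a LMSBp is a MSBp whose transition kernels have the explicit mixture form \eqref{eq:SS_trans}, $\psi_j(v_j,\cdot) = \rho\,\delta_{\Upsilon_j(v_j)} + (1-\rho)\pi_{j+1}$, so all that is required is to check the continuity condition ``$u_n \to u$ in $[0,1]$ implies $\psi_j^{(n)}(u_n,\cdot) \wto \psi_j(u,\cdot)$'' together with $\pi^{(n)} \wto \pi$ and $P_0^{(n)} \wto P_0$. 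The latter two are assumed directly in the statement ($\pi_1^{(n)} \wto \pi_1$ and $P_0^{(n)} \wto P_0$), so the crux is the transition-kernel continuity.

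First I would unpack the transition kernels of the approximating processes as $\psi_j^{(n)}(v,\cdot) = \rho^{(n)}\,\delta_{\Upsilon_j^{(n)}(v)} + (1-\rho^{(n)})\pi_{j+1}^{(n)}$, where $\Upsilon_j^{(n)} = (F_{j+1}^{(n)})^{-1} \circ F_j^{(n)}$ is built from the distribution functions of $\pi_j^{(n)}$, and similarly $\Upsilon_j$ from those of the limiting $\pi_j$. Fix $j \geq 1$ and a sequence $u_n \to u$ in $[0,1]$. Since $\rho^{(n)} \to \rho$ and $\pi_{j+1}^{(n)} \wto \pi_{j+1}$, it suffices to show $\delta_{\Upsilon_j^{(n)}(u_n)} \wto \delta_{\Upsilon_j(u)}$, i.e. $\Upsilon_j^{(n)}(u_n) \to \Upsilon_j(u)$; then taking the convex combination (using that $\pi_{j+1}$ has no atoms, so no mass-splitting pathology at the point $\Upsilon_j(u)$ matters for weak convergence of the mixture) gives $\psi_j^{(n)}(u_n,\cdot) \wto \psi_j(u,\cdot)$. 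The convergence $\Upsilon_j^{(n)}(u_n) \to \Upsilon_j(u)$ follows from the standard fact that weak convergence $\pi_j^{(n)} \wto \pi_j$ of distributions with continuous strictly increasing distribution functions forces $F_j^{(n)} \to F_j$ uniformly (by Pólya's theorem, since the limit $F_j$ is continuous) and the quantile functions $(F_{j+1}^{(n)})^{-1} \to F_{j+1}^{-1}$ pointwise at continuity points, hence uniformly on compacts; composing with $u_n \to u$ and using continuity of $F_j$ and $F_{j+1}^{-1}$ yields the claim. I expect this argument about joint convergence of compositions of distribution and quantile functions to be the main technical obstacle — not deep, but requiring care with the strict monotonicity/continuity assumptions to ensure the inverses behave and that continuity points are dense. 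Once Theorem~\ref{theo:MSB_conv} applies, $\bm{v}^{(n)} \dto \bm{v}$, $\bm{w}^{(n)} \dto \bm{w}$, and $\bm{P}^{(n)} \dwto \bm{P}$ follow at once.

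For parts (i) and (ii) I would then simply specialize. When $\rho = 0$, the transition kernel \eqref{eq:SS_trans} collapses to $\psi_j(v_j,\cdot) = \pi_{j+1}$, i.e. $v_{j+1}$ is independent of $v_j$ with law $\pi_{j+1}$; this is precisely the defining property of an ISBp with parameters $(\bm{\pi},P_0)$, and if moreover $\pi_j = \Be(1-\sigma,\theta+j\sigma)$ this is the stick-breaking representation of the Pitman–Yor process, whose invariance under size-biased permutations is the Perman–Pitman–Yor characterization recalled in Section~\ref{subsec:3}. When $\rho = 1$, \eqref{eq:SS_trans} collapses to $\psi_j(v_j,\cdot) = \delta_{\Upsilon_j(v_j)}$, which is exactly the CDSBp transition \eqref{eq:trans_Upsilon}; when $\pi_j = \Be(1-\sigma,\theta+j\sigma)$, Corollary~\ref{cor:CDSB_PY} gives that $\bm{P}$ is proper with full support and that $\bm{w}$ is decreasing a.s. The only point meriting a line of justification is that in each of these two extreme cases the limiting process is indeed proper (so that Theorem~\ref{theo:MSB_conv}, which assumes proper limits, applies): this is covered by Theorem~\ref{theo:LMSBp}(i) — for $\rho < 1$ under the mean-summability condition, and for $\rho = 1$ (equivalently $\rho \leq 1$) under condition \eqref{eq:sum_1_Upsilon}, which in the Pitman–Yor-marginal case holds by Corollary~\ref{cor:CDSB_PY}.
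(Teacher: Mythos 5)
Your proposal is correct and follows essentially the same route as the paper: the paper proves the transition-kernel continuity in Lemma \ref{lem:SS_trans_conv} via exactly the uniform-convergence argument for $\Upsilon_j^{(n)}$ you outline (P\'olya's theorem plus convergence of the inverse distribution functions, Lemmas \ref{lem:Polya_theo}--\ref{lem:Upsilon_UC}), and then invokes Theorem \ref{theo:MSB_conv} and specializes at $\rho=0$ and $\rho=1$ just as you do. The only superfluous point is your worry about atoms of $\pi_{j+1}$ at $\Upsilon_j(u)$: testing the mixture against bounded continuous functions makes the convex-combination step immediate without any non-atomicity argument.
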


The previous Theorem has interesting implications, both theoretical and methodological. To better understand these, for each $0 \leq \rho \leq 1$ and $0 \leq \sigma < 1$, let $\bm{P}^{(\rho,\sigma)}$ be a LMSBp with parameters $(\rho,\bm{\pi},P_0)$ where $\pi_j = \Be(1-\sigma,\theta+j\sigma)$ with $\theta > 0$ fixed. Then $\bm{P}^{(0,0)}$ is Dirichlet process, $\bm{P}^{(1,0)}$ is a Geometric process with length variable $v \sim \Be(1,\theta)$, $\bm{P}^{(0,\sigma)}$ is a Pitman-Yor process and $\bm{P}^{(1,\sigma)}$ is a CDSBp as in Corollary \ref{cor:CDSB_PY}. With this notation at hand, Theorem \ref{cor:LMSB_conv} implies that for any $(\rho_n,\sigma_n) \to (\rho,\sigma)$ in $[0,1]\times[0,1)$ we have that $\mathcal{L}\big(\bm{P}^{(\rho_n,\sigma_n)}\big) \to \mathcal{L}\big(\bm{P}^{(\rho,\sigma)}\big)$, w.r.t. the weak topology, with $\mathcal{L}(\bm{P})$ denoting the law of $\bm{P}$. Hence, the mapping $(\rho,\sigma) \mapsto \mathcal{L}\big(\bm{P}^{(\rho,\sigma)}\big)$ is continuous on $[0,1]\times[0,1)$. Since the spaces where $\rho$ and $\sigma$ take values are intervals, this allows us to continuously warp any of these processes into any other by simply tuning $\rho$ and $\sigma$. 
This showcases another theoretical advantage of LMSBp w.r.t. BMSBp, extreme cases such as Dirichlet, Pitman-Yor or Geometric processes, can be arbitrarily approximated by non-trivial LMSBp whereas this is not true for BMSBp. 

\subsection{Weights' orderings and simulation} \label{subsec:LMSBP_2}
Theorem \ref{cor:LMSB_conv} is also useful to understand the ordering of the LMSBw, $\bm{w}^{(\rho,\sigma)}$ associated to  $\bm{P}^{(\rho,\sigma)}$. Given $\bm{w}^{(0,\sigma)}$ is in size-biased random order for any value of $\sigma$ and $\bm{w}^{(1,\sigma)}$ is decreasingly ordered, this suggests that the ordering of $\bm{w}^{(\rho,\sigma)}$ is somewhere in between a size-biased permutation and a decreasing rearrangement. The larger $\rho$ is, the more likely it is that elements of $\bm{w}^{(\rho,\sigma)}$  are decreasingly ordered; see App.~\ref{sec:app:pr_w_dec}. In fact, for general LMSBw, the decreasing order can only be met when $\rho = 1$ (though not all LMSBw with $\rho = 1$ are decreasing). This is a consequence of Proposition \ref{cor:MSB_dec_sup}, Theorem \ref{theo:LMSBp}, and the fact that for $\rho < 1$ the support of $\psi_j(v,\cdot)$ and that of $\pi_j$ is $[0,1]$ (because $F_j$ is assumed to be continuous and strictly increasing). Furthermore, we can use Theorem \ref{theo:MSB_sb_PY} to establish, without having to impose even mild conditions, that the only proper LMSBw ($\rho < 1$) that is invariant under size-biased permutations corresponds to the Pitman-Yor weights. By Theorem \ref{theo:LMSBp} we have that $w_j > 0$ a.s.~for every $j \geq 1$. Moreover, the lazy transition \eqref{eq:SS_trans} satisfies condition $(C)$ of Theorem~\ref{theo:MSB_sb_PY} if $\rho < 1$. Indeed, the supports of $\pi_1$ and $\psi_1(v,\cdot)$, $v \in [0,1]$, coincide with $[0,1]$ so the first part of condition $(C)$ is met.  The second part is a consequence of the fact that the mappings $v \mapsto \Upsilon_2(v)$ and $u \mapsto \delta_u$ are both continuous, the latter w.r.t. the weak topology; see App.~\ref{app:lem:SS_trans_conv}. The result then follows.

Regarding the clustering properties, we can conduct a more straightforward empirical investigation compared to the analysis in Section~\ref{subsec:BMSBP_2} on the BMSBp.
         Here, when sampling the length variables 
	one either has $v_{j+1} = \Upsilon_j(v_j)$ 
	or samples $v_{j+1} \sim \pi_{j+1}$, independently of $v_j$. 
Overall, we see that allowing for non-trivial dependence among length variables has a relevant impact on the resulting random probability measure. 
More specifically, 
let $\tau_p^{(\rho)}$ be the tie probability of a LMSBp with marginal distributions of the length variables given by $v_j \sim \Be(1-\sigma,\theta+j\sigma)$. Since the CDSBw are decreasing and the marginals of the length variables are fixed we get $\tau_p^{(0)} = \Esp[v_1] = \Esp[w_1] \geq \Esp[\tilde{w}_1] = \tau_p^{(1)}$
where $w_1 = v_1 \sim \Be(1-\sigma,\theta+\sigma)$ is the first CDSBw, and $\tilde{w}_1$ the corresponding first size-biased weight. Thus, the ISBp will always have larger values of $\Var(\bm{P}(A))$, while larger values of $\Esp[K_n]$ and $\Var(K_n)$ are associated with the CDSBp. One can, then, rely on the specification of $\rho$ in the LMSBp to tune 
the dependence among 
length variables and meet the appropriate trade-off between the 
values of $\Var(\bm{P}(A))$ and 
of $\Esp[K_n]$ and $\Var(K_n)$. This is illustrated in Figures \ref{fig:Tp_LMSB} and \ref{fig:Kn_LMSB}, and also in Figures  \ref{fig:Tp_BMSB} and \ref{fig:Kn_BMSB} for the BMSBp counterpart. 
Importantly, LMSBp  and BMSBp can achieve a good balance between  $\Var(\bm{P}(A))$ and $\Var(K_n)$ without modifying the marginal distributions of the length variables. As discussed in Section \ref{subsec:2} this is extremely beneficial from a computational perspective, 
as the burden of sampling from certain prior and posterior distributions typically increases when $\Esp[v_j]$ decreases rapidly, which is exactly what happens when $\sigma \to 1$ in the Pitman-Yor model.

\begin{figure}
	\centering
	\includegraphics[width=1\textwidth]{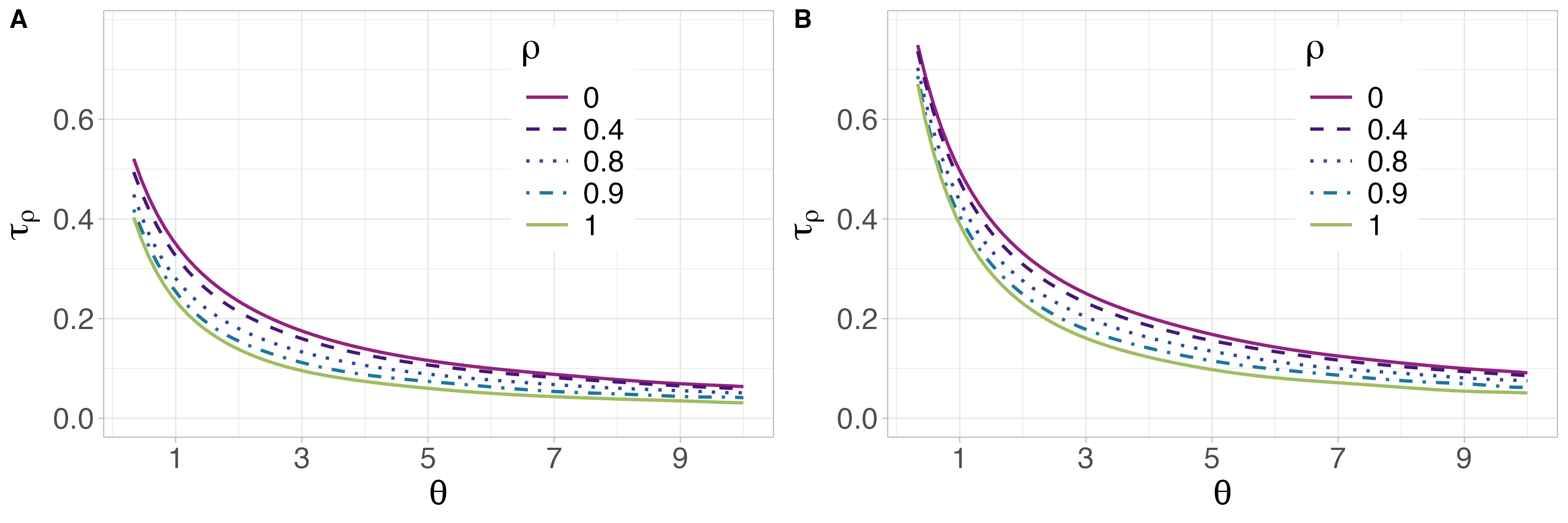}
	\begin{small} 
		\caption{\footnotesize{Tie probability $\tau_p$ as $\theta$ varies in $[0,10]$ for LMSBp with distinct values of $\rho$ where $v_j \sim \Be(1-\sigma,\theta+j\sigma)$ with $\sigma = 0.3$ (A) and $\sigma = 0$ (B) .}}
		\label{fig:Tp_LMSB}
	\end{small}
\end{figure}

\begin{figure}
	\centering
	\includegraphics[width=1\textwidth]{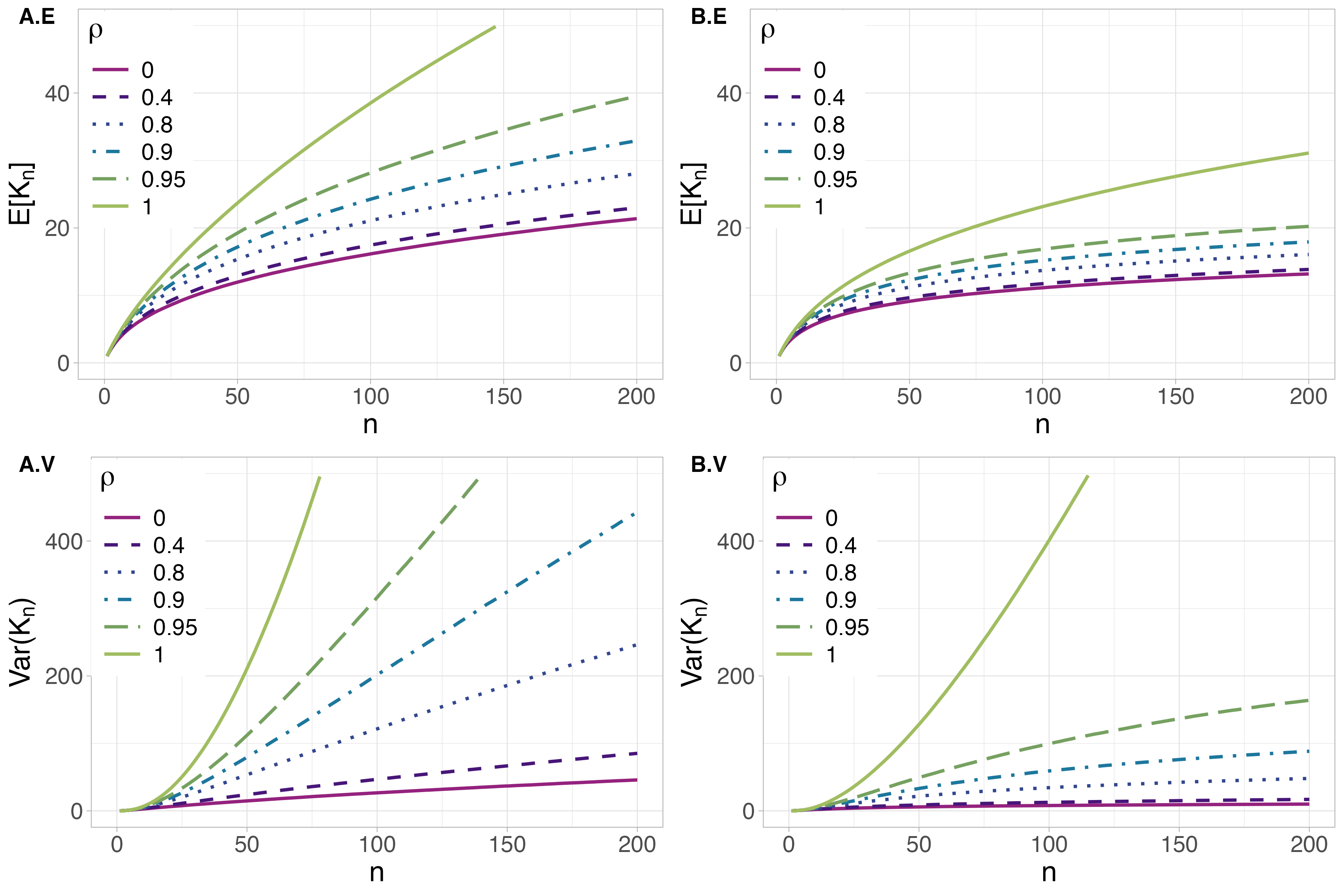}
	\begin{small} 
		\caption{\footnotesize{Displays of the mappings $n \mapsto \Esp[K_n]$ and $n \mapsto \Var(K_n)$ for LMSBp with distinct values of $\rho$ where $v_j \sim \Be(1-\sigma,\theta+j\sigma)$ with $\sigma = 0.3,\theta = 2$ (A) and $\sigma = 0,\theta = 3$ (B) .  }}
		\label{fig:Kn_LMSB}
	\end{small}
\end{figure}

\subsection{Posterior inference} \label{subsec:LMSBP_3}
Here we provide a posterior characterization of the length variables with Beta marginals, as this is the key ingredient of any sampling scheme. To this end, let $\alpha'_j = \alpha_j+a_j$, $\beta'_j = \beta_j + b_j$, with $a_j = |\{i \leq n: d_i = j\}|$ and $b_j = \sum_{l > j}a_l$. 

\begin{prop}\label{prop:LMSB_post}
Let $\bm{w} = (w_j)_{j=1}^{\infty}$ be a proper LMSBw with parameters $(\rho,\bm{\pi})$, where $\pi_j = \Be(\alpha_j,\beta_j)$, and $\bm{v} = (v_j)_{j=1}^{\infty}$ be the underlying Markov process of length variables of $\bm{w}$. Consider the allocation variables $(d_1,\ldots,d_n)$ satisfying $d_i \mid \bm{w} \iid \sum_{j=1}^{\infty}w_j \delta_j$. Then, 
\[
\bm{p}(v_1\mid \bm{v}_{-1},d_1,\ldots,d_n) = \rho_1 \,\delta_{\Upsilon_1^{-1}(v_2)}(v_1) + (1-\rho_1)\Be(v_1\mid \alpha'_1,\beta'_1).
\]
For $j > 1$, 
$
\bm{p}(v_j\mid \bm{v}_{-j},d_1,\ldots,d_n)=\delta_{\Upsilon_{j-1}(v_{j-1})}(v_j),
$
if $\Upsilon_{j-1}(v_{j-1}) = \Upsilon_j^{-1}(v_{j+1})$, whereas 
\[
\bm{p}(v_j\mid \bm{v}_{-j},d_1,\ldots,d_n)=
\rho_{j,1}\,\delta_{\Upsilon_{j-1}(v_{j-1})}(v_j)+ \rho_{j,2}\,\delta_{\Upsilon_{j}^{-1}(v_{j+1})}(v_j)+ \rho_{j,3}\,\Be(v_j\mid \alpha'_j,\beta'_j)
\]
if $\Upsilon_{j-1}(v_{j-1}) \neq \Upsilon_j^{-1}(v_{j+1})$, for some $\rho_{j,1},\rho_{j,2},\rho_{j,3},\rho_{1} \in [0,1]$
\end{prop}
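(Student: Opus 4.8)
The plan is to compute the element-wise full conditional densities $\bm{p}(v_j\mid \bm{v}_{-j}, d_1,\ldots,d_n)$ directly from the general formulae \eqref{eq:MSB_post_v1} and \eqref{eq:MSB_post_vj} of Section~\ref{subsec:2_old}, specialized to the lazy transition \eqref{eq:SS_trans}. The key point is that the lazy transition is a \emph{mixture} of a degenerate component $\delta_{\Upsilon_j(v_j)}$ and a diffuse component $\pi_{j+1} = \Be(\alpha_{j+1},\beta_{j+1})$, so the joint prior density $\bm{p}(\bm{v}_{[m]})$ decomposes, along any consecutive transition, into $\rho$ times a term forcing $v_{j+1} = \Upsilon_j(v_j)$ and $(1-\rho)$ times a term with an independent Beta factor. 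First I would write the prior transition "density" (with respect to the reference measure that is the sum of Lebesgue measure and the atoms generated by the $\Upsilon_j$'s) as $\psi_j(v_j, \mathrm{d}v_{j+1}) = \rho\,\delta_{\Upsilon_j(v_j)}(\mathrm{d}v_{j+1}) + (1-\rho)\,\Be(v_{j+1}\mid\alpha_{j+1},\beta_{j+1})\,\mathrm{d}v_{j+1}$, and note the crucial algebraic identity: since $\Upsilon_j = F_{j+1}^{-1}\circ F_j$ is a strictly increasing bijection of $[0,1]$, the event $\{v_{j+1} = \Upsilon_j(v_j)\}$ is the same as $\{v_j = \Upsilon_j^{-1}(v_{j+1})\}$, which is what allows the atom to be "seen" from either side when we condition.

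The main computation is then bookkeeping of which components survive. For $j=1$, by \eqref{eq:MSB_post_v1}, $\bm{p}(v_1\mid\bm{v}_{-1},\ldots) \propto v_1^{a_1}(1-v_1)^{b_1}\,\pi(v_1)\,\psi_1(v_1, v_2)$. The factor $v_1^{a_1}(1-v_1)^{b_1}\pi(v_1)$ is proportional to $\Be(v_1\mid\alpha'_1,\beta'_1)$ up to the normalizing constant, and $\psi_1(v_1,v_2)$ contributes $\rho\,\delta_{\Upsilon_1^{-1}(v_2)}(v_1)$ (rewriting $\delta_{\Upsilon_1(v_1)}(v_2)$ as an atom in $v_1$) plus $(1-\rho)$ times a constant in $v_1$ (namely $\Be(v_2\mid\alpha_2,\beta_2)$). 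Renormalizing the resulting mixture of a point mass at $\Upsilon_1^{-1}(v_2)$ and a Beta density yields exactly the claimed form with a suitable $\rho_1 \in [0,1]$. For $j>1$, by \eqref{eq:MSB_post_vj}, $\bm{p}(v_j\mid\bm{v}_{-j},\ldots) \propto v_j^{a_j}(1-v_j)^{b_j}\,\psi_{j-1}(v_{j-1},v_j)\,\psi_j(v_j,v_{j+1})$, and now there are \emph{two} transition factors, each a two-term mixture, so the product has (at most) four terms: the cross term $\rho\cdot\rho$ forces simultaneously $v_j = \Upsilon_{j-1}(v_{j-1})$ and $v_j = \Upsilon_j^{-1}(v_{j+1})$; the terms $\rho(1-\rho)$ give atoms at $\Upsilon_{j-1}(v_{j-1})$ and at $\Upsilon_j^{-1}(v_{j+1})$ respectively, each weighted by the Beta factor evaluated at $v_j$ (so really a renormalized $\Be(v_j\mid\alpha'_j,\beta'_j)$ after incorporating $v_j^{a_j}(1-v_j)^{b_j}$ — wait, the atom terms fix $v_j$, so the Beta factor there becomes a constant); and the $(1-\rho)^2$ term gives $\Be(v_j\mid\alpha'_j,\beta'_j)$. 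When $\Upsilon_{j-1}(v_{j-1}) = \Upsilon_j^{-1}(v_{j+1})$, the $\rho^2$ term is a genuine point mass at this common location and, after renormalization, it dominates in the sense that conditioning is consistent only with $v_j$ equal to that value — more carefully, the other three terms are absolutely continuous or atoms at the same point, and one checks the conditional collapses to $\delta_{\Upsilon_{j-1}(v_{j-1})}(v_j)$. When $\Upsilon_{j-1}(v_{j-1})\neq\Upsilon_j^{-1}(v_{j+1})$, the $\rho^2$ cross term vanishes (its support is empty as a set of $v_j$), leaving a three-term mixture: an atom at $\Upsilon_{j-1}(v_{j-1})$, an atom at $\Upsilon_j^{-1}(v_{j+1})$, and a $\Be(v_j\mid\alpha'_j,\beta'_j)$ component, with mixing weights obtained by normalizing, giving $\rho_{j,1},\rho_{j,2},\rho_{j,3}$ summing to one.

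The step I expect to be the main obstacle is the careful handling of the $\rho^2$ "double-atom" term and the case split on whether $\Upsilon_{j-1}(v_{j-1})$ equals $\Upsilon_j^{-1}(v_{j+1})$: one has to be precise about what the product $\delta_{\Upsilon_{j-1}(v_{j-1})}(v_j)\,\delta_{\Upsilon_j(v_j)}(v_{j+1})$ means as a function of $v_j$ given fixed $v_{j-1}, v_{j+1}$, and that it is zero (as a measure in $v_j$) unless the two prescribed values coincide. The continuity and strict monotonicity of each $F_j$ (hence of $\Upsilon_j$ and of $\Upsilon_j^{-1}$) is what makes the reindexing $\delta_{\Upsilon_j(v_j)}(v_{j+1}) = \delta_{\Upsilon_j^{-1}(v_{j+1})}(v_j)$ (up to a Jacobian that is absorbed into the normalizing constants) rigorous — this is where I would invoke the standing assumption on $F_j$. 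The rest is the routine identification of $v_j^{a_j}(1-v_j)^{b_j}\,\Be(v_j\mid\alpha_j,\beta_j) \propto \Be(v_j\mid\alpha'_j,\beta'_j)$ via Beta–Binomial conjugacy, and collecting constants; the explicit values of $\rho_1,\rho_{j,1},\rho_{j,2},\rho_{j,3}$ follow from the normalization but are not needed for the statement.
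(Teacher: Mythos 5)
Your proposal is correct and follows essentially the same route as the paper's proof: specialize \eqref{eq:MSB_post_v1} and \eqref{eq:MSB_post_vj} to the lazy transition, rewrite $\delta_{\Upsilon_j(v_j)}(v_{j+1})$ as $\Ind\{v_j = \Upsilon_j^{-1}(v_{j+1})\}$ via strict monotonicity of the $F_j$'s, expand the product of the two mixed transition densities into four terms, and handle the double-atom $\rho^2$ term by the case split on whether $\Upsilon_{j-1}(v_{j-1}) = \Upsilon_j^{-1}(v_{j+1})$, with the diffuse parts collapsing to $\Be(v_j\mid\alpha'_j,\beta'_j)$ by conjugacy. The paper carries out exactly this bookkeeping, including your observation that the cross term has empty support as a function of $v_j$ when the two prescribed values differ, and that in the coincidence case the Beta components contribute zero mass so the conditional degenerates to the common atom.
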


The probabilities $\rho_{j,1},\rho_{j,2},\rho_{j,3},\rho_{1}$ are explicitly computed in App.~\ref{app:prop:LMSB_post}. Leveraging this result, we can adapt a conditional Gibbs sampler to implement LMSBp by updating one length variable at a time. Thanks to the Markov property this is straightforward: each length variable $v_j$ either is sampled independently from a Beta distribution with updated parameters or equals one between $\Upsilon_{j-1}(v_{j-1})$ and  $\Upsilon_{j}^{-1}(v_{j+1}) = F_{j}^{-1}F_{j+1}(v_{j+1})$. Although theoretically valid, this updating strategy can lead to a slow mixing of the Gibbs sampler. To see this define $t_1 = 1$ and inductively for $i \geq 2$, $t_i = \inf\{j > t_{i-1}: v_j \neq \Upsilon_{j-1}(v_{j-1})\}$, so that $\bm{v}^* = (v^*_1,v^*_2,\ldots)$ with $v^*_i = v_{t_i}$ are  the length variables that were sampled independently of the previous ones, and for $t_{i} < j < t_{i+1}$ we have $v_j = (\Upsilon_{j-1}\circ \cdots \circ \Upsilon_{t_i})(v^*_i) = F^{-1}_{j}(F_{t_i}(v^*_i))$.  Thus, for every $m \geq 1$, $\bm{v}_{[m]} = (v_j)_{j=1}^{m}$ is completely determined by $\bm{t}_{[r_m]} = (t_i)_{i=1}^{r_m}$ and $\bm{v}^{*}_{[r_m]} = (v^*_i)_{i=1}^{r_m}$ where $r_m = \max\{i \geq 1: t_i \leq m\}$, and vice versa. The drawback of updating $\bm{v}_{[m]}$ by sampling one entry at a time, as Proposition \ref{prop:LMSB_post} explains, is that $v^*_{t_i}$ only learns from the allocation variables through $a_{t_i}$ and $b_{t_i}$, while ideally it should incorporate the information provided by $a_{j}$ and $b_j$, for $t_i < j < t_{i+1}$. More worrisome, when $\rho \approx 1$ the values of $\bm{v}^*_{[r_m]}$ will rarely be updated. We overcome this issue by adding a step where $\bm{v}^*_{[r_m]}$ is re-sampled from $\bm{p}(\bm{v}^{*}_{[r_m]}\mid \bm{t}_{[r_m]}, d_1,\ldots,d_n)$. In order to compute this quantity first note that
$$
\bm{p}(\bm{v}^{*}_{[r_m]},\bm{t}_{[r_m]}) = \rho^{m-r_{m}}(1-\rho)^{r_{m}-1}\prod_{i=1}^{r_m}\pi_{t_i}(v^*_i).
$$
Multiplying this equation by \eqref{eq:all_var_cond}  we get
\begin{equation}\label{eq:LMSB_post_v*}
\bm{p}(\bm{v}^{*}_{[r_m]}\mid  \bm{t}_{[r_m]}, d_1,\ldots,d_n) \propto \pi_{t_i}(v^*_i) \prod_{i=1}^{r_m} (v^*_i)^{a_{t_i}}(1-v^*_i)^{b_{t_i}} \prod_{t_i < j < t_{i+1}} v_j^{a_j}(1-v_j)^{b_j},
\end{equation}
with $a_j$ and $b_j$ as in Proposition \ref{prop:LMSB_post} and $v_j = F^{-1}_{j}(F_{t_i}(v^*_i))$ for $t_{i} < j < t_{i+1}$. Hence we can update elements of $\bm{v}^*_{[r_m]}$ independently of each other, e.g.  using ARMS. In the particular case where the length variables are stationary and $\pi_j = \pi = \Be(\alpha,\beta)$, \eqref{eq:LMSB_post_v*} becomes
\begin{equation*}
\bm{p}(\bm{v}^{*}_{[r_m]}\mid \bm{t}_{[r_m]}, d_1,\ldots,d_n) = \prod_{i=1}^{r_m} \Be(v^*_i\mid \alpha+A_i,\beta+B_i),
\end{equation*}
where $A_i = \sum_{j=t_i}^{t_{i+1}-1} a_j$, $B_i = \sum_{j=t_i}^{t_{i+1}-1} b_j$. 

Regardless of whether the length variables are stationary or not, we can  assign a prior $h$ to $\rho$ so that the model itself can learn from the data whether nearly independent or strongly dependent length variables are more suitable and thus exploit Theorem \ref{cor:LMSB_conv} in practice. In this case, the aforementioned sampling scheme holds conditioning on $\rho$ and we also have $\rho$ conditionally independent of $(d_1,\ldots,d_n)$ given $\bm{v}_{[m]}$ for some $m \geq \max_{i \leq n} d_i$ with 
\[
h(\rho\mid \bm{v}_{[m]}) \propto h(\rho)\, \rho^{m-r_{m}}(1-\rho)^{r_{m}-1},
\]
where $r_m = \max\{i \geq 1: t_i \leq m\}$. In particular, if $h=\Be(a,b)$, we get $h(\rho\mid \bm{v}_{[m]}) = \Be\l(\rho\mi a + m-r_m, b+r_m-1\r)$. Clearly, the larger $r_m$ is, the more mass $h(\rho\mid \bm{v}_{[m]})$ assigns to values close to zero, suggesting less dependent length variables are preferred. In contrast, for smaller values of $r_m$, $h(\rho\mid \bm{v}_{[m]})$ assigns more mass to values closer to $1$, meaning that a LMSBp model with more dependent length variables suits the data better. 
See also \cite{NMJLM12} and  \cite{GS2011} for other, alternative, uses of the Beta-Binomial and lazy transitions in Bayesian nonparametric modeling. 

%--------------------------------------------------------------------------------------------------------------------------------
\section{A simulation study for mixture models}\label{sec:illustration}
%--------------------------------------------------------------------------------------------------------------------------------

The most popular application of discrete nonparametric priors is in the context of mixture models. This line of research dates back to \cite{Lo84}, where mixtures of Dirichlet processes were introduced. There is now a huge literature concerning the proposal and study of alternatives to the Dirichlet process as mixing measures, their asymptotic properties and computational implementation. See e.g. \cite{HHMW10,MQJH,GvdV17} and references therein.

In mixture models, the observations  $\bm{y}=(y_i)_{i=1}^n$ take values in a complete and separable metric space
$\mathbb{Y}$ with Borel $\sigma$-algebra $\B_\mathbb{Y}$, and are assumed to be conditionally iid 
from a random probability density function 
	$
	\bm{Q}(y) = \int \varphi(y\mid x) \bm{P}(dx),
	$
where $\varphi(\,\cdot\,\mid x)$ is a probability density over $\mathbb{Y}$, for each fixed value $x$ in 
$\mathbb{X}$, and the mixing distribution $\bm{P}$ is a random probability measure over $(\mathbb{X},\B_\mathbb{X})$.  Assigning a prior distribution to $\bm{Q}$ can be done indirectly by assigning a prior distribution to $\bm{P}$. Interest lies in discrete random probability measures $\bm{P} = \sum_{j=1}^{\infty}w_j\delta_{\theta_j}$, since they allow for simultaneous probabilistic clustering in addition to density estimation, and Dirichlet, Pitman-Yor or Geometric processes are popular choices. More generally, one can endow $\bm{P}$ with a MSBp prior, in which case we refer to $\bm{Q}$ as a MSBp mixture.
Closed form posterior inference is impossible even in the simplest cases and one has to rely on suitable  Markov Chain Monte Carlo algorithms. Further details on the posterior sampling algorithms for MSBp mixtures are provided in App.~\ref{sec:app:illust_mix}.

We illustrate the behaviour of MSBp mixtures by means of a simulation study. We consider $100$ datasets $\bm{y}^{(1)},\ldots,\bm{y}^{(100)}$, each one consisting of $150$ observations that are iid 
from a mixture of eight Gaussian distributions, 
$$
\bm{Q}^* = \sum_{j=1}^{8}w_j \mathsf{N}(\mu_j,\sigma_j^2) \qquad \text{s.t.} \ \  w_j \propto 0.1(0.9)^{j-1},
$$
with means $(\mu_j)_{j=1}^{8} = (-13.1, -7.2, -5.5, -2.7, 2.2, 4.3, 8.9, 9.7)$,  and standard deviations $(\sigma_j)_{j=1}^{8} = ( 1, 0.5, 0.3,1.3, 0.5, 0.5, 0.9, 0.4)$. 
This configuration, which combines well-separated and partially overlapping components, provides a demanding testbed for evaluating how accurately the different priors recover the multimodal structure of the true density; see Figure~\ref{fig:dtv_minmax}.
We assess the performance of 
the mixture models $\bm{Q}_1$\ldots,$\bm{Q}_4$ based on four different MSBp: 
(1) a Dirichlet process (DP); (2) a LMSBp with $\rho \sim \mathsf{Unif}(0,1)$; (3) a BMSBp with random parameter $N$; (4) a Geometric process (GP) mixture. All priors share the same marginals for the length variables $v_j \sim \Be(1,\theta)$. We consider four options for $\theta$: $(\theta_1,\ldots,\theta_4) = (1.6,1.4,0.7,0.4)$. Each $\theta_j$, for $j=1, \ldots, 4$, is calibrated so that, for model $\bm{Q}_j$, the prior expected number of clusters satisfies $\Esp[K_{150}] \approx 8$, which is the true number of components. Each of the four models is then evaluated under all four $\theta$ values for a total of sixteen experimental settings. For all cases we assume a Gaussian kernel $\varphi( y \mid x_1,x_2) = \mathsf{N}(x_1,x_2^{-1})$ and a base measure $P_0(x_1,x_2) = \mathsf{N}(x_1 \mid \mu_0, (\lambda_0 x_2)^{-1}) \mathsf{Ga}(x_2 \mid a_0,b_0) $ with $\mu_0 = n^{-1}\sum_{i=1}^{n} y_i^{(l)}$, $\lambda_0 = 1/100$,  and $a_0 =  b_0 = 0.5$.

For each sample we compare the posterior expectation $\widehat{\bm{Q}}_j^{(l)} = \Esp[\bm{Q}_j\mid \bm{y}^{(l)}]$ with the true distribution $\bm{Q}^*$  in terms of the total variation distance:
\[
d_{TV}(\widehat{\bm{Q}}_j^{(l)},\bm{Q}^*) = \sup_{A \in \B_{\R}} |\widehat{\bm{Q}}_j^{(l)}(A)-\bm{Q}^*(A)| = \frac{1}{2} \int_\R |\widehat{\bm{Q}}_j^{(l)}(y)-\bm{Q}^*(y)| dy,
\]
$l \in [100]$,  $j \in [4]$. Here, $\widehat{\bm{Q}}_j^{(l)}$ and $\bm{Q}^*$ denote both the probability measure and the density function. Figure \ref{fig:dtv_minmax} illustrates two runs where $d_{TV}(\widehat{\bm{Q}}_j^{(l)},\bm{Q}^*)$ was small (A) and large (B). 
 
In Table \ref{tab:dtv} we report the mean (and standard deviation) of $\{d_{TV}(\widehat{\bm{Q}}_j^{(l)},\bm{Q}^*)\}_{l=1}^{100}$ for 
$j=1, \ldots, 4$ and the four distinct values of $\theta$. Figure \ref{fig:dtv_box} displays the corresponding boxplots. We observe that, for each fixed value of $\theta$, the BMSBp achieves smaller values of the distance, closely followed by the LMSBp. This pattern persists even when $\theta$ was chosen to center a different model on the true number of mixture components. For smaller values of $\theta$, the GP shows smaller distances than the DP, whereas the opposite holds for larger values. 
A comparison of performance across models that are centered a priori on the true number of components shows that LMSBp outperforms the other alternatives, yielding the smallest average TV distance from the truth. The remaining prior specifications can be ranked, still in terms of the TV distance, as: BMSBp, DP and GP. See the bold numbers along the antidiagonal of Table \ref{tab:dtv} and the black boxplots in Figure \ref{fig:dtv_box}.

\begin{center}
\begin{table*}
\centering
\caption{\footnotesize{Average distance (and standard deviation) between the estimated and true densities for all MSBp mixtures. Bold text highlights the model for which $\theta$ was chosen so that $\Esp[K_{n}]= 8$}}
\label{tab:dtv}
\begin{tabular}{@{}lrrrc@{}}
\hline
$\theta$ & \multicolumn{1}{c}{DP}
& \multicolumn{1}{c}{LMSBp} & \multicolumn{1}{c}{BMSBp}
& \multicolumn{1}{c}{GP}  \\
\hline
$0.4$    & 0.191  (0.030) & 0.171  (0.027) & 0.169  (0.025)  & \textbf{0.183  (0.025)}  \\
$0.7$    & 0.183  (0.029) & 0.173  (0.027) & \textbf{0.170  (0.028)}  & 0.182  (0.026)  \\
$1.4$    & 0.173  (0.025) & \textbf{0.168  (0.025)} & 0.166 (0.024)  & 0.180  (0.024)  \\
$1.6$    & \textbf{0.173  (0.027)} & 0.169  (0.026) & 0.167  (0.026)  & 0.180  (0.023)  \\
\hline
\end{tabular}
\end{table*}
\end{center}

\begin{figure}
	\centering
	\includegraphics[width=1\textwidth]{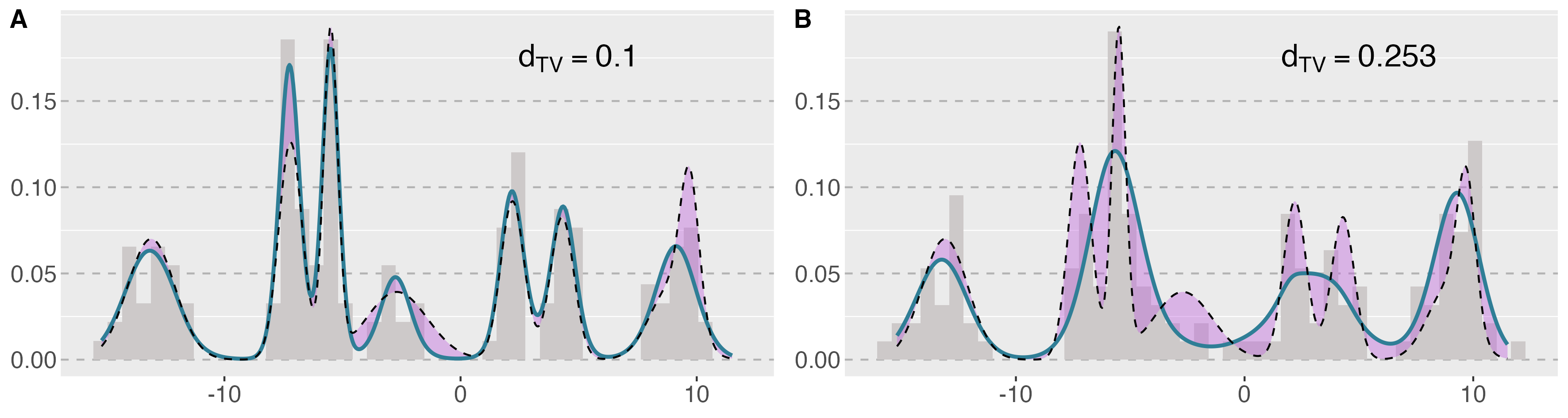}
	\begin{tiny} 
		\caption{\footnotesize{Estimated densities $\widehat{\bm{Q}}$ (solid blue lines), true density $\bm{Q}^*$ (dashed black lines) and histogram of the datasets (shaded gray area) that correspond to the two different runs where $d_{TV}(\widehat{\bm{Q}},\bm{Q}^{*})$ (shaded purple area) was small (A) and large (B).}}\label{fig:dtv_minmax}
	\end{tiny}
\end{figure}

\begin{figure}
	\centering
	\includegraphics[width=1\textwidth]{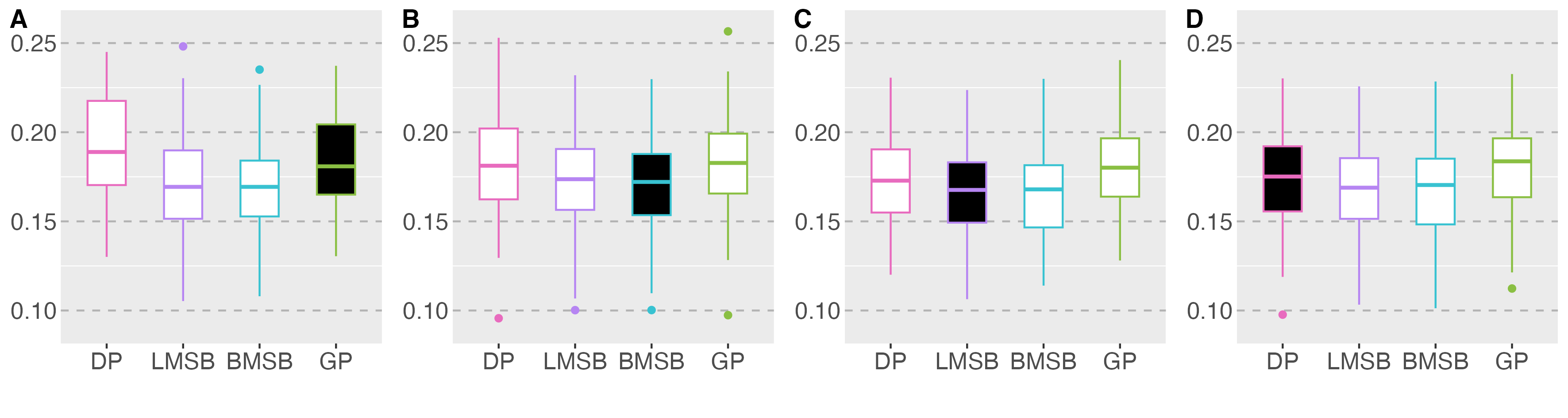}
	\begin{tiny} 
		\caption{\footnotesize{Boxplots of the TV distance between the estimated and true densities for all MSBp mixtures and the four values of $\theta = 0.4,0.7,1.4,1.6$ (panels A,B,C,D, respectively). The black boxplots highlight the model for which $\theta$ was chosen so that $\Esp[K_{n}]= 8$.}}\label{fig:dtv_box}
	\end{tiny}
\end{figure}

\begin{figure}
	\centering
	\includegraphics[width=1\textwidth]{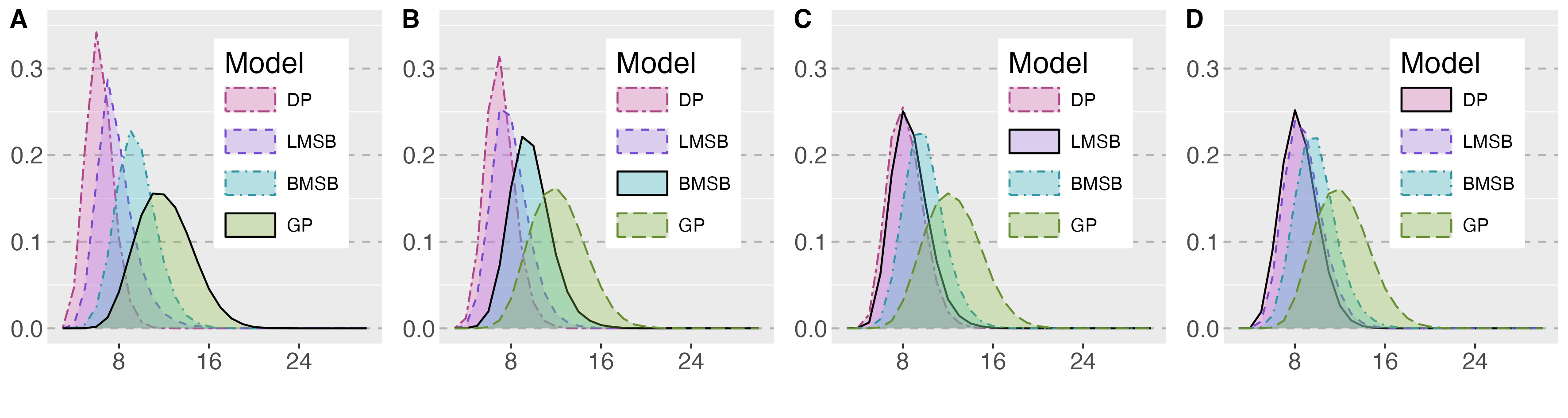}
	\begin{tiny} 
		\caption{\footnotesize{Averaged posterior distribution of $K_n$ over the 100 runs $\sum_{l=1}^{100} \Prob[K_n \in \cdot \mid \bm{y}^{(l)}] /100$, for all MSBp mixtures  and the four values of $\theta = 0.4,0.7,1.4,1.6$ (panels A,B,C,D, respectively). The solid black line highlights the model for which $\theta$ was chosen so that $\Esp[K_{n}]= 8$}}\label{fig:dtv_kn}
	\end{tiny}
\end{figure}

To complement the analysis, Figure \ref{fig:dtv_kn} displays the averaged posterior distribution of the number of observed components $K_n$, namely
$
\sum_{l=1}^{100} \Prob[K_n \in \cdot \mid \bm{y}^{(l)}] /100,
$
for $n = 150$ and for each of the models.
For the larger values of $\theta$ (panels C and D) the posterior distributions of $K_n$ corresponding to the DP and the LMSBp behave similarly, and place most mass around the true number of components. In contrast, for smaller values of $\theta$, the DP often underestimates the number of components by a small amount, and the LMSBp is the model that places more mass around the true number of components. In all cases, the BMSBp is also concentrated around 8, though it frequently overestimates the number of components. The GP notably overestimates the number of components regardless of the value of $\theta$. 
To sum up, Table \ref{tab:dtv}, together with Figures \ref{fig:dtv_box} and \ref{fig:dtv_kn},  suggest that the performance of MSBp mixtures is more stable and less sensitive to small changes in the marginal distributions of $v_j$, thereby highlighting the benefits of incorporating dependence among length variables as in LMSBp and BMSBp mixtures. In practice, where the number of true components is typically unknown, this characteristic can lead to a more robust posterior inference.

In App.~\ref{sec:app:illust_mix_2} we report an additional simulation study for multivariate MSBp mixtures, considering several choices of $\pi_j$, including $\Be(1-\sigma,\theta+j\sigma)$ with $\sigma > 0$. There we also estimate the induced clustering structures and examine the posterior behavior of key tuning parameters. Our findings provide strong evidence favoring MSBp models over simpler, less flexible alternatives. Future work will systematically investigate these methodological implications.

%--------------------------------------------------------------------------------------------------------------------------------
\section{Concluding remarks}\label{sec:conclusion}
%--------------------------------------------------------------------------------------------------------------------------------

In this paper we have focused on MSBp with marginal distributions of the length variables, $\pi_j$, that are absolutely continuous with respect to the Lebesgue measure. This assumption ensures that $\Upsilon_j$ in \eqref{eq:Upsilon} has appealing properties, such as monotonicity and continuity, which are crucial for the convergence results for the notable subclasses of BMSBp and LMSBp established in Theorems \ref{cor:BMSBp_conv} and \ref{cor:LMSB_conv}, respectively. Despite this, for LMSBp with stationary length variables, i.e. $\pi_j = \pi$, the functions $\Upsilon_j$ are all equal to the identity function, which means that the diffuseness assumption on $\pi$ can be easily relaxed so as to consider transition probability kernels as in \eqref{eq:sSS_trans} endowing $\pi$ with an atom at $1$. For instance, one could consider $\pi = (1-q)\Be(\alpha,\beta) + q\delta_1$, so that
\[
\psi(v,\cdot) = \rho\,\delta_{v} + (1-\rho)q\,\delta_1+ (1-\rho)(1-q)\Be(\alpha,\beta)
\]
Leveraging the general results in Section \ref{sec:MSB} it is then straightforward to prove that the corresponding MSBp will be proper, have full support, and that the number of atoms will be $t = \{j \geq 1: v_j = 1\} < \infty$ a.s. In fact, $\bm{P} = \sum_{j=1}^{t}w_j\delta_{\theta_j}$ where $t \sim \mathsf{Geo}(1-\rho)q$. Furthermore, by letting $q \to 0$ we can approximate (in distribution) LMSBp with infinitely many support points such as Dirichlet and Geometric processes. Although we do not further pursue this direction here, the theoretical results in Section \ref{sec:MSB} also cover this scenario. In particular, we highlight that the number of support points $t$ is trivially a stopping time with respect to the Markovian length variables. This could help to avoid the well-known challenges, first highlighted in \cite{Ric:Gre:97}, related to the implementation of a species sampling process with a random model dimension.

%--------------------------------------------------------------------------------------------------------------------------------
\section*{Acknowledgements}
%--------------------------------------------------------------------------------------------------------------------------------
 M.F.~Gil--Leyva and R.~Mena gratefully acknowledge the support of PAPIIT grants IA101124 and IT100524. A.~Lijoi and I.~Pr\"unster were partially supported by the European Union - Next Generation EU PRIN-PNRR (project P2022H5WZ9).

%--------------------------------------------------------------------------------------------------------------------------------
\newpage
\appendix
\numberwithin{equation}{section}
%--------------------------------------------------------------------------------------------------------------------------------

\section{Posterior algorithms and further distributional properties}\label{sec:app:illust}

\smallskip

This section includes details of the posterior sampling scheme, which are not included in the main manuscript, and an investigation of other relevant distributional properties of  
Markov stick-breaking processes.

\subsection{Posterior sampling algorithm}\label{sec:app:illust_mix}

As mentioned in Section 
 \ref{sec:illustration}
in the main paper, in mixture models, the observations  $\bm{y}=(y_i)_{i=1}^n$ taking values in a complete and separable metric space $\mathbb{Y}$, with Borel $\sigma$-algebra $\B_\mathbb{Y}$, 
 are assumed to be conditionally iid
 from a random probability density function
	\[
	\bm{Q}(y) = \int \varphi(y\mid x) \bm{P}(dx),
	\]
where $\varphi(\,\cdot\,\mid x)$ is a probability density kernel over $\mathbb{Y}$, for each fixed value $x$ in 
$\mathbb{X}$, and the mixing distribution $\bm{P}$ is a random probability measure on $(\mathbb{X},\B_\mathbb{X})$. When the mixing distribution is discrete, $\bm{P} = \sum_{j=1}^{\infty}w_j\delta_{\theta_j}$, we have
\begin{equation}\label{eq:mix_model}
	\bm{Q}(y) = \sum_{j=1}^{\infty}w_j \, \varphi(y\mid \theta_j). 
	\end{equation}
Closed form posterior inference is impossible even in the simplest cases and one has to rely on suitable  Markov Chain Monte Carlo algorithms. These are typically classified into marginal and conditional schemes. The former \comillas{marginalize out} the infinite-dimensional $\bm{P}$ and work with the resulting random partition distribution or predictive scheme. The latter requires to sample the random probability measure itself. For stick-breaking processes in general, and even with iid or independent weights, conditional sampling schemes are the algorithms of choice. This is then obviously true also for MSBp. Among the conditional approaches, the most popular are the retrospective \citep{PR08} and slice \citep{W07} samplers with a recent new addition represented by the ordered allocation sampler (OAS) by \cite{DG23}. Here we develop a posterior sampling scheme based on OAS, due to its desirable mixing properties. However, thanks to the availability to sample from the posterior distributions of the length variables one could also derive a retrospective or slice samplers proceeding along similar lines.

To explain how the OAS works first note that modelling $\bm{y}_{[n]} = (y_1,\ldots,y_n)$ as conditionally iid from \eqref{eq:mix_model} is equivalent to assume that $y_i\mid x_i \overset{\rm ind}{\sim} \varphi(\cdot \mid x_i)$, $i \geq 1$, where $\bm{x}_{[n]} =(x_1,\ldots,x_n)$ are iid sampled from $\bm{P}$, i.e. $x_i \mid \bm{P} \iid \bm{P}$, $i \geq 1$. With the introduction of the first $n$ elements $\bm{x}_{[n]}$ of the species sampling sequence $\bm{x}$, 
we can then define the distinct values $\tilde{\bm{x}}_{[K_n]} = (\tilde{x}_1,\ldots,\tilde{x}_{K_n})$ that $\bm{x}_{[n]}$ exhibits in order of appearance (so that $\tilde{x}_1 = x_1$, $\tilde{x}_2 = x_l$ iff $l = \inf\{i \geq 2: x_i \neq x_1\}$, and so on) as well as the ordered allocation variables $\bm{d}_{[n]} = (d_1,\ldots,d_n)$ given by $d_i = j$ if and only if $x_i = \tilde{x}_j$. As explained by \cite{P96} \citep[see also][]{DG23} it turns out that $\tilde{x}_j = \theta_{\varrho_j}$ for some random permutation of $\N$, $\bm{\varrho} = (\varrho_j)_{j=1}^{\infty}$, independent of $\bm{x}$ and given by 
\eqref{eq:sb_pick}.

In particular, $\tilde{x}_1,\tilde{x}_2,\ldots$ remain iid from the base measure $P_0$, and independent of $\bm{\alpha}$ and the weights, $\bm{w}$. Moreover, the long-run proportion of elements in $\bm{x} = (x_i)_{i=1}^{\infty}$ that coincide with $\tilde{x}_j$ is precisely
\[
\tilde{w}_j = \lim_{n \to \infty} \frac{|\{i \leq n: x_i = \tilde{x}_j\}|}{n} = w_{\varrho_j}.
\]
In other words $\tilde{\bm{x}}_{[K_n]}$ and $\tilde{\bm{w}}_{[K_n]} = (\tilde{w}_1,\ldots,\tilde{w}_{K_n})$ are the (observed) random component parameters and weights of \eqref{eq:mix_model} in the order in which they were discovered by $\bm{y}_{[n]}$. With this considerations taken into account, one can work with the augmented likelihood
\begin{equation*}\label{eq:like}
\bm{p}(\bm{y}_{[n]},\bm{d}_{[n]},\bm{\varrho}_{[K_n]}\mid \tilde{\bm{x}},\bm{w}) = \prod_{j=1}^{K_n} w_{\varrho_j}^{n_j} \prod_{i \in D_j}\varphi(y_i \mid \tilde{x}_{d_i})\Ind_{\mathcal{D}}\Ind_{\mathcal{A}},
\end{equation*}
where $D_j = \{i\leq n: d_i = j\}$, $n_j = |D_j|$, $\mathcal{D}$ is the event that $(D_1,\ldots,D_{K_n})$ forms an ordered partition of $[n] = \{1,\ldots,n\}$ with blocks in the least element order, i.e. $\min(D_j) \leq \min(D_{j+1})$, and $\mathcal{A}$ is the event that $\varrho_l \neq \varrho_j$ for $j \neq l$. The OAS proceeds by sampling from the full conditional distributions of $\bm{d}_{[n]},\bm{\varrho}_{[K_n]},\tilde{\bm{x}}$ and $\bm{w}$ \citep[see details in][]{DG23}. In particular, if the stick-breaking representation of $\bm{w}$, $w_j = v_j \prod_{i < j}(1-v_i)$, is available the weights, $\bm{w}$, can be updated via the length variables, $\bm{v}$. To do so it is enough to note that
\[
\prod_{j=1}^{K_n} w_{\varrho_j}^{n_j} = \prod_{j=1}^{\kappa} v_{j}^{a_j}(1-v_j)^{b_j},
\]
where $\kappa = \max(\bm{\varrho}_{[K_n]})$, $a_j = \sum_{j=1}^{K_n}n_l\Ind_{\{\varrho_l = j\}}$ is the number of data points associated to $j$th component, $w_j g(\cdot\mid \theta_j)$, of the mixture in the original order, and $b_j = \sum_{l>j}a_l$. This yields the full conditional distribution of $\bm{v}_{[m]} = (v_1,\ldots,v_m)$:
\[
\bm{p}(\bm{v}_{[m]}\mid \cdots) \propto \bm{p}(\bm{v}_{[m]})\prod_{j=1}^{\kappa} v_{j}^{a_j}(1-v_j)^{b_j},
\]
for $m \geq 1$. Now, in order to adapt the OAS to implement mixture models with BMSBp and LMSBp mixing priors, it is enough to sample from $\bm{p}(\bm{v}_{[m]}\mid \cdots)$, for $m \geq \kappa$, as explained in Propositions \ref{prop:BMSB_post} and  \ref{prop:LMSB_post}.

After running the OAS for $S$ iterations, once the burn-in period has elapsed, one obtains a sample of size $S$ from $\bm{p}(\bm{d}_{[n]},\bm{\varrho}_{[K_n]},\tilde{\bm{x}}_{[K_n]},\bm{w}_{[\kappa]}\mid \bm{y}_{[n]})$, where $K_n = \max(\bm{d}_{[n]})$, and $\kappa = \max(\bm{\varrho}_{[K_n]})$. Using this sample one can then estimate the density of the data at $y$ through
\[
\widehat{\bm{Q}}(y) = \frac{1}{S}\sum_{s=1}^{S}\l\{\sum_{j=1}^{K_n^{(s)}}  \tilde{w}_j^{(s)}\varphi\bigg(y \,\bigg|\, \tilde{x}_j^{(s)}\bigg)+ \l(1-\sum_{j=1}^{K_n^{(s)}}  \tilde{w}_j^{(s)}\r)\int \varphi(y\mid x)P_0(dx)\r\},
\]
where $\bullet^{(s)}$ is the sample $s$th sample of $\bullet$, and $\tilde{w}_j^{(s)} = w^{(s)}_{\varrho_j^{(s)}}$. Alternatively, we can estimate the density through
\[
\widehat{\bm{Q}}(y) = \frac{1}{S}\sum_{s=1}^{S}\l\{\frac{1}{n}\sum_{j=1}^{K_n^{(s)}}  n_j^{(s)}\varphi\bigg(y \,\bigg|\, \tilde{x}_j^{(s)}\bigg)\r\},
\]
where $n_j^{(s)} = |\{i \leq n: d^{(s)}_i = j\}|$. 
Using the samples of the ordered allocation variables, we can also estimate the clustering of data points according to the mixture component they were sampled from. To do so, one can approximate the estimator 
\[
\hat{\Pi} = \underset{\Pi^*}{\arg\min} \,\,\Esp[L(\Pi^{*},\Pi)\mid \bm{y}_{[n]}]
\]
where $\Pi = \Pi_{[n]} = \{D_1,\ldots,D_{K_n}\}$ is the partition of $[n]$ induced by $\bm{d}$ (equiv. $\bm{x}$) and $L$ is a suitable loss function such as \emph{Binder's} loss or the \emph{variation of information} \citep[cf.][]{WG18}. For a fixed partition $\Pi^*$, the objective function $\Esp[L(\Pi^{*},\Pi)\mid \bm{y}_{[n]} ]$ can be approximated using the MCMC samples of $\bm{d}$. However, even for a moderately large number of data points, it is practically impossible to minimize over the space of partitions due to its large cardinality. Alternatively, one can minimize over a smaller set such as the partitions visited by the MCMC.

\subsection{A simulation study for multivariate mixture models}\label{sec:app:illust_mix_2}

We now focus on a simulation study for multivariate mixture models. To showcase the performance of MSBp in multivariate mixture modeling, here we devise an experiment where we estimate the density and clusters of a bivariate dataset, $\bm{y}_{[n]}=(y_1, \ldots, y_n)$, containing $n = 395$ points sampled from a mixture of seven bivariate Gaussian distributions; the true density and the sampled data with their cluster memberships are displayed in Figure \ref{fig:paw_true}.
\begin{figure}
	\centering
	\includegraphics[width=1\textwidth]{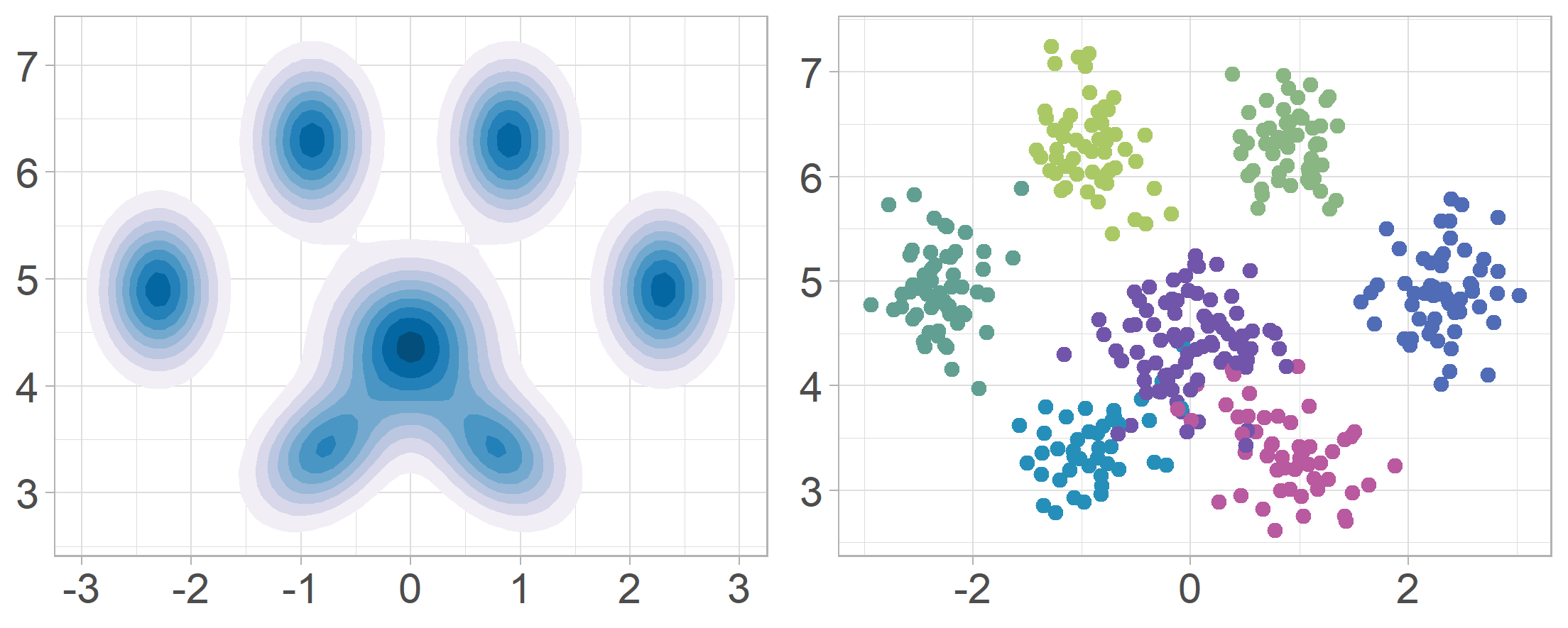}
	\begin{small} 
		\caption{True density (left panel) and sampled data with cluster memberships (right panel).  The density ranges from $0$ (lightest) to $0.024$ (darkest shade), and different colors indicate the different clusters.}
		\label{fig:paw_true}
	\end{small}
\end{figure}
We compare $12$ different MSBp mixtures with Gaussian kernel and the same base measure $P_0$. Specifically, in all cases $\varphi(y\mid \theta) = \mathsf{N}_2(y\mid \mu,\Sigma)$ and  $P_0(\mu,\Sigma) = \mathsf{N}_2(\mu \mid \mu_0,\lambda_0^{-1}\Sigma)\mathsf{IW}(\Sigma\mid \nu_0,\Psi_0)$,  where $\mathsf{IW}$ stands for an Inverse-Wishart distribution. The hyperparameters are set to $\mu_0 = n^{-1}\sum_{i=1}^n y_i$, $\lambda_0 = 10^{-1}$, $\nu_0 = 2$ and $\Psi_0$ equal to the $2$-dimensional identity matrix. The $12$ mixture models differ in terms of the assigned MSBw prior on $\bm{w}$. Namely, we consider:
\begin{enumerate}[(a)]
	\item Three ISBp with parameters $\bm{\pi} = (\pi_j)_{j=1}^{\infty}$ with: (1) $\pi_j = \Be(1,\theta)$; (2) $\pi_j = \Be(1-\sigma,\theta+j\sigma)$; (3) $\pi_j = \Be(1+\gamma/j,1)$, where $\sigma = 0.3$, $\theta = 1$ and $\gamma = 5$. Thus, (1) and (2) are, respectively, a Dirichlet and a Pitman-Yor process.
	\item Three LMSBp with parameters $\bm{\pi}$ as in (a), and in all cases prior on the tuning parameter given by $\rho \sim \Un(0,1)$.
	\item Three BMSBp with parameters $\bm{\alpha}$ and $\bm{\beta}$ given by: (1) $\alpha_j =1$ and $\beta_j = \theta$; (2) $\alpha_j = 1-\sigma$ and $\beta_j = \theta+j\sigma$; (3) $\alpha_j = 1+\gamma/j$ and $\beta_j = 1$, with $\sigma$, $\theta$ and $\gamma$ as in (a). In all cases, the prior on the tuning parameter is $N \sim \Un(\{1,\ldots,200\})$.
	\item Three CDSBp with parameters $\bm{\pi} = (\pi_j)_{j=1}^{\infty}$ as in (a). Thus, the first CDSBp corresponds to a Geometric process.
\end{enumerate}
In other words we considered stick-breaking models with three different types of marginal distributions, $\bm{\pi}$, of the length variables, i.e. $v_j \sim \pi_j$, and for each class of marginals we have considered four different dependence schemes between length variables. Importantly, we also assigned a prior to the tuning parameters of LMSBp and BMSBp, cases (b) and (c), in order to learn the weights' structure favoured by the data.

The estimated densities and clustering structure are shown, respectively, in Figures \ref{fig:paw_dens} and \ref{fig:paw_cl} for all $12$ models. In this simulation study, LMSBp and BMSBp mixtures recover the true density and clustering structures more accurately than their  limiting cases represented by ISBp and CDSBp mixtures. Recall that ISBp include the popular Dirichlet and Pitman-Yor mixtures as special cases and thus this represents a first evidence of the inferential gain in moving beyond these standard choices. In this experiment, the LMSBp and the BMSBp with $\Be(1,\theta)$ and $\Be(1-\sigma,\theta+j\sigma)$ marginals excel at density estimation. Nonetheless, the other models also perform well, except the CDSBp with $\Be(1-\sigma,\theta+j\sigma)$ and with $\Be(1+\gamma/j,1)$ marginals. In terms of clustering structure estimation, the LMSBp with $\Be(1,\theta)$ and $\Be(1-\sigma,\theta+j\sigma)$, as well all three BMSBp, are best at recovering the seven true clusters of data points. For the three ISBp mixing priors two clusters are merged and for the CDSBp models some clusters are merged and others are split. 
\begin{figure}
	\centering
	\includegraphics[width=1\textwidth]{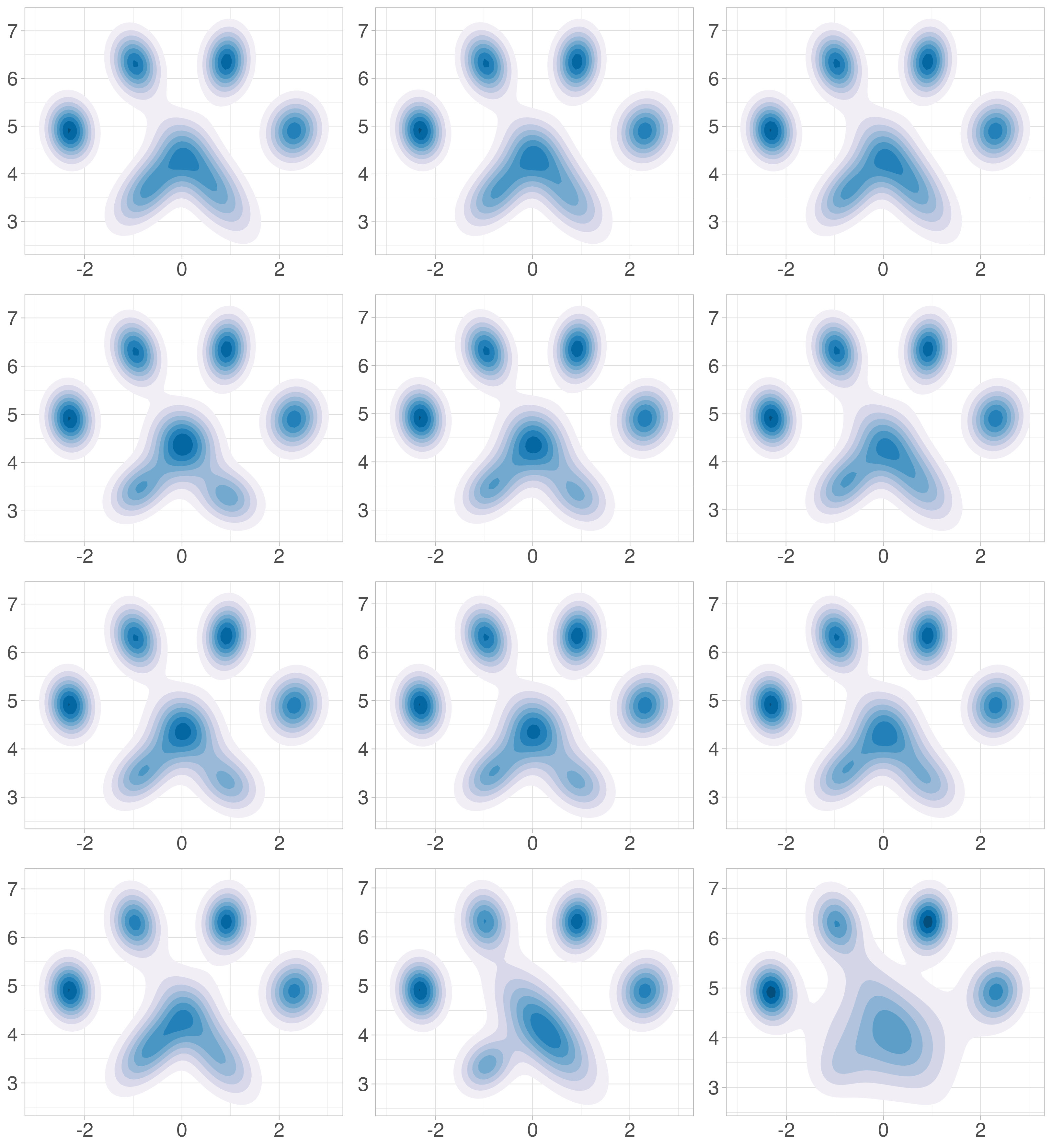}
	\begin{small} 
		\caption{Estimated densities with mixtures based on, respectively, ISBp (first row), LMSBp with random $\rho$ (second row), BMSBp with random $N$ (third row) and CDSBp (fourth row). The marginal distributions of the length variables are, respectively, $\pi_j = \Be(1,\theta)$ (first column), $\pi_j = \Be(1-\sigma,\theta+j\sigma)$ (second column) and $\pi_j = \Be(1+\gamma/j,1)$ (third column). All figures are in the same color scale and the density ranges from $0$ (lightest) to $0.024$ (darkest shade).}\label{fig:paw_dens}
	\end{small}
\end{figure}
\begin{figure}
	\centering
	\includegraphics[width=1\textwidth]{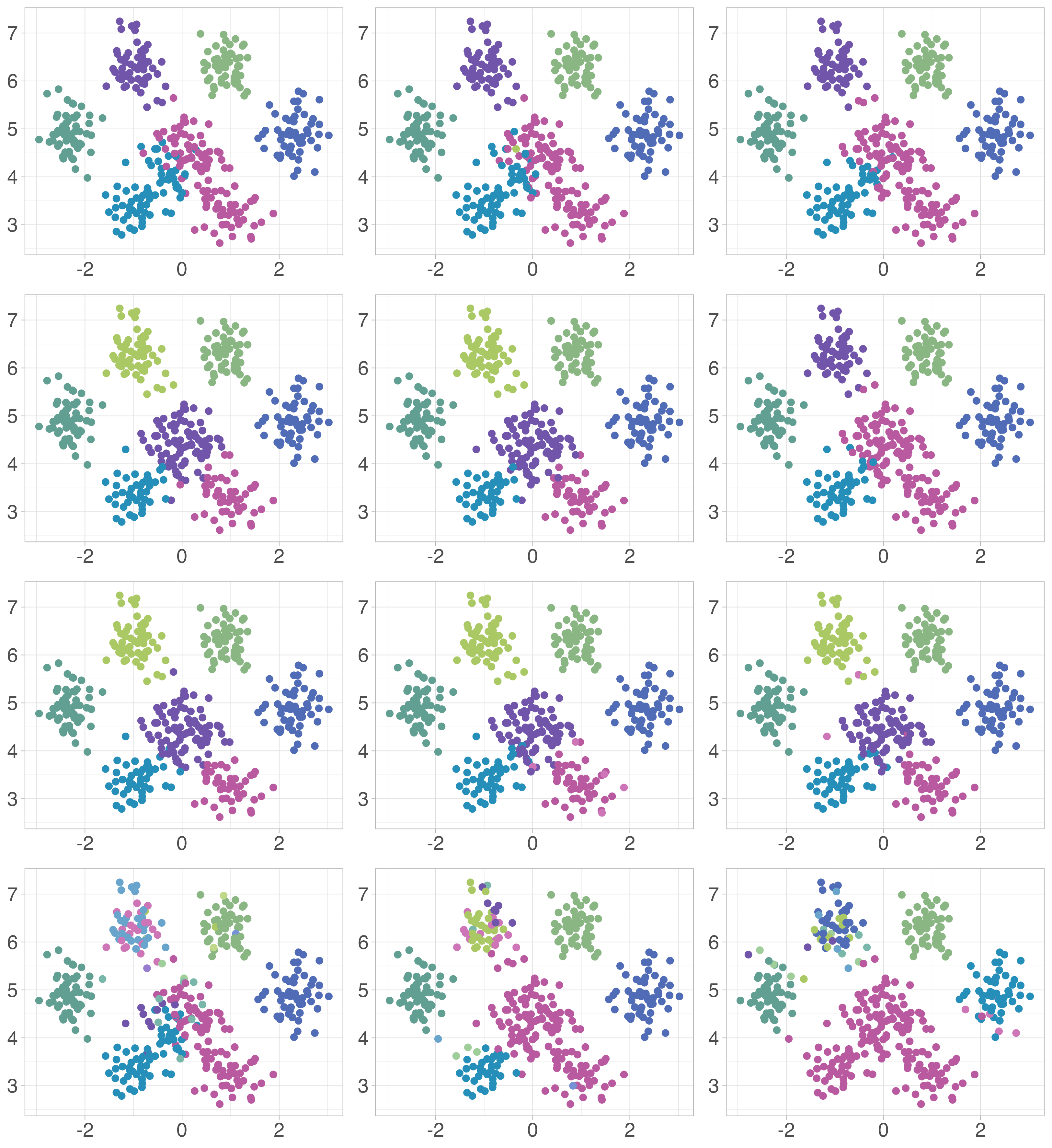}
	\begin{small} 
		\caption{Estimated clusters of data points with mixtures based on, respectively, ISBp (first row), LMSBp with random $\rho$ (second row), BMSBp with random $N$ (third row) and CDSBp (fourth row). The marginal distributions of the length variables are, respectively, $\pi_j = \Be(1,\theta)$ (first column), $\pi_j = \Be(1-\sigma,\theta+j\sigma)$ (second column) and $\pi_j = \Be(1+\gamma/j,1)$ (third column).}\label{fig:paw_cl}
	\end{small}
\end{figure}

We now analyze the posterior distribution of the number of mixture components that generated the data. Indeed, a key advantage of mixtures driven by discrete random probability measures is that the number of mixture components is not fixed and can be estimated. In our experiment the true value is $7$. Figure \ref{fig:paw_Kn} shows that the only models leading to a posterior mode of $7$ are the LMSBp mixtures with $\Be(1,\theta)$. Also the Pitman-Yor model (ISBp with $\Be(1-\sigma,\theta+j\sigma)$ marginals) assigns high probability to $7$ components. The other two ISBp mixtures and the LMSBp mixture with $\Be(1+\gamma/j,1)$ marginals, instead, lead to a posterior mode of $6$. In contrast, BMSBp mixtures tend to slightly overestimate the number of mixture components leading to modes in $8$ and $9$. As far as CDSBp mixtures are concerned, their estimates are completely off the mark: in all three models the posterior distribution of the number of mixture components only assigns positive mass to values larger than $9$.
\begin{figure}
	\centering
	\includegraphics[width=1\textwidth]{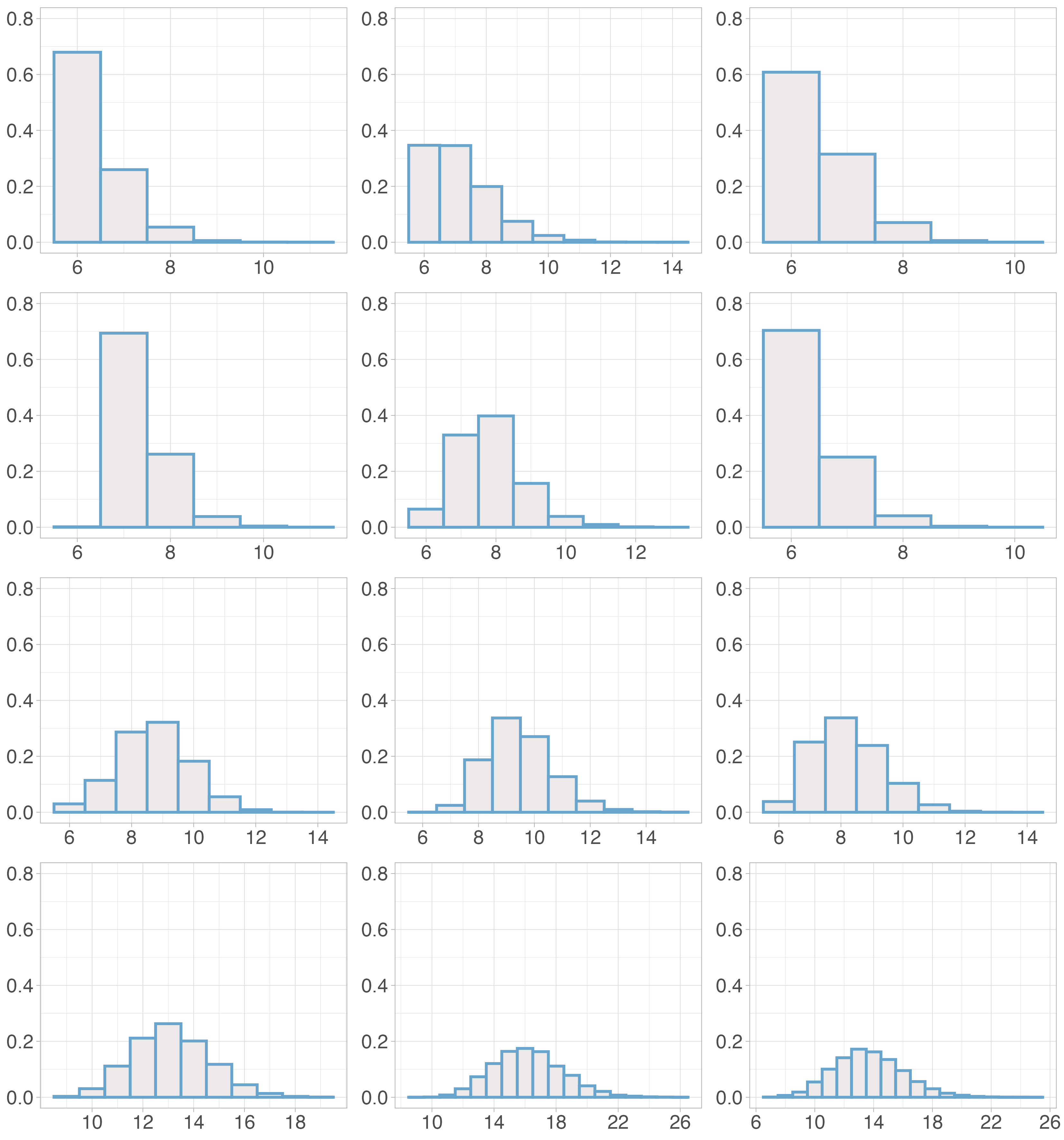}
	\begin{small} 
		\caption{Posterior distribution of the number of clusters corresponding to mixture models based on, respectively, ISBp (first row), LMSBp with random $\rho$ (second row), BMSBp with random $N$ (third row) and CDSBp (fourth row). The marginal distributions of the length variables are, respectively, $\pi_j = \Be(1,\theta)$ (first column), $\pi_j = \Be(1-\sigma,\theta+j\sigma)$ (second column) and $\pi_j = \Be(1+\gamma/j,1)$ (third column).}\label{fig:paw_Kn}
	\end{small}
\end{figure}

The results can be explained by the study in Sections 
\ref{subsec:BMSBP_2} 
and 
\ref{subsec:LMSBP_2} 
where we examined the effect that the dependence between length variables has on $\Var(\bm{P}(A))$, $\Esp[K_n]$ and $\Var(K_n)$. We observed that the ISBp models have the smaller values of $\Esp[K_n]$ and $\Var(K_n)$, which may have prevented them from recovering the true number of clusters in this experiment. This also affected slightly the density estimation in spite of the fact that the ISBp models enjoy the larger values of $\Var(\bm{P}(A))$. On the other extreme  the CDSBp feature considerably larger values of both $\Esp[K_n]$ and $\Var(K_n)$, which lead them to overestimate $K_n$ considerably. Furthermore, such a strong dependence between length variables decreases the degrees of freedom of  $\bm{v}$ to $1$ and this explains the deficiencies observed in the CDSBp mixtures. Now, for any LMSBp and BMSBp with random $\rho < 1$ and $N < \infty$ the degrees of freedom of $\bm{v}$ are infinite as it occurs with the ISBp. Moreover, by placing a prior on $\rho$ and $N$ the models are allowed to learn the dependence of the length variables and hence achieve a satisfactory compromise between $\Var(\bm{P}(A))$, $\Esp[K_n]$ and $\Var(K_n)$. 
Figure \ref{fig:paw_r} illustrates this, here we see that the posterior distribution of the dependence parameter, $\rho$, favours distinct values for distinct marginals choices.

\begin{figure}
	\centering
	\includegraphics[scale=0.7]{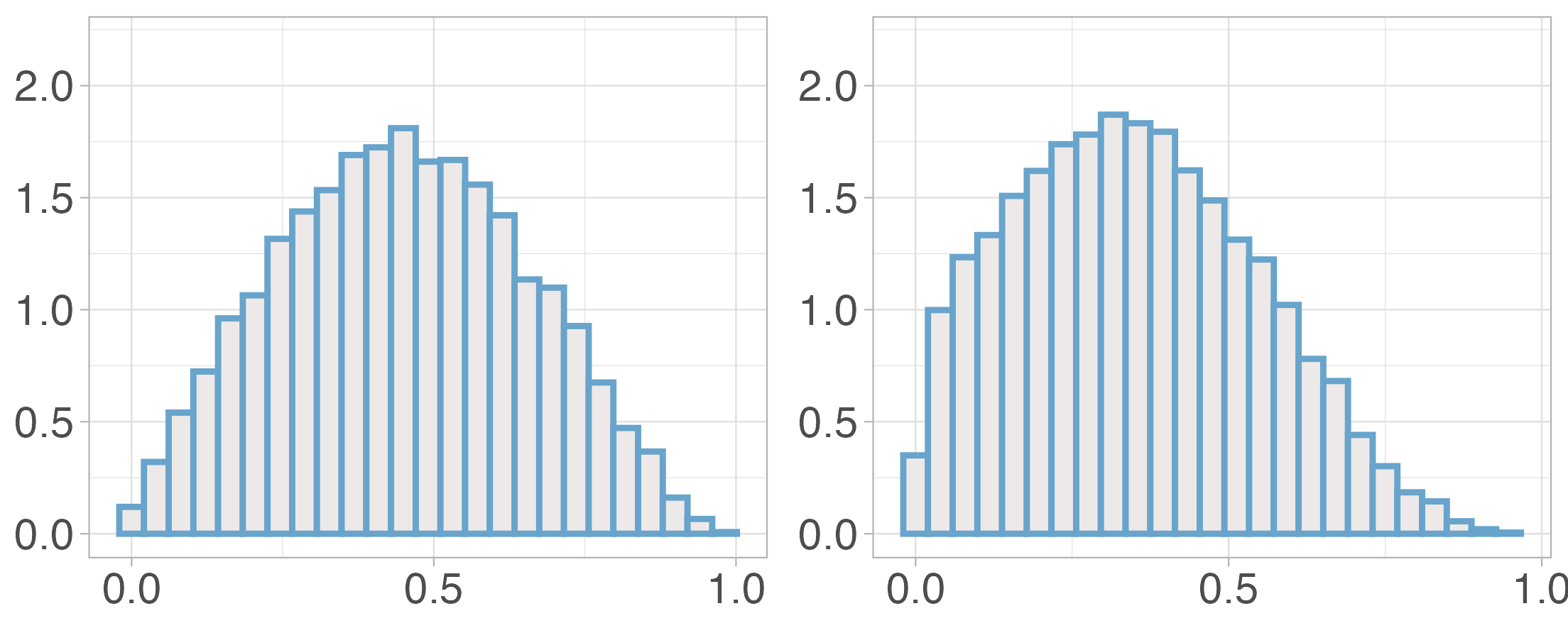}
	\begin{small} 
		\caption{Posterior distribution of $\rho$ for LMSBp with  $\pi_j = \Be(1,\theta)$ marginals (first column) and with $\pi_j = \Be(1-\sigma,\theta+j\sigma)$ marginals (second column).}\label{fig:paw_r}
	\end{small}
\end{figure}

\subsection{Moments of length variables and distribution of allocation variables}\label{sec:app:lv_moments}

In this section we focus on the computation of mixed moments of length variables such as
\[
\Esp\l[\prod_{j=1}^{\kappa} v_j^{a_j} (1-v_j)^{b_j}\r] .
\]
where $a_j$ and $b_j$ are positive constants for $j \geq 1$. This expressions appear in several quantities of interest. For example, using 
\eqref{eq:sb_pick}
and monotone convergence theorem, the tie probability turns out to be
\[
\tau_p = \Esp[\tilde{w}_1] = \Esp[\Esp[\tilde{w}_1\mid \bm{w}]] = \Esp\bigg[\sum_{j=1}^{\infty} w_j^2\bigg] = \sum_{j=1}^{\infty} \Esp[w_j^2] = \sum_{j=1}^{\infty} \Esp\l[v_j^{2} \prod_{l=1}^{j-1} (1-v_l)^{2}\r].
\]
Similarly, the evaluation of mixed moments of length variables is helpful when investigating the joint distribution of allocations variables. By taking expectations in 
\eqref{eq:all_var_cond} 
we obtain the joint mass probability function
\begin{equation}\label{eq:all_var_marg}
	\bm{p}(d_1,\ldots,d_n) =  \Esp\l[\prod_{j=1}^{\kappa} w_j^{a_j}\r] = \Esp\l[\prod_{j=1}^{\kappa} v_j^{a_j} (1-v_j)^{b_j}\r].
\end{equation}
where $\kappa = \max_{i \leq n}d_i$, $a_j = |\{i \leq n: d_i = j\}|$ and $b_j = \sum_{l > j}a_l$. While this joint distribution does not have a direct interpretation is the species sampling processes setting, it is
relevant in situations 
where the stick-breaking decomposition of weights arises.

For general MSBp we get \begin{equation*}
\Esp\l[\prod_{j=1}^{\kappa} v_j^{a_j} (1-v_j)^{b_j}\r] = \int_{[0,1]^{\kappa}}\prod_{j=1}^{\kappa}v_j^{a_j}(1-v_j)^{b_j}\,\bpsi_{\kappa-1}(v_{\kappa-1},\mrm{d}v_\kappa)\cdots \bpsi_1(v_1,\mrm{d}v_2)\,\pi(\mrm{d}v_1).
\end{equation*}
The following propositions provide these moments for the noteworthy subclasses of LMSBp and BMSBp with stationary length variables.
\begin{prop}\label{prop:sBMSB_post}
Let $\bm{v} = (v_j)_{j=1}^{\infty}$ be the length variables of a proper BMSBp with parameters $(N, \bm{\alpha},\bm{\beta})$ s.t. $\alpha_j = \alpha$ and $\beta_j = \beta$, for $j \geq 1$. Then, 
\[
\Esp\l[\prod_{j=1}^{\kappa} v_j^{a_j} (1-v_j)^{b_j}\r] = \sum_{z_1 = 1}^{N} \ldots \sum_{z_\kappa = 1}^N \l\{\prod_{j=1}^{\kappa}\binom{N}{z_j}\frac{(\alpha'_j)_{a_j + z_j}(\beta'_j)_{b_j+N-z_j}}{(\alpha'_j+\beta'_j)_{a_j+b_j+N}}\r\}
\]
where $\alpha'_1 = \alpha$, $\beta'_1 = \beta$,  $\alpha'_j = \alpha+z_{j-1}$ and $\beta'_j = \beta +N -z_{j-1}$, for $j \geq 2$. 
\end{prop}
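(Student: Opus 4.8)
The plan is to exploit the augmentation with the auxiliary variables $\bm{z} = (z_j)_{j=1}^\infty$ introduced just before Proposition~\ref{prop:BMSB_post}, so that the joint law of $(\bm{v}_{[\kappa]},\bm{z}_{[\kappa]})$ has the explicit product form
$\bm{p}(\bm{v}_{[\kappa]},\bm{z}_{[\kappa]}) = \prod_{j=1}^{\kappa}\Be(v_j\mid \alpha'_j,\beta'_j)\Bin(z_j\mid N,\Upsilon_j(v_j))$,
where $\alpha'_1=\alpha$, $\beta'_1=\beta$, $\alpha'_j=\alpha+z_{j-1}$, $\beta'_j=\beta+N-z_{j-1}$. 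Integrating this over $\bm{z}_{[\kappa]}$ recovers the Markov law of $\bm{v}$ with Beta-Binomial transitions (this is exactly the content of App.~\ref{app:prop:BB_trans_marg}, which I may assume). Since all $v_j,z_j$ are nonnegative and the integrand $\prod_j v_j^{a_j}(1-v_j)^{b_j}$ is nonnegative, Tonelli's theorem lets me freely interchange the sum over $\bm{z}_{[\kappa]}$ and the integral over $\bm{v}_{[\kappa]}$. So I would write
\[
\Esp\Bigl[\prod_{j=1}^{\kappa} v_j^{a_j}(1-v_j)^{b_j}\Bigr]
= \sum_{z_1=1}^N\cdots\sum_{z_\kappa=1}^N \int_{[0,1]^\kappa}\prod_{j=1}^{\kappa} v_j^{a_j}(1-v_j)^{b_j}\,\Be(v_j\mid\alpha'_j,\beta'_j)\Bin(z_j\mid N,\Upsilon_j(v_j))\,\mrm{d}\bm{v}_{[\kappa]}.
\]

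The next step is to evaluate the inner $\kappa$-fold integral. The only coupling between consecutive coordinates is through $\Bin(z_j\mid N,\Upsilon_j(v_j))$, which depends on $v_j$ but feeds into $\alpha'_{j+1},\beta'_{j+1}$ only via the \emph{index} $z_j$, not via $v_j$ itself. Hence, conditionally on a fixed vector $(z_1,\ldots,z_\kappa)$, the integral factorizes over $j$. For each $j$ I would combine $v_j^{a_j}(1-v_j)^{b_j}$ with the Beta density $\Be(v_j\mid\alpha'_j,\beta'_j)$ and with the factor $[\Upsilon_j(v_j)]^{z_j}[1-\Upsilon_j(v_j)]^{N-z_j}$ coming from the Binomial pmf. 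Now in the stationary case $\alpha_j=\alpha$, $\beta_j=\beta$ for all $j$, so $F_j=\mathcal{I}_\bullet(\alpha,\beta)$ is the \emph{same} distribution function for every $j$, which forces $\Upsilon_j=F_{j+1}^{-1}\circ F_j$ to be the identity map; therefore $\Upsilon_j(v_j)=v_j$. This collapses the $j$th factor to a plain Beta integral:
\[
\binom{N}{z_j}\int_0^1 v_j^{a_j+z_j}(1-v_j)^{b_j+N-z_j}\,\Be(v_j\mid\alpha'_j,\beta'_j)\,\mrm{d}v_j
= \binom{N}{z_j}\,\frac{\Beta(\alpha'_j+a_j+z_j,\ \beta'_j+b_j+N-z_j)}{\Beta(\alpha'_j,\beta'_j)},
\]
and rewriting the Beta-function ratio via Pochhammer symbols gives precisely $\binom{N}{z_j}(\alpha'_j)_{a_j+z_j}(\beta'_j)_{b_j+N-z_j}/(\alpha'_j+\beta'_j)_{a_j+b_j+N}$. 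Multiplying over $j=1,\ldots,\kappa$ and re-summing over $(z_1,\ldots,z_\kappa)$ yields the stated formula.

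The only genuinely delicate point is justifying the interchange of the infinite-index-free but $\kappa$-fold iterated integral with the finite sums, and checking that the recursive dependence $\alpha'_j=\alpha+z_{j-1}$ is harmless: the factor indexed by $j$ depends on $z_{j-1}$ (through $\alpha'_j,\beta'_j$) and on $z_j$ (through the Binomial weight), so strictly speaking the product does not factorize until the whole vector $\bm{z}_{[\kappa]}$ is frozen — which is exactly why I fix $(z_1,\ldots,z_\kappa)$ first and only then integrate coordinate-by-coordinate. Everything else is bookkeeping: Tonelli applies because all terms are nonnegative (the process is proper, so no measure-zero subtleties arise from $v_j=1$), and the Beta integral identity $\int_0^1 v^{c-1}(1-v)^{d-1}\mrm{d}v=\Beta(c,d)$ together with $(x)_n=\Gamma(x+n)/\Gamma(x)$ does the rest. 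I would also remark that the sums start at $z_j=1$ rather than $z_j=0$ as a harmless typo-or-convention in the statement, since the $z_j=0$ term is included automatically by the Binomial pmf; if the paper intends $z_j\in\{0,\ldots,N\}$ I would simply note the index set and proceed.
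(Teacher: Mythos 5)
Your proposal is correct and follows essentially the same route as the paper: augment with the auxiliary sequence $\bm{z}$, freeze $(z_1,\ldots,z_\kappa)$ so the problem factorizes into coordinate-wise Beta integrals, and convert the resulting Beta-function ratios to Pochhammer symbols (the paper merely reorganizes this as a tower-property computation via $\bm{p}(\bm{v}_{[\kappa]}\mid\bm{z}_{[\kappa]})$ and the joint pmf of $\bm{z}_{[\kappa]}$, which is algebraically the same thing). You are also right that $\Upsilon_j$ is the identity in the stationary case and that the lower summation limit should read $z_j=0$, as in the paper's own derivation.
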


\begin{proof}
We can introduce a sequence of r.v.~$\bm{z} = (z_j)_{j=1}^{\infty}$ such that the joint density function of $\bm{v}_{[m]} = (v_j)_{j=1}^m$ and $\bm{z}_{[m]} = (z_j)_{j=1}^{m}$ is 
\begin{equation*}
\bm{p}(\bm{v}_{[m]},\bm{z}_{[m]}) = \prod_{j=1}^{m}\Be(v_{j}\mid \alpha'_j,\beta'_j)\Bin(z_j\mid N, v_j),
\end{equation*}
for every $m \geq 1$, where $\alpha'_1 = \alpha$, $\beta'_1 = \beta$, $\alpha'_j = \alpha+z_{j-1}$ and $\beta'_j = \beta +N -z_{j-1}$, for $j \geq 2$. In this way, integrating over $\bm{z}$ we recover the Markov process $\bm{v}$ with initial distribution $\Be(\alpha,\beta)$ and transition probability kernel $\bpsi$ given by the suitably simplified version of 
\eqref{eq:BB_trans}. 
Moreover, integrating over $\bm{v}$ we find that $\bm{z}$ is also a Markov process with initial and stationary distribution
\[
\Prob[z_1 = z] = \binom{N}{z}\frac{(\alpha)_z(\beta)_{N-z}}{(\alpha+\beta)_N},
\]
and one step ahead transition probabilities
\[
\Prob[z_{j+1} = z\mid z_j] = \binom{N}{z}\frac{(\alpha + z_j)_z(\beta + N -z_j)_{N-z}}{(\alpha+\beta+N)_N}.
\]
By taking into consideration $\bm{z}$, we get that the elements of $\bm{v}$ are conditionally independent and Beta distributed. Indeed, 
\[
\bm{p}(\bm{v}_{[m]}\mid\bm{z}_{[m]}) = \Be(v_1\mid\alpha+z_1,\beta+N-z_1)\prod_{j=1}^{m-1}\Be(v_{j+1}\mid \alpha +z_j+z_{j+1},\beta+2N-z_j-z_{j+1}).
\]
This way, from \eqref{eq:all_var_marg} we can easily compute
\begin{align*}
\Esp\l[\prod_{j=1}^{\kappa} v_j^{a_j} (1-v_j)^{b_j}\r] & = \Esp\l[\Esp\l[\prod_{j=1}^{\kappa}v_j^{a_j}(1-v_j)^{b_j}\mi \bm{z}_{[\kappa]}\r]\r]\\
& = \Esp\l[\prod_{j=1}^{\kappa}\frac{(\alpha'_j+z_j)_{a_j}(\beta'_j+N-z_j)_{b_j}}{(\alpha'_j+\beta'_j + N)_{a_j+b_j}}\r]\\
& = \sum_{z_1 = 0}^{N}\cdots\sum_{z_{\kappa} = 0}^{N}\l\{\prod_{j=1}^{\kappa}\frac{(\alpha'_j+z_j)_{a_j}(\beta'_j+N-z_j)_{b_j}}{(\alpha'_j+\beta'_j + N)_{a_j+b_j}}\times\binom{N}{z_j}\frac{(\alpha'_j)_{z_j}(\beta'_j)_{N-z_j}}{(\alpha'_j+\beta'_j)_{N}}\r\}\\
& = \sum_{z_1 = 0}^{N}\cdots\sum_{z_{\kappa} = 0}^{N}\l\{\prod_{j=1}^{\kappa}\binom{N}{z_j}\frac{(\alpha'_j)_{a_j+z_j}(\beta'_j)_{b_j+N-z_j}}{(\alpha'_j+\beta'_j)_{a_j+b_j+N}}\r\}
\end{align*}
where $\alpha'_j$, $\beta'_j$ are as above.
\end{proof}

As for stationary length variables of a LMSBp we have the following result.
\begin{prop}\label{prop:sLMSB_post}
Let $\bm{v} = (v_j)_{j=1}^{\infty}$ be the length variables of a proper LMSBp with parameters $(\rho,\bm{\pi})$, where $\pi_j = \pi = \Be(\alpha,\beta)$.  Then, 
\[
\Esp\l[\prod_{j=1}^{\kappa} v_j^{a_j} (1-v_j)^{b_j}\r] = \sum_{(t_0,\ldots,t_r)}\rho^{\kappa-r}(1-\rho)^{r-1}\prod_{i=1}^{r}\dfrac{(\alpha)_{A_i} \, (\beta)_{B_i}}{(\alpha+\beta)_{A_i+B_i}}.
\]
where $A_i = \sum_{j=t_i}^{t_{i+1}-1} a_j$, $B_i = \sum_{j=t_i}^{t_{i+1}-1} b_j$, and the sum ranges over all $(t_1,\ldots,t_r)$ such that $t_1 = 1 < t_2 < \cdots < t_r \leq \kappa$ with $t_{r+1} = \kappa+1$. 
\end{prop}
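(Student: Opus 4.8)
The plan is to exploit the change-point structure of the stationary lazy transition, which in this case reads $\psi(v_j,\cdot) = \rho\,\delta_{v_j} + (1-\rho)\pi$ with $\pi = \Be(\alpha,\beta)$ and $\Upsilon_j$ the identity. Concretely, one can generate $\bm{v}$ by drawing $v_1 \sim \pi$ and, for $j \geq 2$, independently flipping a coin with success probability $\rho$: on a success set $v_j = v_{j-1}$, and on a failure resample $v_j \sim \pi$ independently of the past. Letting $t_1 = 1 < t_2 < \cdots$ denote the successive resampling times and putting $v^*_i = v_{t_i}$, one has $v^*_1,v^*_2,\ldots \iid \pi$ and $v_j = v^*_i$ for all $t_i \leq j < t_{i+1}$. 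This is precisely the decomposition already introduced in Section \ref{subsec:LMSBP_3}; truncating at step $m = \kappa$ and writing $r = \max\{i: t_i \leq \kappa\}$, $t_{r+1} = \kappa+1$, the joint law of the configuration and the fresh draws is $\bm{p}(\bm{v}^*_{[r]},\bm{t}_{[r]}) = \rho^{\kappa - r}(1-\rho)^{r-1}\prod_{i=1}^{r}\Be(v^*_i\mid\alpha,\beta)$, since among the $\kappa-1$ positions $2,\ldots,\kappa$ exactly $r-1$ are resampling times and the remaining $\kappa-r$ are copy times.

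Next I would collapse the target product along blocks. Since $v_j$ is constant and equal to $v^*_i$ on $\{t_i,\ldots,t_{i+1}-1\}$, on the event that the configuration equals $(t_1,\ldots,t_r)$ we have, almost surely,
\[
\prod_{j=1}^{\kappa} v_j^{a_j}(1-v_j)^{b_j} = \prod_{i=1}^{r} (v^*_i)^{A_i}(1-v^*_i)^{B_i}, \qquad A_i = \sum_{j=t_i}^{t_{i+1}-1} a_j, \quad B_i = \sum_{j=t_i}^{t_{i+1}-1} b_j.
\]

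Finally I would take expectations. Conditioning on $\bm{t}_{[r]}$, using independence of the $v^*_i$ and the elementary Beta-moment identity $\Esp[(v^*)^{A}(1-v^*)^{B}] = (\alpha)_A(\beta)_B/(\alpha+\beta)_{A+B}$ for $v^*\sim\Be(\alpha,\beta)$, the conditional expectation factorizes as $\prod_{i=1}^{r} (\alpha)_{A_i}(\beta)_{B_i}/(\alpha+\beta)_{A_i+B_i}$. Summing over all admissible configurations with $t_1 = 1 < t_2 < \cdots < t_r \leq \kappa$, weighted by the probabilities $\rho^{\kappa-r}(1-\rho)^{r-1}$, yields the claimed formula; recalling \eqref{eq:all_var_marg} shows this also equals $\bm{p}(d_1,\ldots,d_n)$, in agreement with the stationary specialization of \eqref{eq:LMSB_post_v*}. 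This mirrors the proof of Proposition \ref{prop:sBMSB_post}, with the change-point times playing the role of the augmenting sequence.

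There is no serious obstacle here; the only point requiring care is the combinatorial bookkeeping, namely verifying that a configuration with $r$ blocks among the first $\kappa$ length variables carries probability exactly $\rho^{\kappa-r}(1-\rho)^{r-1}$ (equivalently, correctly accounting for the deterministic copies on non-resampling positions) and checking that the block sums $A_i,B_i$ telescope as stated. Both are immediate once the change-point representation is in place.
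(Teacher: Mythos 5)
Your proposal is correct and follows essentially the same route as the paper's proof: introduce the change-point times $t_i$, note that conditionally on them the resampled values $v^*_i$ are iid $\Be(\alpha,\beta)$, that the configuration $(t_1,\ldots,t_r)$ has probability $\rho^{\kappa-r}(1-\rho)^{r-1}$, and conclude by factorizing the conditional expectation with the Beta moment identity. The only cosmetic difference is that the paper defines the $t_i$ via $v_j \neq v_{j-1}$ rather than via the generative coin flips, which coincide almost surely since $\pi$ is diffuse.
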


\begin{proof}
Define $t_1 = 1$ and inductively for $i \geq 2$, $t_i = \inf\{j > t_{i-1}: v_j \neq v_{j-1}\}$, so that $\bm{v}^* = (v^*_1,v^*_2,\ldots)$ with $v^*_i = v_{t_i}$ are  the length variables that were sampled independently of the previous ones, and for $t_{i} < j < t_{i+1}$ we have $v_j = v^*_i$. Conditioning on $\bm{t} = (t_i)_{i=1}^{\infty}$, it is easy to see that  elements in $\bm{v}^*$ are iid from $\pi = \Be(\alpha,\beta)$. Thus 
\begin{align*}
\Esp\l[\prod_{j=1}^{\kappa} v_j^{a_j} (1-v_j)^{b_j}\r]  & = \Esp\l[\Esp\l[\prod_{j=1}^{\kappa}v_j^{a_j}(1-v_j)^{b_j}\mi \bm{t}\r]\r]\\
& = \Esp\l[\prod_{i=1}^{r_{\kappa}}\int_{[0,1]}(v)^{A_i}(1-v)^{B_i}\pi(\mrm{d}v)\r]\\
\end{align*}
where $A_i = \sum_{j=t_i}^{t_{i+1}-1} a_j$, $B_i = \sum_{j=t_i}^{t_{i+1}-1} b_j$, $r_{\kappa} = \max\{i \geq 1: t_i \leq \kappa\}$ and setting $t_{r_{\kappa}+1} = \kappa +1$. It also follows from the definition of $\bm{t}$ that
\[
\bm{p}(t_1,\ldots,t_{r_\kappa}) = \rho^{\kappa-r_{\kappa}}(1-\rho)^{r_{\kappa}-1},
\]
which yields
\begin{equation}\label{eq:all_var_LMSB1}
\Esp\l[\prod_{j=1}^{\kappa} v_j^{a_j} (1-v_j)^{b_j}\r]   = \sum_{(t_1,\ldots,t_{r_{\kappa}})}\rho^{\kappa-r_{\kappa}}(1-\rho)^{r_{\kappa}-1}\prod_{i=1}^{r_{\kappa}}\int_{[0,1]}(v)^{A_i}(1-v)^{B_i}\pi(\mrm{d}v).
\end{equation}
where the sum ranges over all sequences $(t_1,\ldots,t_{r_{\kappa}})$ such that $1 = t_1, < t_2 < \cdots < t_{r_\kappa} \leq \kappa$. In particular, as $\pi = \Be(\alpha,\beta)$, the integrals in \eqref{eq:all_var_LMSB1} simplify as
\[
\int_{[0,1]}(v)^{A_i}(1-v)^{B_i}\pi(\mrm{d}v) = \frac{\Gamma(\alpha+\beta)\Gamma(\alpha+A_i)\Gamma(\beta+B_i)}{\Gamma(\alpha+\beta+A_i+B_i)\Gamma(\alpha)\Gamma(\beta)} = \frac{(\alpha)_{A_i}(\beta)_{B_i}}{(\alpha+\beta)_{A_i+B_i}}.
\]
\end{proof}

\subsection{Probability of decreasing weights}\label{sec:app:pr_w_dec}

For MSBw, Theorems 
\ref{theo:MSB_conv}, 
\ref{cor:BMSBp_conv} 
and 
\ref{cor:LMSB_conv} 
suggest that, when the marginals of the length variables are $v_j \sim \Be(1-\sigma,\theta +j\sigma)$ and $\Cov(v_j,v_{j+1}) > 0$, the ordering of the weights $\bm{w}$ is somewhere in between the size-biased random order and the decreasing arrangement. This motivates the computation of 
$\Prob\l[w_{j+1} \leq w_j\r]$, for any $j$, as their values can be interpreted as a sort of measure of closeness 
of the actual order of MSBw to the size-biased random order  or the decreasing one. In this section we compute $\Prob\l[w_{j+1} \leq w_j\r]$ for general MSBw and see how the formula simplifies for BMSBw and LMSBw.

\begin{prop}\label{prop:dec_Ew}
Let $\bm{w}$ be a MSBw  with parameters $(\pi,\bmpsi)$.
\begin{itemize}
\item[\emph{(i)}] If the length variables, $\bm{v}$, are stationary, then $\Esp[w_{j+1}] \leq \Esp[w_j]$, for every $j \geq 1$.
\item[\emph{(ii)}] If the marginal distributions, $\pi_j$, given by 
\eqref{eq:vj_marg} 
satisfy $\pi_j(\{0\}) = 0$, then
\[
\Prob\l[w_{j+1} \leq w_j\r] = \Prob[t > j]\int_{(0,1)}\int_{(0,c(v)]}\bpsi_j(v,\mrm{d}u)\pi_j(\mrm{d}v)
+ \Prob[t \leq j],
\]
for $j \geq 1$, where $t$ is as in Proposition 
\ref{prop:MSB_supp_points} 
and $c(v) = 1\wedge v(1-v)^{-1}$ for $v \in (0,1)$.
\end{itemize}
\end{prop}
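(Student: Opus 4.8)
The plan is to handle the two parts separately, each resting on the stick--breaking identity $w_j=v_j\prod_{i=1}^{j-1}(1-v_i)$ of \eqref{eq:sb} together with, respectively, stationarity and the Markov property.

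For part (i) I would start from $w_{j+1}=v_{j+1}\prod_{i=1}^{j}(1-v_i)$ and drop the first factor: since $0\le 1-v_1\le 1$ one has $w_{j+1}\le v_{j+1}\prod_{i=2}^{j}(1-v_i)$ a.s. The right--hand side is a fixed measurable function of $(v_2,\ldots,v_{j+1})$, which by stationarity has the same law as $(v_1,\ldots,v_j)$, so taking expectations yields $\Esp[w_{j+1}]\le\Esp[v_{j+1}\prod_{i=2}^{j}(1-v_i)]=\Esp[v_{j}\prod_{i=1}^{j-1}(1-v_i)]=\Esp[w_j]$. Once the index shift is set up, this is essentially a one--line computation.

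For part (ii) I would decompose the event $\{w_{j+1}\le w_j\}$ according to the stopping time $t=\inf\{j\ge1:v_j=1\}$. Writing $R_j=\prod_{i=1}^{j-1}(1-v_i)$, so that $w_j=v_jR_j$ and $w_{j+1}=v_{j+1}(1-v_j)R_j$, Proposition~\ref{prop:MSB_supp_points} (applicable since $\pi_j(\{0\})=0$ for every $j$) gives that $w_j>0$ if and only if $j\le t$, a.s. On $\{t\le j\}$ we have $w_{j+1}=0$, so $w_{j+1}\le w_j$ holds trivially; this contributes the term $\Prob[t\le j]$. On $\{t>j\}$ all of $v_1,\ldots,v_j$ lie in $(0,1)$ a.s.\ (they are non-zero because $\pi_i(\{0\})=0$, and different from $1$ because $t>j$), hence $R_j>0$, and cancelling $R_j$ turns $\{w_{j+1}\le w_j\}$ into $\{v_{j+1}(1-v_j)\le v_j\}$; since $v_{j+1}\le1$ this is exactly $\{v_{j+1}\le c(v_j)\}$ with $c(v)=1\wedge v(1-v)^{-1}$. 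Thus $\Prob[w_{j+1}\le w_j]=\Prob[t\le j]+\Prob[\{t>j\}\cap\{v_{j+1}\le c(v_j)\}]$.

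The remaining step, which I expect to be the main obstacle, is the evaluation of $\Prob[\{t>j\}\cap\{v_{j+1}\le c(v_j)\}]$. I would condition on $(v_1,\ldots,v_j)$: the event $\{t>j\}$ is $\sigma(v_1,\ldots,v_j)$--measurable and, by the Markov property, $\Prob[v_{j+1}\le c(v_j)\mid v_1,\ldots,v_j]=\psi_j(v_j,(0,c(v_j)])$ (the endpoint $0$ may be dropped since $\pi_{j+1}(\{0\})=0$ forces $\psi_j(v,\{0\})=0$ for $\pi_j$--a.e.\ $v$). This gives $\Esp[\Ind_{\{t>j\}}\,\psi_j(v_j,(0,c(v_j)])]$. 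The delicate point is the interaction between the indicator $\Ind_{\{t>j\}}$ and the marginal law $\pi_j$ of $v_j$: when the $\pi_j$'s force $t=\infty$ a.s.---in particular in the diffuse setting, where $\pi_j(\{1\})=0$ for every $j$ and hence $\{t>j\}$ has probability one and $\Prob[t\le j]=0$---the indicator can be removed and the expectation collapses to $\int_{[0,1]}\psi_j(v,(0,c(v)])\,\pi_j(\mrm{d}v)=\int_{(0,1)}\int_{(0,c(v)]}\psi_j(v,\mrm{d}u)\,\pi_j(\mrm{d}v)$, the contributions of $v\in\{0,1\}$ vanishing since $\pi_j(\{0\})=\pi_j(\{1\})=0$. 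This is the asserted identity; keeping careful track of the two interval endpoints and of the stopping--time bookkeeping is where the remaining care lies.
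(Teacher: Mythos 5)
Part (i) of your proposal is exactly the paper's argument: drop the factor $(1-v_1)$ and use the stationarity identity $(v_1,\ldots,v_j)\deq(v_2,\ldots,v_{j+1})$; nothing to add there. For part (ii), your decomposition along the stopping time $t$ also matches the paper's: on $\{t\le j\}$ the inequality is automatic, and on $\{t>j\}$ one has $R_j>0$ and cancelling it reduces the event to $\{v_{j+1}\le c(v_j)\}$, giving $\Prob[w_{j+1}\le w_j]=\Prob[t\le j]+\Esp\bigl[\Ind_{\{t>j\}}\,\psi_j(v_j,(0,c(v_j)])\bigr]$, where the Markov property and $\pi_{j+1}(\{0\})=0$ justify replacing the conditional probability by $\psi_j(v_j,(0,c(v_j)])$.

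The gap is the final step. You convert this expression into the stated formula only under the extra hypothesis $\pi_j(\{1\})=0$ for all $j$, i.e.\ when $t=\infty$ a.s., whereas the proposition assumes only $\pi_j(\{0\})=0$ and the displayed identity is asserted with $\Prob[t\le j]$ possibly positive. To pass from $\Esp\bigl[\Ind_{\{t>j\}}\,\psi_j(v_j,(0,c(v_j)])\bigr]$ to the product $\Prob[t>j]\int_{(0,1)}\int_{(0,c(v)]}\psi_j(v,\mathrm{d}u)\,\pi_j(\mathrm{d}v)$ one must be able to replace the conditional law of $v_j$ given $\{t>j\}=\{v_1<1,\ldots,v_j<1\}$ by $\pi_j$ restricted to $(0,1)$; this is precisely the step the paper's proof performs in one line (it writes $\Prob[w_{j+1}\le w_j]=\Prob[t>j]\,\Prob[v_{j+1}\le v_j(1-v_j)^{-1}\mid t>j]+\Prob[t\le j]$ and then substitutes the unconditional double integral), and it is not a consequence of the Markov property alone, since $\{t>j\}$ involves $v_1,\ldots,v_{j-1}$, which are correlated with $v_j$. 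So your route coincides with the paper's up to this point, and your rewriting in terms of the indicator actually exposes the issue clearly, but by explicitly leaving the factorization open outside the case $\Prob[t>j]=1$ (the diffuse setting), your proposal does not establish the proposition in the generality in which it is stated; you would need either to supply an argument for that factorization or to restrict to $\Prob[t>j]=1$, which is weaker than the claim.
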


\begin{proof}
(i): Since the process $\bm{v}$ is stationary, $(v_1,\ldots,v_j)$ is equal in distribution to $(v_2,\ldots,v_{j+1})$ for every $j \geq 1$, which implies
\[
\Esp[w_{j}] = \Esp\l[v_j\prod_{i=1}^{j-1}(1-v_i)\r] = \Esp\l[v_{j+1}\prod_{i=2}^{j}(1-v_i)\r] \geq \Esp\l[v_{j+1}\prod_{i=1}^{j}(1-v_i)\r] = \Esp[w_{j+1}].
\]
(ii): By Theorem
\ref{theo:sMSB_1}
 (ii) we know $w_j = 0$ if and only if $j > t = \inf\{j \geq 1: v_j = 1\}$, a.s. Hence, under the event $\{t \leq j\}$, we know  $w_{j+1} = 0$ which yields $w_{j+1} \leq w_j$ holds. On the other hand  under the event $\{t > j\}$ we get $v_j < 1$, thus it follows from the stick-breaking decomposition that $w_{j+1} \leq w_j$ if and only if $v_{j+1} \leq v_j(1-v_j)^{-1}$. Putting everything together we obtain
\begin{align*}
\Prob[w_{j+1}\leq w_j] & = \Prob[t > j]\Prob[v_{j+1} \leq v_j(1-v_j)^{-1}\mid t > j] + \Prob[t \leq j] \\
& = \Prob[t > j]\int_{(0,1)}\int_{(0,c(v)]}\bpsi_j(v,\mrm{d}u)\psi_j(\mrm{d}v) + \Prob[t \leq j]
\end{align*}
where $c(v) = 1 \wedge v(1-v)^{-1}$.
\end{proof}

For BMSBw, the probability that consecutive weights are decreasingly ordered is derived in the following proposition.
\begin{prop}\label{prop:BMSBw_dec_w}
Let $\bm{w}$ be an BMSBw with parameters $(N,\bm{\alpha},\bm{\beta})$ where $\alpha_j = 1-\sigma$ and $\beta_j = \theta+j\sigma$, for every $j \geq 1$ and  $0 \leq \sigma < 1$ and $\theta > -\sigma$. Then
\[
\Prob[w_{j+1}\leq w_j] = \sum_{z= 0}^{N} \binom{N}{z}\Esp\big[\cl{I}_{c(v)}(\alpha_j+z,\beta_j+N-z)\Upsilon_j(v)^{z}(1-\Upsilon_j(v))^{N-z}\big]
\]
where $v \sim \Be(\alpha_j,\beta_j)$, $c$ is as in Proposition \ref{prop:dec_Ew} and, $\cl{I}_x(a,b)$ is the regularized incomplete  Beta function. In particular, if $\sigma = 0$
\[
\Prob[w_{j+1}\leq w_j] = \sum_{z= 0}^{N} \binom{N}{z}\frac{z!\theta}{(\theta+N-z)_{z+1}}\Esp\big[\cl{I}_{c(v)}(1+z,\theta+N-z)\big].
\]
\end{prop}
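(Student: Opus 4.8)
The plan is to reduce the statement to the integral formula of Proposition~\ref{prop:dec_Ew}(ii), then evaluate that integral by inserting the Beta--Binomial transition \eqref{eq:BB_trans}, and finally, in the case $\sigma=0$, to evaluate the resulting expectation by absorbing the binomial weight into the $\Be(1,\theta)$ density through a change of measure.

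First, I would record that the marginals of the underlying length variables are exactly $\pi_j=\Be(\alpha_j,\beta_j)=\Be(1-\sigma,\theta+j\sigma)$. Since $\sigma<1$ and $\theta>-\sigma$ (so $1-\sigma>0$ and $\theta+j\sigma>0$), each $\pi_j$ is absolutely continuous with respect to Lebesgue measure, hence diffuse, and in particular $\pi_j(\{0\})=\pi_j(\{1\})=0$ for all $j\ge1$. By Theorem~\ref{theo:BMSBp}(ii) (equivalently Proposition~\ref{prop:MSB_supp_points}) this gives $w_j>0$ a.s.\ for all $j$, i.e.\ $t=\inf\{j\ge1:v_j=1\}=\infty$ a.s., so $\Prob[t>j]=1$ and $\Prob[t\le j]=0$. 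Proposition~\ref{prop:dec_Ew}(ii) then reduces the claim to
\[
\Prob[w_{j+1}\le w_j]=\int_{(0,1)}\int_{(0,c(v)]}\psi_j(v,\mrm{d}u)\,\pi_j(\mrm{d}v),\qquad c(v)=1\wedge v(1-v)^{-1}.
\]

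Next, I would insert the Beta--Binomial kernel \eqref{eq:BB_trans}. The integrand is non-negative, so Tonelli's theorem interchanges the finite sum over $z$ with the inner integral, and recalling $\cl{I}_x(a,b)=\int_0^x\Be(\mrm{d}u\mid a,b)$ (with $\cl{I}_1(a,b)=1$, which covers the range $v\ge\tfrac12$ where $c(v)=1$), this gives
\[
\int_{(0,c(v)]}\psi_j(v,\mrm{d}u)=\sum_{z=0}^{N}\binom{N}{z}\Upsilon_j(v)^{z}\bigl(1-\Upsilon_j(v)\bigr)^{N-z}\,\cl{I}_{c(v)}(\alpha_j+z,\beta_j+N-z),
\]
the parameters of the incomplete beta being those of the Beta factors in \eqref{eq:BB_trans} (note $\alpha_{j+1}=1-\sigma=\alpha_j$ here). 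Integrating against $\pi_j=\Be(\alpha_j,\beta_j)$ and pulling the finite sum out of the expectation yields the first displayed identity of the proposition; in fact this part goes through for any BMSBw with diffuse marginals.

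Finally, for $\sigma=0$ one has $\alpha_j=1$, $\beta_j=\theta$ and $\Upsilon_j=\mathrm{id}$, so the $z$-th summand is $\binom{N}{z}\Esp[v^{z}(1-v)^{N-z}\cl{I}_{c(v)}(1+z,\theta+N-z)]$ with $v\sim\Be(1,\theta)$. Writing the $\Be(1,\theta)$ density as $\theta(1-v)^{\theta-1}$, the product $\theta\,v^{z}(1-v)^{\theta+N-z-1}$ equals $\theta\,B(1+z,\theta+N-z)$ times the $\Be(1+z,\theta+N-z)$ density, where $B(1+z,\theta+N-z)=z!\,\Gamma(\theta+N-z)/\Gamma(\theta+N+1)=z!/(\theta+N-z)_{z+1}$ because $\Gamma(\theta+N+1)=\Gamma(\theta+N-z)(\theta+N-z)_{z+1}$; hence
\[
\Esp_{v\sim\Be(1,\theta)}\bigl[v^{z}(1-v)^{N-z}\cl{I}_{c(v)}(1+z,\theta+N-z)\bigr]=\frac{z!\,\theta}{(\theta+N-z)_{z+1}}\,\Esp_{v\sim\Be(1+z,\theta+N-z)}\bigl[\cl{I}_{c(v)}(1+z,\theta+N-z)\bigr],
\]
and summing over $z=0,\dots,N$ produces the stated $\sigma=0$ formula. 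The argument is largely mechanical; the two points needing care are checking in the first step that $t=\infty$ a.s.\ (so the boundary term $\Prob[t\le j]$ drops out), and the reparametrization bookkeeping in the last step—in particular correctly identifying the constant $B(1+z,\theta+N-z)=z!/(\theta+N-z)_{z+1}$. I do not anticipate a genuine obstacle.
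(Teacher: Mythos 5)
Your proposal is correct and follows essentially the same route as the paper's own proof: reduce to the double integral of Proposition~\ref{prop:dec_Ew}(ii) by noting the Beta marginals are diffuse (so $t=\infty$ a.s.\ and the boundary term vanishes), insert the Beta--Binomial kernel \eqref{eq:BB_trans} and exchange the finite sum with the integrals, and for $\sigma=0$ absorb $v^{z}(1-v)^{N-z}$ into the $\Be(1,\theta)$ density, your constant $\theta\,B(1+z,\theta+N-z)=z!\,\theta/(\theta+N-z)_{z+1}$ agreeing with the paper's $(1)_z(\theta)_{N-z}/(1+\theta)_N$. One small bookkeeping point: the kernel $\psi_j$ actually produces $\cl{I}_{c(v)}(\alpha_{j+1}+z,\beta_{j+1}+N-z)$, and while $\alpha_{j+1}=\alpha_j$ here, $\beta_{j+1}=\beta_j+\sigma$ differs from the $\beta_j$ appearing in the stated formula when $\sigma>0$ — a discrepancy already present between the proposition's statement and the paper's own proof, and immaterial in the $\sigma=0$ case.
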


\begin{proof}
Being that $\pi_j = \Be(\alpha_j,\beta_j)$ is non-atomic for every $j \geq 1$, by Propositions 
\ref{prop:MSB_supp_points}
and \ref{prop:dec_Ew} we get
\[
\Prob[w_{j+1}\leq w_j] = \int_{(0,1)}\int_{(0,c(v)]}\bpsi_j(v,\mrm{d}u)\pi_j(\mrm{d}v) 
\]
where $\psi_j$ is the Beta-Binomial transition at time $j$ and  $c$ is as in Proposition \ref{prop:dec_Ew}. Hence,
\begin{align*}
\Prob[w_{j+1}\leq w_j] & = \int_0^1\int_0^{c(v)} \sum_{z=0}^{N} \Be(\mrm{d}u\mid \alpha_{j+1} +z,\beta_{j+1}+N-z)\Bin(z\mid N,\Upsilon_j(v)) \Be(\mrm{d}v\mid \alpha_j,\beta_j)\\
& = \sum_{z=0}^{N}\binom{N}{z}\int_{0}^{1} \cl{I}_{c(v)}(\alpha_{j+1}+z,\beta_{j+1}+N-z)\Upsilon_j(v)^z  (\Upsilon_j(v))^{N-z}\Be(\mrm{d}v\mid \alpha_j,\beta_j)\\
& = \sum_{z=0}^{N}\binom{N}{z}\Esp\l[\cl{I}_{c(v)}(\alpha_{j+1}+z,\beta_{j+1}+N-z)\Upsilon_j(v)^z (\Upsilon_j(v))^{N-z}\r]
\end{align*}
with $v \sim \Be(\alpha_j,\beta_j)$, $\cl{I}_x(a,b) = \int_{[0,x]}\Be(\mrm{d}v\mid a,b)$ and $\Upsilon_j(v) = \cl{I}_{\cl{I}_v(\alpha_j,\beta_j)}^{-1}(\alpha_{j+1},\beta_{j+1})$. In particular if $\alpha_j = \alpha$ and $\beta_j = \beta$, $\Upsilon_j$ is the identity function and we get
\begin{align*}
\Prob[w_{j+1}\leq w_j] & = \sum_{z=0}^{N}\binom{N}{z}\frac{\Gamma(\alpha+\beta)}{\Gamma(\alpha)\Gamma(\beta)}\int_{0}^{1} \cl{I}_{c(v)}(\alpha+z,\beta+N-z)v^{\alpha+z-1}(1-v)^{\beta+N-z-1}\mrm{d}v\\
& = \sum_{z=0}^{N}\binom{N}{z}\frac{(\alpha)_z(\beta)_{N-z}}{(\alpha+\beta)_N}\Esp\l[\cl{I}_{c(v)}(\alpha+z,\beta+N-z)\r],
\end{align*}
where $(x)_n = \prod_{i=0}^{n-1}(x+i)$. Furthermore, if $\alpha=1$ we obtain
\[
\Prob[w_{j+1}\leq w_j] = \sum_{z=0}^{N} \binom{N}{z} \frac{z! \beta}{(\beta+N-z)_{z+1}}\Esp\l[\cl{I}_{c(v)}(1+z,\beta+N-z)\r],
\]
\end{proof}

Next we compute $\Prob[w_{j+1}\leq w_j]$ for LMSBw.
\begin{prop}\label{prop:LMSBw_dec_w}
Let $\bm{w}$ be a LMSBw with parameters $(\rho,\bm{\pi})$, where $\pi_j = \Be(1-\sigma,\theta+j\sigma)$. Then
\[
\Prob[w_{j+1}\leq w_j] = \rho + (1-\rho)\Esp[\mathcal{I}_{c(v_j)}(1-\sigma,\theta+(j+1)\sigma)]
\]
where, as before, $\mathcal{I}_x(a,b)$ is the regularized incomplete Beta function, $v_j \sim \Be(1-\sigma,\theta+j\sigma)$ and $c$ is as in Proposition \ref{prop:dec_Ew}.
\end{prop}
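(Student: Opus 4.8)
The plan is to specialize the general identity of Proposition~\ref{prop:dec_Ew}(ii) to the lazy transition \eqref{eq:SS_trans}. Since each $\pi_j = \Be(1-\sigma,\theta+j\sigma)$ is diffuse, we have $\pi_j(\{0\}) = \pi_j(\{1\}) = 0$, so Proposition~\ref{prop:MSB_supp_points} gives $t = \inf\{j\ge 1: v_j = 1\} = \infty$ a.s., hence $\Prob[t>j] = 1$ and $\Prob[t\le j] = 0$. Consequently Proposition~\ref{prop:dec_Ew}(ii) reduces to
\[
\Prob[w_{j+1}\le w_j] = \int_{(0,1)}\int_{(0,c(v)]}\psi_j(v,\mrm{d}u)\,\pi_j(\mrm{d}v),
\]
with $c(v) = 1\wedge v(1-v)^{-1}$ and $\psi_j$ the lazy transition at time $j$.

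First I would substitute $\psi_j(v,\cdot) = \rho\,\delta_{\Upsilon_j(v)} + (1-\rho)\pi_{j+1}$ from \eqref{eq:SS_trans} and split the inner integral as
\[
\int_{(0,c(v)]}\psi_j(v,\mrm{d}u) = \rho\,\Ind\{\Upsilon_j(v)\le c(v)\} + (1-\rho)\,\pi_{j+1}\big((0,c(v)]\big).
\]
The mixing-component term is immediate: by definition of the regularized incomplete Beta function and of $\pi_{j+1}$, one has $\pi_{j+1}((0,c(v)]) = \mathcal{I}_{c(v)}(\alpha_{j+1},\beta_{j+1}) = \mathcal{I}_{c(v)}(1-\sigma,\theta+(j+1)\sigma)$.

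The key step — and the main obstacle — is to show that the Dirac-component indicator is trivial, i.e.\ $\Upsilon_j(v)\le c(v)$ for $\pi_j$-a.e.\ $v\in(0,1)$. I would obtain this from Corollary~\ref{cor:CDSB_PY}: for the CDSBp with these Pitman--Yor marginals the length variables satisfy $v_{j+1} = \Upsilon_j(v_j)$ with $v_j\sim\pi_j$, and the associated CDSBw are a.s.\ non-increasing, $w_{j+1}\le w_j$. As in the proof of Proposition~\ref{prop:dec_Ew}(ii), on $\{v_j<1\}$ the inequality $w_{j+1}\le w_j$ is equivalent to $v_{j+1}\le v_j(1-v_j)^{-1}$; combined with the trivial bound $\Upsilon_j(v_j)\le 1$ this yields $\Upsilon_j(v_j)\le c(v_j)$ a.s., hence $\Ind\{\Upsilon_j(v)\le c(v)\} = 1$ for $\pi_j$-a.e.\ $v$. (Alternatively one could verify $\Upsilon_j(v)\le c(v)$ directly from the monotonicity properties of $\Upsilon_j$, but importing Corollary~\ref{cor:CDSB_PY} is cleaner.)

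Putting the two pieces together, the inner integral equals $\rho + (1-\rho)\mathcal{I}_{c(v)}(1-\sigma,\theta+(j+1)\sigma)$ for $\pi_j$-a.e.\ $v$; integrating over $v\sim\pi_j = \Be(1-\sigma,\theta+j\sigma)$ gives
\[
\Prob[w_{j+1}\le w_j] = \rho + (1-\rho)\,\Esp\big[\mathcal{I}_{c(v_j)}(1-\sigma,\theta+(j+1)\sigma)\big],
\]
which is the claim. Beyond the measurability bookkeeping, no analytic work is needed; the only nontrivial ingredient is the pointwise monotonicity $\Upsilon_j(v)\le c(v)$.
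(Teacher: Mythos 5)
Your proof is correct and follows essentially the same route as the paper's: reduce via Proposition~\ref{prop:dec_Ew}(ii) using diffuseness of the marginals (so $t=\infty$ a.s.), split the lazy transition into its Dirac and mixing components, and observe that $\Upsilon_j(v)\le c(v)$ so the Dirac term contributes $\rho$. The only cosmetic difference is that the paper obtains $\Upsilon_j(v)\le v\le c(v)$ pointwise from the monotonicity $F_j\le F_{j+1}$ established in the proof of Corollary~\ref{cor:CDSB_PY}, whereas you import the a.s.\ decreasingness of the CDSBw from that corollary's statement and translate it back into $\Upsilon_j(v_j)\le c(v_j)$ a.s.; both rest on the same underlying fact and either version suffices for the integral.
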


\begin{proof}
From Proposition \ref{prop:dec_Ew} we know
\begin{equation*}\label{eq:LMSB_w_dec_proof}
\Prob[w_{j+1}\leq w_j] = \Prob[t > j]\int_{(0,1)}\int_{(0,c(v)]}\bpsi_j(v,\mrm{d}u)\pi_j(\mrm{d}v) + \Prob[t \leq j].
\end{equation*}
Since $\pi_j$ is diffuse for every $j \geq 1$ from Propostion 
\ref{prop:MSB_supp_points}
we get $t = \infty$ a.s. Thus $\Prob[t > j] = 1$, and we obtain
\begin{align*}
\Prob[w_{j+1}\leq w_j] & = \int_{(0,1)}\int_{(0,c(v)]}\bpsi_j(v,\mrm{d}u)\pi_j(\mrm{d}v)\\
& = \int_{(0,1)}\l\{\rho\,\delta_{\Upsilon_j(v)}((0,c(v)]) + (1-\rho) \int_{(0,c(v)]}\pi_{j+1}(\mrm{d}u)\r\}\pi_j(\mrm{d}v)\\
\end{align*}
with $c(v) = 1 \wedge v(1-v)^{-1}$. Now, in the proof of Corollary 
\ref{cor:CDSB_PY}
 we noted
\[
F_{j}(v) = \cl{I}_v(1-\sigma,\theta+j\sigma) \leq \cl{I}_v(1-\sigma,\theta+(j+1)\sigma) = F_{j+1}(v). 
\]
which yields $\Upsilon_j(v) =  F_{j+1}^{-1}(F_{j}(v)) \leq v \leq 1 \wedge v(1-v)^{-1} = c(v)$. That is $\delta_{\Upsilon_j(v)}((0,c(v)]) = 1$ for every $v \in (0,1)$ and we get
\[
\Prob[w_{j+1}\leq w_j] = \rho + (1-\rho)\int_{[0,1]}F_{j+1}(c(v))\, \pi_j(\mrm{d}v) = \rho + (1-\rho) \Esp[F_{j+1}((c(v))]
\]
with $v \sim \pi_j = \Be(1-\sigma,\theta+j\sigma)$.
\end{proof}

For the LMSBw in Proposition \ref{prop:LMSBw_dec_w}, $\Prob[w_{j+1} \leq w_j]$  is a linear combination between $1$ and a quantity completely determined by $\sigma,\theta$ and $j$, and $\rho$ measures how close to $1$ this probability is. In particular, $\Prob[w_{j+1} \leq w_j]$ increases as $\rho $ gets closer to $1$. By comparing this result to the BMSBw case (see Proposition \ref{prop:BMSBw_dec_w}) it is apparent that in the BMSB model  $\Prob[w_{j+1} \leq w_j]$ is a much more involved function of the tuning parameter $N$. In fact, even if $\sigma = 0$, it is quite hard to prove that the probability in question increases as $N$ does. This represents another theoretical advantage of LMSBp over BMSBp.

\medskip

\section{Proof of the results in Section \ref{sec:MSB}}\label{sec:app:results}

\smallskip

\subsection{Preliminaries}\label{sec:app:Markov}

In general for a fixed probability kernel $\bpsi$, it is possible that there exist no invariant probability measures, that there exists exactly one invariant probability measure or that there are infinitely many invariant probability measures. It is easy to see that if the probability measures $\lambda_1$ and $\lambda_2$ are both invariant for $\bpsi$, then for every $t \in [0,1]$, the mixture, $t\lambda_1+(1-t)\lambda_2$, is also invariant for $\bpsi$, hence the set of invariant probability measures with respect to $\bpsi$, denoted by $\cl{I}[\bpsi]$, is convex. We say that a probability measure $\lambda \in \cl{I}[\bpsi]$ is $\bpsi$-ergodic if it is extremal in $\cl{I}[\bpsi]$, that is, it can not be decomposed as a mixture of other probability measures $\lambda_i$ belonging to $\cl{I}[\bpsi]$. It can be shown that two  $\bpsi$-ergodic invariant probability measures, $\lambda$ and $\nu$ are either identical or mutually singular (meaning that the exist disjoint $A, B \in \B_{[0,1]}$ such that $A \cup B = [0,1]$, $\lambda(A) = 0$, and $\nu(B) = 0$). With this concepts in mind we can recall the ergodic theorem for Markov processes

\begin{theo}[Birkhoff; Ergodic theorem for stationary Markov processes]\label{theo:ergodic}
	Consider an stationary Markov process, $(v_j)_{j = 1}^{\infty}$, with and one-step transition probability kernel, $\bpsi$. Then
	\begin{equation*}\label{eq:ergodic0}
		\lim_{n \to \infty} \frac{1}{n} \sum_{j = 1}^nf(v_j) = \Esp[f(v_1)\mid \cl{I}],
	\end{equation*}
	where $\cl{I}$ is the invariant $\sigma$-algebra. In particular if the initial and stationary distribution, $\pi$, is $\bpsi$-ergodic, then
	\begin{equation}\label{eq:ergodic}
		\lim_{n \to \infty} \frac{1}{n} \sum_{j = 1}^n f(v_j) = \Esp[f(v_1)],
	\end{equation}
	a.s.~for every measurable function. Moreover, if $\cl{I}[\bpsi] = \{\pi\}$, \eqref{eq:ergodic} holds.
\end{theo}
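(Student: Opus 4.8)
The plan is to reduce the statement to the classical pointwise ergodic theorem for measure-preserving transformations by realizing the chain on path space. Let $\Omega=[0,1]^{\N}$ carry the product $\sigma$-algebra and the law $\Prob_\pi$ of the stationary chain $(v_j)_{j\ge 1}$ (one-step kernel $\psi$, invariant initial law $\pi$), let $p_1\colon\Omega\to[0,1]$ be the first-coordinate map, and $T\colon\Omega\to\Omega$ the left shift. Stationarity of $(v_j)$ is exactly the assertion that $T$ preserves $\Prob_\pi$, and $f(v_j)=(f\circ p_1)\circ T^{\,j-1}$; hence for integrable $f$ Birkhoff's theorem yields $\frac1n\sum_{j=1}^n f(v_j)\to\Esp[f(v_1)\mid\cl{I}_T]$ $\Prob_\pi$-a.s.\ and in $L^1$, where $\cl{I}_T$ is the $\sigma$-algebra of $T$-invariant events. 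Identifying $\cl{I}$ with $\cl{I}_T$ gives \eqref{eq:ergodic0}; the abstract pointwise ergodic theorem itself I would simply cite.

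\textbf{From $\psi$-ergodicity of $\pi$ to triviality of $\cl{I}_T$.} For \eqref{eq:ergodic} it suffices to show that when $\pi$ is extremal in $\cl{I}[\psi]$ the $\sigma$-algebra $\cl{I}_T$ is $\Prob_\pi$-trivial, so the conditional expectation above reduces to $\Esp[f(v_1)]$. The one substantive input is that, under the stationary measure, every $A\in\cl{I}_T$ is $\Prob_\pi$-a.s.\ of the form $\{v_1\in S\}$ for a $\psi$-absorbing set $S$ (i.e.\ $\psi(x,S)=1$ for $\pi$-a.e.\ $x\in S$): this comes from the ergodic decomposition of the state space into mutually singular ergodic components (recall from the Preliminaries that distinct $\psi$-ergodic invariant measures are mutually singular), with $A$ corresponding to a union of such components. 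Since $A^c$ also lies in $\cl{I}_T$, $S^c$ is $\psi$-absorbing too, so both $\pi(\cdot\mid S)$ and $\pi(\cdot\mid S^c)$ belong to $\cl{I}[\psi]$. If $0<\Prob_\pi(A)<1$, i.e.\ $0<\pi(S)<1$, then $\pi=\pi(S)\,\pi(\cdot\mid S)+\pi(S^c)\,\pi(\cdot\mid S^c)$ is a non-trivial convex combination of two \emph{distinct} invariant probability measures, contradicting extremality of $\pi$. Hence $\Prob_\pi(A)\in\{0,1\}$ for every $A\in\cl{I}_T$, and \eqref{eq:ergodic} follows. (Alternatively, one may quote the ergodic-decomposition theorem for stationary Markov chains, which directly identifies the extreme points of $\cl{I}[\psi]$ with the $T$-ergodic laws $\Prob_{\pi'}$ on path space.)

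\textbf{The final assertion, and main obstacle.} If $\cl{I}[\psi]=\{\pi\}$, then $\pi$ is the unique — hence trivially extremal — element of the convex set $\cl{I}[\psi]$, i.e.\ $\psi$-ergodic, so \eqref{eq:ergodic} holds by the previous step. The step I expect to require the most care is the structural fact in the second paragraph: that $T$-invariant events on path space are, under the stationary measure, carried by an absorbing partition of the state space. Making this precise for an arbitrary $[0,1]$-valued chain (as opposed to the familiar finite/countable recurrent-class picture) is the delicate point, whereas the remaining ingredients are either the textbook pointwise ergodic theorem or elementary bookkeeping.
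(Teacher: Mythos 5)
The paper does not prove this statement at all: it is quoted as background (Birkhoff's pointwise ergodic theorem specialized to stationary Markov processes), with the reader pointed to standard references (Feller; Kallenberg), and the only place the paper works with it is in the proof of Theorem \ref{theo:sMSB_1}. So the comparison is between your sketch and the standard textbook argument that the paper implicitly invokes. Your route is exactly that standard argument and is essentially sound: realize the chain on path space, observe that stationarity means the shift $T$ preserves $\Prob_\pi$, apply Birkhoff to $f\circ p_1$, and then show that extremality of $\pi$ in $\cl{I}[\bpsi]$ forces $\Prob_\pi$-triviality of the shift-invariant $\sigma$-algebra (with uniqueness of the invariant measure handled as a trivial special case of extremality, as you say). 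Two caveats. First, the convergence requires $f$ to be $\pi$-integrable (or nonnegative); since the state space is $[0,1]$ and the paper only ever uses bounded $f$, this is harmless, but ``every measurable function'' should be read with that proviso. Second, the one substantive step — that modulo $\Prob_\pi$-null sets every $T$-invariant event is of the form $\{v_1\in S\}$ with $S$ absorbing — is the real content, and justifying it ``from the ergodic decomposition into mutually singular components'' is close to circular, since the ergodic decomposition for Markov chains is usually derived from precisely this correspondence. The clean self-contained argument is via bounded harmonic functions: for $A$ shift-invariant, the Markov property gives $\Prob[A\mid v_1,\ldots,v_n]=h(v_n)$ for a bounded $\bpsi$-harmonic $h$, martingale convergence yields $h(v_n)\to\Ind_A$ a.s., and $S=\{h=1\}$ is absorbing with $A=\{v_1\in S\}$ a.s.; from there your convexity contradiction with $\pi(\cdot\mid S)$ and $\pi(\cdot\mid S^c)$ goes through verbatim. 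Alternatively, citing this equivalence (extremality of $\pi$ iff triviality of the invariant $\sigma$-algebra) is entirely in keeping with how the paper itself treats the theorem.
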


A detailed review of Markov processes can be found in the work by \cite{Fell68,K02}.

\subsection{Proof of Theorem \ref{theo:sMSB_1}}\label{app:theo:sMSB_1}

\begin{proof}
First recall that $\sum_{j=1}^{\infty}w_j = 1$ a.s.~if and only if one of the conditions in \eqref{eq:sum1_MSB} 
holds.

(i): This is the same condition on the right side of 
\eqref{eq:sum1_MSB}.

(ii): By the Ergodic theorem for Markov processes (cf. Theorem \ref{theo:ergodic}) we know
\[
\lim_{n \to \infty} n^{-1}\sum_{j =1}^n v_j = \Esp[v_1\mid \cl{I}],
\]
where $\cl{I}$ is the invariant $\sigma$-algebra. If $\Prob[v_1 = 0] = \pi(\{0\}) = 0$, we get $v_1 > 0$ a.s., which yields $\Esp[v_1\mid \cl{I}] > 0$ a.s. Otherwise, if $\Prob[\Esp[v_1\mid \cl{I}] \leq 0] > 0$, from the definition of conditional expectation, we find that
\[
0 < \int_{\{\Esp[v_1\mid \cl{I}] \leq 0\}} v_1 \, \mrm{d}\Prob = \int_{\{\Esp[v_1\mid \cl{I}] \leq 0\}} \Esp[v_1\mid \cl{I}]\, \mrm{d}\Prob \leq 0,
\]
which is contradiction. Thus  $\Esp[v_1\mid \cl{I}] > 0$ a.s.~and we obtain the almost sure limit, $\lim_{n \to \infty} \sum_{j \leq n}v_j = \infty$, which is the  condition on the left side of 
\eqref{eq:sum1_MSB}.

(iii): First note that if $\pi = \delta_0$, then $\Prob[v_j = 0] = 1$, and trivially $\sum_{j \geq 1}w_j = 0$ a.s. Otherwise, if $\pi \neq \delta_0$, by the Ergodic theorem for Markov processes (cf. Theorem \ref{theo:ergodic}), and being that $\pi$ is $\psi$-ergodic, we get
\[
\lim_{n \to \infty} n^{-1}\sum_{j =1}^nv_j = \Esp[v_1] > 0.
\]
This implies $\lim_{n \to \infty} \sum_{j \leq n}v_j = \infty$ a.s., which is the first condition in 
\eqref{eq:sum1_MSB}.
\end{proof}

\subsection{Proof of Proposition 
\ref{prop:MSB_supp_points}}\label{app:prop:MSB_supp_points}

Define $t = \inf\{j \geq 1: v_j = 1\}$. Note that $w_j = v_j\prod_{i<j}(1-v_i) = 0$ if and only if $v_j = 0$, or $v_i = 1$ for some $i < j$. In turn, this occurs if and only if $v_j = 0$ or $t < j$. Thus, under the event $\bigcap_{j = 1}^{\infty}\{v_j \neq 0\}$, we get $w_j = 0$ if and only if $t < j$. Now, being that $\pi_j(\{0\}) = \Prob[v_j = 0] =0$, we get
\[
\Prob\l[\bigcap_{j \geq 1}\{v_j \neq 0\}\r] = 1,
\]
which yields $w_j > 0$ if and only $j \leq t$, a.s.~or in other words, $t = |\{j: w_j > 0\}|$, a.s. If we further have that $\pi_j(\{1\}) = \Prob[v_j = 1] = 0$, we obtain
\[
\Prob[t = \infty] = \Prob\l[\bigcap_{j \geq 1}\{v_j < 1\}\r] = 1,
\]
which, in particular, yields $0 \leq \Prob[w_j = 0] \leq \Prob[v_j = 0]+ \Prob[t < \infty] = 0$, for each $j \geq 1$, and we can conclude
\[
\Prob\l[\bigcap_{j \geq 1}\{w_j > 0\}\r] = 1.
\]

\subsection{Proof of Proposition 
\ref{prop:MSB_full_supp}}\label{sub:app:prop:MSB_supp_points}

\begin{proof}[Proof of Proposition 
\ref{prop:MSB_full_supp}]

Fix $\varepsilon > 0$, by hypothesis, there exists $\delta > 0$ such that
\[
\Prob\l[\bigcap_{j=1}^m (\delta < v_j < \varepsilon)\r] = \int_{\delta}^{\varepsilon}\int_{\delta}^{\varepsilon} \cdots \int_{\delta}^{\varepsilon}  \bpsi_{m-1}(v_{m-1},\mrm{d}v_m) \cdots \bpsi_1(v_1,\mrm{d}v_2)\pi(\mrm{d}v_1) > 0,
\]
for every $n \geq 1$. The first statement follows from the proof of Proposition 7 by \cite{BO14}. We now turn to prove the remaining part of the assertion, so say that there exist $0 < \epsilon < 1$ such that $(0,\epsilon)$ is contained in the support of $\pi$ and $\bpsi_j(v,\cdot)$, for every $v \in (0,\epsilon)$ and $j \geq 1$. Fix $\varepsilon > 0$ and define $\varepsilon' = \min\{\varepsilon,\epsilon\} >0$ and $\delta = \varepsilon'/2 > 0$. Then $(\delta,\varepsilon') \subset (0,\epsilon)$ and we get
\[
\pi((\delta,\varepsilon')) > 0 \quad \text{and} \quad \int_{\delta}^{\varepsilon'} \bpsi_j(v,\mrm{d}u) = \bpsi(v,(\delta,\varepsilon')) > 0,
\]
for every $v \in (\delta,\varepsilon')$. By a simple induction argument, this yields
\begin{align*}
\Prob\l[\bigcap_{j=1}^m (\delta < v_j < \varepsilon)\r] 
\geq \int_{\delta}^{\varepsilon'}\int_{\delta}^{\varepsilon'} \cdots \int_{\delta}^{\varepsilon'}  \bpsi_{m-1}(v_{m-1},\mrm{d}v_m) \cdots \bpsi_1(v_1,\mrm{d}v_2)\pi(\mrm{d}v_1) > 0
\end{align*}
for every $m \geq 1$, and the proof follows from Proposition 7 of \cite{BO14}. See also \cite{GvdV17}.
\end{proof}

\subsection{Proof of Theorem 
\ref{theo:MSB_conv}}\label{app:theo:MSB_conv} 

Before proving Theorem 
\ref{theo:MSB_conv},
we introduce some notation and definitions. Given a sequence of probability measures $(P_n)_{n \geq 1}$,  $P_n$ is said to converge weakly to $P$, denoted by $P_n \wto P$, whenever $P_n(f) = \int_\X f \mrm{d}P_n\to \int_\X f \mrm{d}P = P(f)$ for every continuous and bounded function $f:\X \to \R$. This is equivalent to $x_n \dto x$ when $x_n \sim P_n$ and $x \sim P$. Similarly, the random probability measures $\bm{P},\bm{P}_1,\bm{P}_2, \ldots$, are said to converge weakly a.s.~when the r.v.~$\bm{P}_n(f) = \int_\X f \mrm{d}\bm{P}_n\to \int_\X f \mrm{d}\bm{P} =  \bm{P}(f)$ a.s.~for every continuous and bounded function $f:\X \to \R$. If instead $\bm{P}_n(f) \dto \bm{P}(f)$, for every $f$ as before, we say $\bm{P}_n$ converges weakly in distribution to $\bm{P}$, denoted by $\bm{P}_n \dwto \bm{P}$. In particular if $\mathbb{X}$ is also a locally compact second countable Hausdorff space $(lcscH)$, $\bm{P}_n \dwto \bm{P}$ is equivalent to $\bm{P}_n \dto \bm{P}$ \citep[cf. Theorem 16.16 and Theorem 4.19 by][respectively]{K02,K17}. Further details on convergence of (random) probability measures can be found in \cite{B68} and \cite{K02,K17}.

Next we state lemmas that will aid the proof of Theorem 
\ref{theo:MSB_conv}.

\begin{lem}\label{lem:dL_cont_map}
Let
$\Delta_{\infty}$ denote the infinite dimensional simplex. 
The mapping from $\X^{\infty} \times \Delta_{\infty}$ into $\cl{P}(\X)$ defined by $[(x_1,x_2,\ldots),(w_1,w_2,\ldots)] \mapsto \sum_{j=1}^{\infty}w_j\delta_{x_j}$ is continuous with respect to the product and weak topologies.
\end{lem}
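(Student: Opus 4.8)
The plan is to prove sequential continuity, which suffices here because $\X^\infty$, $\Delta_\infty$ and $\cl{P}(\X)$ are all metrizable (the first two as countable products of metric spaces carrying the product topology, the last since $\X$ is separable metric). So I would fix sequences $x^{(n)} = (x_j^{(n)})_{j\ge 1} \to x = (x_j)_{j\ge 1}$ in $\X^\infty$, which means $x_j^{(n)} \to x_j$ in $\X$ for every $j$, and $w^{(n)} = (w_j^{(n)})_{j\ge 1} \to w = (w_j)_{j\ge 1}$ in $\Delta_\infty$, which means $w_j^{(n)} \to w_j$ for every $j$; write $P_n = \sum_{j\ge 1} w_j^{(n)}\delta_{x_j^{(n)}}$ and $P = \sum_{j\ge 1} w_j\delta_{x_j}$. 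By the very definition of weak convergence recalled above, it is enough to show $P_n(f) \to P(f)$ for every bounded continuous $f:\X \to \R$.

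The first step is to upgrade coordinatewise convergence of the weights to convergence in $\ell^1$: since the coordinates of $w^{(n)}$ and $w$ are nonnegative and sum to one (so that the images are genuine probability measures), and $w_j^{(n)} \to w_j$ for each $j$, Scheffé's lemma — applied on $\N$ equipped with the counting measure — gives $\sum_{j\ge 1}|w_j^{(n)} - w_j| \to 0$. With this in hand, for bounded continuous $f$ I would estimate
\begin{align*}
|P_n(f) - P(f)|
&\le \sum_{j\ge 1} w_j^{(n)}\,\bigl|f(x_j^{(n)}) - f(x_j)\bigr| + \Bigl|\sum_{j\ge 1}(w_j^{(n)} - w_j)\,f(x_j)\Bigr| \\
&\le \sum_{j\ge 1} w_j\,\bigl|f(x_j^{(n)}) - f(x_j)\bigr| + 3\,\|f\|_\infty \sum_{j\ge 1}|w_j^{(n)} - w_j|,
\end{align*}
where the second line replaces $w_j^{(n)}$ by $w_j$ inside the first sum and collects all the resulting errors, each of which is bounded by $\|f\|_\infty$ times the $\ell^1$-distance of the weight vectors. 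The last term vanishes as $n\to\infty$ by the Scheffé step. For the first term, $|f(x_j^{(n)}) - f(x_j)| \to 0$ for each fixed $j$ by continuity of $f$, while these quantities are dominated by the constant $2\|f\|_\infty$, which is integrable with respect to the \emph{fixed} discrete probability measure $\sum_{j\ge 1} w_j\delta_j$ on $\N$; hence dominated convergence gives $\sum_{j\ge 1} w_j\bigl|f(x_j^{(n)}) - f(x_j)\bigr| \to 0$. Therefore $P_n(f) \to P(f)$, which proves the claim.

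The only genuine obstacle is that both the atoms and the weights vary with $n$, so one cannot apply dominated convergence directly to $P_n$; the decomposition above circumvents this by first freezing the weights at their limit $w$ — at a cost controlled uniformly by $\|f\|_\infty$ times the $\ell^1$ weight-error, which is exactly what Scheffé's lemma makes small — and only then invoking dominated convergence against the fixed reference measure $\sum_{j\ge 1} w_j\delta_j$. Everything else is routine, and the argument uses nothing about $\X$ beyond being a metric space.
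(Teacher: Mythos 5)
Your proof is correct. It differs from the paper's argument in how the passage from coordinatewise convergence to convergence of the integrals is organized: the paper applies the generalized dominated convergence theorem directly to the terms $w_j^{(n)} f(x_j^{(n)})$, using the \emph{varying} dominating sequence $\|f\|_\infty\, w_j^{(n)}$, whose pointwise limits are $\|f\|_\infty\, w_j$ and whose sums are all equal to $\|f\|_\infty$; you instead split the error into an atom part and a weight part, dispose of the weight part by upgrading $w^{(n)}\to w$ to $\ell^1$ convergence via Scheff\'e's lemma, and then handle the atom part with ordinary dominated convergence against the fixed measure $\sum_j w_j\delta_j$. The two routes rest on the same mechanism (Scheff\'e-type arguments are what underlie the generalized DCT), but yours is more self-contained, making explicit the quantitative bound $3\|f\|_\infty\sum_j|w_j^{(n)}-w_j|$, while the paper's is more compact by invoking the stronger convergence theorem as a black box. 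Two small points worth keeping in mind: both arguments, yours and the paper's, use that the weights sum to exactly one (otherwise Scheff\'e fails and the image need not be a probability measure, so this is forced by the statement anyway); and your opening remark that sequential continuity suffices because all three spaces are metrizable is a step the paper leaves implicit, so it is good that you flagged it.
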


\begin{proof}[Proof of Lemma \ref{lem:dL_cont_map}]

Let $w = (w_1,w_2,\ldots)$, $w^{(n)}= \big(w^{(n)}_1,w^{(n)}_2,\ldots\big)$ be elements of the infinite dimensional simplex, $\Delta_{\infty}$, and  $x = (x_1,x_2,\ldots)$, $x^{(n)}=\big(x^{(n)}_1,x^{(n)}_2,\ldots\big)$, be elements of $\X^{\infty}$, such that $w_j^{(n)} \to w_j$ and  $x^{(n)}_j \to x_j$, for every $j \geq 1$. Define $P^{(n)} = \sum_{j =1}^{\infty}w^{(n)}_j\delta_{x^{(n)}_j}$ and $P = \sum_{j =1}^{\infty}w_j\delta_{x_j}$. Fix a continuous and bounded function $f: \X \to \R$. Then
\[
w^{(n)}_j\delta_{x^{(n)}_j}(f) = w^{(n)}_jf\l(x^{(n)}_j\r)\to w_jf(x_j) = w_j\delta_{x_j}(f),
\]
for every $j \geq 1$. Since $f$ is bounded, there exist $M$ such that $|f|\leq M$, hence 
\[
\l|w^{(n)}_j\delta_{x^{(n)}_j}(f)\r| = w^{(n)}_j\l|f\l(x^{(n)}_j\r)\r|\leq w^{(n)}_jM,
\]
for every $n \geq 1$, and $j \geq 1$. Evidently, $Mw^{(n)}_j \to Mw_j$, and $\sum_{j=1}^{\infty}Mw^{(n)}_j = M = \sum_{j \geq 1}Mw_j$. Thus, by the general Lebesgue dominated convergence theorem, we obtain
\[
P^{(n)}(f) = \sum_{j= 1}^{\infty}w^{(n)}_j f\l(x^{(n)}_j\r) \to  \sum_{j =1}^{\infty}w_jf(x_j) = P(f),
\]
that is $P^{(n)} \wto P$.
\end{proof}

\begin{lem}[Coupling]\label{lem:coup}
Let $\bm{x},\bm{x}_1,\bm{x}_2,\ldots$ be random elements taking values in $\X$ such that $\bm{x}_n \dto \bm{x}$. Then there exist a probability space and some random elements with $\bm{z}_n \deq \bm{x}_n$, $\bm{z} \deq \bm{x}$, and $\bm{z}_n \to \bm{z}$ a.s.
\end{lem}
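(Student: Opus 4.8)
The plan is to recognize the statement as the \emph{Skorokhod representation theorem} and to exploit that $(\X,d)$ is a complete separable metric space. Write $\mu_n=\cl{L}(\bm x_n)$ and $\mu=\cl{L}(\bm x)$, so that $\mu_n\wto\mu$, and take as ambient probability space the canonical one $\Omega=((0,1),\cl{B}_{(0,1)},\mathrm{Leb})$ with $U$ the identity map, $U\sim\Un(0,1)$. As a warm-up and prototype I would first treat the scalar case $\X=\R$: with quantile functions $Q_n(u)=\inf\{t:\mu_n((-\infty,t])\ge u\}$ and $Q$ defined analogously, a change-of-variables argument gives $Q_n(U)\deq\mu_n$ and $Q(U)\deq\mu$; since $\mu_n\wto\mu$ forces the distribution functions to converge at every continuity point of the limit, one checks that $Q_n(u)\to Q(u)$ at every continuity point of the monotone function $Q$, i.e.\ off a countable, hence Lebesgue-null, set, so $Q_n(U)\to Q(U)$ a.s.

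For a general Polish $\X$ the idea is to discretize $\X$ into countably many small cells and transport mass along $(0,1)$. Using separability, for each $m\ge 1$ I would build a countable Borel partition $\X=\bigsqcup_i B_{m,i}$ with $\mathrm{diam}(B_{m,i})<2^{-m}$ and $\mu(\partial B_{m,i})=0$: around every point one can choose a small radius whose sphere is $\mu$-null (only countably many radii carry positive mass), extract a countable subcover by the Lindel\"of property, disjointify, and then pass to common refinements to make the partitions nested in $m$ (finite intersections of $\mu$-continuity sets are again $\mu$-continuity sets). Fix representatives $x_{m,i}\in B_{m,i}$. By the portmanteau theorem $\mu_n(B_{m,i})\to\mu(B_{m,i})$ for all $m,i$. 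On $(0,1)$ I would then lay down, for each $m$, intervals $\{J_{m,i}\}$ with $|J_{m,i}|=\mu(B_{m,i})$, nested in $m$ and ordered by a fixed enumeration inside each cell of the coarser level, and likewise intervals $\{J^n_{m,i}\}$ with $|J^n_{m,i}|=\mu_n(B_{m,i})$; the coherent ordering together with $\mu_n(B_{m,i})\to\mu(B_{m,i})$ forces the endpoints of $J^n_{m,i}$ to converge to those of $J_{m,i}$. Setting $\xi^n_m=x_{m,i}$ on $J^n_{m,i}$ and $\xi_m=x_{m,i}$ on $J_{m,i}$, the laws of $\xi^n_m$ and $\xi_m$ converge weakly to $\mu_n$ and $\mu$ respectively as $m\to\infty$.

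To finish, fix $n$: since the $(m+1)$-st partition refines the $m$-th, $d(\xi^n_m,\xi^n_{m+1})<2^{-m}$ pointwise, so by completeness of $\X$ the sequence $(\xi^n_m)_m$ converges pointwise to some $\bm z_n$, which then has law $\mu_n$; similarly $\xi_m\to\bm z$ with $\bm z\deq\mu$. For every $\omega$ outside the countable set of all interval endpoints (over every level $m$, every $n$, and the limit partition), for $n$ large $\omega$ lies in $J^n_{m,i}$ and in $J_{m,i}$ for the \emph{same} index $i$, so $d(\xi^n_m(\omega),\xi_m(\omega))<2^{-m}$; combined with $d(\xi^n_m,\bm z_n)\le 2^{-m+1}$ and $d(\xi_m,\bm z)\le 2^{-m+1}$ this gives $\limsup_n d(\bm z_n(\omega),\bm z(\omega))\le 5\cdot 2^{-m}$, and letting $m\to\infty$ yields $\bm z_n(\omega)\to\bm z(\omega)$. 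The main obstacle is the bookkeeping in the middle paragraph: arranging the two families of intervals on $(0,1)$ so that they are simultaneously nested in $m$ and satisfy $J^n_{m,i}\to J_{m,i}$, which is exactly what makes the cell-indices eventually agree; everything else (completeness to pass $m\to\infty$, portmanteau for the cell probabilities, the scalar quantile lemma) is routine. Alternatively, since the paper already relies on Kallenberg, one may simply invoke the Skorokhod / Dudley--Wichura representation theorem \citep[see, e.g.,][]{B68,K02} and omit the construction.
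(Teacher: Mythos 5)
Your proposal is correct: what you have reconstructed is the Skorokhod--Dudley--Wichura representation theorem for a Polish space, proved by the standard device of nested partitions into $\mu$-continuity cells of small diameter together with a mass-transport along $(0,1)$. The paper, however, does not prove this lemma at all --- it simply invokes \cite{K02} (Theorem 4.30), exactly the shortcut you mention in your last sentence, so the ``official'' route is a citation while yours is a self-contained construction. Your argument would go through, but two pieces of bookkeeping that you wave at deserve a word. First, the claim that the endpoints of $J^n_{m,i}$ converge to those of $J_{m,i}$ is not a termwise statement about a finite sum in general: with countably many cells per level, the left endpoint of $J^n_{m,i}$ is an infinite partial sum of cell masses, and one must observe that the nested, coherently ordered arrangement lets this sum be regrouped into a \emph{finite} sum of masses of coarser cells (each converging by portmanteau), plus finitely many level-$m$ terms; this is where the ``only finitely many predecessors within each parent'' property of the enumeration is actually used. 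Second, cells with $\mu(B_{m,i})=0$ but $\mu_n(B_{m,i})>0$ produce degenerate limit intervals; this is harmless for the a.s.\ convergence (you only ever argue at $\omega$ lying in the interior of a limit interval of positive length, a set of full Lebesgue measure), but it is worth saying so explicitly, since the identification $\cl{L}(\bm z_n)=\mu_n$ does require that the discretizations $\xi^n_m$ converge in law to $\mu_n$ as $m\to\infty$, which holds because $d(\xi^n_m,\bm z_n)\le 2^{-m+1}$ pointwise. In a paper that already leans on Kallenberg for Lemmas \ref{lem:cont_map} and \ref{lem:conv_rs}, the citation is clearly the economical choice; your construction buys self-containedness at the price of exactly this interval bookkeeping.
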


\noindent Assume also $(\Y,\B_{\Y})$ to be a complete and separable metric space with corresponding Borel $\sigma$--field.
\begin{lem}[Continuous mappings]\label{lem:cont_map}
Let $\bm{x},\bm{x}_1,\bm{x}_2,\ldots$ be random elements taking values in $\X$ such that $\bm{x}_n \dto \bm{x}$, as $n \to \infty$, and consider some measurable mapping $f,f_1,f_2,\ldots:\X \to \Y$ satisfying $f_n(x_n) \to f(x)$ for every $x_n \to x$ in $\mathbb{X}$. Then $f_n(\bm{x}_n) \dto f(\bm{x})$.
\end{lem}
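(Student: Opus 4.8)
The plan is to derive this ``extended continuous mapping'' statement from the Skorokhod coupling of Lemma~\ref{lem:coup}, which reduces the convergence in distribution $\bm{x}_n \dto \bm{x}$ to an almost sure convergence along a suitably chosen version. First I would apply Lemma~\ref{lem:coup}: since $\X$ is a complete separable metric space, there exist, on a common probability space, random elements $\bm{z}_n$ and $\bm{z}$ with $\bm{z}_n \deq \bm{x}_n$ for every $n$, $\bm{z} \deq \bm{x}$, and $\bm{z}_n \to \bm{z}$ almost surely.

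Next I would transfer this convergence through the maps $f_n$. On the almost sure event $\{\bm{z}_n \to \bm{z}\}$, for each sample point $\omega$ in this event the sequence $x_n := \bm{z}_n(\omega)$ converges to $x := \bm{z}(\omega)$ in $\X$, so the hypothesis that $f_n(x_n) \to f(x)$ whenever $x_n \to x$ yields $f_n(\bm{z}_n(\omega)) \to f(\bm{z}(\omega))$ in $\Y$. Hence $f_n(\bm{z}_n) \to f(\bm{z})$ almost surely. Here $f_n(\bm{z}_n)$ and $f(\bm{z})$ are genuine $\Y$-valued random elements, being compositions of the measurable maps $f_n$, $f$ with measurable maps; and almost sure convergence implies convergence in distribution in any metric space (apply dominated convergence to $\Esp[h(f_n(\bm{z}_n))]$ for $h$ bounded and continuous), so $f_n(\bm{z}_n) \dto f(\bm{z})$.

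Finally I would push the equalities in law through the maps: because $\bm{z}_n \deq \bm{x}_n$ and $f_n$ is measurable, the images satisfy $f_n(\bm{z}_n) \deq f_n(\bm{x}_n)$, and likewise $f(\bm{z}) \deq f(\bm{x})$. Combining this with the previous step gives $f_n(\bm{x}_n) \dto f(\bm{x})$, which is the assertion.

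The argument is short, and there is no genuine analytic obstacle: the one point requiring care is the applicability of the Skorokhod coupling in Lemma~\ref{lem:coup}, which is precisely why the standing completeness and separability assumptions on $\X$ (and, automatically in this setting, the tightness of the law of $\bm{x}$) are needed, together with the routine bookkeeping that $f_n$ and $f$ are measurable so that all the displayed compositions are well-defined random elements.
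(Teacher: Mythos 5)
Your argument is correct: the Skorokhod coupling from Lemma~\ref{lem:coup} gives $\bm{z}_n\deq\bm{x}_n$, $\bm{z}\deq\bm{x}$ with $\bm{z}_n\to\bm{z}$ a.s.; the deterministic hypothesis (valid for \emph{every} convergent sequence in $\X$) applies pointwise on that almost sure event, so $f_n(\bm{z}_n)\to f(\bm{z})$ a.s., hence in distribution, and measurability of $f_n,f$ lets you push the equalities in law through to conclude $f_n(\bm{x}_n)\dto f(\bm{x})$. Note, however, that the paper does not prove this lemma at all: it simply cites \cite{K02} (Theorem 4.27), just as Lemma~\ref{lem:coup} is cited as Theorem 4.30 there. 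So your proof is a legitimate, self-contained alternative rather than a reproduction of the paper's argument, and there is no circularity in deriving the continuous-mapping lemma from the coupling lemma, since the Skorokhod representation is proved independently. For comparison, Kallenberg's own proof of the extended continuous mapping theorem is a direct portmanteau-type argument that avoids the coupling and covers the slightly more general hypothesis where $f_n(x_n)\to f(x)$ is only required for $x$ in a measurable set $C$ with $\Prob[\bm{x}\in C]=1$ (your coupling argument also extends to that case by intersecting two almost sure events); what your route buys is brevity, given that Lemma~\ref{lem:coup} is already on hand, at the price of invoking separability (completeness is not essential, and your aside about tightness is not actually needed for the representation --- on the paper's Polish spaces it is automatic by Ulam's theorem anyway).
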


The proof of Lemmas  \ref{lem:coup} and \ref{lem:cont_map} can be found in \cite{K02} (Theorems 4.30 and 4.27, respectively).

\begin{lem}\label{lem:joint_conv_rcd}
Consider some random elements $\bm{x},\bm{x}_1,\bm{x}_2,\ldots$ and $\bm{y},\bm{y}_1,\bm{y}_2,\ldots$ taking values in $\mathbb{X}$ and $\mathbb{Y}$, respectively. Let $\pi$ be the distribution of $\bm{x}$ and $\pi_n$ that of $\bm{x}_n$. Also consider some regular versions of the conditional distributions $\bpsi(\bm{x},\cdot) = \Prob[\bm{y} \in \cdot\mid \bm{x}]$ and $\bpsi_n(\bm{x}_n,\cdot) = \Prob[\bm{y}_n \in \cdot \mid \bm{x}_n]$. If $\pi_n \wto \pi$ and for each $x_n \to x$, $\bpsi_n(x_n,\cdot) \wto \bpsi(x,\cdot)$, then $(\bm{x}_n,\bm{y}_n) \dto (\bm{x},\bm{y})$.
\end{lem}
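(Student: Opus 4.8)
The plan is to check the definition of convergence in distribution on the product space directly: since $\mathbb{X}\times\mathbb{Y}$ is again a complete separable metric space, $(\bm{x}_n,\bm{y}_n)\dto(\bm{x},\bm{y})$ is equivalent to $\Esp[f(\bm{x}_n,\bm{y}_n)]\to\Esp[f(\bm{x},\bm{y})]$ for every bounded continuous $f:\mathbb{X}\times\mathbb{Y}\to\R$. First I would fix such an $f$ and condition on the first coordinate: using that $\bpsi_n(\bm{x}_n,\cdot)$ is a regular version of $\Prob[\bm{y}_n\in\cdot\mid\bm{x}_n]$, one gets $\Esp[f(\bm{x}_n,\bm{y}_n)]=\Esp[g_n(\bm{x}_n)]=\int_{\mathbb{X}}g_n\,\mrm{d}\pi_n$, where $g_n(x):=\int_{\mathbb{Y}}f(x,y)\,\bpsi_n(x,\mrm{d}y)$ is measurable (joint measurability of $f$ and of the kernel) and bounded by $\|f\|_\infty$, and likewise $\Esp[f(\bm{x},\bm{y})]=\int_{\mathbb{X}}g\,\mrm{d}\pi$ with $g(x):=\int_{\mathbb{Y}}f(x,y)\,\bpsi(x,\mrm{d}y)$. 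So the statement reduces to $\int_{\mathbb{X}}g_n\,\mrm{d}\pi_n\to\int_{\mathbb{X}}g\,\mrm{d}\pi$.

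The crux is then the ``continuous convergence'' $g_n(x_n)\to g(x)$ for every $x_n\to x$ in $\mathbb{X}$. To see it, let $\xi_n\sim\bpsi_n(x_n,\cdot)$ and $\xi\sim\bpsi(x,\cdot)$; by hypothesis $\xi_n\dto\xi$. The functions $h_n(\cdot):=f(x_n,\cdot)$ and $h(\cdot):=f(x,\cdot)$ on $\mathbb{Y}$ are continuous, uniformly bounded by $\|f\|_\infty$, and satisfy $h_n(y_n)\to h(y)$ whenever $y_n\to y$ (joint continuity of $f$ together with $x_n\to x$). Hence Lemma \ref{lem:cont_map} gives $h_n(\xi_n)\dto h(\xi)$, and since these variables are uniformly bounded their expectations converge, i.e. $g_n(x_n)=\Esp[h_n(\xi_n)]\to\Esp[h(\xi)]=g(x)$.

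With this in hand the proof closes quickly. From $\pi_n\wto\pi$ we have $\bm{x}_n\dto\bm{x}$, and we have just shown that the uniformly bounded measurable maps $g_n$ converge continuously to $g$; so a second application of Lemma \ref{lem:cont_map} yields $g_n(\bm{x}_n)\dto g(\bm{x})$, whence $\int_{\mathbb{X}}g_n\,\mrm{d}\pi_n=\Esp[g_n(\bm{x}_n)]\to\Esp[g(\bm{x})]=\int_{\mathbb{X}}g\,\mrm{d}\pi$ by uniform boundedness. Since $f$ was arbitrary, $(\bm{x}_n,\bm{y}_n)\dto(\bm{x},\bm{y})$.

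The only step that is not purely formal is the continuous convergence of $g_n$, and it is also where the hypothesis really bites: a naive estimate of $g_n(x_n)-g(x)$ splits off a term $\int[f(x_n,y)-f(x,y)]\,\bpsi_n(x_n,\mrm{d}y)$ in which the first slot of $f$ also moves, and $f$ need not be uniformly continuous on all of $\mathbb{X}\times\mathbb{Y}$; what makes this harmless is that $\bpsi_n(x_n,\cdot)$ converges weakly, hence is tight, so only the behaviour of $f$ on $K\times L$ matters, with $K=\{x\}\cup\{x_m\}$ and $L$ a suitable compact subset of $\mathbb{Y}$ — this is exactly the phenomenon packaged inside Lemma \ref{lem:cont_map}, and it is why ordinary pointwise convergence of the kernels would not suffice. (An alternative route would first establish convergence for product test functions $f=f_1\otimes f_2$, where $g_n(x_n)=f_1(x_n)\int f_2\,\mrm{d}\bpsi_n(x_n,\cdot)$ converges trivially, then deduce $\bm{x}_n\dto\bm{x}$ and $\bm{y}_n\dto\bm{y}$, hence tightness of the pairs, and identify every subsequential limit through its product integrals; this pushes the use of Prokhorov to the end rather than into Lemma \ref{lem:cont_map}.)
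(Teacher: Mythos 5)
Your proposal is correct and follows essentially the same route as the paper's proof: condition on the first coordinate, prove continuous convergence of $x\mapsto\int f(x,y)\,\bpsi_n(x,\mrm{d}y)$ via the continuous-mapping lemma applied to $\bpsi_n(x_n,\cdot)\wto\bpsi(x,\cdot)$, then apply that lemma once more together with $\pi_n\wto\pi$ and bounded (dominated) convergence of expectations. The only cosmetic difference is that the paper makes the passage from convergence in distribution of uniformly bounded variables to convergence of their means explicit through the coupling lemma, which you invoke implicitly.
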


\begin{proof}[Proof of Lemma \ref{lem:joint_conv_rcd}]
Let $g:\mathbb{X}\times\mathbb{Y} \to \R$ be a continuous and bounded function. Define the measurable mappings $f,f_1,f_2,\ldots:\X \to \R$ by
\[
f(x) = \int_{\mathbb{Y}} g(x,y)\bpsi(x,\mrm{d}y) \quad \text{ and } \quad f_n(x) = \int_{\mathbb{Y}} g(x,y)\bpsi_n(x,\mrm{d}y).
\]
First we will prove that 
\begin{equation}\label{eq:fn(sn)tof(s)}
f_n(x_n) \to f(x) \quad \text{as} \quad x_n \to x.
\end{equation}
So let $x_n \to x$ in $\mathbb{X}$, and choose some random elements $\bm{z} \sim \bpsi(x,\cdot)$ and $\bm{z}_n \sim \bpsi_n(x_n,\cdot)$, this way, $\bm{z}_n \dto \bm{z}$, by hypothesis. As $g$ is continuous we have that $h_n(y_n) = g(x_n,y_n) \to g(x,y) = h(y)$ for every $y_n \to y$ in $\mathbb{Y}$. Hence, Lemma \ref{lem:cont_map} yields $h_n(\bm{z}_n) \dto h(\bm{z})$. Since $g$ is bounded, the random variables $h_n(\bm{z}_n)$ and $h(\bm{z})$ are uniformly bounded. Thus, by Lemma \ref{lem:coup} and dominated Lebesgue convergence theorem, we obtain $f_n(x_n) = \Esp[h_n(\bm{z}_n)] \to \Esp[h(\bm{z})] = f(x)$. This proves \eqref{eq:fn(sn)tof(s)}, which together with the hypothesis, $\pi_n \wto \pi$, and Lemma \ref{lem:cont_map} yield $f_n(\bm{x}_n) \dto f(\bm{x})$. Using once again the fact that $g$ is bounded, Lemma \ref{lem:coup} and dominated convergence theorem we get
\[
\int_{\mathbb{X}}\int_{\mathbb{Y}} g(x,y)\bpsi_n(x,\mrm{d}y)\pi_n(\mrm{d}x) = \Esp[f_n(\bm{x}_n)] \to \Esp[f(\bm{x})] = \int_{\mathbb{X}}\int_{\mathbb{Y}} g(x,y)\bpsi(x,\mrm{d}y)\pi(\mrm{d}x),
\]
which is equivalent to $\Esp[g(\bm{x}_n,\bm{y}_n)] \to \Esp[g(\bm{x},\bm{y})]$. Since $g$ was chosen arbitrarily this yields $(\bm{x}_n,\bm{y}_n) \dto (\bm{x},\bm{y})$.
\end{proof}

\begin{lem}[Random sequences]\label{lem:conv_rs}
Let $\bm{x}^{(n)} = \big(\bm{x}_j^{(n)}\big)_{j=1}^{\infty}$ and $\bm{x}= (\bm{x}_j)_{j=1}^{\infty}$ be random random elements in $\X^{\infty}$. If
\[
\big(\bm{x}^{(n)}_1,\ldots,\bm{x}^{(n)}_m\big) \dto (\bm{x}_1,\ldots,\bm{x}_m) 
\]
for every $m \geq 1$, it follows that $\bm{x}^{(n)} \dto \bm{x}$.
\end{lem}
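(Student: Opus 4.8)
The plan is to reduce weak convergence in $\X^{\infty}$ (with the product topology) to convergence of expectations of bounded Lipschitz test functions, and then to approximate any such test function by one depending on only finitely many coordinates, so that the finite-dimensional hypothesis can be applied directly.

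Concretely, I would first fix a complete metric $d$ generating the topology of $\X$ and metrize $\X^{\infty}$ by the product metric $d_{\infty}(x,y)=\sum_{j\ge 1}2^{-j}\bigl(d(x_j,y_j)\wedge 1\bigr)$, which induces the product topology. By a standard portmanteau argument — approximating the indicator of a closed set $F$ from above by the bounded Lipschitz functions $\bigl(1-m\,d_{\infty}(\cdot,F)\bigr)^{+}$ and letting $m\to\infty$ — it suffices to show $\Esp\bigl[f(\bm{x}^{(n)})\bigr]\to\Esp\bigl[f(\bm{x})\bigr]$ for every bounded $d_{\infty}$-Lipschitz $f:\X^{\infty}\to\R$, say $|f|\le M$ with Lipschitz constant $L$.

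Next, fix a reference point $a=(a_1,a_2,\ldots)\in\X^{\infty}$ and, for each $m\ge 1$, set $f_{m}(x)=f(x_{1},\ldots,x_{m},a_{m+1},a_{m+2},\ldots)$. Since $d_{\infty}\bigl(x,(x_{1},\ldots,x_{m},a_{m+1},\ldots)\bigr)\le\sum_{j>m}2^{-j}=2^{-m}$, one gets the uniform bound $\|f-f_{m}\|_{\infty}\le L\,2^{-m}$. On the other hand $f_{m}(x)=g_{m}(x_{1},\ldots,x_{m})$ for a bounded continuous $g_{m}:\X^{m}\to\R$, so the hypothesis $(\bm{x}^{(n)}_{1},\ldots,\bm{x}^{(n)}_{m})\dto(\bm{x}_{1},\ldots,\bm{x}_{m})$ yields $\Esp\bigl[f_{m}(\bm{x}^{(n)})\bigr]\to\Esp\bigl[f_{m}(\bm{x})\bigr]$ as $n\to\infty$, for each fixed $m$. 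A routine $3\varepsilon$-estimate, bounding $\bigl|\Esp[f(\bm{x}^{(n)})]-\Esp[f(\bm{x})]\bigr|$ by $2L\,2^{-m}+\bigl|\Esp[f_{m}(\bm{x}^{(n)})]-\Esp[f_{m}(\bm{x})]\bigr|$, choosing $m$ large and then $n$ large, completes the argument and gives $\bm{x}^{(n)}\dto\bm{x}$.

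I do not expect a genuine obstacle here: the lemma is essentially a repackaging of the classical fact that weak convergence on a countable product of separable metric spaces is equivalent to convergence of all finite-dimensional marginals. The only mildly delicate points are the truncation estimate $\|f-f_{m}\|_{\infty}\le L\,2^{-m}$ and the verification that $f_{m}$ factors through a continuous bounded function on $\X^{m}$, both of which are immediate once the product metric is in place. An alternative route would establish tightness of $\{\mathcal{L}(\bm{x}^{(n)})\}_{n}$ — picking compacts $K_{j}\subseteq\X$ with $\Prob[\bm{x}^{(n)}_{j}\notin K_{j}]\le\varepsilon\,2^{-j}$ for all $n$, so that $\prod_{j}K_{j}$ is compact in $\X^{\infty}$ by Tychonoff — and then invoke Prokhorov's theorem, identifying every subsequential weak limit through its finite-dimensional laws; but the Lipschitz-truncation argument above is shorter and sidesteps compactness.
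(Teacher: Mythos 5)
Your argument is correct. The paper does not prove this lemma at all — it simply cites Theorem 4.29 of Kallenberg (2002) — so you have supplied a complete, self-contained proof of a standard fact that the authors take off the shelf. The key steps all check out: the product metric $d_{\infty}$ induces the product topology; convergence of $\Esp[f(\bm{x}^{(n)})]$ for all bounded Lipschitz $f$ implies weak convergence via the portmanteau approximation $(1-m\,d_{\infty}(\cdot,F))^{+}\downarrow \Ind_F$; the truncation bound $\|f-f_m\|_{\infty}\le L\,2^{-m}$ is exactly right since the tail of the product metric is at most $2^{-m}$; and $f_m$ does factor through a bounded continuous function on $\X^m$ because the section map $(x_1,\ldots,x_m)\mapsto(x_1,\ldots,x_m,a_{m+1},\ldots)$ is continuous. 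The final estimate is routine. Your alternative tightness route is also viable (one-dimensional marginals of a weakly convergent sequence on a Polish space are tight by Prokhorov, and finite-dimensional laws determine the law on the product $\sigma$-field), but as you note the Lipschitz-truncation argument is shorter and avoids compactness entirely.
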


The proof of Lemma \ref{lem:conv_rs} can be found in \cite{K02} (Theorem 4.29). With these results at hand the proof of Theorem 
\ref{theo:MSB_conv}
follows easily as shown below.

\begin{proof}[Proof of Theorem 
\ref{theo:MSB_conv}]
Let $\bm{v} = (v_j)_{j \geq 1}$ be the underlying Markov process of $\bm{w}$, and let $\bm{v}^{(n)} = \big(v^{(n)}_j\big)$ be that of $\bm{w}^{(n)}$. Say that for some $m \geq 1$
\begin{equation}\label{eq:conv_v}
\bm{v}^{(n)}_{[m]} = \l(v^{(n)}_1,\ldots,v^{(n)}_m\r) \dto (v_1,\ldots,v_m) = \bm{v}_{[m]}
\end{equation}
as $n \to \infty$. Since $\bm{v}^{(n)}$ and $\bm{v}$ are Markov processes we get
\[
\Prob\l[v^{(n)}_{m+1} \in \cdot\mi \bm{v}^{(n)}_{[m]}\r] = \Prob\l[v^{(n)}_{m+1}\in \cdot\mi v^{(n)}_{m}\r] = \bpsi_m^{(n)}\l(v^{(n)}_{m},\cdot\r),
\]
for each $n \geq 1$, and analogously $\Prob[v_{m+1} \in \cdot \mid \bm{v}_{[m]}] = \bpsi_m(v_m,\cdot)$. By hypothesis we know $\bpsi^{(n)}(v_n,\cdot) \wto \bpsi(v,\cdot)$ as $v_n \to v$ in $[0,1]$. Thus, by Lemma \ref{lem:joint_conv_rcd} we get
\[
\bm{v}^{(n)}_{[m+1]} = \l(v^{(n)}_1,\ldots,v^{(n)}_{m+1}\r) \dto (v_1,\ldots,v_{m+1}) = \bm{v}_{[m+1]}.
\]
This inductive argument together with the fact that $\pi^{(n)} \wto \pi$, or equivalently $v^{(n)}_1 \dto v_1$, show that \eqref{eq:conv_v} holds for every $m \geq 1$, which in turn yields $\bm{v}^{(n)} \dto \bm{v}$ by Lemma \ref{lem:conv_rs}. Now, notice that the mapping
\[(v_1,v_2,v_3\ldots) \mapsto \l(v_1,v_2(1-v_1),v_3(1-v_1)(1-v_2),\ldots\r)
\]
is continuous with respect to the product topology. Hence, by Lemma \ref{lem:cont_map} we obtain $\bm{w}^{(n)} \dto \bm{w}$. Moreover, if we consider the collection of iid atoms, $\bm{\theta}^{(n)}$, of $\bm{P}^{(n)}$ and let $\bm{\theta}$ be that of $\bm{P}$, as $P^{(n)}_0 \wto P_0$ we get $\bm{\theta}^{(n)} \dto \bm{\theta}$. This together with the facts that $\bm{w}^{(n)} \dto \bm{w}$,  $\bm{\theta}^{(n)}$ is independent of $\bm{w}^{(n)}$ for each $n \geq 1$, and  $\bm{\theta}$ is  independent of $\bm{w}$, imply that $\l(\bm{\theta}^{(n)},\bm{w}^{(n)}\r) \dto (\bm{\theta},\bm{w})$. Thus Lemmas \ref{lem:dL_cont_map} and \ref{lem:cont_map} yield $\bm{P}^{(n)} \dwto \bm{P}$. In particular if $\X$ is $lcscH$, this condition is known to be equivalent to $\bm{P}^{(n)} \dto \bm{P}$ \citep[cf. Theorem 16.16 and Theorem 4.19 in][respectively]{K02,K17}.
\end{proof}

\subsection{Proof of Proposition 
\ref{cor:MSB_dec_sup}}\label{app:prop:dec_Ew}

The proof of this result follows easily from the combination of Propositions \ref{prop:MSB_supp_points} and \ref{prop:dec_Ew}.

\subsection{Functional second moments of a species sampling process}\label{app:funct_moments}

\begin{prop}\label{prop:funct_moments}
Let $\bm{P} = \sum_{j\geq 1}w_j\delta_{\theta_j} + \l(1-\sum_{j\geq 1}w_j\r)P_0$ be a species sampling process over $(\X,\B_\X)$. For any measurable and bounded functions $f,g:\X \to \R$ 
one has
\begin{equation}
	\label{eq:funct_second_moment}
	\Esp[\bm{P}(f)\bm{P}(g)] = \tau_p P_0(fg) + (1-\tau_p)P_0(f)P_0(g)
\end{equation}
where $\tau_p$ is the tie probability associated to $\bm{P}$.
\end{prop}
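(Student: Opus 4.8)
The plan is to expand $\bm{P}(f)\bm{P}(g)$ directly in terms of the weights $\bm{w}=(w_j)_{j\ge1}$ and the atoms $\bm{\theta}=(\theta_j)_{j\ge1}$, and to exploit two structural features of a species sampling process: the atoms are i.i.d.\ from the diffuse measure $P_0$ and independent of $\bm{w}$, while the weights together with the residual mass $r_0 := 1-\sum_{j\ge1}w_j$ sum to one almost surely. Writing $\bm{P}(h)=\sum_{j\ge1}w_j h(\theta_j)+r_0 P_0(h)$ for bounded measurable $h$, I would first record the expansion
\[
\bm{P}(f)\bm{P}(g)=\sum_{j,k}w_j w_k f(\theta_j)g(\theta_k)+r_0 P_0(g)\sum_j w_j f(\theta_j)+r_0 P_0(f)\sum_k w_k g(\theta_k)+r_0^2 P_0(f)P_0(g),
\]
and take expectations term by term, the interchange of $\Esp$ with the (double) series being justified by the boundedness of $f,g$ together with $\sum_j w_j\le1$ a.s., via dominated convergence.

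For the first term I would split the double sum into its diagonal ($j=k$) and off-diagonal ($j\ne k$) parts; using that $(\theta_j)_{j\ge1}$ is i.i.d.\ $P_0$ and independent of $\bm{w}$, the diagonal contributes $P_0(fg)\sum_j\Esp[w_j^2]$ and the off-diagonal contributes $P_0(f)P_0(g)\sum_{j\ne k}\Esp[w_j w_k]$. By the same independence the three remaining terms equal $P_0(f)P_0(g)$ times $\sum_j\Esp[r_0 w_j]$, $\sum_k\Esp[r_0 w_k]$ and $\Esp[r_0^2]$, respectively. Collecting,
\[
\Esp[\bm{P}(f)\bm{P}(g)]=\Big(\sum_j\Esp[w_j^2]\Big)P_0(fg)+\Big(\sum_{j\ne k}\Esp[w_j w_k]+2\sum_j\Esp[r_0 w_j]+\Esp[r_0^2]\Big)P_0(f)P_0(g).
\]

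It then remains to identify the two coefficients. First, since $P_0$ is diffuse, conditionally on $\bm{P}$ two independent draws $x_1,x_2\sim\bm{P}$ coincide if and only if they land on the same atom, so $\Prob[x_1=x_2\mid\bm{P}]=\sum_j w_j^2$ and hence $\tau_p=\Esp\big[\sum_j w_j^2\big]=\sum_j\Esp[w_j^2]$ (this is also \eqref{eq:EPPF_sum} with $k=1$, $n_1=2$). Second, squaring the identity $\sum_j w_j+r_0=1$, which holds a.s., gives $\sum_j w_j^2+\sum_{j\ne k}w_j w_k+2r_0\sum_j w_j+r_0^2=1$ a.s.; taking expectations shows that the bracket multiplying $P_0(f)P_0(g)$ equals $1-\sum_j\Esp[w_j^2]=1-\tau_p$. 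Substituting these into the last display yields \eqref{eq:funct_second_moment}. I expect the only genuinely delicate points to be the Fubini/Tonelli justification for exchanging expectation with the infinite sums and the appeal to the diffuseness of $P_0$ when identifying $\tau_p$ with $\sum_j\Esp[w_j^2]$; the rest is elementary bookkeeping.
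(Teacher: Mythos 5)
Your proposal is correct and follows essentially the same route as the paper's proof: expand $\bm{P}(f)\bm{P}(g)$ using the independence of the atoms and weights and the iid-ness of $(\theta_j)$ under $P_0$, justify the interchange of expectation and summation by boundedness and dominated convergence, identify $\tau_p=\sum_j\Esp[w_j^2]$, and recover the coefficient $1-\tau_p$ from the a.s.\ identity $\bigl(\sum_j w_j + r_0\bigr)^2=1$. The only cosmetic difference is that you obtain $\tau_p=\sum_j\Esp[w_j^2]$ by computing $\Prob[x_1=x_2\mid\bm{P}]$ directly via diffuseness of $P_0$, whereas the paper passes through the size-biased pick $\tau_p=\Esp[\tilde w_1]=\Esp\bigl[\sum_j w_j^2\bigr]$; the two identifications are equivalent.
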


\begin{proof}
For simplicity set $w_0 = 1-\sum_{j\geq 1}w_j$. First note that
\[
1 = \Esp\bigg[\bigg(\sum_{j=0}^\infty w_j\bigg)^2\bigg] = \Esp\bigg[\sum_{j=1}^{\infty} w_j^2\bigg] + \Esp\bigg[\sum_{i \neq j}w_iw_j\bigg] + \Esp[w_0^2]
\]
by a monotone convergence argument. It follows from the definition of the size-biased random order and monotone convergence theorem that
\[
\tau_p = \Esp[\tilde{w}_1] = \Esp[\Esp[\tilde{w}_1\mid \bm{w}]] = \Esp\bigg[\sum_{j=1}^{\infty}w_j^2\bigg] = \sum_{j=1}^{\infty}\Esp[w_j^2]
\]
and $1-\tau_p = \sum_{i\neq j}\Esp[w_iw_j] + \Esp[w_0^2]$. Now, since $f$ and $g$ are bounded and $\bm{P}$ is a probability measure $\bm{P}(f),\bm{P}(g) < \infty$ almost surely. Then,
\begin{align*}
\bm{P}(f)\bm{P}(g) & = \bigg(\sum_{j \geq 1} w_jf(\theta_j) + w_0P_0(f)\bigg)\bigg(\sum_{j \geq 1} w_jg(\theta_j) + w_0P_0(g)\bigg)\\
& = \sum_{j \geq 1}w_j^2f(\theta_j)g(\theta_j)  + \sum_{i \neq j \geq 1} w_iw_jf(\theta_i)g(\theta_j) + w_0^2P_0(f)P_0(g)\\
& \quad \quad \quad \quad \quad \quad \quad + \bigg(\sum_{j \geq 1}w_0w_jg(\theta_j)\bigg)P_0(f)+ \bigg(\sum_{j \geq 1}w_0w_jf(\theta_j)\bigg)P_0(g).
\end{align*}
If $M$ and $N$ are bounds for $f$ and $g$ respectively, we get  $|\sum_{j=1}^nw_j^2f(\theta_j)g(\theta_j)|\leq NM$, $|\sum_{i=1}^n\sum_{j=1}^n w_iw_j f(\theta_i)g(\theta_j)\Ind_{\{i \neq j\}}| \leq NM$, $|\sum_{j=1}^n w_jw_0f(\theta_j)| \leq M$, $|\sum_{j=1}^nw_jw_0g(\theta_j)| \leq N|$, for every $n \geq 1$. Hence the linearity of the expectation and Lebesgue dominated convergence theorem yield
\begin{align*}
\Esp[\bm{P}(f)\bm{P}(g)] & = \sum_{j \geq 1}\Esp[w_j^2f(\theta_j)g(\theta_j) ]+ \sum_{i \neq j \geq 1} \Esp[w_iw_jf(\theta_i)g(\theta_j)]  +\Esp[w_0^2]P_0(f)P_0(g)\\
&  \quad \quad  \quad \quad  \quad \quad + \bigg(\sum_{j \geq 1}\Esp[w_0w_jg(\theta_j)]\bigg)P_0(f) + \bigg(\sum_{j \geq 1}\Esp[w_0w_jf(\theta_j)]\bigg)P_0(g) .
\end{align*}
Finally, since the weights are independent of the atoms and $\theta_j \iid P_0$ we obtain
\begin{align*}
\Esp[\bm{P}(f)\bm{P}(g)] & = \sum_{j \geq 1} \Esp[w_j^2]P_0(fg) + \sum_{i \neq j}\Esp[w_iw_j]P_0(f)P_0(g) + \Esp[w_0^2]P_0(f)P_0(g)\\
& = \tau_p P_0(fg) + (1-\tau_p)P_0(f)P_0(g),
\end{align*}
which proves \eqref{eq:funct_second_moment}.
\end{proof}

If instead we set  $f = g = \Ind_A$ we obtain $\Esp[\bm{P}(A)^2] = \tau_p P_0(A) +(1-\tau_p)P_0(A)^2$ and
\[
\Var(\bm{P}(A)) = \tau_p P_0(A)(1-P_0(A)).
\]
Similarly,  $f = \Ind_A$ and $g = \Ind_B$ yield $\Esp[\bm{P}(A)\bm{P}(B)] = \tau_pP_0(A\cap B) + (1-\tau_p)P_0(A)P_0(B)$, then
\[
\Cov(\bm{P}(A),\bm{P}(B)) = \tau_p (P_0(A\cap B)-P_0(A)P_0(B)).
\]

Proposition \ref{prop:funct_moments} showcases that several important quantities of a species sampling process are completely determined by the tie probability $\tau_p$ and the base measure $P_0$. For some MSBp, in addition to the numerical approximations of $\tau_p$, we can derive expressions for it. In particular, for a BMSBp with parameters $(N, \bm{\alpha},\bm{\beta})$ s.t. $\alpha_j = \alpha$ and $\beta_j = \beta$, for $j \geq 1$:
\[
\tau_p = \sum_{j=1}^{\infty}\,\bigg(\sum_{z_1 = 1}^{N} \ldots \sum_{z_j = 1}^N \l\{\prod_{i=1}^{j}\binom{N}{z_i}\frac{(\alpha'_i)_{z_i+2\Ind_{\{i = j\}}}(\beta'_i)_{N-z_i+2\Ind_{\{i < j\}}}}{(\alpha'_i+\beta'_i)_{2+N}}\r\}\bigg),
\]
where $\alpha'_1 = \alpha$, $\beta'_1 = \beta$ and $\alpha'_i = \alpha+z_{i-1}$, $\beta'_i = \beta +N -z_{i-1}$, for $i \geq 2$. As for LMSBp with parameters $(\rho,\bm{\pi})$, where $\pi_j = \pi = \Be(\alpha,\beta)$:
$$
\tau_p = \sum_{j=1}^{\infty}  \sum_{(t_1,\ldots,t_r)} \rho^{j-r}(1-\rho)^{r-1} \frac{(\alpha)_{2}\,(\beta)_{2(j-t_r)}}{(\alpha+\beta)_{2(j-t_r+1)}} \prod_{i=1}^{r-1}\frac{ (\beta)_{2(t_{i+1}-t_i)}}{(\alpha+\beta)_{2(t_{i+1}-t_i)}},
$$
where the inner sum ranges over all $(t_1,\ldots,t_r)$ such that $t_1 = 1 < t_2 < \cdots < t_r \leq j$.

\subsection{A characterization of $K_n$}\label{app:Kn_sum_geo}

It is well know \citep[cf.][]{P95,P96b} that given the weights in size-biased random order $(\tilde{w}_j)_{j = 1}^\infty$, the partition structure ${\Pi}_n$ can be constructed as follows:
\begin{itemize}
\item Set $A_1 = \{1\}$.
\item Once we have partitioned $\{1,\ldots,i\}$ obtaining ${\Pi}_i = \{A_1,\ldots,A_{K_i}\}$, the element $(i+1)$ is either added to $A_j$ with probability $\tilde{w}_j$, or it defines a new block $A_{K_i+1} = \{i+1\}$ with probability $1-\sum_{j=1}^{K_i} \tilde{w}_j$.
\end{itemize}
Now consider the elements $\kappa_1,\kappa_2,\ldots$ that form new blocks when added, i.e. $\kappa_j = \min A_j$. It then easy to see that $K_n = \max\{j: \kappa_j \leq n\}$ and $G_l = \kappa_{l} -\kappa_{l-1} \sim \mathsf{Geo}\big(1-\sum_{j=1}^{l-1} \tilde{w}_j\big)$ with $l > 1$. Hence we obtain
\[
K_n = \max\bigg\{j:\sum_{l=1}^{j}G_l \leq n\bigg\}
\]
with $G_1 = \kappa_1 = 1$.

\medskip

\section{A MSB characterization of the Pitman-Yor size-biased permuted weights}\label{sec:app:MSB_sb}

\smallskip

To prove Theorem 
\ref{theo:MSB_sb_PY}
we require some preliminary results. The proof of some of them can be found in Pitman and are stated in Subsection \ref{sec:app:MSB_sb:Pit}, further preparatory results are proven and stated in Subsection \ref{sec:app:MSB_sb:more}. 

\subsection{Preliminary results by \cite{P96b}}\label{sec:app:MSB_sb:Pit}

Throughout this section we denote by $\Delta_k = \{(x_1,\ldots,x_k) \in [0,1]^k: \sum_{i=1}^k x_j \leq 1\}$ to the space of all vectors of size $k$ of positive numbers such that its sum is equal or less than one. We will also denote by $\oDk$, and $\partial \Dk$ to the interior and boundary of $\Dk$ respectively, so that $\oDk = \{(x_1,\ldots,x_k) \in [0,1]^k: x_j > 0 \text{ and }\sum_{i=1}^k x_j < 1\}$ and $\partial \Dk = \Delta_k\setminus \oDk$. 

It is clear that the stick-breaking weights $\bm{w}$ can be written as a measurable function of their length variables $\bm{v}$. To ensure the converse also holds, so that $\bm{v}$ and $\bm{w}$ are measurable with respect to each other, throughout this section we work under the convention that $1$ is an absorbing state for $\bm{v}$. In other words, $v_j = 1$ if and only if $\sum_{i=1}^j w_i = 1$, this way we obtain
\[
v_1 = w_1, \quad v_j = \begin{cases}
\frac{w_j}{1-\sum_{i=1}^{j-1}w_i} &\text{ if } \sum_{i=1}^{j-1}w_i < 1\\
1 &\text{ if } \sum_{i=1}^{j-1}w_i = 1
\end{cases}, \quad j \geq 2.
\]

\begin{lem}[Theorem 2 by \cite{P96b}]\label{lem:theo2}
Let $\bm{w}$ be stick-breaking weights with independent length variables, $\bm{v}$, and such that $\sum_{j=1}^{\infty} w_j = 1$ a.s. Then, $\bm{w}$ is invariant under size-biased permutations if and only if one of the following holds:
\begin{enumerate}
\item $w_j > 0$ a.s.~for every $j \geq 1$ and $v_j \sim \mathsf{Be}(1-\sigma,\theta+j\sigma)$ for some $\sigma \in [0,1)$ and $\theta > -\sigma$.
\item $\{j: w_j > 0\} = \{1,\ldots,m\}$ a.s.~for some integer $m \geq 1$, and one of the following holds:
\begin{enumerate}
\item $v_j \sim \mathsf{Be}(1+\sigma,(m-j)\sigma)$ for every $j \in \{1,\ldots,m\}$, and some $\sigma > 0$.
\item $v_j = 1/(m-j+1)$ for every $j \in \{1,\ldots,m\}$, i.e. $w_j = 1/m$ for $j \in \{1,\ldots,m\}$, and $w_j = 0$ for $j > m$.
\item $m = 2$ and the distribution $F$ on $[0,1]$ defined by
\[
F(\mathrm{d}x) = (1-x)\Prob[w_1 \in \mathrm{d}x]/\Esp[1-w_1]
\]
is symmetric about $1/2$.
\end{enumerate}
\end{enumerate}
\end{lem}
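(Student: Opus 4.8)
\emph{Strategy and the ``if'' direction.} This is Theorem~2 of \cite{P96b}, and the plan is to follow Pitman's argument. Write $m_j(a,b):=\Esp[v_j^{\,a}(1-v_j)^{\,b}]$ for $a,b\in\{0,1,2,\dots\}$; since $0\le v_j\le1$ these mixed moments (equivalently, the moments $\Esp[v_j^{\,a}]$, $a\ge0$) determine the law of $v_j$ by the Hausdorff moment theorem. For the ``if'' direction one checks that each listed family is invariant under size-biased permutations: for the Pitman--Yor lengths $v_j\sim\mathsf{Be}(1-\sigma,\theta+j\sigma)$ one substitutes the corresponding $m_j$ into the identity displayed below and observes that the product telescopes to $\frac{\prod_{i=1}^{k-1}(\theta+i\sigma)}{(1+\theta)_{n-1}}\prod_{j=1}^{k}(1-\sigma)_{n_j-1}$, which is symmetric in $(n_1,\dots,n_k)$ (the classical two-parameter EPPF); the finitely supported cases~2(a)--(c) are dealt with by direct inspection (case~2(b) is symmetric at sight, case~2(a) reduces to the finite Pitman--Yor partition, and case~2(c) is exactly the assumed symmetry of $F$).

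\emph{Reducing to a functional equation.} For the ``only if'' direction, assume $\bm w$ has independent lengths, $\sum_j w_j=1$ a.s., and is invariant under size-biased permutations. First I would reduce to the regime $w_j>0$ a.s.\ for all $j$: since a size-biased pick never selects a zero mass while positive masses remain, invariance forces $\{j:w_j>0\}=\{1,\dots,M\}$ a.s.\ with $M=\inf\{j:v_j=1\}\in\{1,2,\dots\}\cup\{\infty\}$, and a short argument using independence of the $v_j$ (carried out in \cite{P96b}) shows $M$ is a.s.\ constant; the case $M=m<\infty$ is the degenerate branch producing case~2, so from now on $v_j\in(0,1)$ a.s., i.e.\ each $\pi_j$ has no atom at $0$ or $1$. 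Because $\bm w$ is proper and invariant under size-biased permutations, the weights in order of appearance agree in law with $\bm w$, so \eqref{eq:EPPF_sb} becomes $\Phi^{(n)}_k(n_1,\dots,n_k)=\Esp\bigl[\prod_{j=1}^k w_j^{n_j-1}\prod_{j=1}^{k-1}(1-\sum_{i\le j}w_i)\bigr]$; expanding with the stick-breaking decomposition \eqref{eq:sb} and using independence of the lengths,
\begin{equation*}
\Phi^{(n)}_k(n_1,\dots,n_k)=\prod_{j=1}^{k}m_j\Bigl(n_j-1,\ \sum_{l>j}n_l\Bigr),
\end{equation*}
and symmetry of this expression in $(n_1,\dots,n_k)$, for every $k$ and every $n_1,\dots,n_k\ge1$, is equivalent to invariance under size-biased permutations (these polynomial moments determine the law of $\bm w$ thanks to properness). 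Comparing the adjacent transposition $n_j\leftrightarrow n_{j+1}$ and cancelling the unaffected factors yields, for every $j\ge1$ and all integers $a,b,r\ge0$,
\begin{equation*}
m_j(a,b+r+1)\,m_{j+1}(b,r)=m_j(b,a+r+1)\,m_{j+1}(a,r).\tag{$\star$}
\end{equation*}

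\emph{Solving $(\star)$.} Taking $r=0$ in $(\star)$ separates variables: $m_j(a,b+1)/m_j(b,a+1)=\Esp[v_{j+1}^{\,a}]/\Esp[v_{j+1}^{\,b}]$, and then $b=0$ gives the explicit recursion $\Esp[v_j^{\,a}(1-v_j)]=\Esp[v_{j+1}^{\,a}]\,\Esp[(1-v_j)^{\,a+1}]$. Feeding these back into $(\star)$ for general $r$, and combining with the standing requirement that each $\pi_j$ be a bona fide probability law on $(0,1)$, one shows by induction on $j$ that $m_j(a,b)=\dfrac{(1-\sigma)_a\,(\theta+j\sigma)_b}{(1+\theta+(j-1)\sigma)_{a+b}}$ for constants $\sigma,\theta$ not depending on $j$, i.e.\ $v_j\sim\mathsf{Be}(1-\sigma,\theta+j\sigma)$; requiring $\pi_j((0,1))=1$ for all $j\ge1$ then forces $\sigma\in[0,1)$ and $\theta>-\sigma$, which is case~1, and the Hausdorff moment theorem promotes the identification of the $m_j$ to that of the $\pi_j$.

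\emph{Main obstacle.} The crux is the last step: extracting from the single family of identities $(\star)$---which only ties together consecutive pairs $\pi_j,\pi_{j+1}$ through the ``boundary'' moments $\Esp[v_{j+1}^{\,a}]$---enough rigidity to recover a two-parameter family, in particular pinning down both $\sigma$ and $\theta$ and excluding candidate solutions that fail to be moment sequences of probability measures on $(0,1)$. This is the technical heart of Pitman's theorem; by comparison the reduction to $(\star)$ and the verification of the ``if'' direction are routine.
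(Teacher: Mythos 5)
The paper does not actually prove this lemma: it is imported verbatim as Theorem~2 of \cite{P96b} and used as a black box (see the preamble to App.~C.1, which explicitly defers the proof to Pitman). So the relevant comparison is between your sketch and Pitman's argument, and on that score your reductions are faithful and the computations you do display are correct: the identity $\Phi^{(n)}_k(n_1,\dots,n_k)=\prod_{j}m_j\bigl(n_j-1,\sum_{l>j}n_l\bigr)$ under independence and invariance, the telescoping to the two-parameter EPPF in the ``if'' direction, the adjacent-transposition identity $(\star)$, and its $r=0$, $b=0$ specializations all check out.

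The genuine gap is that you stop exactly where the theorem begins. Everything you have written down --- properness forcing $\{j:w_j>0\}$ to be an initial segment, the product form of the EPPF, the derivation of $(\star)$ --- is the routine part; the content of Pitman's Theorem~2 is the classification of \emph{all} solutions of $(\star)$ among sequences of laws on $[0,1]$, i.e.\ showing that the only non-degenerate moment sequences satisfying the recursion $\Esp[v_j^a(1-v_j)]=\Esp[v_{j+1}^a]\,\Esp[(1-v_j)^{a+1}]$ together with the full family $(\star)$ are $m_j(a,b)=(1-\sigma)_a(\theta+j\sigma)_b/(1+\theta+(j-1)\sigma)_{a+b}$, and that the degenerate alternatives collapse precisely to cases 2(a)--(c). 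You label this ``the technical heart'' and do not supply it; likewise you defer to \cite{P96b} the facts that $M=\inf\{j:v_j=1\}$ is a.s.\ constant under independence and that the generalized moments $\Esp\bigl[\prod_j x_j^{n_j-1}\prod_{j<k}(1-\sum_{i\le j}x_i)\bigr]$ determine the law of a proper weight sequence (needed both for the converse implication and for the ``if'' direction). As a standalone proof the proposal is therefore incomplete: it reproduces the setup of Pitman's proof but not its substance. Since the paper itself only cites the result, this is not a defect relative to the paper's treatment, but it should be presented as a proof outline with attribution rather than as a proof.
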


In order to formally state the second result we will require a preliminary definition: Let $(v_1,v_2)$ be a pair of r.v.~each taking values in $[0,1]$, we say that the law of $(v_1,v_2)$ is \emph{acceptable} iff
\[
g(r,s) = \Esp[v_1^r(1-v_1)^{s+1}v_2^s]
\]
is symmetric, i.e. $g(r,s) = g(s,r)$, for all pairs of non-negative integers $r$ and $s$.

\begin{lem}[Corollary 7 by \cite{P96b}]\label{lem:cor7}
Let $\bm{w}$ be stick-breaking weights with length variables, $\bm{v}$. Then $\bm{w}$ is invariant under size-biased permutations if and only if all of the following holds:
\begin{enumerate}
\item The law of $(v_1,v_2)$ is acceptable.
\item On the event $\{\sum_{j=1}^k w_j < 1\}$, the conditional law of $(v_{k+1},v_{k+2})$ given $\bm{w}_{[k]} = (w_1,\ldots,w_k)$ is acceptable a.s.
\item The conditional law in 2 can be chosen to dependent symmetrically on the first $k$ weights, $\bm{w}_{[k]} = (w_1,\ldots,w_k)$.
\end{enumerate}
\end{lem}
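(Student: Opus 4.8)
The plan is to establish this exactly in the form proved by \cite[Corollary~7]{P96b}, which one may simply cite; for context I describe the argument I would give. The organising fact is Pitman's moment characterisation of invariance under size-biased permutations (ISBP): $\bm w$ is ISBP if and only if, for every $k\ge 1$ and every composition $(n_1,\dots,n_k)$ with all $n_j\ge 1$,
\[
\widehat\Phi_k(n_1,\dots,n_k):=\Esp\!\Big[\prod_{j=1}^{k} w_j^{\,n_j-1}\prod_{j=1}^{k-1}\Big(1-\sum_{i=1}^{j}w_i\Big)\Big]
=\Esp\!\Big[\prod_{j=1}^{k} v_j^{\,n_j-1}(1-v_j)^{\,\bar n_j}\Big],\qquad \bar n_j:=\sum_{l>j}n_l,
\]
is a \emph{symmetric} function of $(n_1,\dots,n_k)$ (on the improper event the products are truncated at $t=\inf\{j:\sum_{i\le j}w_i=1\}$). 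I would quote this equivalence from \cite{P96b}: by \eqref{eq:EPPF_sum} the EPPF \eqref{eq:EPPF_sb} is automatically symmetric, so $\bm w\deq\tilde{\bm w}$ forces $\widehat\Phi_k$ symmetric, and conversely a symmetric $\widehat\Phi$ recovers the EPPF and hence $\bm w\deq\tilde{\bm w}$ by moment determinacy on $[0,1]$.

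Granting this, the forward implication ($\bm w$ ISBP $\Rightarrow(1)$--$(3)$) is short. Taking $k=2$ and writing $r=n_1-1$, $s=n_2-1$, the displayed identity gives $\widehat\Phi_2(n_1,n_2)=\Esp[v_1^{\,r}(1-v_1)^{s+1}v_2^{\,s}]=g(r,s)$, so symmetry of $\widehat\Phi_2$ is precisely acceptability of $(v_1,v_2)$, i.e.\ $(1)$. For $(2)$--$(3)$ I would use the hereditary structure of size-biased sampling: when $\bm w$ is ISBP, $(w_1,\dots,w_k)$ has the law of the first $k$ size-biased picks, and, on $\{\sum_{i\le k}w_i<1\}$, the conditional law of the rescaled residual $(w_{k+1},w_{k+2},\dots)/(1-\sum_{i\le k}w_i)$ given $(w_1,\dots,w_k)$ is again ISBP and is a function of the multiset $\{w_1,\dots,w_k\}$. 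The multiset dependence yields a version symmetric in $(w_1,\dots,w_k)$, which is $(3)$; applying the $k=2$ step to this residual sequence (whose first two length variables are $v_{k+1},v_{k+2}$) gives acceptability of $\Prob[(v_{k+1},v_{k+2})\in\cdot\mid\bm w_{[k]}]$, which is $(2)$.

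For the converse ($(1)$--$(3)\Rightarrow$ ISBP) I would prove, by induction on $k$, that $\widehat\Phi_k$ is symmetric for every stick-breaking sequence satisfying $(1)$--$(3)$. The inductive step rests on the ``restart'' identity $\widehat\Phi_k(n_1,\dots,n_k)=\Esp\big[v_1^{\,n_1-1}(1-v_1)^{\,\bar n_1}\,\widehat\Phi^{\mathrm{res}}_{k-1}(n_2,\dots,n_k\mid w_1)\big]$, where $\widehat\Phi^{\mathrm{res}}_{k-1}(\cdot\mid w_1)$ is the analogous array for the rescaled residual, conditioned on $w_1$; conditions $(2)$--$(3)$ are inherited by this conditional residual, so the induction hypothesis makes $\widehat\Phi_k$ already symmetric in $(n_2,\dots,n_k)$, and it remains only to establish invariance under the single adjacent transposition $n_1\leftrightarrow n_2$. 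After a two-step restart this reduces to showing $\Esp[w_1^{r}(1-w_1)w_2^{s}\,\Psi(w_1,w_2)]=\Esp[w_1^{s}(1-w_1)w_2^{r}\,\Psi(w_1,w_2)]$ for a factor $\Psi$ which, by $(3)$ chained across levels, is symmetric in $(w_1,w_2)$: the case $\Psi\equiv1$ is exactly $(1)$, and the general case is where the conditions at all the higher levels, through $(2)$, must be brought to bear. Since the adjacent transpositions generate $S_k$, symmetry of $\widehat\Phi_k$ follows and ISBP is recovered.

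The main obstacle is precisely this last point: propagating ``two-coordinate'' acceptability together with ``symmetric dependence on the past'' through the whole tower of conditional laws until they force the full symmetry of $\widehat\Phi_k$. This is the technical heart of Pitman's argument; it requires careful tracking of the residual-mass exponents $(1-v_j)^{\bar n_j}$ and genuine measure-theoretic care, since conditional laws are defined only up to null sets — $(3)$ asserts the \emph{existence} of a symmetric version, and these versions must be chained consistently across levels — and one must also handle the improper event $\{\sum_j w_j<1\}$, where all products terminate at the stopping time $t$. For the present paper I would simply invoke \cite[Corollary~7]{P96b}.
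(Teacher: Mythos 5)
The paper does not prove this lemma at all: it is imported verbatim as Corollary~7 of \cite{P96b}, which is exactly what you propose to do, so your plan matches the paper's treatment. Your accompanying sketch of Pitman's argument — the symmetric-moment-array characterization of invariance under size-biased permutations, the reduction of full symmetry of $\widehat\Phi_k$ to adjacent transpositions handled by the two-variable acceptability condition, and the hereditary ISBP/symmetric-dependence property of the rescaled residual given $\bm{w}_{[k]}$ — is a faithful outline of the actual proof in that reference, including the correct caveats about null-set versions of the conditional laws and the improper event.
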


\begin{cor}\label{cor7'}
In the proof of Corollary 7 \cite{P96b} showed that \emph{3.} in Lemma \ref{lem:cor7} can be replace by 
\begin{itemize}
\item[3$'$.] On the event $\{\sum_{j=1}^k w_j < 1\}$, the conditional law of $v_{k+1}$ can be chosen to dependent symmetrically on the first $k$ weights, $\bm{w}_{[k]} = (w_1,\ldots,w_k)$.
\end{itemize}
\end{cor}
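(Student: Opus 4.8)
The plan is to prove Corollary~\ref{cor7'} by showing that, for stick-breaking weights $\bm{w}$ with length variables $\bm{v}$, the symmetry requirements \emph{3} and \emph{3$'$} of Lemma~\ref{lem:cor7} are equivalent once they are read as holding at every level $k\ge 1$; the corollary is then a direct substitution in Lemma~\ref{lem:cor7}, and in fact conditions \emph{1}--\emph{2} play no role in this equivalence (though they are of course present in the statement being amended). The easy direction, \emph{3}$\Rightarrow$\emph{3$'$}, I would dispose of in one line: the first marginal of a symmetric-in-$\bm{w}_{[k]}$ version of the conditional law of $(v_{k+1},v_{k+2})$ given $\bm{w}_{[k]}$ is a symmetric-in-$\bm{w}_{[k]}$ version of the conditional law of $v_{k+1}$ given $\bm{w}_{[k]}$. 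The substance is the converse, which amounts to reconstructing the step used in the proof of Corollary~7 in \cite{P96b}.

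First I would record the elementary bookkeeping underlying the stick-breaking recursion: on the event $\{\sum_{j=1}^k w_j<1\}$, with the absorbing convention for $\bm{v}$ adopted above, one has $w_{k+1}=v_{k+1}\bigl(1-\sum_{j=1}^k w_j\bigr)$ and conversely $v_{k+1}=w_{k+1}/\bigl(1-\sum_{j=1}^k w_j\bigr)$, so that $(\bm{w}_{[k]},v_{k+1})$ and $\bm{w}_{[k+1]}$ generate the same $\sigma$-algebra. In particular the conditional law of $v_{k+2}$ given $(\bm{w}_{[k]},v_{k+1})$ coincides with the conditional law of $v_{k+2}$ given $\bm{w}_{[k+1]}$. (On the sub-event $\{\sum_{j=1}^{k+1}w_j=1\}$ one has $v_{k+2}=1$ deterministically and the corresponding conditional law is taken to be $\delta_1$, which is invariant under any relabelling of the weights.)

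Assuming \emph{3$'$} at every level, I would then choose a symmetric-in-$\bm{w}_{[k]}$ version $\mu_k(\mathrm{d}v_{k+1};\bm{w}_{[k]})$ of the law of $v_{k+1}$ given $\bm{w}_{[k]}$ (condition \emph{3$'$} at level $k$) and a symmetric-in-$\bm{w}_{[k+1]}$ version $\mu_{k+1}(\mathrm{d}v_{k+2};\bm{w}_{[k+1]})$ of the law of $v_{k+2}$ given $\bm{w}_{[k+1]}$ (condition \emph{3$'$} at level $k+1$), and form the composed kernel
\[
\nu_k(\mathrm{d}v_{k+1},\mathrm{d}v_{k+2};\bm{w}_{[k]})=\mu_k(\mathrm{d}v_{k+1};\bm{w}_{[k]})\,\mu_{k+1}\!\bigl(\mathrm{d}v_{k+2};\bm{w}_{[k]},\,v_{k+1}(1-{\textstyle\sum_{j=1}^{k} w_j})\bigr),
\]
which by the previous step is a version of the conditional law of $(v_{k+1},v_{k+2})$ given $\bm{w}_{[k]}$. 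The crux, which I would spell out carefully, is that $\nu_k$ is symmetric in $\bm{w}_{[k]}$: permuting the coordinates of $\bm{w}_{[k]}$ fixes $\mu_k(\cdot;\bm{w}_{[k]})$ by symmetry at level $k$; it also fixes $\sum_{j=1}^k w_j$, hence the value $w_{k+1}=v_{k+1}(1-\sum_{j=1}^k w_j)$ for every value of the integration variable $v_{k+1}$; and $\mu_{k+1}$, being symmetric in its first $k$ weight arguments by symmetry at level $k+1$, is then also fixed. Thus $\nu_k$ witnesses condition \emph{3}.

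The main obstacle is not conceptual but lies precisely in this last step: one must notice that $w_{k+1}$ depends on $\bm{w}_{[k]}$ only through the symmetric quantity $\sum_{j=1}^k w_j$, and then check that symmetry of $\mu_{k+1}$ in all $k+1$ of its weight arguments is exactly what is needed to annihilate the dependence of the second factor on the ordering of the first $k$. The remaining points — joint measurability of $\nu_k$, so that it is a genuine regular conditional distribution, and the consistent choice $\delta_1$ on the boundary event $\{\sum_{j=1}^{k+1}w_j=1\}$ — are routine, since all spaces involved are standard Borel and $\delta_1$ is relabelling-invariant.
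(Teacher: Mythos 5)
Your argument is correct, but there is no in-paper proof to compare it with: Corollary~\ref{cor7'} is stated as a fact imported from Pitman's proof of his Corollary~7, and the only direction the present paper ever uses downstream (e.g.\ in Part~1 of the proof of Lemma~\ref{lem:keyA}) is the easy one, namely that invariance under size-biased permutations yields a symmetric version of the conditional law of $v_{k+1}$ given $\bm{w}_{[k]}$ --- exactly your marginalization step applied to condition \emph{3} (equivalently, to Lemma~\ref{lem:rem8}). Your converse supplies the substantive content of the replacement claim and is sound: composing the symmetric one-step versions $\mu_k$ and $\mu_{k+1}$ at levels $k$ and $k+1$, and noting that $w_{k+1}=v_{k+1}\bigl(1-\sum_{j\le k}w_j\bigr)$ depends on $\bm{w}_{[k]}$ only through the symmetric sum, so that symmetry of $\mu_{k+1}$ in all $k+1$ weight arguments removes any dependence on the ordering of the first $k$, does produce a symmetric version of the conditional law of $(v_{k+1},v_{k+2})$ given $\bm{w}_{[k]}$; the identification $\sigma(\bm{w}_{[k]},v_{k+1})=\sigma(\bm{w}_{[k+1]})$ under the absorbing convention, the $\delta_1$ choice on $\{\sum_{j\le k+1}w_j=1\}$ (a symmetric event given $\bm{w}_{[k]}$ and $v_{k+1}$), and the disintegration argument making the composed kernel a genuine version of the joint conditional law are all handled correctly on these standard Borel spaces. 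The only point worth stating explicitly is that your direction \emph{3$'$}$\Rightarrow$\emph{3} at level $k$ consumes \emph{3$'$} at both levels $k$ and $k+1$, so the equivalence is one of families of conditions over all $k$ rather than level-by-level; since Lemma~\ref{lem:cor7} is quantified over all $k$, this is exactly what the substitution in the characterization requires, and conditions \emph{1}--\emph{2} indeed play no role in it.
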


\begin{lem}[Remark 8 by \cite{P96b}]\label{lem:rem8}
Let $\bm{w}$ be stick-breaking weights and set $W_k = \sum_{j=1}^k w_j$. For each $k \geq 1$ and $n \geq 1$, on the event $\{W_k < 1\}$, there is a version of the conditional law of $(w_{k+n}/(1-W_k))_{n=1}^{\infty}$ given $\bm{w}_{[k]}$ that is invariant under size-biased permutations and that depends symmetrically on $\bm{w}_{[k]}$. 
\end{lem}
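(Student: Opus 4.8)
The plan is to deduce Lemma~\ref{lem:rem8} from the sequential ``sampling without replacement'' description of the size-biased permutation recorded in \eqref{eq:sb_pick}; throughout, $\bm{w}$ is taken to be invariant under size-biased permutations (this is the only setting in which the statement is used in App.~\ref{sec:app:MSB_sb}, and it is what the conclusion requires). By this invariance together with a coupling, I may present $\bm{w}$ as $\bm{w}\deq(w^{\circ}_{\varrho_1},w^{\circ}_{\varrho_2},\ldots)$, where $\bm{w}^{\circ}=(w^{\circ}_j)_{j\ge1}$ has the law of $\bm{w}$ and, given $\bm{w}^{\circ}$, the order $\bm{\varrho}=(\varrho_j)_{j\ge1}$ is the size-biased sampling-without-replacement order of \eqref{eq:sb_pick}. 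In this representation $\bm{w}_{[k]}=(w^{\circ}_{\varrho_1},\ldots,w^{\circ}_{\varrho_k})$ is the ordered tuple of the first $k$ size-biased picks, and $(w_{k+n}/(1-W_k))_{n\ge1}$ is the renormalized continuation of the sampling.

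First I would record two routine facts about the size-biased permutation operation $\mathsf{SBP}$, acting on sequences of non-negative reals summing to $1$ and, by pushforward, on laws on that space. \emph{Idempotence}: the $\mathsf{SBP}$ of a $\mathsf{SBP}$ has the same law as the original; equivalently, the composition of two independent size-biased samplings without replacement from a sequence $p$ is again a size-biased sampling without replacement from $p$, which one checks by a short telescoping computation on the composed picks (using that $\bm{\varrho}$ is a bijection). \emph{Affinity}: the map $\mu\mapsto$ (law of $\mathsf{SBP}(P)$ with $P\sim\mu$) is affine, so a probability mixture of $\mathsf{SBP}$-invariant laws is itself $\mathsf{SBP}$-invariant.

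Next comes the structural core. Write $\mathcal{R}_k$ for the (unordered) collection of masses remaining after the first $k$ picks and $M_k=\{w^{\circ}_{\varrho_1},\ldots,w^{\circ}_{\varrho_k}\}$ for the multiset of the first $k$ picks. The first structural step is a strong-Markov-type property of sequential size-biased sampling: given $(\mathcal{R}_k,M_k)$, the renormalized continuation $(w_{k+n}/(1-W_k))_{n\ge1}$ is the size-biased permutation of $\mathcal{R}_k$ (renormalized), and it is conditionally independent of the ordered tuple $\bm{w}_{[k]}$ — once $k$ draws are made, the subsequent draws depend on the past only through the remaining masses. The second structural step is that the conditional law of $\mathcal{R}_k$ given $\bm{w}_{[k]}=(a_1,\ldots,a_k)$ depends on $(a_1,\ldots,a_k)$ only through the multiset $\{a_1,\ldots,a_k\}$: heuristically, and rigorously at the level of regular conditional distributions,
\[
\Prob\bigl[\text{first $k$ picks}=(a_1,\ldots,a_k),\ \mathcal{R}_k\in\mathrm{d}\mathcal{R}\bigr]
=\Prob\bigl[\mathrm{mult}(\bm{w}^{\circ})=\{a_1,\ldots,a_k\}\uplus\mathcal{R}\bigr]\cdot\prod_{i=1}^{k}\frac{a_i}{1-\sum_{j<i}a_j},
\]
so after normalizing in $\mathcal{R}$ the order-dependent factor cancels and only the symmetric quantity $\Prob[\mathrm{mult}(\bm{w}^{\circ})=\{a_1,\ldots,a_k\}\uplus\mathcal{R}]$ remains.

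Combining the two structural steps with the tower property,
\[
\Prob\bigl[(w_{k+n}/(1-W_k))_{n\ge1}\in\cdot\ \big|\ \bm{w}_{[k]}=(a_1,\ldots,a_k)\bigr]
=\int\bigl[\mathsf{SBP}\text{ of }\mathcal{R}\text{ renormalized}\bigr]\,\Prob\bigl[\mathcal{R}_k\in\mathrm{d}\mathcal{R}\ \big|\ M_k=\{a_1,\ldots,a_k\}\bigr];
\]
this version of the conditional law depends on $\bm{w}_{[k]}$ only through its multiset, hence is symmetric in $(a_1,\ldots,a_k)$, and it is a probability mixture of laws of the form $\mathsf{SBP}(\mathcal{R}\text{ renormalized})$, each $\mathsf{SBP}$-invariant by idempotence, hence $\mathsf{SBP}$-invariant by affinity. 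This is exactly the assertion of Lemma~\ref{lem:rem8}. I expect the main obstacle to be the measure-theoretic bookkeeping of $\mathcal{R}_k$: one must set up the regular conditional distributions of ``the remaining masses'' (an infinite-multiset / point-process object, with possible ties) on the event $\{W_k<1\}$, and justify carefully the conditional independence asserted in the first structural step — that strong-Markov property of size-biased sampling without replacement is where the genuine content lies, whereas the two facts about $\mathsf{SBP}$ and the final assembly are immediate.
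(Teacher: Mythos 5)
Your argument is sound, but note that the paper itself offers no proof of this lemma: it is imported verbatim as Remark~8 of \cite{P96b} and used as a black box in App.~C, so there is no internal proof to compare against. Your derivation is essentially Pitman's own justification, made explicit: represent the ISBP sequence as the size-biased presentation of its underlying mass configuration, observe that sequential size-biased sampling without replacement is ``Markov in the remaining masses'' so the renormalized continuation given the past is the $\mathsf{SBP}$ of the residual configuration, check that the conditional law of the residual configuration given the ordered first $k$ picks is symmetric because the order-dependent factor $\prod_i a_i/(1-\sum_{l<i}a_l)$ cancels upon normalization, and conclude by idempotence of $\mathsf{SBP}$ plus affinity of the operation on laws. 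All four ingredients are correct, and you were right to restore the hypothesis that $\bm{w}$ is invariant under size-biased permutations: as stated in the paper the lemma omits it, and without it the conclusion fails (e.g.\ deterministic weights $(1/2,1/3,1/6)$ have a residual after $k=1$ equal to $(2/3,1/3)$ in a fixed order, which is not $\mathsf{SBP}$-invariant). This is a harmless slip in the paper's transcription, since the lemma is only ever invoked for ISBP sequences.

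The one place where your sketch stops short of a proof is the disintegration identity for $\mathcal{R}_k$. For weights with continuous marginals the event $\{\text{first $k$ picks}=(a_1,\dots,a_k)\}$ is null, so the displayed formula must be read as a density statement with respect to the $k$-fold Campbell (moment) measure of the point process $\sum_j \delta_{w^\circ_j}$ restricted to $(0,1]$; the symmetry of that Campbell measure in its $k$ arguments is what makes the order-dependent factor the \emph{only} asymmetric ingredient, and handling ties correctly amounts to working with the reduced point process $N-\sum_{i\le k}\delta_{a_i}$ rather than with set-theoretic deletion. You identify this as the genuine content, which is accurate; once that disintegration is set up, the tower-property assembly and the two $\mathsf{SBP}$ facts go through exactly as you describe.
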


\begin{rem}\label{rem:rem?}
In particular if the length variables are independent, and under the assumption $v_j = 1$ if and only if $\sum_{i=1}^j w_i = 1$, Lemma \ref{lem:rem8} states that on the event $\{v_k < 1\}$, $(v_{k+n})_{n=1}^{\infty}$ define stick-breaking weights that are invariant under size-biased permutations.
\end{rem}

\begin{lem}[Lemma 10 together with Lemma 12 by \cite{P96b}]\label{lem:lem10,12}
Let $v_1$ and $v_2$ be independent r.v.~each taking values in $[0,1]$. Then law of $(v_1,v_2)$ is acceptable if and only if one of the following holds:
\begin{enumerate}
\item $v_2 = 1$ a.s.~and the distribution of $F$ on $[0,1]$ defined by
\[
F(\mathrm{d}x) = (1-x)\Prob[v_1 \in \mathrm{d}x]\Esp[1-v_1]
\]
is symmetric about $1/2$.
\item $v_1 \sim \mathsf{Be}(\alpha,\beta)$ and $v_2 \sim\mathsf{Be}(\alpha,\beta-\alpha+1)$ for some $0 < \alpha < \beta +1$.
\item $v_1 = c$ and $v_2 = c/(1-c)$ for some constant $c$ with $0 < c < 1/2$.
\end{enumerate}
\end{lem}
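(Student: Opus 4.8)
\emph{Proof sketch.} The plan is to convert \emph{acceptability} into a statement about moment sequences and then read off the three families. Set $A_{r,s}=\Esp[v_1^r(1-v_1)^s]$ and $m_s=\Esp[v_2^s]$. Because $v_1\perp v_2$, the function in the definition of acceptability factorises, $g(r,s)=\Esp[v_1^r(1-v_1)^{s+1}v_2^s]=A_{r,s+1}\,m_s$, so the law of $(v_1,v_2)$ is acceptable iff $A_{r,s+1}\,m_s=A_{s,r+1}\,m_r$ for all non-negative integers $r,s$. For the \emph{if} direction this is just a plug-in: in case (2), $A_{r,s+1}m_s$ simplifies to a symmetric Pochhammer expression; in case (3) it equals $c^{r+s}(1-c)$; and in case (1), where $m_s\equiv1$, symmetry of $A_{r,s+1}$ in $(r,s)$ is exactly the symmetry of the tilted law $F$ about $1/2$.

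For the \emph{only if} direction, assume acceptability and discard the trivial case $v_1=1$ a.s.\ (then $g\equiv0$). First I would specialise to $r=0$: using $m_0=1$ this yields $m_s=A_{s,1}/A_{0,s+1}$ for every $s$, so the law of $v_2$ is already determined by that of $v_1$ through the Hausdorff moment problem on $[0,1]$. Substituting this back, acceptability becomes a single moment identity in $v_1$ alone,
\[
A_{r,s+1}\,A_{s,1}\,A_{0,r+1}=A_{s,r+1}\,A_{r,1}\,A_{0,s+1},\qquad r,s\ge0 .
\]
Next I would pass to the tilted law $F(\mathrm{d}x)=(1-x)\Prob[v_1\in\mathrm{d}x]/\Esp[1-v_1]$ appearing in (1): writing $X\sim F$, one has $A_{r,s+1}=\Esp[1-v_1]\,\Esp[X^r(1-X)^s]$, $A_{s,1}=\Esp[1-v_1]\,\Esp[X^s]$ and $A_{0,r+1}=\Esp[1-v_1]\,\Esp[(1-X)^r]$, so the identity collapses to: the normalised moment $\Esp[X^r(1-X)^s]/(\Esp[X^r]\Esp[(1-X)^s])$ is symmetric under $r\leftrightarrow s$.

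It then remains to classify laws of $X$ on $[0,1]$ with this symmetry and translate back. Taking $r=1$ and writing $\alpha_s=\Esp[X^s]$, $\beta_s=\Esp[(1-X)^s]$, $p=\Esp[X]$, $q=1-p$, gives the recursion $p^{-1}(1-\beta_{s+1}/\beta_s)=q^{-1}(1-\alpha_{s+1}/\alpha_s)$ for all $s\ge0$. If $p=\tfrac12$ this forces $\alpha_s=\beta_s$ for all $s$, hence $X\deq1-X$ by moment uniqueness, i.e.\ $F$ symmetric about $1/2$; then $m_s\equiv1$, $v_2=1$ a.s., and we are in case (1). If $p\ne\tfrac12$, I would combine this recursion with the $r=2$ identity to obtain a three-term recursion for $(\alpha_s)$ with coefficients affine in $s$; depending on whether the $s$-dependent coefficients vanish, either $X$ is a point mass (so $v_1\equiv c$, $m_s=(c/(1-c))^s$, $v_2\equiv c/(1-c)$: case (3)) or $\alpha_{s+1}/\alpha_s=(a+s)/(a+b+s)$, i.e.\ $X\sim\Be(a,b)$, which back-translates to $v_1\sim\Be(\alpha,\beta)$ and, via $m_s=(\alpha)_s/(\beta+1)_s$, to $v_2\sim\Be(\alpha,\beta-\alpha+1)$: case (2). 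The requirement that $m_s=A_{s,1}/A_{0,s+1}$ be a genuine $[0,1]$-moment sequence is what pins down the admissible ranges ($0<c\le\tfrac12$, and $\alpha<\beta+1$), with the endpoint cases $c=\tfrac12$ and $\alpha=\beta+1$ falling into case (1).

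The hard part will be the rigidity step — showing that this weak symmetry of normalised moments admits no solutions beyond the three named families — which is precisely the content spread over Pitman's Lemmas 10 and 12 and requires a careful treatment of boundary phenomena (atoms of $X$ at $0$ or $1$, finite versus infinite support, the degeneration of the Beta recursion to a point mass, and the dichotomy at $p=\tfrac12$). Everything else — the reduction to a single moment identity in $v_1$, the $(1-x)$-tilting, and the $\Gamma$-function book-keeping — is routine.
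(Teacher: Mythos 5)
First, a point of comparison that matters here: the paper does not prove this lemma. It is quoted verbatim from Pitman (1996), Lemmas 10 and 12, and App.~C.1 states it without proof, so there is no in-paper argument to measure your proposal against; it has to be judged as a reconstruction of Pitman's proof. Your preparatory reductions are correct and are essentially the right first moves. The factorization $g(r,s)=A_{r,s+1}m_s$ under independence, the use of $r=0$ to get $m_s=A_{s,1}/A_{0,s+1}$ (so the law of $v_2$ is pinned down by that of $v_1$ via Hausdorff determinacy on $[0,1]$, once the case $v_1=1$ a.s.\ is set aside), and the passage to the tilted variable $X\sim F$, under which acceptability becomes symmetry of $\Esp[X^r(1-X)^s]/(\Esp[X^r]\,\Esp[(1-X)^s])$ in $(r,s)$, all check out. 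The ``if'' direction is also fine: in case (2) the product collapses to $\beta\,(\alpha)_r(\alpha)_s/(\alpha+\beta)_{r+s+1}$, in case (3) to $c^{r+s}(1-c)$, and in case (1) the symmetry of $A_{r,s+1}$ is exactly $X\deq 1-X$.

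The genuine gap is the one you flag yourself, and it is not a formality: everything up to your displayed moment identity is soft, and the entire mathematical content of Pitman's two lemmas is the rigidity step, i.e.\ that the normalized-moment symmetry forces the law of $X$ to be a point mass, a Beta law, or symmetric about $1/2$. Your plan for it — the $r=1$ identity $p^{-1}(1-\beta_{s+1}/\beta_s)=q^{-1}(1-\alpha_{s+1}/\alpha_s)$, the dichotomy at $p=1/2$, and for $p\neq 1/2$ coupling with the $r=2$ identity to get a recursion for $a_s=\alpha_{s+1}/\alpha_s$ — is credible (the $r=2$ identity does relate $1-2a_s+a_sa_{s+1}$ to the corresponding $b$-expression, so generically $a_{s+1}$ is a rational function of $a_s$), but three things remain unproved: (i) that the resulting moment sequences, determined by $\alpha_1,\alpha_2$, are valid Hausdorff moment sequences only in the Beta and degenerate cases; (ii) that the identities for $r\ge 3$ are then automatically satisfied rather than imposing further constraints; and (iii) the degenerate branches (vanishing coefficient of $a_{s+1}$, atoms of $X$ at $0$ or $1$, finite support). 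Until these are carried out the ``only if'' direction is a programme, not a proof. A smaller caveat: as literally stated the equivalence also fails for the degenerate acceptable laws $v_1=1$ a.s.\ and $v_1=v_2=0$ a.s., which give a symmetric $g$ but fit none of the three cases; these are harmless where the paper applies the lemma (there $w_1+w_2<1$ a.s.\ and $\Prob[w_j>0]>0$), but your write-up should exclude them explicitly rather than only discarding $v_1=1$ a.s.
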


\subsection{More preliminary results}\label{sec:app:MSB_sb:more}

Recall $\Delta_k = \{(x_1,\ldots,x_k) \in [0,1]^k: \sum_{i=1}^k x_j \leq 1\}$  and $\oDk$ denotes its interior.

\begin{rem}\label{rem:wk=0}
It follows from equation 
\eqref{eq:sb_pick}
that if $\bm{w}$ is invariant under size-biased permutations and $\sum_{j=1}^{\infty}w_j = 1$, then $w_k = 0$ if and only if $\sum_{j=1}^{k-1}w_j = 1$ in which case we also have $w_{k+j} = 0$ for every $j \geq 1$.
\end{rem}

\begin{lem}\label{lem:symm_A}
Let $\bm{w}$ be a weights sequence that it is invariant under size-biased permutations, with $\sum_{j=1}^{\infty} w_j = 1$ a.s.~and such that $w_k > 0$ a.s. Then for any permutation $\bm{\sigma} = (\sigma_1,\ldots\sigma_k)$ of $[k]$, and every measurable set $A \subseteq \Dk$ with $\Prob[\bm{w}_{[k]} \in A] = 1$, we have $\Prob[\bm{w}_{[k]} \in \bm{\sigma}(A)] = 1$ where $\bm{\sigma}(A) = \{ \bm{x}_{\bm{\sigma}}=(x_{\sigma_1},\ldots,x_{\sigma_k}): \bm{x} = (x_1,\ldots,x_k) \in A\}$.
\end{lem}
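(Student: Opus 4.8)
The plan is to read invariance under size-biased permutations off the first $k$ coordinates of $\bm{w}$ and to exploit that, under the hypotheses, every rearrangement of $(w_1,\ldots,w_k)$ occurs with strictly positive conditional probability as the initial block of a size-biased permutation. Concretely, I would first recast the assertion as a statement about null sets. Fix a permutation $\bm{\sigma}$ of $[k]$ and a measurable $A\subseteq\Dk$ with $\Prob[\bm{w}_{[k]}\in A]=1$, and put $B=\Dk\setminus A$, so $\Prob[\bm{w}_{[k]}\in B]=0$ (note $\bm{w}_{[k]}\in\Dk$ a.s.\ since $\sum_{j\geq1}w_j=1$). Since $\{\bm{w}_{[k]}\in\bm{\sigma}(A)\}$ is the event that the vector obtained from $\bm{w}_{[k]}$ by applying $\bm{\sigma}^{-1}$ lies in $A$, it suffices to prove: for every permutation $\bm{\tau}=(\tau_1,\ldots,\tau_k)$ of $[k]$ one has $\Prob[(w_{\tau_1},\ldots,w_{\tau_k})\in B]=0$; applying this with $\bm{\tau}=\bm{\sigma}^{-1}$ then yields $\Prob[\bm{w}_{[k]}\in\bm{\sigma}(A)]=1$. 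Equivalently, the target is mutual absolute continuity of $\mathcal{L}(\bm{w}_{[k]})$ and $\mathcal{L}(w_{\tau_1},\ldots,w_{\tau_k})$.

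Second, I would record the only facts about the picking scheme \eqref{eq:sb_pick} that are needed. Since $w_k>0$ a.s., Remark~\ref{rem:wk=0} gives $w_1,\ldots,w_k>0$ a.s.; hence the weight-sum of any $k-1$ among the indices $1,\ldots,k$ is a.s.\ strictly below $1$ (at least the remaining positive weight is left over). Thus, for a permutation $\bm{\tau}$ of $[k]$ the scheme \eqref{eq:sb_pick} stays in its non-degenerate branch through its first $k$ steps and
\[
P(\bm{\tau}):=\Prob[\varrho_1=\tau_1,\ldots,\varrho_k=\tau_k\mid\bm{w}]=\prod_{i=1}^{k}\frac{w_{\tau_i}}{1-\sum_{l<i}w_{\tau_l}}
\]
is a.s.\ strictly positive: the numerators are $w_1,\ldots,w_k$, and each denominator is a.s.\ at least $1-\sum_{l<k}w_{\tau_l}\geq w_{\tau_k}>0$.

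Third comes the core, one-line computation. By invariance under size-biased permutations, $(w_1,\ldots,w_k)\deq(w_{\varrho_1},\ldots,w_{\varrho_k})$, so conditioning on $\bm{w}$ and summing over index tuples in \eqref{eq:sb_pick},
\[
0=\Prob[(w_1,\ldots,w_k)\in B]=\Esp\Bigl[\,\sum_{(j_1,\ldots,j_k)}\Ind_B(w_{j_1},\ldots,w_{j_k})\,P(j_1,\ldots,j_k)\Bigr],
\]
the sum ranging over all $k$-tuples of distinct positive integers, with $P(j_1,\ldots,j_k):=\Prob[\varrho_1=j_1,\ldots,\varrho_k=j_k\mid\bm{w}]$. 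Every summand is non-negative, so in particular $\Ind_B(w_{\tau_1},\ldots,w_{\tau_k})\,P(\bm{\tau})=0$ a.s.; since $P(\bm{\tau})>0$ a.s., this forces $\Ind_B(w_{\tau_1},\ldots,w_{\tau_k})=0$ a.s., i.e.\ $\Prob[(w_{\tau_1},\ldots,w_{\tau_k})\in B]=0$, completing the reduction and hence the proof.

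I do not expect a real obstacle here; the single substantive point is the positivity of $P(\bm{\tau})$, which is exactly where the positivity of the weights enters and which makes every ordering of $(w_1,\ldots,w_k)$ attainable among the first size-biased picks. It is worth noting that the argument yields only mutual absolute continuity — not equality in law — of $\mathcal{L}(w_1,\ldots,w_k)$ and $\mathcal{L}(w_{\sigma_1},\ldots,w_{\sigma_k})$: the order-dependent denominators $1-\sum_{l<i}w_{\tau_l}$ obstruct full exchangeability, in accordance with, e.g., the Pitman--Yor regime, where $\Esp[w_1]\geq\Esp[w_2]\geq\cdots$.
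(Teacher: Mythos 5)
Your proof is correct and rests on the same two ingredients as the paper's own argument: the decomposition of the law of $\bm{w}_{\bm{\varrho}_{[k]}}$ over index tuples via \eqref{eq:sb_pick}, and the a.s.\ positivity of $\Prob[\bm{\varrho}_{[k]}=\bm{\tau}\mid\bm{w}]$ for any permutation $\bm{\tau}$ of $[k]$, which follows from $w_1,\ldots,w_k>0$ a.s.\ (Remark~\ref{rem:wk=0}). The only difference is one of execution: you pass to the null complement $B=\Dk\setminus A$ and conclude directly from the non-negativity of the summands, whereas the paper keeps the full-measure set and argues by contradiction --- yours is a slightly cleaner write-up of essentially the same argument.
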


\begin{proof}
Let $\bm{\varrho} = (\varrho_j)_{j=1}^{\infty}$ be as in 
\eqref{eq:sb_pick}. 
Then,
\[
1=\Prob[\bm{w}_{[k]} \in A] = \Prob[\bm{w}_{\bm{\varrho}_{[k]}} \in A] = \sum_{\bm{\alpha}} \Prob[\bm{w}_{\bm{\alpha}} \in A\mid \bm{\varrho}_{[k]} = \bm{\alpha}] \Prob[\bm{\varrho}_{[k]} = \bm{\alpha}]
\]
with $\bm{w}_{\bm{\varrho}_{[k]}} = (w_{\varrho_1},\ldots,w_{\varrho_k})$, $\bm{w}_{\bm{\alpha}}  = (w_{\alpha_1},\ldots,w_{\alpha_k})$, and where sum ranges over all deterministic vectors of size $k$, $\bm{\alpha} = (\alpha_1,\ldots, \alpha_k)$, of distinct positive integers such that $ \Prob[\bm{\varrho}_{[k]} = \bm{\alpha}] > 0$. As $\sum_{\bm{\alpha}}\Prob[\bm{\varrho}_{[k]} = \bm{\alpha}] = 1$ and $\Prob[\bm{w}_{\bm{\alpha}} \in A\mid \bm{\varrho}_{[k]} = \bm{\alpha}] \leq 1$ we get
\[
\Prob[\bm{\varrho}_{[k]} = \bm{\alpha}] > 0 \quad \Rightarrow \quad  \Prob[\bm{w}_{\bm{\alpha}} \in A\mid \bm{\varrho}_{[k]} = \bm{\alpha}] = 1.
\]
In particular, as a consequence of $w_k > 0$ a.s.~and Remark \ref{rem:wk=0}, if $\bm{\alpha} = (\alpha_1,\ldots, \alpha_k)$ defines a permutation of $[k]$, we get
\begin{equation}\label{eq:symm_A_a}
\Prob[\bm{\varrho}_{[k]} = \bm{\alpha}] = \Esp[\Prob[\bm{\varrho}_{[k]} = \bm{\alpha}\mid \bm{w}]]  = \Esp\l[\prod_{j=1}^k w_{\alpha_j} \l(1-\sum_{i=1}^{j-1}w_{\alpha_i}\r)^{-1}\r] \geq \Esp\l[\prod_{j=1}^k w_{j}\r] >0,
\end{equation}
(under the convention that the empty sum equals $0$) which implies
$
\Prob[\bm{w}_{\bm{\alpha}} \in A\mid \bm{\varrho}_{[k]} = \bm{\alpha}] = 1.
$
Next we will prove that $\Prob[\bm{w}_{\bm{\alpha}} \in A] =  1$ for the permutation $\bm{\alpha}$. If $\Prob[\bm{w}_{\bm{\alpha}} \in A] <  1$, then $\Prob[\bm{w}_{\bm{\alpha}} \in A^c]  > 0$ and we also have 
\begin{equation}\label{eq:symm_A_b}
\Prob[\bm{w}_{\bm{\alpha}} \in A^c \cap B] > 0,
\end{equation}
with $B = \{\bm{x} = (x_1,\ldots,x_k) \in \Dk: x_j > 0\}$ (this because $\Prob[\bm{w}_{\bm{\alpha}} \in B] = \Prob[w_j > 0, \forall j \leq k] =1$ by Remark \ref{rem:wk=0}). Noting that $\prod_{j=1}^k w_j > 0$ under the event $\{\bm{w}_{\bm{\alpha}} \in A^c \cap B\}$, we get
\begin{equation}\label{eq:symm_A_c}
\begin{aligned}
\Prob[\bm{\varrho}_{[k]} = \bm{\alpha} \mid \bm{w}_{\bm{\alpha}} \in A^c \cap B] 
& = \frac{\Esp[\Ind_{\{\bm{\varrho}_{[k]} = \bm{\alpha} \}}\Ind_{\{\bm{w}_{\bm{\alpha}} \in A^c \cap B\}}]}{\Prob[\bm{w}_{\bm{\alpha}} \in A^c \cap B]}\\
 &= \frac{\Esp[\Prob[\bm{\varrho}_{[k]} = \bm{\alpha} \mid \bm{w}]\Ind_{\{\bm{w}_{\bm{\alpha}} \in A^c \cap B\}}]}{\Prob[\bm{w}_{\bm{\alpha}} \in A^c \cap B]}
\\
& \geq \frac{\Esp[\prod_{j=1}^k w_j\Ind_{\{\bm{w}_{\bm{\alpha}} \in A^c \cap B\}}]}{\Prob[\bm{w}_{\bm{\alpha}} \in A^c \cap B]} > 0
\end{aligned}
\end{equation}
Putting together equations \eqref{eq:symm_A_a}--\eqref{eq:symm_A_c} we obtain
\[
\Prob[\bm{w}_{\bm{\alpha}} \in A^c \cap B\mid \bm{\varrho}_{[k]} = \bm{\alpha}] = \frac{\Prob[\bm{\varrho}_{[k]} = \bm{\alpha} \mid \bm{w}_{\bm{\alpha}} \in A^c \cap B]\Prob[\bm{w}_{\bm{\alpha}} \in A^c \cap B] }{\Prob[\bm{\varrho}_{[k]} = \bm{\alpha}]} >0,
\]
and $\Prob[\bm{w}_{\bm{\alpha}} \in A^c \mid \bm{\varrho}_{[k]} = \bm{\alpha}] > 0$ which contradicts $\Prob[\bm{w}_{\bm{\alpha}} \in A\mid \bm{\varrho}_{[k]} = \bm{\alpha}] = 1$.
Hence for every permutation $\bm{\alpha}$ we have $\Prob[\bm{w}_{\bm{\alpha}} \in A] =  1$. At this stage, for a fixed permutation $\bm{\sigma} = (\sigma_1,\ldots,\sigma_k)$ of $[k]$ we can take $\bm{\alpha}$ equal to the inverse permutation of $\bm{\sigma}$ and obtain
\[
\Prob[\bm{w}_{[k]} \in \sigma(A)] = \Prob[\bm{w}_{\bm{\alpha}} \in A] = 1.
\]
\end{proof}

\begin{cor}\label{cor:Sk_sym}
Let $\bm{w}$ be a weights sequence that it is invariant under size-biased permutations with $\sum_{j=1}^{\infty} w_j = 1$ a.s.~and such that   $w_k > 0$ a.s. Then the support of $\bm{w}_{[k]}$, $\mathcal{S}$, is symmetric in the sense that $\bm{x} \in \mathcal{S}$ if and only if $\bm{x}_{\bm{\sigma}} = (x_{\sigma_1},\ldots, x_{\sigma_k}) \in \mathcal{S}$ for every permutation $\bm{\sigma} = (\sigma_1,\ldots,\sigma_k)$ of $[k]$.
\end{cor}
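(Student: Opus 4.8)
The plan is to deduce this directly from Lemma~\ref{lem:symm_A} together with the standard characterization of the topological support as the smallest closed set of full probability. Write $\mathcal{S} \subseteq \Delta_k$ for the support of $\bm{w}_{[k]}$, so that $\mathcal{S}$ is closed and $\Prob[\bm{w}_{[k]} \in \mathcal{S}] = 1$, and recall that any closed set $F$ with $\Prob[\bm{w}_{[k]} \in F] = 1$ must satisfy $\mathcal{S} \subseteq F$ (this is valid since $\Delta_k$ is a separable metric space).

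First I would fix a permutation $\bm{\sigma} = (\sigma_1,\ldots,\sigma_k)$ of $[k]$ and observe that the coordinate-permutation map $\bm{x} = (x_1,\ldots,x_k) \mapsto \bm{x}_{\bm{\sigma}} = (x_{\sigma_1},\ldots,x_{\sigma_k})$ is a homeomorphism of $[0,1]^k$ that maps $\Delta_k$ onto itself; in particular $\bm{\sigma}(\mathcal{S})$ is again a closed subset of $\Delta_k$. Next, since $\mathcal{S}$ is measurable and $\Prob[\bm{w}_{[k]} \in \mathcal{S}] = 1$, Lemma~\ref{lem:symm_A} (which applies because $\bm{w}$ is invariant under size-biased permutations, $\sum_{j}w_j = 1$ a.s., and $w_k > 0$ a.s.) yields $\Prob[\bm{w}_{[k]} \in \bm{\sigma}(\mathcal{S})] = 1$. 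As $\bm{\sigma}(\mathcal{S})$ is a closed set of full probability, minimality of the support forces $\mathcal{S} \subseteq \bm{\sigma}(\mathcal{S})$.

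This inclusion holds for every permutation of $[k]$; applying it to $\bm{\sigma}^{-1}$ in place of $\bm{\sigma}$ gives $\mathcal{S} \subseteq \bm{\sigma}^{-1}(\mathcal{S})$, and applying $\bm{\sigma}$ to both sides (a homeomorphism, hence injective) yields $\bm{\sigma}(\mathcal{S}) \subseteq \mathcal{S}$. Combining the two inclusions gives $\bm{\sigma}(\mathcal{S}) = \mathcal{S}$, i.e.\ $\bm{x} \in \mathcal{S}$ if and only if $\bm{x}_{\bm{\sigma}} \in \mathcal{S}$, which is precisely the asserted symmetry of the support. There is no real obstacle here: the only points requiring a little care are checking that Lemma~\ref{lem:symm_A} is legitimately invoked with $A = \mathcal{S}$ (a measurable full-probability subset of $\Delta_k$) and recalling the minimality property of the support; both are routine.
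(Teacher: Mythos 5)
Your proof is correct and follows essentially the same route as the paper's: invoke Lemma~\ref{lem:symm_A} with $A=\mathcal{S}$, note that $\bm{\sigma}(\mathcal{S})$ is closed (your homeomorphism observation is the same fact the paper checks via accumulation points), and conclude by minimality of the support. The only cosmetic difference is that the paper intersects $\bm{\sigma}(\mathcal{S})$ over all $k!$ permutations before applying minimality, whereas you apply minimality permutation by permutation and then pass to $\bm{\sigma}^{-1}$ to get the reverse inclusion; both yield $\bm{\sigma}(\mathcal{S})=\mathcal{S}$.
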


\begin{proof}
By definition $\mathcal{S}$ is the smallest closed set such that $\Prob[\bm{w}_{[k]} \in \mathcal{S}] = 1$, and it follows from Lemma \ref{lem:symm_A} that $\Prob[\bm{w}_{[k]} \in \bm{\sigma}(\mathcal{S})] = 1$ for every permutation $\bm{\sigma}$ of $[k]$. It is also easy to see that that $\bm{\sigma}(\mathcal{S})$ is closed. Indeed, if $\bm{x}$ is an accumulation point of $\bm{\sigma}(\mathcal{S})$ then $\bm{x}_{\sigma^{-1}}=(x_{\sigma^{-1}_1},\ldots,x_{\sigma^{-1}_k})$ is an accumulation point of $\mathcal{S}$ which means $\bm{x}_{\bm{\sigma}^{-1}} \in \mathcal{S}$ and therefore $\bm{x} = \bm{x}_{(\bm{\sigma}\circ \bm{\sigma}^{-1})} \in \bm{\sigma}(\mathcal{S})$. That is $\bm{\sigma}(\mathcal{S})$ contains all its accumulation points. Now, since the intersection of closed sets is closed,
$\bigcap_{\bm{\sigma}} \bm{\sigma}(\mathcal{S})$ is closed and being that the countable intersection almost sure events occurs a.s., $\Prob[\bm{w}_{[k]} \in \bigcap_{\bm{\sigma}} \bm{\sigma}(\mathcal{S})] = 1$, where the intersection ranges over all $k!$ permutation of $[k]$. Hence $\mathcal{S} \subseteq \bigcap_{\bm{\sigma}} \bm{\sigma}(\mathcal{S}) \subseteq \mathcal{S}$.
\end{proof}

\begin{cor}\label{cor:Sk_Sj}
Let $\bm{w}$ be a weights sequence that it is invariant under size-biased permutations with $\sum_{j=1}^{\infty} w_j = 1$ a.s.~and such that $w_m > 0$ a.s.~for some $m \geq 1$.Then for any $k \leq m$, and any vector of distinct positive integers $\bm{\alpha} = (\alpha_1,\ldots,\alpha_k) \subseteq [m]$, the support of $\bm{w}_{\bm{\alpha}} = (w_{\alpha_1},\ldots,w_{\alpha_k})$ is equal to the support of $\bm{w}_{[k]} = (w_1,\ldots,w_k)$.
\end{cor}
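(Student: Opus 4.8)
The plan is to reduce the statement to Corollary~\ref{cor:Sk_sym} applied with $k$ replaced by $m$, using the elementary fact that a continuous image of a compactly supported random vector has support equal to the image of its support. So first I would record: if $\bm{X}$ takes values in a compact metric space, with support $\mathcal{S}=\mathrm{supp}(\bm{X})$, and $f$ is a continuous map defined on that space (with values in a metric space), then $\mathrm{supp}\big(f(\bm{X})\big)=f(\mathcal{S})$. The inclusion $\subseteq$ holds because $\Prob[\bm{X}\in\mathcal{S}]=1$ forces $\Prob[f(\bm{X})\in f(\mathcal{S})]=1$ and $f(\mathcal{S})$ is compact, hence closed. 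The inclusion $\supseteq$ holds because, for $\bm{x}\in\mathcal{S}$ and any open neighbourhood $V$ of $f(\bm{x})$, the set $f^{-1}(V)$ is an open neighbourhood of $\bm{x}\in\mathrm{supp}(\bm{X})$, so $\Prob[f(\bm{X})\in V]=\Prob[\bm{X}\in f^{-1}(V)]>0$; as $V$ was arbitrary, $f(\bm{x})\in\mathrm{supp}(f(\bm{X}))$.

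Next I would invoke Corollary~\ref{cor:Sk_sym} with $k=m$, which is legitimate since $w_m>0$ a.s.: the support $\mathcal{S}_m$ of $\bm{w}_{[m]}$ is symmetric, i.e.\ $T_{\bm{\sigma}}(\mathcal{S}_m)=\mathcal{S}_m$ for every permutation $\bm{\sigma}=(\sigma_1,\ldots,\sigma_m)$ of $[m]$, where $T_{\bm{\sigma}}$ is the coordinate permutation $\bm{x}\mapsto\bm{x}_{\bm{\sigma}}=(x_{\sigma_1},\ldots,x_{\sigma_m})$ (so $T_{\bm{\sigma}}(\mathcal{S}_m)=\bm{\sigma}(\mathcal{S}_m)$ in the notation of Corollary~\ref{cor:Sk_sym}; note $\mathcal{S}_m$ is a closed, hence compact, subset of $\Delta_m$). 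Now fix $k\le m$ and a vector $\bm{\alpha}=(\alpha_1,\ldots,\alpha_k)$ of distinct elements of $[m]$, and extend it to a permutation $\bm{\sigma}$ of $[m]$ by taking $\sigma_i=\alpha_i$ for $i\le k$ and letting $(\sigma_{k+1},\ldots,\sigma_m)$ enumerate $[m]\setminus\{\alpha_1,\ldots,\alpha_k\}$. Writing $g_{[k]}$ and $g_{\bm{\alpha}}$ for the continuous maps $\Delta_m\ni\bm{x}\mapsto(x_1,\ldots,x_k)$ and $\bm{x}\mapsto(x_{\alpha_1},\ldots,x_{\alpha_k})$, one has $g_{\bm{\alpha}}=g_{[k]}\circ T_{\bm{\sigma}}$, and hence, by the support-transport fact and $T_{\bm{\sigma}}(\mathcal{S}_m)=\mathcal{S}_m$,
\[
\mathrm{supp}(\bm{w}_{\bm{\alpha}})=g_{\bm{\alpha}}(\mathcal{S}_m)=g_{[k]}\big(T_{\bm{\sigma}}(\mathcal{S}_m)\big)=g_{[k]}(\mathcal{S}_m)=\mathrm{supp}(\bm{w}_{[k]}),
\]
which is the claim.

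The only point requiring genuine care is the support-transport fact of the first paragraph: one should exploit compactness of $\mathcal{S}_m$ so that $g_{\bm{\alpha}}(\mathcal{S}_m)$ is automatically closed and no closure operation intervenes, and one must pull open neighbourhoods back through $g_{\bm{\alpha}}$ for the reverse inclusion; everything else is bookkeeping. As a side remark, $w_m>0$ a.s.\ together with Remark~\ref{rem:wk=0} forces $w_j>0$ a.s.\ for all $j\le m$, so Corollary~\ref{cor:Sk_sym} could also be applied directly to each $\bm{w}_{[k]}$ with $k\le m$; routing through $\mathcal{S}_m$, however, is what lets one compare the supports of two arbitrary $k$-subvectors of $\bm{w}_{[m]}$ in a single stroke.
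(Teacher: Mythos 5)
Your proof is correct, but it reaches the conclusion by a somewhat different mechanism than the paper. The paper applies Lemma~\ref{lem:symm_A} at level $m$ directly to the probability-one cylinder event $\{\bm{w}_{[k]}\in\mathcal{S},\,(w_{k+1},\ldots,w_m)\in\Delta_{m-k}\}$, with a permutation exchanging $[k]$ and $\bm{\alpha}$, to obtain $\Prob[\bm{w}_{\bm{\alpha}}\in\mathcal{S}]=1$; minimality of the support then gives $\mathrm{supp}(\bm{w}_{\bm{\alpha}})\subseteq\mathrm{supp}(\bm{w}_{[k]})$, and the reverse inclusion follows by interchanging the roles of the two index vectors. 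You instead pass through Corollary~\ref{cor:Sk_sym} at level $m$ (legitimate, since $w_m>0$ a.s.) and a general topological fact: for a continuous map on a compactly supported random vector, the support of the image equals the image of the support. Writing both $\bm{w}_{[k]}$ and $\bm{w}_{\bm{\alpha}}$ as continuous projections of $\bm{w}_{[m]}$ and using $T_{\bm{\sigma}}(\mathcal{S}_m)=\mathcal{S}_m$ then yields the equality of supports in a single chain, with no separate ``by symmetry'' step. Your route costs one extra lemma (whose proof you give correctly, and where compactness of $\Delta_m$ is genuinely what makes $g_{\bm{\alpha}}(\mathcal{S}_m)$ closed, so no closure operation is needed), but it buys a one-shot identity rather than two inclusions, and it isolates cleanly where each hypothesis enters: size-biased invariance only through the symmetry of $\mathcal{S}_m$, and everything else is topology. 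The paper's route is leaner measure-theoretically, never manipulating images of sets under projections, only probability-one events. Your side remark that $w_m>0$ a.s.\ together with Remark~\ref{rem:wk=0} forces $w_j>0$ a.s.\ for all $j\leq m$ is also correct, and your explicit extension of $\bm{\alpha}$ to a permutation of $[m]$ (fixing $\sigma_i=\alpha_i$ for $i\leq k$ and enumerating the complement afterwards) is in fact stated more carefully than the permutation described in the paper's proof.
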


\begin{proof}
Fix $k \leq m$ and let $\bm{\alpha} = (\alpha_1,\ldots,\alpha_k)$ be a vector of distinct positive integers with $\alpha_j \in [m]$. Let $\bm{\sigma}$ be the permutation of $[m]$ given by $\bm{\sigma}_{\alpha_i} = i$, $\bm{\sigma}_i = \alpha_i$, and $\bm{\sigma}_l = l$ for every other number in $[m]$. Then by Lemma \ref{lem:symm_A} and since $1 = \Prob[\bm{w}_{[k]} \in \mathcal{S}]$, where $\mathcal{S}$ is the support of $\bm{w}_{[k]}$, we get
\begin{align*}
1 = \Prob[\bm{w}_{[k]} \in \mathcal{S}] & = \Prob[\bm{w}_{[k]} \in \mathcal{S}, (w_{k+1},\ldots,w_{m}) \in \Delta_{m-k}]\\
& = \Prob[(w_{\sigma_1},\ldots,w_{\sigma_k}) \in \mathcal{S}, (w_{\sigma_{k+1}},\ldots,w_{\sigma_m}) \in \Delta_{m-k}]\\
& = \Prob[w_{\bm{\alpha}} \in \mathcal{S}].
\end{align*}
This proves that the support of $w_{\bm{\alpha}}$ is contained in the support of $\bm{w}_{[k]}$ and by symmetry we also have the converse contention. Thus the support of $\bm{w}_{[k]}$ and that of $\bm{w}_{\bm{\alpha}}$ are equal.
\end{proof}

\begin{cor}\label{cor:0}
Let $\bm{w}$ be a weights sequence that it is invariant under size-biased permutation with $\sum_{j=1}^{\infty} w_j = 1$ and such that $w_j > 0$, for every $j \geq 1$, a.s. Then, the origin, $\bm{0} = (0,\ldots,0)$, belongs to the support, $\mathcal{S}$, of $\bm{w}_{[k]}$.
\end{cor}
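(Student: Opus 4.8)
The plan is to show that the support $\mathcal{S}$ of $\bm{w}_{[k]} = (w_1,\ldots,w_k)$ contains points arbitrarily close to the origin, and then to use the fact that $\mathcal{S}$ is closed. Concretely, it suffices to prove that for every $\varepsilon>0$ there exists a point of $\mathcal{S}$ all of whose coordinates lie in $[0,\varepsilon)$; taking $\varepsilon = 1/n$ and letting $n\to\infty$ produces a sequence in $\mathcal{S}$ converging to $\bm{0}$, whence $\bm{0}\in\mathcal{S}$ by closedness.

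First I would exploit properness. Since $\sum_{j=1}^{\infty}w_j = 1$ a.s., the partial sums converge to $1$ a.s., so the tail sums $\sum_{j>N}w_j = 1-\sum_{j=1}^{N}w_j$ converge to $0$ a.s.\ as $N\to\infty$. Hence, for a fixed $\varepsilon>0$, almost sure convergence implies convergence in probability, and one can choose $N$ large enough that $\Prob\big[\sum_{j>N}w_j<\varepsilon\big]>0$ (indeed this probability tends to $1$). Because $w_{N+i}\le\sum_{j>N}w_j$ for each $i\in[k]$, it follows that $\Prob\big[w_{N+1}<\varepsilon,\ldots,w_{N+k}<\varepsilon\big]>0$, i.e.\ the vector $(w_{N+1},\ldots,w_{N+k})$ falls, with positive probability, in the (relatively) open cube $C_\varepsilon = \{\bm{x}\in\Dk:\,x_i<\varepsilon,\ i\in[k]\}$ (recall the coordinates are nonnegative).

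Next I would transfer this back to $\mathcal{S}$ via the symmetry already established. Since $w_j>0$ a.s.\ for every $j$, Corollary~\ref{cor:Sk_Sj}, applied with $m=N+k$ and $\bm{\alpha}=(N+1,\ldots,N+k)\subseteq[m]$, gives that the support of $(w_{N+1},\ldots,w_{N+k})$ is exactly $\mathcal{S}$, for every $N$. If $\mathcal{S}$ did not meet $C_\varepsilon$, then $C_\varepsilon$ would be contained in the open complement of the support, forcing $\Prob\big[(w_{N+1},\ldots,w_{N+k})\in C_\varepsilon\big]=0$, contradicting the previous paragraph. Therefore $\mathcal{S}\cap C_\varepsilon\neq\emptyset$, i.e.\ $\mathcal{S}$ contains a point with all coordinates in $[0,\varepsilon)$, and, $\varepsilon>0$ being arbitrary, closedness of $\mathcal{S}$ yields $\bm{0}\in\mathcal{S}$.

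The argument is short and I do not expect a serious obstacle; the points needing care are (i) the passage from almost-sure convergence of the tail sums to the strict positivity of $\Prob[\sum_{j>N}w_j<\varepsilon]$, and (ii) the precise invocation of Corollary~\ref{cor:Sk_Sj}, which crucially requires $w_j>0$ a.s.\ for \emph{every} $j$ so that the far-out shifted block $(w_{N+1},\ldots,w_{N+k})$ shares the support $\mathcal{S}$ with $\bm{w}_{[k]}$ for arbitrarily large $N$. Everything else is the routine equivalence between "an open set carries positive probability" and "that open set meets the support."
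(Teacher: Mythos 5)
Your proof is correct and follows essentially the same route as the paper: both arguments combine properness of $\bm{w}$ (to find, with positive probability, a block of $k$ consecutive weights all below $\varepsilon$), Corollary~\ref{cor:Sk_Sj} (to transfer this back to the support $\mathcal{S}$ of $\bm{w}_{[k]}$), and closedness of $\mathcal{S}$. The only difference is cosmetic: the paper argues by contradiction via the stopping time $t_\varepsilon=\inf\{j:w_i<\varepsilon\ \forall i\ge j\}$ and a union bound over all shifts, whereas you work directly with the tail sums and need only a single sufficiently large $N$ — a slightly cleaner packaging of the same idea.
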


\begin{proof}
For $\varepsilon > 0$ define $t_{\varepsilon} = \inf\{j \geq 1: w_i < \varepsilon, \, \forall i \geq j\}$. Since $\sum_{j=1}^{\infty} w_j = 1$ a.s.~we have that 
\[
1 \geq \Prob[t_\varepsilon < \infty] = \Prob\l[\bigcup_{j \geq 1}\bigcap_{i \geq j} \{w_i < \varepsilon\}\r] \geq \Prob\l[\sum_{j=1}^{\infty} w_j < \infty\r] = 1.
\]
Now, fix $k \geq 1$ and say that $\bm{0} \not\in \mathcal{S}$, being that $\mathcal{S}$ is closed, this means that $\bm{0}$ is not an accumulation point of $\mathcal{S}$. Thus the there exist $r > 0$ such that the ball with radius $r$ and center $\bm{0}$, $\mathcal{B}_r(\bm{0})$, is contained in the complement of $\mathcal{S}$. It follows from Corollary \ref{cor:Sk_Sj} that $\mathcal{B}_r(\bm{0})$ must be contained in the complement of the support of $(w_{j+1},\ldots,w_{j+k})$, for every $j \geq 1$. Thus, $\Prob[w_{j+1} < r, \ldots, w_{j+k} < r] = 0$, which means
\[
0 \leq \Prob[t_r < \infty] = \Prob\l[\bigcup_{j \geq 1} \bigcap_{i \geq j} \{w_i < r\}\r] \leq \Prob\l[\bigcup_{j \geq 1} \bigcap_{i = 1}^k \{w_{j+i} < r\}\r] \leq \sum_{j \geq 1} \Prob\l[\bigcap_{i = 1}^k \{w_{j+i} < r\}\r] = 0,
\]
contradicting $\Prob[t_r < \infty] = 1$. Hence $\bm{0}$ must be an accumulation point of $\mathcal{S}$, i.e. $\bm{0} \in \mathcal{S}_{\bm{w}_{[k]}}$.
\end{proof}

\begin{lem}\label{lem:keyA}
Let $\bm{w}$ be stick-breaking weights that remain invariant under size-biased permutations with $\sum_{j=1}^{\infty}w_j = 1$ a.s.~and $\Prob[w_j > 0] > 0$ for every $j \geq 1$. Say that
\begin{itemize}
\item[\emph{A0.}] $v_3$ is conditionally independent of $v_1$ given $v_2$, i.e $\Prob[v_3 \in \cdot\mid v_2] \aseq \Prob[v_3 \in \cdot\mid v_1,v_2]$ (this is already given if $\bm{w}$ is an MSB).
\item[\emph{A1.}] The law of $\bm{v}_{[2]}$ and the Lebesgue measure, $\lambda^2$ over $([0,1]^2,\B_{[0,1]^2})$, are mutually absolutely continuous. This is for every $A \in \B_{[0,1]^2}$, $\Prob[\bm{v}_{[2]} \in A] = 0$ if and only if $\lambda^2(A) = 0$.
\end{itemize}
Then $v_1,v_2$ and $v_3$ are independent Beta distributed r.v.
\end{lem}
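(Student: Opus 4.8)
The plan is to prove, in this order, that (a) $v_3$ is independent of $(v_1,v_2)$, (b) $v_1$ is independent of $v_2$, and (c) the three marginal laws are beta; step (a) is the heart of the matter, while (b) and (c) are short consequences of Pitman's classification lemmas. Throughout I would keep the convention of Subsection~\ref{sec:app:MSB_sb:Pit} that $1$ is absorbing for $\bm v$, so that $\bm v$ and $\bm w$ determine each other. For (a), since $\bm w$ is invariant under size-biased permutations, Corollary~\ref{cor7'} with $k=2$ supplies a version $\mu(\,\cdot\,;w_1,w_2)$ of $\Prob[v_3\in\cdot\mid w_1,w_2]$, defined on the event $\{w_1+w_2<1\}$ — which is a.s.\ because A1 forces $(1-v_1)(1-v_2)>0$ a.s. — that is symmetric, $\mu(\,\cdot\,;w_1,w_2)=\mu(\,\cdot\,;w_2,w_1)$; while Markovianity (A0) supplies another version $\nu$ of the same conditional law depending on $(w_1,w_2)$ only through $v_2=w_2/(1-w_1)$, say $\nu(\,\cdot\,;w_1,w_2)=\bar\nu(\,\cdot\,;v_2)$. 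Two versions of a conditional distribution agree off a $\Prob_{(w_1,w_2)}$-null set (pass to a countable algebra generating $\B_{[0,1]}$ to make this uniform in the test set), and since $(v_1,v_2)\mapsto(v_1,v_2(1-v_1))$ is a diffeomorphism of $(0,1)^2$ onto the interior of $\Delta_2$ with strictly positive Jacobian, A1 makes $\Prob_{(w_1,w_2)}$ mutually absolutely continuous with Lebesgue measure there; hence $\mu=\nu$ Lebesgue-a.e.\ on the interior of $\Delta_2$.

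Next I would work in the coordinates $(v_1,v_2)\in(0,1)^2$. Fixing a test set $A$ and writing $g(v_2):=\bar\nu(A;v_2)$, the identity $\mu=\nu$ combined with the (everywhere) symmetry of $\mu$ becomes $g(v_2)=g(\phi(v_1,v_2))$ for a.e.\ $(v_1,v_2)$, where $\phi(v_1,v_2)=v_1/(1-v_2+v_1v_2)$ is the value of $w_1/(1-w_2)$ in these coordinates. For each fixed $v_2$ the map $v_1\mapsto\phi(v_1,v_2)$ is an increasing $C^1$ diffeomorphism of $(0,1)$ onto itself, since $\partial_{v_1}\phi=(1-v_2)/(1-v_2+v_1v_2)^2>0$; so by Fubini I can pick $v_2^\ast$ with $g(\phi(v_1,v_2^\ast))=g(v_2^\ast)$ for a.e.\ $v_1$, and transporting Lebesgue measure along this diffeomorphism forces $g\equiv g(v_2^\ast)$ a.e., with the constant independent of the chosen $v_2^\ast$. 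Doing this for each $A$ in the generating algebra and evaluating at one point where all these equalities hold yields a probability measure $Q$ with $\Prob[v_3\in\cdot\mid v_1,v_2]=Q$ a.s., i.e.\ $v_3\perp(v_1,v_2)$ and $v_3\sim Q$; moreover $Q\neq\delta_1$, since $\Prob[w_4>0]>0$ forces $\Prob[v_3=1]<1$.

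For (b) and (c), I would apply Lemma~\ref{lem:rem8} with $k=1$: on $\{w_1<1\}$ there is a version of the conditional law given $v_1$ of the stick-breaking weights with length variables $(v_2,v_3,\dots)$ that is invariant under size-biased permutations, so by Lemma~\ref{lem:cor7}(1) the conditional law of $(v_2,v_3)$ given $v_1$ is acceptable for a.e.\ $v_1$. By step (a) that conditional law is the product $\mu_{v_1}\otimes Q$, with $\mu_{v_1}$ the law of $v_2$ given $v_1$, which by A1 and Fubini has an a.e.\ positive density for a.e.\ $v_1$. Feeding the independent pair $(v_2,v_3)\sim\mu_{v_1}\otimes Q$ into Lemma~\ref{lem:lem10,12} excludes its first alternative ($Q=\delta_1$) and its third ($\mu_{v_1}$ degenerate), leaving $\mu_{v_1}=\Be(\alpha(v_1),\beta(v_1))$ and $Q=\Be(\alpha(v_1),\beta(v_1)-\alpha(v_1)+1)$; since $Q$ does not depend on $v_1$, neither do $\alpha(v_1)$ and $\beta(v_1)$, so $\mu_{v_1}$ is constant, i.e.\ $v_1\perp v_2$, and $v_2,v_3$ are beta. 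One more application of Lemma~\ref{lem:lem10,12}, now to the independent pair $(v_1,v_2)$ — whose law is acceptable by Lemma~\ref{lem:cor7}(1) applied to $\bm w$ itself, and whose first and third alternatives are impossible because $v_2$ is a nondegenerate beta variable — gives that $v_1$ is beta as well. Finally, $v_1\perp v_2$ together with $v_3\perp(v_1,v_2)$ yields the joint independence of $v_1,v_2,v_3$.

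The step I expect to be the main obstacle is (a): converting the a.e.\ — rather than everywhere — coincidence of the symmetric version $\mu$ and the ``function-of-$v_2$'' version $\nu$ into genuine constancy. This is exactly where A1 is used, and it requires the Fubini argument along the diffeomorphism $\phi$, a careful dictionary between the $(v_1,v_2)$- and $(w_1,w_2)$-pictures, and enough bookkeeping with countable generating algebras to turn ``version'' statements into statements about actual measures. Once $v_3\perp(v_1,v_2)$ is secured, the remaining work is just a matter of selecting the right independent pairs to feed into the classification lemmas.
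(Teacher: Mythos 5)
Your proposal is correct and follows the same overall architecture as the paper's proof: extract the symmetric version $\mu$ of $\Prob[v_3\in\cdot\mid w_1,w_2]$ from Corollary~\ref{cor7'}, the ``function of $v_2$'' version $\nu$ from A0, force them to coincide via A1, conclude $v_3\perp(v_1,v_2)$, and then feed the resulting independent pairs into Lemmas~\ref{lem:cor7}, \ref{lem:rem8} and \ref{lem:lem10,12} to obtain $v_1\perp v_2$ and the Beta marginals (your injectivity-of-$(\alpha,\beta)\mapsto\Be(\alpha,\beta-\alpha+1)$ argument replaces the paper's explicit moment computation, to the same effect). The one place where your implementation genuinely differs is the constancy step: the paper partitions $[0,1]$ into equivalence classes $\{u:\phi(u)=\phi(v)\}$ and derives a contradiction by constructing, for any two ratio values $v_x,v_y$, a point $\bm z$ with $z_2/(1-z_1)=v_x$ and $z_1/(1-z_2)=v_y$ that would have to lie outside the full-measure symmetric set $B$; you instead write the a.e.\ identity $g(v_2)=g(\phi(v_1,v_2))$ with $\phi(v_1,v_2)=v_1/(1-v_2+v_1v_2)$ and run Fubini along the fibrewise diffeomorphism $v_1\mapsto\phi(v_1,v_2^\ast)$ of $(0,1)$ onto itself. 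These are two implementations of the same geometric idea (your $\phi$ is exactly the reflected ratio realised by the paper's $\bm z$), and both lean on A1 in the same way; your Fubini version is arguably more streamlined for condition $(A)$, whereas the paper's set-theoretic formulation is the one that transfers to the topological hypotheses $(B)$ and $(C)$ in Lemmas~\ref{lem:keyB} and \ref{lem:keyC}. No gaps beyond the measurability bookkeeping (countable generating algebra, measurable selection of the acceptable conditional version) that the paper elides to the same degree.
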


\begin{proof}
\textbf{Part 1:} Since $\bm{w}$ is invariant under size-biased permutations by Corollary \ref{cor7'} there is a version of the conditional law of $v_{3}$ given $\bm{w}_{[2]}$, $\Prob[v_{3}\in \cdot \mid \bm{w}_{[2]}]$, that depends symmetrically on $\bm{w}_{[2]} = (w_1,w_2)$. In other words there exist a measurable function $\mu:\D2 \to \mathcal{P}([0,1])$, where $\mathcal{P}([0,1])$ denotes the space of all probability measures over $([0,1],\B_{[0,1]})$,  such that $\mu$ is symmetric, i.e. $\mu((x_1,x_2)) = \mu((x_2,x_1))$, and $\mu(\bm{w}_{[2]}) \aseq \Prob[v_{3}\in \cdot \mid \bm{w}_{[2]}]$.
Now, from the assumption $v_j = 1$ if and only if $\sum_{i=1}^j w_i = 1$, we know that $\bm{w}_{[2]}$ and $\bm{v}_{[2]}$ are measurable with respect to each other, putting this together with the assumption that $v_3$ is conditionally independent of $v_1$ given $v_2$, we get a.s.
\begin{equation}\label{eq:MSB_v3}
\Prob[v_{3} \in \cdot\mid \bm{w}_{[2]}] = \Prob[v_{3} \in \cdot\mid \bm{v}_{[2]}] =\Prob[v_{3} \in \cdot\mid v_2].
\end{equation}
Hence, regarding $v_2 = f(\bm{w}_{[2]})$ as a function of $\bm{w}_{[2]}$, i.e.
\begin{equation}\label{eq:f2}
f(\bm{w}_{[2]}) = \begin{cases}
w_2/(1-w_1) & \text{ if } w_1 < 1\\
1 & \text{ if } w_1 = 1,
\end{cases}
\end{equation}
we get that $\nu = \phi \circ f:\D2 \to \mathcal{P}([0,1])$ is another version of $\Prob[v_{3} \in \cdot\mid \bm{w}_{[2]}]$, where $\phi:[0,1] \to \mathcal{P}([0,1])$ is any version of $\Prob[v_3\in\cdot\mid v_2]$, so 
\[
\mu(\bm{w}_{[2]})  =  \phi(f(\bm{w}_{[2]})) =: \nu(\bm{w}_{[2]}),
\]
a.s., this is there exist a measurable set $A \subseteq \D2$ such that $\Prob[\bm{w}_{[2]} \in A] = 1$ and for every $\bm{x} = (x_1,x_2) \in A$, $\mu(\bm{x}) = \nu(\bm{x})$. Now, as $w_1 + w_2 < 1$ a.s. (this follows from A1) Remark \ref{rem:wk=0} and Lemma \ref{lem:symm_A} explain that we may assume without loss of generality that $A$ is symmetric (otherwise we can replace it with $A \cap \bm{\sigma}(A)$, where $\sigma_1 = 2$ and $\sigma_2 = 1$, which also has probability one under the law of $\bm{w}_{[2]}$). Under this consideration, also note that $\nu$ is symmetric on $A$. Indeed, for $\bm{x} = (x_1,x_2), \bm{x}_{\bm{\sigma}}= (x_2,x_1) \in A$,
\[
\nu(\bm{x}_{\bm{\sigma}}) = \mu(\bm{x}_{\bm{\sigma}}) = \mu(\bm{x}) = \nu(\bm{x}),
\]
because $\mu$ is symmetric. Thus, the set 
$B = \{(x_1,x_2) \in \D2: \nu((x_1,x_2)) =  \nu((x_2,x_1))\}$
also satisfies $\Prob[\bm{w}_{[2]} \in B] = 1$ and can be assumed to be symmetric likewise. 

\smallskip

\textbf{Part 2:} Now define the equivalence relation over $[0,1]$, $u \sim v$ if and only if $\phi(u) = \phi(v)$ and let $\mathcal{A} = \{A_i\}_{i \in I}$ be the set of equivalence classes of $\sim$. Note that $\mathcal{A}$ is a measurable partition of $[0,1]$ because $\phi:[0,1] \to \mathcal{P}([0,1])$ is measurable (with respect to the Borel $\sigma$-algebra $\B_{\mathcal{P}([0,1])}$, of $\mathcal{P}([0,1])$). We will prove that there exist $i \in I$ such that $\Prob[v_2 \in A_i] = 1$ by contradiction. So assume there exist two distinct indexes $i,j \in I$ with  $\Prob[v_2 \in A_i] > 0$,  $\Prob[v_2 \in A_j] >0.$ Set $C_l = \{(x_1,x_2) \in \D2: x_2/(1-x_1) \in A_l\}$ for $l \in \{i,j\}$ so that $v_2 \in A_l$ if and only if $\bm{w}_{[2]} \in C_l$. For any $\bm{x} = (x_1,x_2) \in C_i\cap \oD2$ and $\bm{y}= (y_1,y_2) \in C_j\cap \oD2$ set $v_x = f(\bm{x}) = x_2/(1-x_1)$ and $v_y = f(\bm{y}) = y_2/(1-y_1)$ and consider the point $\bm{z} = (z_1,z_2)$ given by
\[
z_1 = \frac{v_y(1-v_x)}{1-v_x v_y}, \quad \text{ and } \quad z_2 = \frac{v_x(1-v_y)}{1-v_x v_y}
\]
Note that $z_2/(1-z_1) = v_x$ and $z_1/(1-z_2) = v_y$ hence $\bm{z} \in C_i\cap\oD2$ and $\bm{z}_{\bm{\sigma}} = (z_2,z_1) \in C_j\cap \oD2$ with $\bm{\sigma} = (2,1)$. Now, if $\bm{z} \in B$ we have
\[
\nu(\bm{x}) = \phi(v_x) = \nu(\bm{z}) = \nu(\bm{z}_{\bm{\sigma}}) = \phi(v_y) = \nu(\bm{y}).
\]
In particular $\phi(v_x) = \phi(v_y)$ with $v_x \in A_i$ and $v_y \in A_j$ which contradicts that $A_i$ and $A_j$ are two distinct equivalence classes. We must therefore have $\bm{z} \not\in B$. In other words, if we set $D_l = C_l\cap\oD2$ and regard $\bm{z}$ as a function of $\bm{x}$ and $\bm{y}$ i.e. $\bm{z} = g(\bm{x},\bm{y})$ with $g:\oD2\times\oD2 \to \oD2$ given by
\[
g(\bm{x},\bm{y}) = \l(\frac{f(\bm{x})(1-f(\bm{y}))}{1-f(\bm{x})f(\bm{y})}, \frac{f(\bm{y})(1-f(\bm{x}))}{1-f(\bm{x})f(\bm{y})}\r)
\]
and $f$ as in \eqref{eq:f2}, then the direct image under $g$ of $D_i\times D_j$ satisfies $g[D_i \times D_j] \subseteq \D2 \setminus B$. Furthermore, by construction we have that
\[
g[D_i\times D_j] = D_i \cap \bm{\sigma}(D_j) = \{(z_1,z_2) \in \oD2: z_2/(1-z_1)\in A_i, z_1/(1-z_2) \in A_j\}.
\]
So that $\bm{w}_{[2]} \in g[D_i\times D_j]$ if and only if $v_2 \in A_i$ and $v_1/(1-v_1(1-v_2)) \in A_j$. It then follows from assumption A1 in the statement of the Lemma that
\[
\Prob[\bm{w}_{[2]} \in \oD2\setminus B] \geq \Prob[\bm{w}_{[2]} \in g[D_i \times D_j]] = \Prob[v_2 \in A_i,v_1/(1-v_1(1-v_2)) \in A_j] > 0
\]
(because $\Prob[v_2 \in A_l] > 0$ implies $\lambda(A_l) > 0$ and the mapping $(v_1,v_2) \mapsto (v_1/(1-v_1(1-v_2)),v_2)$ defines a continuous bijection). Since $\Prob[\bm{w}_{[k]} \in B] = 1$, this cannot occur. The contradiction arises from  the assumption that there exist two distinct indexes $i,j \in I$ with  $\Prob[v_2 \in A_i] > 0$,  $\Prob[v_2 \in A_j] >0.$ Thus there can only be one index $i \in I$ such that $\Prob[v_2 \in A_i] > 0$, and since $\mathcal{A} = \{A_i\}_{i \in I}$ is a measurable partition we must have $\Prob[v_2 \in A_i] = 1$. This is, $\phi$ is a.s.~constant and by the tower property of conditional expectation we must have
\[
\Prob[v_3 \in \cdot\,] = \Esp[\Prob[v_3 \in \cdot\mid v_2]] = \Esp[\phi(v_2)] = \phi(u)
\]
for $u \in A_i$. Thus $\phi = \Prob[v_3 \in \cdot\,]$ a.s.~which means that the law of $v_3$ is version of $\Prob[v_3 \in \cdot\mid v_2]$ and from \eqref{eq:MSB_v3} we also get it is a version of $\Prob[v_3 \in \cdot\mid \bm{v}_{[2]}]$ and $\Prob[v_3 \in \cdot\mid \bm{w}_{[2]}]$, proving that $v_3$ is independent of the pair $\bm{v}_{[2]} = (v_1,v_2)$.

\smallskip

\textbf{Part 3:}  Next we show that $v_1$ and $v_2$ are also independent. Notice that as $v_3$ is conditionally independent of $v_1$ given $v_2$, $\Prob[v_3 \in \cdot\,]$ is a version of $\Prob[v_3 \in \cdot \mid v_{[2]}]$. In particular
\[
\Prob[v_3 \in \cdot \mid v_1] = \Esp[\Prob[v_3 \in \cdot \mid \bm{v}_{[2]}]\mid v_1] = \Prob[v_3 \in \cdot\,]
\]
and 
\[
\Prob[v_3 \in \cdot \mid \bm{v}_{[2]}] = \Prob[v_3 \in \cdot\,] = \Prob[v_3 \in \cdot \mid v_1]
\]
a.s. That is $v_3$ is independent of $v_1$, and $v_2$ and $v_3$ are conditionally independent given $v_1$. Putting this together with Lemmas \ref{lem:cor7} (\emph{2}), Lemma  \ref{lem:lem10,12}, and the fact that $w_1 + w_2 < 1$ a.s. (equiv. $v_1,v_2 < 1$ a.s.~due to A1),  we get that one of the following holds:
\begin{enumerate}
\item $v_3 = 1$ a.s.~and the distribution of $F$ on $[0,1]$ defined by
\[
F(\mathrm{d}x) = (1-x)\Prob[v_2 \in \mathrm{d}x\mid v_1]/\Esp[1-v_2 \mid v_1]
\]
is symmetric about $1/2$. This cannot occur because if $v_3 = 1$ a.s then $w_j = 0$ a.s.~for every $j \geq 4$ contradicting $\Prob[w_j > 0] > 0$ for every $j \geq 1$.
\item There exist some $v_1$-measurable r.v.~$\alpha = \alpha(v_1)$ and $\beta = \beta(v_1)$ with $0 < \alpha < \beta+1$ a.s.~such that the Beta distributions $\Be(\alpha,\beta)$ and $\Be(\alpha,\beta-\alpha+1)$ are regular versions of $\Prob[v_2 \in \cdot\mid v_1]$ and $\Prob[v_3 \in \cdot \mid v_1]$, respectively. 
Being that $v_3$ is in particular independent of $v_1$, we get
\begin{align*}
\alpha(v_1) &= \l(\frac{\Esp[v_3\mid v_1](1-\Esp[v_3\mid v_1])}{\Var(v_3 \mid v_1)}\r)\Esp[v_3\mid v_1] = \l(\frac{\Esp[v_3](1-\Esp[v_3])}{\Var(v_3)}\r)\Esp[v_3]\\
\gamma(v_1) &= \l(\frac{\Esp[v_3\mid v_1](1-\Esp[v_3\mid v_1])}{\Var(v_3 \mid v_1)}\r)(1-\Esp[v_3\mid v_1]) = \l(\frac{\Esp[v_3](1-\Esp[v_3])}{\Var(v_3)}\r)(1-\Esp[v_3]),\\
\end{align*}
where $\gamma(v_1) = \beta(v_1)-\alpha(v_1)+1$, so $\Prob[v_2 \in \cdot \mid v_1] = \Be(\alpha,\beta)$, for some constants $0 < \alpha < \beta +1$, and thus $v_2$ is also independent of $v_1$. In this case it also follows from Lemma \ref{lem:cor7} (\emph{1}) and  Lemma \ref{lem:lem10,12} (\emph{2}) that $v_1$ is Beta distributed.
\item There exist some $v_1$-measurable r.v.~$c = c(v_1)$ with $0 < c < 1/2$ such that $v_2 = c$ and $v_3 = c/(1-c)$ a.s. Equivalently,  $w_2 = w_3 > 0$ a.s.~and it follows from Lemma \ref{lem:symm_A} (for the set $A = \{\bm{x} = (x_1,x_2,x_3) \in \Delta_3: x_2 = x_3\}$ and for the permutation, $\bm{\sigma} = (3,1,2)$, of $[3]$) that $\Prob[w_1 = w_2] = 1 = \Prob[w_2 = w_3]$. This implies the law of $v_1$ and $v_2$ concentrates in the set $\{\bm{u} \in [0,1]^2: u_2 = u_1/(1-u_1)\}$ which has Lebesque measure zero, contradicting A1. Hence this cannot occur either.
\end{enumerate}
We conclude (2) holds and we have $v_1,v_2$ and $v_3$ are independent Beta distributed r.v.
\end{proof}

\begin{lem}\label{lem:keyB}
Let $\bm{w}$ be stick-breaking weights that remain invariant under size-biased permutations with $\sum_{j=1}^{\infty}w_j = 1$, $w_1+w_2 < 1$ a.s.~and $\Prob[w_j > 0] > 0$ for every $j \geq 1$.  Let $\Delta_{2,\varepsilon} = \{(x_1,x_2) \in \D2: x_1+x_2 \leq \varepsilon\}$ and define $E = \inf\{\varepsilon > 0: \mathcal{S} \subseteq \Delta_{2,\varepsilon}\}$ where $\mathcal{S}$ is the support of $\bm{w}_{[2]}$. Say that
\begin{itemize}
\item[\emph{B0.}] $v_3$ is conditionally independent of $v_1$ given $v_2$, i.e $\Prob[v_3 \in \cdot\mid v_2] \aseq \Prob[v_3 \in \cdot\mid v_1,v_2]$ (this is already given if $\bm{w}$ is an MSB).
\item[\emph{B1.}] For every $0 < \varepsilon < E$, $\mathcal{D}_{2,\varepsilon} = \{(x,x) \in \Delta_{2,\varepsilon}: 0 < x \leq \varepsilon/2\}$, is contained in the interior of the support $\mathcal{S}_{\bm{w}_{[2]}}$. 
\item[\emph{B2.}] There is a version $\phi:[0,1] \to \mathcal{P}([0,1])$, of the conditional law $\Prob[v_{3}\in \cdot \mid v_2]$ that is left continuous (w.r.t. the weak topology). This is, for every $u_1,u_2,\ldots$ in $[0,1]$ such that $u_n < u$ and $u_n \to u$ we get $\phi(u_n) \wto \phi(u)$. 
\end{itemize}
Then $v_1,v_2$ and $v_3$ are independent Beta distributed r.v.
\end{lem}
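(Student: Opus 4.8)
The plan is to follow the three-part architecture of the proof of Lemma~\ref{lem:keyA}, keeping the set-up (Part~1) and the final identification (Part~3) essentially unchanged, and replacing in the middle part (Part~2) the absolute-continuity hypothesis A1 by the topological hypotheses B1 and B2. For Part~1 I would run exactly the argument of Lemma~\ref{lem:keyA}, Part~1: size-biased invariance and Corollary~\ref{cor7'} produce a \emph{symmetric} version $\mu:\D2\to\mathcal{P}([0,1])$ of $(w_1,w_2)\mapsto\Prob[v_3\in\cdot\mid w_1,w_2]$, while the conditional independence B0 produces a second version $\nu=\phi\circ f$, with $f(\bm{x})=x_2/(1-x_1)$ (so $v_2=f(\bm{w}_{[2]})$) and $\phi$ any version of $\Prob[v_3\in\cdot\mid v_2]$. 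Because $\mu=\nu$ off a null set, because $w_1+w_2<1$ a.s., and by Remark~\ref{rem:wk=0} and Lemma~\ref{lem:symm_A}, the coincidence set can be taken symmetric of full probability; hence $B=\{\bm{x}\in\D2:\nu(\bm{x})=\nu(\bm{x}_{\bm{\sigma}})\}$, $\bm{\sigma}=(2,1)$, is symmetric of full probability and therefore dense in the support $\mathcal{S}=\mathcal{S}_{\bm{w}_{[2]}}$; on $B$ one has $\phi\big(x_2/(1-x_1)\big)=\phi\big(x_1/(1-x_2)\big)$.

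The crux is Part~2, proving $v_3$ independent of $(v_1,v_2)$. I would introduce the map $g:\oD2\to(0,1)^2$, $g(\bm{x})=\big(x_2/(1-x_1),\,x_1/(1-x_2)\big)=\big(f(\bm{x}),f(\bm{x}_{\bm{\sigma}})\big)$, a diffeomorphism with $g^{-1}(u,u')=\big(u'(1-u)/(1-uu'),\,u(1-u')/(1-uu')\big)$ and Jacobian vanishing only on $\partial\D2$; note $g^{-1}(u,u)=\big(u/(1+u),u/(1+u)\big)$. The key reduction is: \emph{if $g^{-1}(u,u')\in\operatorname{int}\mathcal{S}$, then $\phi(u)=\phi(u')$}. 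To establish it, take an open ball $N\subseteq\operatorname{int}\mathcal{S}$ around $g^{-1}(u,u')$; then $g(N)$ is an open neighbourhood of $(u,u')$ and $g(B\cap N)$ is dense in it; on $g(B\cap N)$ the two coordinates carry equal $\phi$-value; now approach $(u,u')$ along a sequence in $g(B\cap N)$ chosen in its \emph{lower-left} quadrant, so that each coordinate converges from below, and apply the one-sided (left) continuity B2 of $\phi$. Combining the reduction with B1, which puts $\{(x,x):0<x<E/2\}$ inside $\operatorname{int}\mathcal{S}$, one gets that $\phi$ is locally constant, hence constant --- say $\equiv P^{*}$ --- on $(0,E/(2-E))$, and, by B2, on its right endpoint too. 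To reach the remaining values of $v_2$ I would argue by chaining: for $u^{*}$ in the support of $v_2$ one exhibits $u'$ with $\phi(u')=P^{*}$ already known and with $g^{-1}(u^{*},u')\in\operatorname{int}\mathcal{S}$, using that $\mathcal{S}$ is anchored at the origin (Corollary~\ref{cor:0}), symmetric (Corollary~\ref{cor:Sk_sym}), and reaches the frontier $\{x_1+x_2=E\}$, so that $E/(2-E)$ already exhausts the essential range of $v_2$ and the connecting points exist; the reduction then yields $\phi(u^{*})=P^{*}$. Therefore $\Prob[v_3\in\cdot\mid v_2]=P^{*}$ a.s., hence by B0 $\Prob[v_3\in\cdot\mid\bm{v}_{[2]}]=P^{*}$ a.s., i.e.\ $v_3$ is independent of $(v_1,v_2)$; in particular $v_3$ is independent of $v_1$, and $v_2,v_3$ are conditionally independent given $v_1$.

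For Part~3 I would copy the argument of Lemma~\ref{lem:keyA}, Part~3 verbatim: given $v_3\perp v_1$ and $v_2,v_3$ conditionally independent given $v_1$, Lemma~\ref{lem:cor7}(2), Lemma~\ref{lem:lem10,12} and $w_1+w_2<1$ a.s.\ leave three cases; $v_3=1$ a.s.\ is impossible since it forces $w_j=0$ a.s.\ for $j\ge4$, contradicting $\Prob[w_j>0]>0$; the case $w_2=w_3$ a.s.\ concentrates the law of $(v_1,v_2)$ on a Lebesgue-null curve and is ruled out as before; the surviving case gives $\Prob[v_2\in\cdot\mid v_1]=\Be(\alpha,\beta)$ with $\alpha,\beta$ \emph{constant} (because $v_3\perp v_1$ forces the Beta parameters to be functions of $\Esp[v_3]$ and $\Var(v_3)$ alone), so $v_2\perp v_1$, and Lemma~\ref{lem:cor7}(1) together with Lemma~\ref{lem:lem10,12}(2) then makes $v_1$ Beta as well. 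Hence $v_1,v_2,v_3$ are independent Beta variables.

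The step I expect to be hardest is the final half of Part~2: passing from ``$\phi$ constant on $(0,E/(2-E))$'' to ``$\phi$ constant on the whole support of $v_2$''. The reflection/diffeomorphism device only links pairs $(u,u')$ whose preimage $g^{-1}(u,u')$ lies in $\operatorname{int}\mathcal{S}$, whereas B1 directly supplies only a neighbourhood of the origin diagonal; the chaining therefore has to be powered by extra structural information about $\mathcal{S}$ (its anchoring at $\mathbf 0$, its symmetry, its reaching the $\ell^1$-level $E$, and consequently that $E/(2-E)$ covers the essential range of $v_2$), and all approximations must be taken from the lower-left so that only the one-sided continuity B2 is invoked.
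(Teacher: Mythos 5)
Your Part 1, your reduction in the first half of Part 2 (if $g^{-1}(u,u')$ lies in the interior of $\mathcal{S}$ then $\phi(u)=\phi(u')$, proved by approximating from the lower-left inside $g(B\cap N)$ and invoking B2), and your Part 3 all track the paper's argument, and the conclusion that $\phi$ is constant, say $\equiv P^{*}$, on $(0,E/(2-E)]$ is sound. The gap is in your final chaining step. First, the parenthetical claim that $E/(2-E)$ ``exhausts the essential range of $v_2$'' is unjustified and false in general: on $\Delta_{2,E}$ the map $f(x_1,x_2)=x_2/(1-x_1)$ takes values up to $E$, and $E>E/(2-E)$ whenever $E<1$; nothing in the hypotheses of the lemma forces $E=1$ (that is essentially what is being proved), so $v_2$ may exceed $E/(2-E)$ with positive probability. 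Second, the proposed repair --- for such $u^{*}$ exhibit $u'$ with $\phi(u')=P^{*}$ and $g^{-1}(u^{*},u')$ in the interior of $\mathcal{S}$ --- needs interior membership of a specific connecting point, and B1 only puts a neighbourhood of the small diagonal segment inside the interior of $\mathcal{S}$; symmetry of $\mathcal{S}$ and $\bm{0}\in\mathcal{S}$ do not force the curve $\{g^{-1}(u^{*},u'):u'\le E/(2-E)\}$ to meet that interior (e.g.\ $\mathcal{S}$ could be a diagonal strip together with symmetric pieces near $(0,E)$ and $(E,0)$), so the chain can fail to reach $u^{*}$.

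The paper closes this differently, and you can too without extra hypotheses: you do not need $\phi$ to be constant on the whole essential range of $v_2$, only that $\nu(\bm{w}_{[2]})$ is a.s.\ constant. Almost surely $\bm{w}_{[2]}\in B$ with $w_1,w_2>0$ and $w_1+w_2<1$, and $\mathcal{S}\subseteq\Delta_{2,E}$. An elementary inequality shows that on $\{x_2\le x_1,\ x_1+x_2\le E\}$ one has $x_2(2-E)+E(x_1-x_2)\le x_1+x_2\le E$, i.e.\ $x_2/(1-x_1)\le E/(2-E)$. Hence at the realized point either $f(\bm{w}_{[2]})\le E/(2-E)$, or $w_2>w_1$ and then $f\bigl((w_2,w_1)\bigr)=w_1/(1-w_2)\le E/(2-E)$, in which case membership in $B$ gives $\nu(\bm{w}_{[2]})=\nu\bigl((w_2,w_1)\bigr)$. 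Either way $\nu(\bm{w}_{[2]})=P^{*}$ a.s., and the tower property yields $\Prob[v_3\in\cdot\mid\bm{w}_{[2]}]=\Prob[v_3\in\cdot\,]$ a.s., after which your Part 3 goes through. One small correction there: the degenerate case $v_2=c(v_1)$, $v_3=c/(1-c)$ cannot be dismissed ``as before'' via a Lebesgue-null curve, since keyB has no absolute-continuity hypothesis; it is excluded because it forces the support of $\bm{w}_{[2]}$ into the diagonal, contradicting B1.
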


\begin{proof}
\textbf{Part 1:} As in the proof of Lemma \ref{lem:keyA} and we get $\nu = \phi \circ f$ is a version of $\Prob[v_3  \in \cdot\mid \bm{w}_{[2]}]$, where $\phi$ is as in B1 and $f$ as in \eqref{eq:f2}. Moreover the set $B = \{(x_1,x_2) \in \D2: \nu((x_1,x_2)) =  \nu((x_2,x_1))\}$ satisfies $\Prob[\bm{w}_{[2]} \in B] = 1$, can be assumed to be symmetric. Also notice that $B$ must be dense in the support, $\mathcal{S}$, of $\bm{w}_{[2]} = (w_1,w_2)$,  because the closure of $B$, $\overline{B}$, is closed and $\Prob[\bm{w}_{[2]} \in \overline{B}\,] = 1$, hence $\mathcal{S} \subseteq \overline{B}$.

\smallskip

\textbf{Part 2.1:} Next we will prove that $\nu$ is constant on $\mathcal{D}_{2,E}$, to do so we will first show that $\nu$ is constant on $\mathcal{D}_{2,\varepsilon}$ for every $0 < \varepsilon < E$. So fix  $0 < \varepsilon < E$, and $\bm{x} = (x,x) \in \mathcal{D}_{2,\varepsilon}$, by assumption B1, there exist $r > 0$ such that the open ball with radius $r$ and center $\bm{x}$, $\mathcal{B}_r(\bm{x})$ is contained in $\mathcal{S}$. We will see that  $\mathcal{B}_r(\bm{x}) \subseteq B$, so further fix $\bm{u} = (u_1,u_2) \in \mathcal{B}_r(\bm{x})$ and note that the set 
$$
D_{n} = \mathcal{B}_r(\bm{x})\cap \B_{1/n}(\bm{u})\cap \{\bm{z} = (z_1,z_2) \in \oD2 : z_j < u_j, \, \forall j \in \{1,2\}\}
$$
is a non-empty open subset of $\mathcal{S}$. Using the fact that $B$ is dense in $\mathcal{S}$ we must have $B \cap D_{n} \neq \emptyset$. Thus, there exist $\bm{u}^{(n)} = (u^{(n)}_{1},u^{(n)}_{2}) \in B \cap D_{n}$ for every $n \geq 1$. Note that defining $\bm{u}^{(n)} _{\bm{\sigma}} = (u^{(n)}_{2},u^{(n)}_{1})$, the sequences $\{\bm{u}^{(n)}\}_{n=1}^{\infty}$ and $\{\bm{u}^{(n)}_{\bm{\sigma}}\}_{n=1}^{\infty}$ satisfy the following:
\begin{itemize}
\item[(a)] Being that $\bm{u}^{(n)} \in \B_{1/n}(\bm{u})$,  
$\bm{u}^{(n)} \to \bm{u} \in \D2 \setminus \Omega_2$ and $\bm{u}^{(n)}_{\bm{\sigma}} \to \bm{u}_{\bm{\sigma}} = (u_2,u_1)\in \D2 \setminus \Omega_2$ with $\Omega_2 = \{(x_1,x_2) \in \Delta_2: x_1+x_2 = 1\}$. As the function $f$ in \eqref{eq:f2} is continuous in $\D2 \setminus \Omega_2$, we obtain
\[
f(\bm{u}^{(n)}) \to f(\bm{u}) \quad \text{ and } f(\bm{u}^{(n)}_{\bm{\sigma}}) \to f(\bm{u}_{\bm{\sigma}}).
\]
\item[(b)] Since $\bm{u}^{(n)} \in \{\bm{z} = (z_1,z_2) \in \oD2: z_j < u_j, \, \forall j \in \{1,2\}\}$, 
\[
f(\bm{u}^{(n)}) = \frac{u^{(n)}_2}{1-u^{(n)}_1} < \frac{u_2}{1-u_1} = f(\bm{u}).
\]
and similarly
\[
f(\bm{u}^{(n)}_{\bm{\sigma}}) = \frac{u^{(n)}_1}{1-u^{(n)}_2} < \frac{u_1}{1-u_2} = f(\bm{u}_{\bm{\sigma}}).
\]
\item[(c)] As $\bm{u}^{(n)} \in B$ and $B$ is symmetric, then $\bm{u}^{(n)}_{\bm{\sigma}} \in B$ and
$
\nu(\bm{u}^{(n)}) = \nu(u^{(n)}_{1},u^{(n)}_{2}) = \nu(u^{(n)}_{2},u^{(n)}_{1}) = \nu(\bm{u}^{(n)}_{\bm{\sigma}}).
$
\end{itemize}
Putting together (a), (b) and (c) above, and using the facts that $\phi$ is left continuous and $\nu = \phi \circ f$, we get
\[
\nu(\bm{u}^{(n)}) \wto \nu(\bm{u}), \quad \nu(\bm{u}^{(n)}_{\bm{\sigma}}) \wto \nu(\bm{u}_{\bm{\sigma}}),\quad \text{ with }\quad  \nu(\bm{u}^{(n)}) = \nu(\bm{u}^{(n)}_{\bm{\sigma}}).
\]
The uniqueness of limits with respect to weak convergence then yields $\nu(\bm{u}) =  \nu(\bm{u}_{\bm{\sigma}})$ which proves $\mathcal{B}_r(\bm{x}) \subseteq B$. The next step is to see that there exist $s > 0$ such that for every $a \in (-s,s)$ we have that $\nu(\bm{x}) = \nu(\bm{x}+a)$ with $\bm{x}+a = (x+a,x+a)$, so set $r' = r/2$ and define $s$ by
\[
s = \frac{r'(1-2x)}{1+r'}
\]
Note that since $\bm{x} \in \mathcal{D}_{2,\varepsilon}$ we get $1-2x > \varepsilon - 2x > 0$, hence $0 < s$. Now for $a$ such that $|a| < s$ define $\bm{u} = (u_1,u_2)$ by
\[
u_1 = x+a\l(1+\frac{x+a}{1-2x-a}\r), \quad  \text{and} \quad u_2 = x\l(1-\frac{a}{1-2x-a}\r).
\]
It is easy to check that (after some algebra) $d(\bm{x},\bm{u}) = [(x_1-u_1)^2+(x_2-u_2)^2]^{-1/2} < r$, this is $\bm{u} \in \mathcal{B}_r(\bm{x})\subseteq B$. It is also easy to verify that
\[
\frac{u_2}{1-u_1} = \frac{x}{1-x} \quad \text{ and } \frac{u_1}{1-u_2} = \frac{x+a}{1-x-a},
\]
i.e. $f((u_1,u_2)) = f(\bm{x})$ and $f((u_2,u_1)) = f(\bm{x}+a)$. Since $\nu$ is symmetric in $B$ and $\bm{u} \in B$ we obtain
\[
\nu(\bm{x}) = \nu((u_1,u_2)) = \nu((u_2,u_1)) = \nu(\bm{x}+a).
\]
This proves that $\nu$ is constant in the set $\{(x+a,x+a): |a| < s\}$. As we chose $\varepsilon < E$ and $\bm{x} = (x,x) \in \mathcal{D}_{2,\varepsilon}$ arbitrarily, we get $\nu$ must be constant in $\mathcal{D}_{2,\varepsilon}$ for every $0 < \varepsilon < E$. The following step is to check that $\nu$ is constant on $\mathcal{D}_{2,E}$, to do so it is enough to prove that $\nu((E/2,E/2)) = \nu((\varepsilon/2,\varepsilon/2))$ for some $\varepsilon < E$. So let $0 < \varepsilon = \varepsilon_1 < \varepsilon_2 < \cdots < E$ such that $\varepsilon_n \to E$, and fix $\bm{x}_n = (\varepsilon_n/2,\varepsilon_n/2)$. Then it is easy to see $f(\bm{x}_n) < f(\bm{x})$ with $\bm{x} = (E/2,E/2)$, hence, as $\phi$ is left continuous, we get $\nu(\bm{x}_n) \wto \nu(\bm{x})$. Being that $\nu$ in constant on $\mathcal{D}_{2,\varepsilon_n}$ for each $n\geq 1$, we must have $\nu(\bm{x}_1) = \nu(\bm{x}_n) = \nu(\bm{x})$ for any $n \geq 1$ as desired.

\smallskip

\textbf{Part 2.2:} Now, we will prove that $\nu$ is constant a.s. Let $\bm{x} = (x_1,x_2) \in B  \cap \oD2 \cap \mathcal{S}$ and set $\gamma = f(\bm{x}) = x_2/(1-x_1)$. First assume $0 < x_2 \leq x_1$,  it easy to see that $z = \gamma/(1+\gamma)$ satisfies $z \leq E/2$ and $z/(1-z) = \gamma$. This is,  $\bm{z} = (z,z)\in \mathcal{D}_{2,E}$ and  $\nu(\bm{z}) = \nu(\bm{x})$. In case $0 < x_1 \leq x_2$, we have that for $\bm{x}_{\bm{\sigma}} = (x_2,x_1) \in B \cap (\D2 \setminus \Omega_2) \cap \mathcal{S}$, it holds $\nu(\bm{z}) = \nu(\bm{x}_{\bm{\sigma}})$ for some $\bm{z} \in \mathcal{D}_{2,E}$. In any case, by definition of $B$ we get $\nu(\bm{x}) = \nu(\bm{x}_{\bm{\sigma}}) = \nu(\bm{z})$ for some $\bm{z} \in \mathcal{D}_{2,E}$ and since $\nu$ is constant in $\mathcal{D}_{2,E}$ it follows that $\nu$ is constant in the set
$
B \cap \oD2 \cap \mathcal{S}.
$
Finally notice that since $w_1+w_2 < 1$ a.s.~then $w_3 > 0$ a.s.~and $w_1,w_2 > 0$ a.s. (by Remark \ref{rem:wk=0}) i.e. $\Prob[\bm{w}_{[k]} \in \oD2] =1$ which yields $\Prob[\bm{w}_{[k]} \in B \cap \oD2 \cap \mathcal{S}] =1$ and we get $\nu$ is constant a.s.

\smallskip

\textbf{Part 2.3:} By the tower property of conditional expectation we have
\[
\Prob[v_3 \in \cdot\,] = \Esp[\Prob[v_3 \in \cdot\mid \bm{w}_{[2]}]] = \Esp[\nu(\bm{w}_{[2]})] = \nu(\bm{x})
\]
for any $\bm{x} \in B \cap \oD2 \cap \mathcal{S}$. That is $\nu \aseq \Prob[v_3 \in \cdot\,]$ which means that the law of $v_3$ is version of $\Prob[v_3 \in \cdot\mid \bm{w}_{[2]}]$ and from \eqref{eq:MSB_v3} we also get it is also a version of $\Prob[v_3 \in \cdot\mid v_2]$ and $\Prob[v_3 \in \cdot\mid \bm{v}_{[2]}]$, proving that $v_3$ is independent of the pair $\bm{v}_{[2]} = (v_1,v_2)$. 

\smallskip

\textbf{Part 3:} The rest of the proof is identical to the proof of Lemma  \ref{lem:keyA} (here, in Part 3, the case 3. $v_2 = c$ and $v_3 = c/(1-c)$ for some $v_1$-measure r.v.~$c= c(v_1)$ cannot occur because this implies the support of $\bm{w}_{[2]}$ is contained in the diagonal contradicting B1).

\end{proof}

\begin{cor}\label{cor:keyB}
Let $\bm{w}$ be stick-breaking weights that are invariant under size-biased permutations with $\sum_{j=1}^{\infty}w_j = 1$, and $w_j > 0$ a.s.~for every $j \geq 1$. Say that
\begin{itemize}
\item[\emph{B0.}] $v_3$ is conditionally independent of $v_1$ given $v_2$, i.e $\Prob[v_3 \in \cdot\mid v_2] \aseq \Prob[v_3 \in \cdot\mid v_1,v_2]$ (this is already given if $\bm{w}$ is an MSB).
\item[\emph{B1.}]  The support $\mathcal{S}$ of $\bm{w}_{[2]}$ is convex.
\item[\emph{B2.}] There is a version $\phi:[0,1] \to \mathcal{P}([0,1])$, of the conditional law $\Prob[v_{3}\in \cdot \mid v_2]$ that is left continuous (w.r.t. the weak topology).
\end{itemize}
Then $v_1,v_2$ and $v_3$ are independent Beta distributed r.v.
\end{cor}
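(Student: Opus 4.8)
The plan is to deduce Corollary~\ref{cor:keyB} directly from Lemma~\ref{lem:keyB}. Hypotheses B0 and B2 of the two statements are literally the same, and the standing assumptions of Lemma~\ref{lem:keyB} are immediate here: since $w_j>0$ a.s.\ for every $j$, in particular $w_3>0$ a.s., whence $w_1+w_2<1$ a.s., and of course $\Prob[w_j>0]>0$ for all $j$. Thus the entire content of the proof is to show that convexity of the support $\mathcal{S}$ of $\bm{w}_{[2]}$ (assumption B1 of the Corollary) implies assumption B1 of the Lemma, namely that $\mathcal{D}_{2,\varepsilon}=\{(x,x):0<x\le\varepsilon/2\}\subseteq\mathrm{int}(\mathcal{S})$ for every $0<\varepsilon<E$, where $E=\inf\{\varepsilon>0:\mathcal{S}\subseteq\Delta_{2,\varepsilon}\}$.

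First I would collect a few elementary facts about $\mathcal{S}$: by Corollary~\ref{cor:Sk_sym} it is symmetric under the coordinate swap $\bm{\sigma}=(2,1)$; by Corollary~\ref{cor:0} it contains the origin $\bm{0}$; and $E>0$, since $E=0$ would mean $w_1=w_2=0$ a.s. Next I would show $\mathcal{S}$ has nonempty interior in $\R^2$. A planar convex set with empty interior lies in a line; combining the symmetry of $\mathcal{S}$ with $\bm{0}\in\mathcal{S}$, such a line must be the diagonal $\{x_1=x_2\}$ (the only alternative being $\mathcal{S}=\{\bm{0}\}$), so $w_1=w_2$ a.s.; applying the analogous permutation symmetry to the support of $\bm{w}_{[n]}$ (Lemma~\ref{lem:symm_A}) then gives $w_1=\dots=w_n$ a.s.\ for every $n$, which is impossible when $\sum_j w_j=1$ and all $w_j>0$. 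Hence $\mathrm{int}(\mathcal{S})\neq\emptyset$, and since $\mathcal{S}\subseteq\Delta_2\subseteq[0,\infty)^2$, every interior point of $\mathcal{S}$ has strictly positive coordinates.

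The heart of the argument is a short convexity computation. Pick an interior point $\bm{c}$; then $\bm{c}_{\bm{\sigma}}\in\mathrm{int}(\mathcal{S})$ too (the swap is a homeomorphism fixing $\mathcal{S}$), so $\bm{d}:=\tfrac12(\bm{c}+\bm{c}_{\bm{\sigma}})$ is an interior point on the diagonal, say $\bm{d}=(d_0,d_0)$ with $d_0>0$. Now fix $0<\varepsilon<E$; by definition of $E$ there is $\bm{p}\in\mathcal{S}$ with $p_1+p_2>\varepsilon$, and then $\bm{q}:=\tfrac12(\bm{p}+\bm{p}_{\bm{\sigma}})=(q,q)\in\mathcal{S}$ with $q=\tfrac12(p_1+p_2)>\varepsilon/2$. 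Using the standard fact that, for a convex set, the segment joining an arbitrary point of the set to an interior point lies (minus that endpoint) in the interior, the segments $[\bm{0},\bm{d}]$ and $[\bm{d},\bm{q}]$, both with interior endpoint $\bm{d}$, together yield $(x,x)\in\mathrm{int}(\mathcal{S})$ for all $0<x<\max(d_0,q)$; since $q>\varepsilon/2$ this contains $\mathcal{D}_{2,\varepsilon}$. This verifies B1 of Lemma~\ref{lem:keyB}, and that Lemma then gives the conclusion that $v_1,v_2,v_3$ are independent Beta variables.

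The step I expect to require the most care is the second paragraph: ruling out the degenerate collapse of $\mathcal{S}$ onto the diagonal (so that $\mathcal{S}$ really is two–dimensional) and then upgrading ``the diagonal segment lies in $\mathcal{S}$'' to ``it lies in the \emph{interior} of $\mathcal{S}$''. Everything else is a routine combination of Corollaries~\ref{cor:Sk_sym} and \ref{cor:0} with elementary planar convex geometry, after which the result is immediate from Lemma~\ref{lem:keyB}.
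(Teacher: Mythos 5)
Your proposal is correct and follows essentially the same route as the paper: check the standing hypotheses of Lemma \ref{lem:keyB}, then verify its condition B1 by combining convexity of $\mathcal{S}$ with the symmetry of the support (Corollary \ref{cor:Sk_sym}), the fact that $\bm{0}\in\mathcal{S}$ (Corollary \ref{cor:0}), and ruling out concentration of $\bm{w}_{[2]}$ on the diagonal. The only differences are cosmetic: you exclude the diagonal case via Lemma \ref{lem:symm_A} (forcing $w_1=\dots=w_n$ a.s.\ for all $n$) rather than via the uniform-weights characterization, and you obtain $\mathcal{D}_{2,\varepsilon}\subseteq\mathrm{int}(\mathcal{S})$ from an interior point and segments instead of the paper's explicit polygons, both of which are sound.
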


\begin{proof}
First note that since $w_j > 0$ and $\sum_{j=1}^{\infty}w_j = 1$ a.s.~we get $w_1+w_2 < 1$ a.s.~and $\Prob[w_j > 0] > 0$. Now, let $\Delta_{2,\varepsilon} = \{(x_1,x_2) \in \D2: x_1+x_2 \leq \varepsilon\}$ and define $E = \inf\{\varepsilon > 0: \mathcal{S} \subseteq \Delta_{2,\varepsilon}\}$ due to Lemma \ref{lem:keyB} it is enough to prove that for every $0 < \varepsilon < E$ the set $\mathcal{D}_{2,\varepsilon} = \{(x_1,x_2) \in \Delta_{2,\varepsilon}: x_1 = x_2\}$, is contained in the interior of $\mathcal{S}$. To this aim, first we will show that the law of $\bm{w}_{[2]}$ does not concentrates all of the mass in the diagonal. If we assume this holds then we get $w_1 = w_2$ a.s., the invariance under size-biased permutations of $\bm{w}$ the yields that for some possibly random $M \geq 3$ we must have $w_1 = w_2 \ldots = w_M = 1/M$ and $w_j = 0$ for $j > M$. In this instance the support of $\bm{w}_{[2]}$ must be contained in the set $\{(1/m,1/m): m \geq 2\}$ contradicting that $\mathcal{S}$ is convex. We then have that there exist $\bm{x} = (x_1,x_2) \in \mathcal{S}\setminus \mathcal{D}_2$, where $\mathcal{D}_2 = \{(x_1,x_2) \in \D2 : x_1 = x_2\}$ is the diagonal in $\D2$. For such $\bm{x}$ set $\gamma = x_1 + x_2$. Now, we know by Corollary \ref{cor:0} that the origin $\bm{0} = (0,0)$ is contained in $\mathcal{S}$, furthermore by Corollary \ref{cor:Sk_sym} we also have that $\bm{x}_{\bm{\sigma}} = (x_2,x_1)\in \mathcal{S}$. Thus $\{\bm{x},\bm{x}_{\bm{\sigma}},\bm{0}\} \subset \mathcal{S}$ and by condition B1 we must have that the polygon with these vertices in contained in $\mathcal{S}$, it follows that for every $\varepsilon < \gamma$, $\mathcal{D}_{2,\varepsilon}$ is contained in the interior of $\mathcal{S}$. If $\gamma = E$ this proves the result. Otherwise, by definition of $E$, we must have that $\gamma < E$ and for every $\gamma < \varepsilon < E$, there exist $\bm{y} = (y_1,y_2) \in \mathcal{S}$ with $\varepsilon < y_1 + y_2 \leq E$. If $y_1 = y_2$, we get that the polygon with vertices $\{\bm{x},\bm{y},\bm{x}_{\bm{\sigma}},\bm{0}\}$ is contained in $\mathcal{S}$, otherwise, if $y_1 \neq y_2$, then the polygon with vertices $\{\bm{x},\bm{y},\bm{y}_{\bm{\sigma}},\bm{x}_{\bm{\sigma}},\bm{0}\}$ is contained in $\mathcal{S}$. In any case we get $\mathcal{D}_{2,\varepsilon}$ is contained in the interior of $\mathcal{S}$.
\end{proof}

\begin{lem}\label{lem:keyC}
Let $\bm{w}$ be stick-breaking weights are invariant under size-biased permutations with $\sum_{j=1}^{\infty}w_j = 1$, $w_1+w_2 < 1$, a.s., and $\Prob[w_j > 0] > 0$ for every $j \geq 1$. Say that
\begin{itemize}
\item[\emph{C0.}] $v_3$ is conditionally independent of $v_1$ given $v_2$, i.e $\Prob[v_3 \in \cdot\mid v_2] \aseq \Prob[v_3 \in \cdot\mid v_1,v_2]$ (this is already given if $\bm{w}$ is an MSB).
\item[\emph{C1.}]  There exist $0 <\varepsilon < 1$ such that 
the set $C_{\varepsilon} = \{(x_1,x_2) \in \D2: x_2/(1-x_1) = \varepsilon\}$ is contained in the support of $\bm{w}_{[2]} = (w_1,w_2)$. Equivalently, the set $\{(x_1,x_2) \in [0,1]^2: x_2 = \varepsilon\}$ is contained in the support of $\bm{v}_{[2]} = (v_1,v_2)$.
\item[\emph{C2.}] There is a version $\phi:[0,1] \to \mathcal{P}([0,1])$, of the conditional law $\Prob[v_{3}\in \cdot \mid v_2]$ that is continuous (w.r.t. the weak topology). This is, for every $u_1,u_2,\ldots$ in $[0,1]$ such that  $u_n \to u$ we get $\phi(u_n) \wto \phi(u)$. 
\end{itemize}
Then $v_1,v_2$ and $v_3$ are independent Beta distributed r.v.
\end{lem}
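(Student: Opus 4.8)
The plan is to mimic the structure of the proofs of Lemmas~\ref{lem:keyA} and \ref{lem:keyB}, keeping Parts~1 and 3 essentially unchanged and replacing the geometric input of Part~2 (mutual absolute continuity in \ref{lem:keyA}, convexity of the support in \ref{lem:keyB}) by the single--level--set hypothesis~C1 together with the \emph{two-sided} weak continuity of $\phi$ in~C2. First I would record the standard setup of Part~1: since $w_1+w_2<1$ a.s., Remark~\ref{rem:wk=0} gives $w_1,w_2,w_3>0$ a.s.; invariance under size--biased permutations and Corollary~\ref{cor7'} yield a symmetric version $\mu\colon\Delta_2\to\mathcal{P}([0,1])$ of $\bm{w}_{[2]}\mapsto\Prob[v_3\in\cdot\mid\bm{w}_{[2]}]$, while the Markov property~C0, together with the mutual measurability of $\bm{v}_{[2]}$ and $\bm{w}_{[2]}$, yields a second version $\nu=\phi\circ f$ with $f$ as in \eqref{eq:f2}; these agree on a set that, by Lemma~\ref{lem:symm_A}, may be taken symmetric, so $B=\{(x_1,x_2)\in\Delta_2:\nu(x_1,x_2)=\nu(x_2,x_1)\}$ is symmetric with $\Prob[\bm{w}_{[2]}\in B]=1$, and hence $\mathcal{S}\subseteq\overline{B}$, where $\mathcal{S}$ is the support of $\bm{w}_{[2]}$.

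For the new Part~2, I would argue as follows. By~C1, $\mathcal{S}$ contains the whole segment $C_\varepsilon=\{(x_1,\varepsilon(1-x_1)):x_1\in[0,1]\}$. Fix $\bm{x}=(x_1,x_2)\in C_\varepsilon$ with $x_1<1$; then both $\bm{x}$ and its reflection $\bm{x}_{\bm{\sigma}}=(x_2,x_1)$ lie in the region $\{w_1<1\}$ on which $f$, and therefore $\nu=\phi\circ f$ (using~C2), is continuous. Since $\mathcal{S}\subseteq\overline{B}$, choose $\bm{x}^{(n)}\in B$ with $\bm{x}^{(n)}\to\bm{x}$; then $\bm{x}^{(n)}_{\bm{\sigma}}\to\bm{x}_{\bm{\sigma}}$, one has $\nu(\bm{x}^{(n)})=\nu(\bm{x}^{(n)}_{\bm{\sigma}})$ since $\bm{x}^{(n)}\in B$, and passing to the limit gives $\nu(\bm{x})=\nu(\bm{x}_{\bm{\sigma}})$, i.e. $\phi(\varepsilon)=\phi\bigl(x_1/(1-x_2)\bigr)$. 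As $x_1$ runs over $[0,1)$, the quantity $x_1/(1-x_2)=x_1/(1-\varepsilon+\varepsilon x_1)$ runs over all of $[0,1)$; hence $\phi\equiv\phi(\varepsilon)$ on $[0,1)$ and, by the continuity in~C2, on all of $[0,1]$. Consequently $\nu$ is constant, $\Prob[v_3\in\cdot\,]$ is a version of $\Prob[v_3\in\cdot\mid\bm{w}_{[2]}]=\Prob[v_3\in\cdot\mid\bm{v}_{[2]}]$, and with~C0 this shows $v_3$ is independent of $\bm{v}_{[2]}=(v_1,v_2)$.

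Part~3 would then proceed exactly as in Lemma~\ref{lem:keyA}: independence of $v_3$ from $\bm{v}_{[2]}$ and~C0 give $v_3\perp v_1$ and conditional independence of $v_2,v_3$ given $v_1$; invoking Lemma~\ref{lem:cor7}(2) and Lemma~\ref{lem:lem10,12} leaves only the Beta alternative, since the alternative $v_3=1$ a.s.\ contradicts $\Prob[w_j>0]>0$ for all $j$, and the alternative $v_2=c$, $v_3=c/(1-c)$ forces $w_1=w_2$ a.s.\ by Lemma~\ref{lem:symm_A}, placing $\mathcal{S}$ on the diagonal and contradicting~C1; thus $v_2\mid v_1\sim\Be(\alpha,\beta)$ with constant parameters (matched to the moments of the $v_1$-independent variable $v_3$), whence $v_2\perp v_1$, and Lemma~\ref{lem:cor7}(1) with Lemma~\ref{lem:lem10,12}(2) gives that $v_1$ is Beta.

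The main obstacle is Part~2, specifically the care needed in the limiting argument: one must ensure the approximating $B$-sequence and its reflection both stay in the continuity region of $f$ — i.e. away from the single singular point $(1,0)$ — which is why I restrict to the sub-segment $C_\varepsilon\cap\{x_1<1\}$, and one must check that $x_1\mapsto x_1/(1-\varepsilon+\varepsilon x_1)$ sweeps a dense subset of $[0,1]$. It is precisely the combination of ``$C_\varepsilon$ lies entirely inside $\mathcal{S}$'' (so that $B$-points accumulate at every point of that whole segment) with the \emph{full} continuity of $\phi$ that makes this work; mere left-continuity, as in Lemma~\ref{lem:keyB}, or knowledge of only an isolated point of the support, would be insufficient.
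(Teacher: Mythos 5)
Your proposal is correct and follows essentially the same route as the paper's proof: the same Part~1 setup, the same Part~3 reduction via Lemmas~\ref{lem:cor7} and~\ref{lem:lem10,12}, and in Part~2 the same key computation — your observation that $x_1\mapsto x_1/(1-\varepsilon+\varepsilon x_1)$ sweeps $[0,1)$ is exactly the inverse of the paper's explicit construction $(x_1,x_2)=\bigl(\alpha(1-\varepsilon)/(1-\alpha\varepsilon),\,\varepsilon(1-\alpha)/(1-\alpha\varepsilon)\bigr)$ matching an arbitrary $f$-value $\alpha$ to a point of $C_\varepsilon$. The only cosmetic difference is that you conclude directly that $\phi$ is constant on $[0,1]$, whereas the paper first shows $C_\varepsilon\subseteq B$ and then that $\nu$ is constant on $B\cap(\Delta_2\setminus\Theta_2)$; both yield the needed a.s.\ constancy of $\nu$.
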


\begin{proof}
\textbf{Part 1:} As in the proof of Lemmas \ref{lem:keyA} and \ref{lem:keyB} we get $\nu = \phi \circ f$ is a version of $\Prob[v_3 \in \cdot \mid \bm{w}_{[2]}]$, where $\phi$ is as in C1 and $f$ as in \eqref{eq:f2}. Moreover, the set $B = \{(x_1,x_2) \in \D2: \nu((x_1,x_2)) =  \nu((x_2,x_1))\}$ satisfies $\Prob[\bm{w}_{[2]} \in B] = 1$, it is dense in support, $\mathcal{S}$, of $\bm{w}_{[2]}$, and it can be assumed to be symmetric.

\smallskip

\textbf{Part 2.1:} Next we will prove that the set $C_{\varepsilon} = \{(x_1,x_2) \in \oD2: x_2/(1-x_1) = \varepsilon\}$ is contained in $B$. So fix $\bm{x} = (x_1,x_2) \in C_{\varepsilon}$, so that $f(\bm{x}) = x_2/(1-x_1) = \varepsilon$. Since $B$ is dense in $\mathcal{S}$ and $C_{\varepsilon} \subseteq \mathcal{S}$ by C1 in the statement of the Lemma, we may take a sequence $\bm{x}^{(1)},\bm{x}^{(2)},\ldots$ with $\bm{x}^{(n)} = (x^{(n)}_1,x^{(n)}_2) \in B$ and $\bm{x}^{(n)} \to \bm{x}$. Note that we also have $\bm{x}^{(n)}_{\bm{\sigma}} \to \bm{x}_{\bm{\sigma}}$ with $\bm{x}^{(n)}_{\bm{\sigma}} = (x^{(n)}_2,x^{(n)}_1)$ and $\bm{x}_{\bm{\sigma}} = (x_2,x_1)$. Since $\bm{x}^{(n)},\bm{x}^{(n)}_{\bm{\sigma}} \in B$, the function $f$ in \eqref{eq:f2} is continuous in $\oD2$, $\phi$ is continuous (w.r.t. the weak topology) and $\nu = \phi\circ f$, we get
\[
\nu(\bm{x}^{(n)}) \wto \nu(\bm{x}), \quad \nu(\bm{x}^{(n)}_{\bm{\sigma}}) \wto \nu(\bm{x}_{\bm{\sigma}}), \quad \text{ and } \quad  \nu(\bm{x}^{(n)}) = \nu(\bm{x}^{(n)}_{\bm{\sigma}}).
\]
The uniqueness of limits with respect to weak convergence then yields $\nu(\bm{x}) =  \nu(\bm{x}_{\bm{\sigma}})$ which proves $\bm{x} \in B$ and therefore $C_{\varepsilon} \subseteq B$. 

\smallskip

\textbf{Part 2.2:} Our next aim is to prove that $\nu$ is constant on $B\cap (\D2\setminus\Theta_2)$, where $\Theta_2 = \{(x_1,x_2) \in \Delta_2: x_1+x_2 = 1\}$. So fix $\bm{z} = (z_1,z_2) \in B\cap (\D2\setminus\Theta_2)$ and $\alpha = z_2/(1-z_1)$. Define $\bm{x} = (x_1,x_2)$ by 
\[
x_1 = \frac{\alpha(1-\varepsilon)}{1-\alpha\varepsilon}, \quad x_2 = \frac{\varepsilon(1-\alpha)}{1-\alpha\varepsilon}.
\]
Then, it is easy to check $f(\bm{x}) = x_2/(1-x_1) = \varepsilon$ and $f(\bm{x}_{\bm{\sigma}}) = x_1/(1-x_2) = \alpha$ with $\bm{x}_{\bm{\sigma}} = (x_2,x_1)$. Since $\bm{x} \in C_{\varepsilon}\subseteq B$ and $B$ is symmetric we get
\[
 \nu(\bm{x}) = \nu(\bm{x}_{\bm{\sigma}}) = (\phi\circ f)(\bm{x}_{\bm{\sigma}}) = \phi(\alpha) = (\phi\circ f)(\bm{z})  = \nu(\bm{z}).
\]
That is for every $\bm{z} \in B\cap (\D2\setminus\Theta_2)$ there exist $\bm{x} \in C_{\varepsilon}$ such that $\nu(\bm{z}) = \nu(\bm{x})$. Being that $\nu$ is constant in $C_{\varepsilon}$ by definition of $\phi$, we get $\nu$ is constant in $B\cap (\D2\setminus\Theta_2)$. Now, since $w_1 + w_2 < 1$ a.s., $\Prob[\bm{w}_{[k]} \in \Theta_2] = 0$. Thus, recalling that $\Prob[\bm{w}_{[2]} \in B] = 1$, we have $\Prob[\bm{w}_{[2]} \in B \cap (\D2\setminus\Theta_2)] = \Prob[\bm{w}_{[2]} \in (\D2\setminus\Theta_2)] = 1$ which in turn yields $\nu$ is constant a.s.

\smallskip

\textbf{Part 2.3:} As in Lemma \ref{lem:keyB} this yields $\Prob[v_3 \in \cdot]$ is a version of $\Prob[v_3 \in \cdot\mid v_2]$, $\Prob[v_3 \in \cdot \mid \bm{w}_{[2]}]$ and $\Prob[v_3 \in \cdot\mid \bm{v}_{[2]}]$, so that $v_3$ is independent of $\bm{v}_{[2]}$

\smallskip

\textbf{Part 3:} The rest of the proof is identical to Part 3 in the proof of Lemma \ref{lem:keyB}. Here, the case 3. $v_2 = c$ and $v_3 = c/(1-c)$ for some $v_1$-measure r.v.~$c= c(v_1)$ cannot occur because this implies the support of $\bm{w}_{[2]}$ is contained in the diagonal contradicting C1.

\end{proof}

\subsection{Proof of Theorem 
\ref{theo:MSB_sb_PY}}\label{sec:app:MSB_sb:proof}

\begin{proof}
If the length variables are independent with $v_j \sim \Be(1-\sigma, \theta+j\sigma)$ for some $\sigma \in [0,1)$ and $\theta > -\sigma$, by Lemma \ref{lem:theo2}, $\bm{w}$ is invariant under size-biased permutation. Conversely, assume that $\bm{w}$ is invariant under size-biased permutation. First note that if $\sum_{j=1}^{\infty}w_j = 1$ a.s.~and $w_j > 0$ a.s.~we trivially get $\Prob[w_j > 0] > 0$ and $w_1+w_2 < 1$ a.s. By Lemma \ref{lem:keyA}, Corollary \ref{cor:keyB}, or Lemma \ref{lem:keyC} we know that $v_1$, $v_2$ and $v_3$ are independent. We will show by induction on $k$ that $v_1,\ldots,v_k$ are independent and Beta distributed, so assume that this holds for some $k \geq 3$. Define $\bm{v}' = (v_{k-1},v_k,v_{k+1},\ldots)$ and let $\bm{w}'$ be the corresponding stick-breaking weights i.e. $w'_j = w_{k-2+j}/\big(1-\sum_{i \leq k-2}w_i\big)$. It follows  from the assumption that $v_1,\ldots,v_k$ are independent and Beta distributed, together with the Markov property of $\bm{v}$ that:
\begin{itemize}
\item $\bm{w}'$ remains in variant under size-biased permutations and $\sum_{j=1}^{\infty} w'_j = 1$ a.s. (see Remark \ref{rem:rem?} and Lemma \ref{lem:rem8}).
\item $\bm{v}'$ is a Markov chain. In particular $v'_3 = v_{k+1}$ is conditionally independent of $v'_1 = v_{k-1}$ given $v'_2 = v_k$.
\item The law of $\bm{v}'_{[2]} = (v'_1,v'_2)$ is mutually absolutely continuous with respect to the Lebesgue measure.
\end{itemize}
By Lemma \ref{lem:keyA} we have that $v'_1 = v_{k-1}$, $v'_2 = v_k$ and $v'_3 = v_{k+1}$ are independent and Beta distributed. Hence $v_1,\ldots,v_{k+1}$ are independent and Beta distributed. This proves that for every $j \geq 1$, $v_j \ind \Be(\alpha,\beta_j)$ for some constants $\alpha_j,\beta_j > 0$. The fact that $\alpha = 1-\sigma$  and $\beta_j = \theta +  j \sigma$ for some $\sigma \in [0,1)$ and $\theta > -\sigma$ follows from Lemma \ref{lem:theo2}.
\end{proof}

\subsection{Further remarks on Theorem 
\ref{theo:MSB_sb_PY}}\label{sec:app:MSB_sb:remarks}

Note that another version of Theorem 
\ref{theo:MSB_sb_PY} 
can be derived by replacing B1 in its statement with B1 in the statement of Lemma \ref{lem:keyB}. Moreover it follows from Lemmas \ref{lem:keyA}, \ref{lem:keyB} and \ref{lem:keyC} that it is possible to relax the hypothesis $w_j > 0$ a.s.~for every $j \geq 1$, and instead request $\Prob[w_j > 0] > 0$ for $j \geq 1$ and $w_1+w_2 < 1$ a.s. Under this consideration, we now provide a counterexample that shows that not all MSBw that are invariant under size-biased permutations are the size-biased permuted Pitman-Yor weights. So let $M$ be some r.v.~taking values in $\{3,4,\ldots\}$ such that $\Prob[M = m] > 0$ for every $m \geq 3$ and define the weights, $\bm{w}$, by $w_j = 1/M$ if $j \leq M$ and $w_j = 0$ if $j > M$. The length variables, $\bm{v}$, of $\bm{w}$ are easily seen to be $v_j = 1/(M-j+1)$ for $j \leq M$ and $v_j = 1$ for $j > M$. Note that $\bm{w}$ satisfies the following:
\begin{itemize}
\item $\bm{w}$ is invariant under size-biased permutations because any size-biased permutation of $\bm{w}$ will be identical to $\bm{w}$ 
\item $\bm{w}$ is a MSBw with $v_1 = 1/M$ and transition probability kernel
\[
\Prob[v_{j+1} \in \cdot \mid v_j] = \begin{cases}
\delta_{v_j/(1-v_j)} & \text{ if } v_j < 1\\
\delta_1 & \text{ if } v_j = 1
\end{cases}
\]
\item $\sum_{j=1}^{\infty}w_j = \sum_{j=1}^{M} 1/M = 1$, $w_1 + w_2 < 1$ a.s.~because $M \geq 3$ a.s., and $\Prob[w_j > 0] > 0$ for every $j \geq 1$ because $\Prob[M \leq j] > 0$.
\end{itemize}
However, the support of $\bm{w}_{[2]}$ is
\[
\mathcal{S} = \{(1/m,1/m) \in \D2: m \geq 3\}
\]
In particular, $\mathcal{S}$ is contained in the diagonal of $\D2$ so A1, B1 and C1 in the statement of Lemmas \ref{lem:keyA}, \ref{lem:keyB} and \ref{lem:keyC} do not hold. 

\medskip

\section{Proofs of the results in Section \ref{sec:examples}}\label{sec:app:examples}

\smallskip

\subsection{Proof of Theorem \ref{theo:Upsilon}}\label{app:theo:Upsilon}

\begin{proof}
(i.a) Let $\varepsilon$, $J$ and $(a_j)_{j=1}^{\infty}$ be as in the statement of (i.a). First note that since $F_j$ is strictly increasing and continuous, so is $F_j^{-1}$ for every $j \geq 1$. This yields $\Upsilon_j$ and $\Upsilon^{[j]}$ will also be strictly increasing and continuous, because they are compositions of such functions. This said, let $v \in (0,\varepsilon)$ and note that the hypothesis implies
\begin{equation}\label{eq:Upsilon_i.a}
\Upsilon_j(v) \geq v\frac{a_{j+1}}{a_j},
\end{equation}
for $j \geq J$. Define $v^* = \Upsilon^{[J]}(v)/a_J > 0$, we will prove by induction that 
\begin{equation}\label{eq:Upsilon_v*}
\Upsilon^{[j]}(v) \geq v^* a_{j}.
\end{equation}
for each $j \geq J$. So assume that \eqref{eq:Upsilon_v*} holds for $j \geq J$. Since $\Upsilon_j$ is increasing and by \eqref{eq:Upsilon_i.a}, this yields
\[
\Upsilon^{[j+1]}(v) = \Upsilon_{j}\big(\Upsilon^{[j]}(v)\big) \geq \Upsilon_j(v^* a_{j}) \geq v^* a_{j+1}.
\]
This together with the fact that $\Upsilon^{[J]}(v) =v^* a_{J}$ prove that \eqref{eq:Upsilon_v*} holds for every $j \geq J$. Thus,
\[
\sum_{j = 1}^{\infty}\Upsilon^{[j]}(v) \geq \sum_{j=J}^{\infty}\Upsilon^{[j]}(v) \geq v^*\sum_{j=J}^{\infty}a_j = \infty
\]
because $\sum_{j=1}^{\infty}a_j = \infty$. Being that $\sum_{j = 1}^{\infty}\Upsilon^{[j]}(v) = \infty$ holds for every $0 < v < \varepsilon$ and $\Upsilon^{[j]}$ increasing we get that it also holds for every $v \in (0,1)$. Finally, since $\pi_1 = \pi$ is diffuse we have proven 
\eqref{eq:sum_1_Upsilon} 
in the main document which in turn implies $\sum_{j=1}^{\infty}w_j = 1$.

(i.b) By hypothesis and Monotone convergence theorem, we get $\Esp[\sum_{j=1}^{\infty}v_j] < \infty$, in particular we must have $\sum_{j=1}^{\infty}v_j < \infty$ a.s., thus $\sum_{j=1}^{\infty} w_j < 1$ a.s.

(ii) By hypothesis we have $\Upsilon_j(v) \leq v$, which yields 
\[\Upsilon^{[j+1]}(v) = \Upsilon_j(\Upsilon^{[j]}(v)) \leq \Upsilon^{[j]}(v),
\] 
for each $j \geq 1$ and $v \in [0,1]$. Also note that since $\Upsilon^{[j]}$ is continuous and strictly increasing 
\[
\Upsilon^{[j]}(v) > 0, \text{ for } v > 0.
\] 
Now, let $\bm{v}$ be the length variables of $\bm{w}$ so that $v_1 \sim \pi = \pi_1$ and $v_{j+1} = \Upsilon_j(v_j)$ for $j \geq 1$. Equivalently, we can write $v_j = \Upsilon^{[j]}(v)$ with $v = v_1 \sim \pi$. Being that $\pi$ is diffuse we know $v > 0$ a.s., thus it follows from the stick breaking decomposition and the aforementioned annotations that a.s.
\[
w_{j+1} =\frac{\Upsilon^{[j+1]}(v)\{1-\Upsilon^{[j]}(v)\}}{\Upsilon^{[j]}(v)}w_j \leq w_j,
\]
for each $j \geq 1$. This proves that $\bm{w}$ is decreasing a.s. To prove the second statement of (ii) we first mention that for proper species sampling processes, the property of having full support is equivalent to 
\[
\Prob\l[\max_{j \geq 1}w_j < \varepsilon\r] > 0,
\]
for every $\varepsilon > 0$ \citep[Corollary 3 in][]{BO14}. As we have already shown the CDSBw, $\bm{w}$, in question are a.s.~decreasing, which means $v_1 = w_1 = \max_{j \geq }w_j$ a.s. Being that the distribution function, $F_1$, is continuous and strictly increasing we obtain $\Prob[v_1 < \varepsilon] = F_1(\varepsilon) > 0$, for every $\varepsilon > 0$, which proves $\bm{P}$ has full support.
\end{proof}

\subsection{Proof of Corollary \ref{cor:CDSB_PY}}\label{app:cor:CDSB_PY}

\begin{lem}\label{lem:A}
Let $0 < \alpha \leq 1$ and $\alpha +\beta >2$, then $v \mapsto \cl{I}_{v}(\alpha,\beta)$ is concave.
\end{lem}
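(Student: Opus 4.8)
The plan is to reduce the statement to an elementary monotonicity fact about the Beta density. Recall that $v \mapsto \cl{I}_v(\alpha,\beta)$ is the cumulative distribution function of a $\Be(\alpha,\beta)$ random variable, hence continuous on $[0,1]$ and differentiable on $(0,1)$ with
$$
\frac{\ddr}{\ddr v}\cl{I}_v(\alpha,\beta) = \frac{v^{\alpha-1}(1-v)^{\beta-1}}{\mrm{B}(\alpha,\beta)} =: g(v),
$$
where $\mrm{B}(\alpha,\beta)$ denotes the Beta function. A continuous function on $[0,1]$ that is differentiable in the interior with non-increasing derivative is concave, so it suffices to show that $g$ is non-increasing on $(0,1)$.

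First I would compute the logarithmic derivative of $g$: for $v \in (0,1)$,
$$
\frac{g'(v)}{g(v)} = \frac{\alpha-1}{v} - \frac{\beta-1}{1-v},
$$
whose sign, since $v$ and $1-v$ are positive, is that of $(\alpha-1)(1-v) - (\beta-1)v = (\alpha-1) - (\alpha+\beta-2)v$. Using $\alpha \le 1$, the term $\alpha-1$ is $\le 0$; using $\alpha+\beta > 2$, the term $(\alpha+\beta-2)v$ is strictly positive on $(0,1)$. Hence $g'(v)/g(v) < 0$ there, so $g$ is strictly decreasing on $(0,1)$, and concavity of $v \mapsto \cl{I}_v(\alpha,\beta)$ follows.

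The argument is essentially mechanical, so I do not anticipate a genuine obstacle; the only point deserving a word of care is the passage from ``the derivative of $\cl{I}_v$ is non-increasing on the open interval $(0,1)$'' to ``$\cl{I}_v$ is concave on the closed interval $[0,1]$''. This uses continuity of the incomplete Beta function up to the endpoints (which holds because the $\Be(\alpha,\beta)$ law is non-atomic) together with the standard characterization of concavity via monotone derivatives. No integrability subtlety intervenes: the hypotheses $\alpha \le 1$ and $\alpha+\beta > 2$ force $\beta > 1$, so $g$ is bounded near $v = 1$, while a possible blow-up of $g$ near $v=0$ (when $\alpha < 1$) is harmless for concavity of the integral. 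The lemma is a purely technical ingredient feeding the proof of Corollary \ref{cor:CDSB_PY}.
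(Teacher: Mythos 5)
Your proof is correct and follows essentially the same route as the paper: both arguments reduce concavity to the sign of $(\alpha-1)(1-v)-(\beta-1)v$, the paper by writing out the second derivative of $\cl{I}_v(\alpha,\beta)$ explicitly and you by taking the logarithmic derivative of the Beta density, which is the same computation. The extra care you take about the endpoints and the possible blow-up of the density near $v=0$ is sound but not needed beyond what you say.
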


\begin{proof}
It can be easily seen that
\[
\frac{\partial^2 \cl{I}_v(\alpha,\beta)}{\partial v^2} = \frac{v^{\alpha-2}(1-v)^{\beta-2}}{\cl{B}(\alpha,\beta)}\{(\alpha-1)(1-v)-(\beta-1)v\},
\]
which is non-positive if and only if $(\alpha-1)/(\beta+\alpha-2) \leq v$. This holds for every $v \in (0,1)$, as $(\alpha-1)/(\beta+\alpha-2) \leq 0$. Thus $v \mapsto \cl{I}_{v}(\alpha,\beta)$ is concave.
\end{proof}

\begin{lem}\label{lem:B}
Let $0 < \alpha \leq 1$ and $\beta \geq 1$. Then for every $n \in \{1,2,...\}$ and $v \in (0,1)$
\[
\cl{I}_v(\alpha,\beta+n) \geq \cl{I}_{nv/(n+1)}(\alpha,\beta+n+1).
\]
\end{lem}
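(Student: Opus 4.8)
The plan is to reduce the stated inequality to a cleaner one in which the two beta shape parameters differ by exactly one, and then to prove that cleaner inequality via a single–sign–change argument for a suitable difference of integrands. Writing $b := \beta + n$, the hypotheses $\beta\ge 1$ and $n\ge 1$ give $b\ge 2$ and, by a one–line algebraic check, $\tfrac{n}{n+1}\le \tfrac{b-1}{b}$ (this inequality is in fact equivalent to $\beta\ge 1$). Since $x\mapsto \cl{I}_x(a,b)$ is non-decreasing, it therefore suffices to prove
\[
\cl{I}_v(\alpha,b)\;\ge\;\cl{I}_{cv}(\alpha,b+1),\qquad c:=\frac{b-1}{b},
\]
for $0<\alpha\le 1$, integer $b\ge 2$ and $v\in(0,1)$.

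To set this up I would substitute $s=ct$ inside $\cl{I}_{cv}(\alpha,b+1)$ and use $\cl{B}(\alpha,b)=\tfrac{\alpha+b}{b}\,\cl{B}(\alpha,b+1)$ to rewrite the difference as
\[
\cl{I}_v(\alpha,b)-\cl{I}_{cv}(\alpha,b+1)=\frac{1}{\cl{B}(\alpha,b)}\int_0^v t^{\alpha-1}\,h(t)\,\mrm{d}t,\qquad h(t):=(1-t)^{b-1}-K(1-ct)^b,
\]
with $K:=\tfrac{\alpha+b}{b}\,c^{\alpha}$. So the goal becomes $G(v):=\int_0^v t^{\alpha-1}h(t)\,\mrm{d}t\ge 0$ for all $v\in[0,1]$.

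The argument then rests on two facts about $h$. First, $K<1$: taking logarithms, $\log K=\log(1+\alpha/b)-\alpha\log(1+\tfrac{1}{b-1})$, and the elementary bounds $\log(1+x)\le x$ and $\log(1+x)\ge \tfrac{x}{1+x}$ (applied with $x=\alpha/b$ and $x=1/(b-1)$ respectively) give $\log(1+\alpha/b)\le\alpha/b\le\alpha\log(1+\tfrac1{b-1})$, the first inequality being strict; hence $h(0)=1-K>0$. Second, the ratio $R(t):=(1-t)^{b-1}/(1-ct)^b$ is strictly decreasing on $[0,1)$, since $\tfrac{\mrm d}{\mrm dt}\log R(t)=-\tfrac{b-1}{1-t}+\tfrac{bc}{1-ct}$ and, because $bc=b-1$, this is $\le 0$ iff $0\le ct$, which is strict for $t>0$. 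As $h(t)=(1-ct)^b\,(R(t)-K)$ on $[0,1)$ and $R$ decreases strictly from $R(0)=1>K$ down to $0$, there is a unique $t_0\in(0,1)$ with $h>0$ on $[0,t_0)$, $h(t_0)=0$, and $h<0$ on $(t_0,1)$. Consequently $G'(v)=v^{\alpha-1}h(v)$ is $\ge 0$ on $(0,t_0]$ and $\le 0$ on $[t_0,1)$, so $G$ rises then falls on $[0,1]$ and $\min_{[0,1]}G=\min\{G(0),G(1)\}=\min\{0,G(1)\}$. It remains to evaluate $G(1)$: substituting $s=ct$ gives $\int_0^1 t^{\alpha-1}(1-ct)^b\,\mrm{d}t=c^{-\alpha}\cl{B}(\alpha,b+1)\cl{I}_c(\alpha,b+1)$, and since $Kc^{-\alpha}\cl{B}(\alpha,b+1)=\cl{B}(\alpha,b)$ one gets $G(1)=\cl{B}(\alpha,b)\bigl(1-\cl{I}_c(\alpha,b+1)\bigr)\ge 0$ because $c<1$. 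Hence $G\ge 0$ on $[0,1]$, which after undoing the reduction yields the lemma.

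The only step that is not routine bookkeeping is the monotonicity of $R$: it is precisely what pins $h$ down to a single sign change and thereby lets one replace ``$G(v)\ge 0$ for every $v$'' by checking only the endpoints $v=0$ and $v=1$. I would also flag that the \emph{strict} inequality $K<1$ is genuinely needed here — if $K=1$ one would instead have $h\le 0$ throughout, contradicting $G(1)>0$ — so establishing $K<1$ is not merely cosmetic.
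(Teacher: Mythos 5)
Your proof is correct, but it takes a genuinely different route from the paper's. The paper disposes of the lemma in a few lines by combining the mean value theorem with the concavity of $x\mapsto\cl{I}_x(\alpha,\beta+n+1)$ established in Lemma~\ref{lem:A}: concavity makes the derivative decreasing, so the increment of $\cl{I}_{\cdot}(\alpha,\beta+n+1)$ over $[nv/(n+1),v]$ is at least $\tfrac{v}{n+1}\,\cl{I}'_v(\alpha,\beta+n+1)$, and after invoking $\alpha+\beta+n\geq n+1$ (the only place $\beta\geq1$ enters there) together with the recurrences $\cl{B}(a,b+1)=b\,\cl{B}(a,b)/(a+b)$ and $\cl{I}_v(a,b+1)=\cl{I}_v(a,b)+v^a(1-v)^b/\{b\,\cl{B}(a,b)\}$, the claim drops out. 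You instead use $\beta\geq1$ up front to replace $n/(n+1)$ by $c=(b-1)/b$ with $b=\beta+n$, and then prove the cleaner inequality $\cl{I}_v(\alpha,b)\geq\cl{I}_{cv}(\alpha,b+1)$ by showing that the integrand difference $t^{\alpha-1}h(t)$ changes sign exactly once (positive then negative), so that only the endpoints $v=0$ and $v=1$ need checking. I verified the key computations: $K=\tfrac{\alpha+b}{b}c^{\alpha}<1$ (this is where $\alpha\leq1$ enters for you, playing the role that concavity plays in the paper), the strict monotonicity of $R$ via $bc=b-1$, and the endpoint value $G(1)=\cl{B}(\alpha,b)\bigl(1-\cl{I}_c(\alpha,b+1)\bigr)>0$. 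Your argument is longer but self-contained, needing nothing from Lemma~\ref{lem:A}; the paper's is shorter once concavity is on record. One cosmetic slip: you call $b\geq2$ an integer, but $b=\beta+n$ is merely a real number at least $2$; nothing in your argument uses integrality, so this is harmless.
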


\begin{proof}
Fix $v \in (0,1)$ and $n \in \{1,2,...\}$. By the mean value theorem we know that there exist $u$ satisfying $nv/(n+1) < u < v$, such that 
\[
\frac{\partial \cl{I}_x(\alpha,\beta+n+1)}{\partial x}\bigg|_u = \cl{I}'_u(\alpha,\beta+n+1) = \frac{\cl{I}_v(\alpha,\beta+n+1)-\cl{I}_{nv/(n+1)}(\alpha,\beta+n+1)}{v-nv/(n+1)}.
\]
By Lemma \ref{lem:A} we have that $x \mapsto \cl{I}_x(\alpha,\beta+n+1)$ is concave, which implies $\cl{I}'_u(\alpha,\beta+n+1) \geq \cl{I}'_v(\alpha,\beta+n+1)$. That is
\[
\frac{\cl{I}_v(\alpha,\beta+n+1)-\cl{I}_{nv/(n+1)}(\alpha,\beta+n+1)}{v
(n+1)^{-1}} \geq \cl{I}'_v(\alpha,\beta+n+1) = \frac{v^{\alpha-1}(1-v)^{\beta+n}}{\cl{B}(\alpha,\beta+n+1)},
\]
where $\cl{B}$ denotes the beta function. Evidently, $(\alpha+\beta+n) \geq (n+1)$, thus
\[
(\alpha+\beta+n)\l\{\frac{\cl{I}_v(\alpha,\beta+n+1)-\cl{I}_{nv/(n+1)}(\alpha,\beta+n+1)}{v}\r\} \geq \frac{v^{\alpha-1}(1-v)^{\beta+n}}{\cl{B}(\alpha,\beta+n+1)}.
\]
Recalling that $\cl{B}(a,b+1) = b\cl{B}(a,b)/(a+b)$ for $a,b >0$, the above equation can be written as
\[
\cl{I}_v(\alpha,\beta+n+1)-\cl{I}_{nv/(n+1)}(\alpha,\beta+n+1) \geq \frac{v^{\alpha}(1-v)^{\beta+n}}{(\beta+n)\cl{B}(\alpha,\beta+n)}.
\]
Further, recalling that $\cl{I}_v(a,b+1) = \cl{I}_v(a,b) + \{v^{a}(1-v)^{b}\}/\{b\cl{B}(a,b)\}$, for $a,b >0$, we obtain
\[
\cl{I}_v(\alpha,\beta+n) \geq \cl{I}_{nv/(n+1)}(\alpha,\beta+n+1).
\]
\end{proof}

\begin{lem}\label{lem:C}
Let $\bm{P}$ be a CDSBp with parameters $\bm{\pi} = (\pi_j)_{j=1}^{\infty}$ where $\pi_j = \Be(\alpha,\theta+j\sigma)$ for some $0 < \alpha \leq 1$, $0 \leq \sigma \leq 1$ and $\theta > -\sigma$. Then $\bm{P}$ is proper and it has full support. Moreover the CDSBw, $\bm{w}$, of $\bm{P}$ satisfy $w_{j} > 0$ and $w_{j+1}\leq w_j$ for every $j \geq 1$, a.s.
\end{lem}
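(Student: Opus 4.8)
The strategy is to read off all four assertions from Theorem~\ref{theo:Upsilon} and Proposition~\ref{prop:MSB_supp_points}, applied to the CDSBp built from $F_j(v) = \cl{I}_v(\alpha,\beta_j)$ with $\beta_j = \theta + j\sigma$ and $\Upsilon_j$ as in \eqref{eq:Upsilon}. The cheap parts come first: each $\pi_j = \Be(\alpha,\beta_j)$ is diffuse, so $\pi_j(\{0\}) = \pi_j(\{1\}) = 0$, whence Proposition~\ref{prop:MSB_supp_points} gives $t = \inf\{j\ge 1: v_j = 1\} = \infty$ a.s.\ and therefore $w_j > 0$ for every $j\ge 1$ a.s.

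For the ordering and support claims, the key hypothesis to check is the monotonicity in Theorem~\ref{theo:Upsilon}(ii). For each fixed $v$ the map $\beta\mapsto\cl{I}_v(\alpha,\beta)$ is nondecreasing: writing $\Be(\alpha,\beta)\deq G_\alpha/(G_\alpha+G_\beta)$ with independent $G_\alpha\sim\Ga(\alpha,1)$, $G_\beta\sim\Ga(\beta,1)$, increasing $\beta$ stochastically inflates $G_\beta$, hence stochastically deflates the ratio, hence raises its distribution function at $v$. Since $\sigma\ge 0$ forces $\beta_j\le\beta_{j+1}$, this gives $F_j(v)\le F_{j+1}(v)$ for all $v\in[0,1]$ and $j\ge 1$. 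By Theorem~\ref{theo:Upsilon}(ii) this already yields that $\bm{w}$ is decreasing a.s., and — once properness is in hand — that $\bm{P}$ has full support.

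The real content is properness, i.e.\ condition \eqref{eq:sum_1_Upsilon}: $\sum_{j\ge 1}\Upsilon^{[j]}(v) = \infty$ for $\pi_1$-a.e.\ $v$, where $\Upsilon^{[j]} = F_j^{-1}\circ F_1$. As $\pi_1$ is diffuse it suffices to prove this for every fixed $v\in(0,1)$; put $u = F_1(v)\in(0,1)$, so $\Upsilon^{[j]}(v) = F_j^{-1}(u)$. If $\sigma = 0$ then $F_j = F_1$ for all $j$, $\Upsilon^{[j]}$ is the identity, and $\sum_j\Upsilon^{[j]}(v) = \sum_j v = \infty$. If $\sigma > 0$, then $\beta_j\ge 1$ for all $j\ge J$ with $J$ large, and there $(1-t)^{\beta_j-1}\le 1$ on $(0,1)$ gives the elementary bound $\cl{I}_x(\alpha,\beta_j) \le x^{\alpha}/\bigl(\alpha\,\cl{B}(\alpha,\beta_j)\bigr)$. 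Evaluating this at $x = F_j^{-1}(u)$, where by definition $\cl{I}_x(\alpha,\beta_j) = u$, yields $F_j^{-1}(u)\ge\bigl(u\,\alpha\,\cl{B}(\alpha,\beta_j)\bigr)^{1/\alpha}$. Since $\cl{B}(\alpha,\beta_j) = \Gamma(\alpha)\Gamma(\beta_j)/\Gamma(\alpha+\beta_j)$ and $\Gamma(\beta_j+\alpha)/\Gamma(\beta_j)\sim\beta_j^{\alpha}$, we have $\cl{B}(\alpha,\beta_j)^{1/\alpha}\asymp\beta_j^{-1}$ as $j\to\infty$, so $F_j^{-1}(u)\gtrsim(\theta+j\sigma)^{-1}$ and $\sum_{j\ge J}\Upsilon^{[j]}(v)\gtrsim\sum_j(\theta+j\sigma)^{-1} = \infty$. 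Thus \eqref{eq:sum_1_Upsilon} holds, and by the equivalence recorded just before Theorem~\ref{theo:Upsilon} (via \eqref{eq:sum1_MSB}) we obtain $\sum_{j\ge 1}w_j = 1$ a.s.; Theorem~\ref{theo:Upsilon}(ii) then also delivers full support, completing the proof.

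I expect the properness step to be the main obstacle. One could instead try to verify Theorem~\ref{theo:Upsilon}(i.a) head-on, taking $a_j\asymp\beta_j^{-1}$ and comparing $F_j(v)$ with $F_{j+1}\bigl(v\,a_{j+1}/a_j\bigr)$ by combining the concavity of $v\mapsto\cl{I}_v(\alpha,\beta)$ (Lemma~\ref{lem:A}) with the quantile-scaling estimate of Lemma~\ref{lem:B}; this is presumably the route the two preparatory lemmas are meant for. The snag is that Lemma~\ref{lem:B} is built for a unit increment of the second Beta parameter, whereas here the increment is $\sigma\in(0,1]$, and the crude passage $\beta_{j+1}\le\beta_j + 1$ wastes a multiplicative factor that renders the resulting $a_j$ summable when $\sigma < 1$. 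The quantile bound above sidesteps any fractional analogue of Lemma~\ref{lem:B}; the only point that needs genuine care there is upgrading the asymptotic $\cl{B}(\alpha,\beta_j)\sim\Gamma(\alpha)\beta_j^{-\alpha}$ to a two-sided bound $c_1\beta_j^{-\alpha}\le\cl{B}(\alpha,\beta_j)\le c_2\beta_j^{-\alpha}$ valid for all $j\ge J$, which is routine via Stirling's formula.
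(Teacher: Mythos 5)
Your proof is correct, but the two substantive steps are handled by genuinely different arguments than the paper's. For the monotonicity $F_j(v)\le F_{j+1}(v)$ the paper invokes Karp's log-concavity of $\beta\mapsto\cl{I}_v(\alpha,\beta)$, combines it (via quasi-concavity) with the recurrence $\cl{I}_v(\alpha,\beta+1)>\cl{I}_v(\alpha,\beta)$ to get monotonicity in $\beta$; your Gamma-representation coupling $\Be(\alpha,\beta)\deq G_\alpha/(G_\alpha+G_\beta)$ reaches the same conclusion more elementarily and self-containedly. For properness the paper does \emph{not} apply Lemma~\ref{lem:B} to the $\sigma$-increment sequence directly (so the ``snag'' you anticipated does not arise there): it introduces the auxiliary unit-increment family $\hat F_j(v)=\cl{I}_v(\alpha,\hat\theta+j)$ with $\hat\theta=\theta+J\sigma\ge 1$, establishes divergence for that family via Lemma~\ref{lem:B} and Theorem~\ref{theo:Upsilon}(i.a) with $a_j\propto 1/j$, and then transfers back to the original chain by monotonicity of the Beta quantile in its second parameter, using $\sigma\le 1$ to compare $\theta+(J+j)\sigma\le\hat\theta+j$ globally rather than increment by increment. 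Your route instead bounds the quantile directly: from $\cl{I}_x(\alpha,\beta_j)\le x^{\alpha}/(\alpha\,\cl{B}(\alpha,\beta_j))$ for $\beta_j\ge 1$ you get $F_j^{-1}(u)\ge\bigl(u\,\alpha\,\cl{B}(\alpha,\beta_j)\bigr)^{1/\alpha}\gtrsim(\theta+j\sigma)^{-1}$ and compare with the harmonic series; this bypasses Lemmas~\ref{lem:A} and~\ref{lem:B} entirely. The only point you flag as needing care is even easier than you suggest: you need only the lower bound $\cl{B}(\alpha,\beta_j)\ge\Gamma(\alpha)\beta_j^{-\alpha}$, which follows in one line from log-convexity of $\Gamma$, namely $\Gamma(\beta+\alpha)\le\Gamma(\beta)^{1-\alpha}\Gamma(\beta+1)^{\alpha}=\Gamma(\beta)\beta^{\alpha}$ for $0<\alpha\le 1$, with no Stirling asymptotics required. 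The remaining pieces ($w_j>0$ via diffuseness and Proposition~\ref{prop:MSB_supp_points}, decreasingness and full support via Theorem~\ref{theo:Upsilon}(ii), the $\sigma=0$ case, and reducing a.s.\ divergence to fixed $v\in(0,1)$) coincide with the paper's treatment.
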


\begin{proof}
As $\pi_j$ is diffuse, it follows immediately from Proposition 
\ref{prop:MSB_supp_points} 
that $w_j > 0$ for every $j \geq 1$, a.s. Next we prove that $\bm{w}$ is decreasing, by showing that the condition in Theorem (ii) holds for $F_j(v) = \mathcal{I}_v(1-\sigma,\theta+j\sigma)$.

As shown by \cite{Karp16}, for $v \in [0,1]$ and $\alpha > 0$ fixed, the mapping $\beta \mapsto \cl{I}_v(\alpha,\beta)$ is log-concave, which implies it is quasi-concave, that is for every $\beta_1,\beta_2 > 0$ and $\lambda \in [0,1]$
\begin{equation}\label{eq:quasi_concave}
\cl{I}_v(\alpha,\lambda\beta_1+(1-\lambda)\beta_2) \geq \min\{\cl{I}_v(\alpha,\beta_1),\cl{I}_v(\alpha,\beta_2)\}.
\end{equation}
Further, it is a well-known property of the regularized Beta function that
\begin{equation}\label{eq:reg_beta}
\cl{I}_v(\alpha,\beta + 1) = \cl{I}_v(\alpha,\beta) + \frac{v^{\alpha}(1-v)^{\beta}}{\beta\cl{B}(\alpha,\beta)} > \cl{I}_v(\alpha,\beta),
\end{equation}
where $\cl{B}(\alpha,\beta) = \Gamma(\alpha+\beta)/[\Gamma(\alpha)\Gamma(\beta)]$ denotes the Beta function. Hence by \eqref{eq:quasi_concave} and \eqref{eq:reg_beta} we obtain that for every $\beta > 0$ and $\varepsilon \in [0,1]$,
\[
\cl{I}_v(\alpha,\beta+\varepsilon) \geq  \min\{\cl{I}_v(\alpha,\beta),\cl{I}_v(\alpha,\beta+1)\} = \cl{I}_v(\alpha,\beta).
\]
That is $\beta \mapsto \cl{I}_v(\alpha,\beta)$ is monotonically increasing, particularly
\[
F_j(v) = \cl{I}_v(\alpha,\theta+j\sigma) \leq \cl{I}_v(\alpha,\theta+(j+1)\sigma) = F_{j+1}(v). 
\]
Thus for $\pi_j = \Be(\alpha,\theta+j\sigma)$, Theorem 
\ref{theo:Upsilon} 
shows $\bm{w}$ is decreasing a.s.~and if $\bm{P}$ is proper it also has full support. Thus, to finish the proof it only remains to prove $\bm{P}$ is proper. 

If $\sigma = 0$, $\bm{P}$ is a Geometric process and the result obvious. Otherwise, there exists $J \geq 1$ such that $\theta+J\sigma \geq 1$. Define $\hat{\theta} = \theta +J \sigma$, set $\hat{F}_j(v) = \mathcal{I}_v(\alpha,\hat{\theta}+j)$ and $\hat{\Upsilon}_j = \hat{F}_{j+1}^{-1}\circ \hat{F}_j$. By Lemma \ref{lem:B} we know $\hat{F}_j(v) \geq \hat{F}_{j+1}(v \, j/(j+1))$, hence by Theorem 
\ref{theo:Upsilon} 
(i.a), 
\begin{equation}\label{eq:hat_Upsilon}
\sum_{j=1}^{\infty} \hat{\Upsilon}^{[j]}(\hat{v}) = \infty,
\end{equation}
for every $\hat{v} \in (0,1)$, with $\hat{\Upsilon}^{[j]} = \hat{F}^{-1}_j \circ \hat{F}_1$. Now, since $b \mapsto \mathcal{I}_{\hat{v}}(\alpha,b)$, is monotonically increasing, $b \mapsto \mathcal{I}^{-1}_{\hat{v}}(\alpha,b)$ is monotonically decreasing and we get 
\[
\mathcal{I}^{-1}_{\mathcal{I}_{\hat{v}}(\alpha,\hat{\theta})}(\alpha,\hat{\theta}+j \sigma) \geq\mathcal{I}^{-1}_{\mathcal{I}_{\hat{v}}(\alpha,\hat{\theta})}(\alpha,\hat{\theta}+j) = \hat{\Upsilon}^{[j]}(\hat{v}).
\]
The choice $\hat{v} = \mathcal{I}^{-1}_{\mathcal{I}_{v}(\alpha,\theta+\sigma)}(\alpha,\hat{\theta})$ with $v \in (0,1)$ yields
\[
\Upsilon^{[J+j]}(v) = \big(F_{J+j}^{-1}\circ F_1\big)(v) \geq \hat{\Upsilon}^{[j]}(\hat{v})
\]
for every $j \geq 1$, where $F_j(v) = \mathcal{I}_v(\alpha,\theta+j\sigma)$. Hence, by \eqref{eq:hat_Upsilon} we obtain
\[
\sum_{j=1}^{\infty} \Upsilon^{[j]}(v) \geq \sum_{j=1}^{\infty}\Upsilon^{[J+j]}(v) \geq \sum_{j=1}^{\infty} \hat{\Upsilon}^{[j]}(\hat{v}) = \infty,
\]
for $v \in (0,1)$. This proves 
\eqref{eq:sum_1_Upsilon}
holds a.s.~for $v \sim \Be(\alpha,\theta+\sigma)$, which in turn implies $\sum_{j=1}^{\infty}w_j = 1$, i.e. $\bm{P}$ is proper.
\end{proof}

Corollary 
\ref{cor:CDSB_PY}
is simply a particular case of Lemma \ref{lem:C}.

\medskip

\section{Proofs of the results in Section 
\ref{sec:BBSB}}\label{sec:app:BMSB_sup}

\smallskip

\subsection{Marginal distribution of the length variables}\label{app:prop:BB_trans_marg}

\begin{prop}\label{prop:BB_trans_marg}
Let $\bm{v}$ be a Markov process with initial distribution $\pi = \pi_1 = \Be(\alpha_1,\beta_1)$ and Beta-Binomial transitions $\bm{\psi}$ as in 
\eqref{eq:BB_trans}
 with parameters $(N,\bm{\alpha},\bm{\beta})$. Then 
$v_j \sim \Be(\alpha_j,\beta_j)$ for every $j \geq 1$.
\end{prop}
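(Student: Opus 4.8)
The plan is to argue by induction on $j$. The base case $j=1$ holds by assumption, since $\pi_1=\Be(\alpha_1,\beta_1)$. For the inductive step I would assume $v_j\sim\Be(\alpha_j,\beta_j)$ and show that the recursion \eqref{eq:vj_marg}, namely $\pi_{j+1}=\int_{[0,1]}\psi_j(v,\cdot)\,\pi_j(\mrm{d}v)$, together with the explicit form of the Beta-Binomial transition \eqref{eq:BB_trans}, forces $v_{j+1}\sim\Be(\alpha_{j+1},\beta_{j+1})$.

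First I would reduce the transition to a standard conjugate scheme. Since $\alpha_j,\beta_j>0$, the regularized incomplete beta function $F_j(v)=\cl{I}_v(\alpha_j,\beta_j)$ is continuous and strictly increasing on $[0,1]$, so $\Upsilon_j=F_{j+1}^{-1}\circ F_j$ is well defined and the probability integral transform applies: if $v_j\sim\pi_j$ then $F_j(v_j)\sim\Un(0,1)$, whence $p:=\Upsilon_j(v_j)=F_{j+1}^{-1}(F_j(v_j))\sim\Be(\alpha_{j+1},\beta_{j+1})$. Reading off \eqref{eq:BB_trans}, drawing $v_{j+1}$ from $\psi_j(v_j,\cdot)$ then amounts to the hierarchy $p\sim\Be(\alpha_{j+1},\beta_{j+1})$, next $z\mid p\sim\Bin(N,p)$, and finally $v_{j+1}\mid z\sim\Be(\alpha_{j+1}+z,\beta_{j+1}+N-z)$, with $v_{j+1}$ conditionally independent of $p$ given $z$.

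The key step is then Beta-Binomial conjugacy. Writing $a=\alpha_{j+1}$, $b=\beta_{j+1}$, the joint density of $(p,z)$ is proportional to $\binom{N}{z}\,p^{a+z-1}(1-p)^{b+N-z-1}$, so the conditional law of $p$ given $z$ is precisely $\Be(a+z,b+N-z)$ — exactly the law from which $v_{j+1}$ is drawn given $z$. Since $z$ is generated by the same mechanism in both descriptions and $v_{j+1}$ and $p$ share the same conditional distribution given $z$, the tower property yields, for every $B\in\B_{[0,1]}$,
\[
\Prob[v_{j+1}\in B]=\Esp\big[\Prob[v_{j+1}\in B\mid z]\big]=\Esp\big[\Prob[p\in B\mid z]\big]=\Prob[p\in B],
\]
so $v_{j+1}\deq p\sim\Be(\alpha_{j+1},\beta_{j+1})$, closing the induction. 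Equivalently, one may integrate $p$ out of the joint density of $(p,z,v_{j+1})$ — the $\Be(a+z,b+N-z)$ normalizing constant cancels — and then collapse the resulting mixture using $\sum_{z=0}^N\binom{N}{z}u^z(1-u)^{N-z}=1$ to recover the $\Be(\alpha_{j+1},\beta_{j+1})$ density; this is the only genuine computation involved. I do not expect any real obstacle here: the argument is routine once the transition is recognized as the usual Beta-Binomial data-augmentation move, and the sole point needing a word of justification is the continuity and strict monotonicity of $F_j$, which is automatic for beta distributions with positive parameters.
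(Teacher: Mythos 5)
Your proposal is correct and follows essentially the same route as the paper's proof: induction on $j$, the probability integral transform to get $\Upsilon_j(v_j)\sim\Be(\alpha_{j+1},\beta_{j+1})$, augmentation with the latent Binomial variable $z$, and Beta-Binomial conjugacy to identify the conditional law of $v_{j+1}$ given $z$ with that of $\Upsilon_j(v_j)$ given $z$. Nothing is missing.
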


\begin{proof}
Assume that $v_j \sim \Be(\alpha_j,\beta_j)$. Then $F_j(v_j) \sim \Un(0,1)$ recalling that $F_j$ is the distribution function of a $\Be(\alpha_j,\beta_j)$ distribution, which yields $\Upsilon_j(v_j) = F^{-1}_{j+1}(F_j(v_j)) \sim \Be(\alpha_{j+1},\beta_{j+1})$. Now, since $v_{j+1}\mid v_j \sim \psi_j(v_j,\cdot)$ where $\psi_j$ is the Beta-Binomial transition at time $j$, we might choose a random element $z_j$ such that $z_j \mid v_j \sim \Bin(N,\Upsilon_j(v_j))$ and $v_{j+1}\mid z_j \sim \Be(\alpha_{j+1}+z_j,\beta_{j+1} +N -z_j)$. This way, we have that  $z_j \mid \Upsilon(v_j) \sim \Bin(N,\Upsilon(v_j))$, where $\Upsilon_j(v_j) \sim \Be(\alpha_{j+1},\beta_{j+1})$. By Beta-Binomial conjugate model we have that $\Upsilon_j(v_j)\mid z_j \sim \Be(\alpha_{j+1}+z_j,\beta_{j+1} +N -z_j)$, identically as $v_{j+1}$. Thus we must have that $v_{j+1} \sim \Be(\alpha_{j+1},\beta_{j+1})$. This induction step together with the fact $v_1 \sim \Be(\alpha_1,\beta_1)$ prove the result.
\end{proof}

\subsection{Proof of Theorem 
\ref{theo:BMSBp}}\label{app:theo:BMSBp}

\begin{proof}
(ii) and (iii) are straight forward from Propositions 
\ref{prop:MSB_supp_points}
and 
\ref{prop:MSB_full_supp} 
 because Beta distributions are diffuse and their support is the interval $[0,1]$. Hence, it is enough to prove (i). Let $\bm{v}$ be the underlying Markov process of length variables with the Beta-Binomial transitions. Note that for fixed $j$ we can choose a random element $z_j$ such that $z_j \mid v_j \sim \Bin(N,\Upsilon_j(v_j))$ and $v_{j+1}\mid z_j \sim \Be(\alpha_{j+1}+z_j,\beta_{j+1} +N -z_j)$ independently of $v_j$. This way, we can easily compute
\[
\Esp[v_{j+1}\mid v_j] = \Esp[\Esp[v_{j+1}\mid z_j]\mid v_j] = \Esp\l[\frac{\alpha_{j+1}+z_j}{\alpha_{j+1}+\beta_{j+1}+N}\mi v_j\r] = \frac{\alpha_{j+1}+N\Upsilon_j(v_j)}{\alpha_{j+1}+\beta_{j+1}+N},
\]
which yields
\[
\Esp[(1-v_{j+1})\mid v_j] = \frac{\beta_{j+1}+N(1-\Upsilon_j(v_j))}{\alpha_{j+1}+\beta_{j+1}+N}\leq \frac{\beta_{j+1}+N}{\alpha_{j+1}+\beta_{j+1}+N}.
\]
By the Markov property of the length variables $\bm{v}$ we obtain
\begin{align*}
\Esp\l[\prod_{j=1}^{m}(1-v_j)\r]&  = \Esp\l[\Esp\l[\prod_{j=1}^{m-1}(1-v_j)\mi v_{m-1}\r]\Esp[(1-v_m)\mid v_{m-1}]\r]\\
& \leq \Esp\l[\prod_{j=1}^{m-1}(1-v_j)\r]\frac{\beta_{m}+N}{\alpha_{m}+\beta_{m}+N},
\end{align*}
and a simple induction argument proves
\[
0 \leq \Esp\l[\prod_{j=1}^{m}(1-v_j)\r] \leq \frac{\beta_1}{\alpha_1+\beta_1}\prod_{j=2}^{m}\frac{\beta_{j}+N}{\alpha_{j}+\beta_{j}+N}.
\]
Next we want to prove that the right side diverges to zero. For simplicity set $\gamma_j = \alpha_{j}/(\alpha_{j}+\beta_{j})$ and $c_j = \alpha_{j}/(\alpha_{j}+\beta_{j}+N)$, then it is enough to show $\sum_{j=1}^{\infty}c_j = \infty$. We know $\sum_{j=1}^{\infty}\gamma_j = \infty$, if  $\sum_{j=1}^{\infty}c_j < \infty$, by the converse of the one-sided comparison test for series, necessarily
\[
\infty = \limsup_{j \to \infty}\frac{\gamma_j}{c_j} = 1+N\limsup_{j \to \infty}\frac{1}{\alpha_j+\beta_j},
\]
which contradicts the hypothesis $\limsup_{j\to \infty}(\alpha_j+\beta_j)^{-1}< \infty$. Thus $\sum_{j=1}^{\infty}c_j = \infty$ and we obtain $\Esp\big[\prod_{j=1}^{m}(1-v_j)\big] \to 0$. Finally, being that this product is positive and bounded by one we get $1-\sum_{j=1}^m w_j = \prod_{j=1}^{m}(1-v_j) \to 0$ a.s.~and the result follows.
\end{proof}

\subsection{Proof of Theorem 
\ref{cor:BMSBp_conv}}\label{app:cor:BMSBp_conv}

To prove the main result we require some preliminary Lemmas.

\begin{lem}[P\'olya's theorem]\label{lem:Polya_theo}
Let $F_n$ be distribution functions that converges pointwise to a continuous distribution function $F$. Then $F_n \to F$ uniformly. That is
\[
\lim_{n \to \infty}\sup_{x \in [0,1]}|F_n(x)-F(x)| = 0
\]
\end{lem}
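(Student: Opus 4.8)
The plan is to exploit two elementary features: the monotonicity of each $F_n$ and of the limit $F$, and the uniform continuity of $F$ on the compact interval $[0,1]$. First I would fix $\varepsilon>0$ and choose a finite mesh $0=x_0<x_1<\cdots<x_m=1$ of $[0,1]$ whose $F$-increments are small, say $F(x_k)-F(x_{k-1})<\varepsilon$ for every $k\in\{1,\dots,m\}$. Such a mesh exists because $F$ is continuous on the compact set $[0,1]$, hence uniformly continuous: pick $\delta>0$ with $|F(x)-F(y)|<\varepsilon$ whenever $|x-y|<\delta$, and take any mesh of width less than $\delta$ together with the endpoints $0$ and $1$.

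Next I would invoke pointwise convergence at the finitely many mesh points: there is an index $N$ such that $|F_n(x_k)-F(x_k)|<\varepsilon$ for all $n\ge N$ and all $k\in\{0,1,\dots,m\}$. The key step is then to interpolate. For an arbitrary $x\in[0,1]$, locate the cell $[x_{k-1},x_k]$ containing $x$ and use monotonicity, $F_n(x_{k-1})\le F_n(x)\le F_n(x_k)$ and $F(x_{k-1})\le F(x)\le F(x_k)$, to sandwich the difference: for $n\ge N$,
\[
F_n(x)-F(x)\le F_n(x_k)-F(x_{k-1})=\big(F_n(x_k)-F(x_k)\big)+\big(F(x_k)-F(x_{k-1})\big)<2\varepsilon,
\]
and symmetrically $F_n(x)-F(x)\ge F_n(x_{k-1})-F(x_k)=\big(F_n(x_{k-1})-F(x_{k-1})\big)+\big(F(x_{k-1})-F(x_k)\big)>-2\varepsilon$. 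Hence $\sup_{x\in[0,1]}|F_n(x)-F(x)|\le 2\varepsilon$ for all $n\ge N$, and since $\varepsilon>0$ was arbitrary the supremum tends to $0$, which is the claimed uniform convergence.

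There is essentially no serious obstacle here; the argument is entirely classical. The only point requiring a little care is the construction of the mesh with controlled $F$-increments, and this is precisely where the continuity of $F$ (equivalently, uniform continuity on $[0,1]$) enters in an essential way — pointwise convergence alone, without continuity of the limit, would not suffice, as the standard example of distribution functions converging to a discontinuous step function shows.
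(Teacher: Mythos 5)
Your proof is correct and is essentially the same argument as the paper's: both choose a finite mesh along which the increments of $F$ are small (the paper takes $F(x_i)=i/k$, you use uniform continuity on $[0,1]$ to get increments below $\varepsilon$), then sandwich $F_n(x)-F(x)$ between values at adjacent mesh points using monotonicity and invoke pointwise convergence at the finitely many mesh points. The only difference is cosmetic — the paper's equal-$F$-increment mesh versus your uniform-continuity mesh — and both yield the same bound.
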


\begin{proof}
Fix $k \in \{1,2,\ldots\}$ since $F$ is a continuous distribution function we can find $-\infty = x_0 < x_1 < \cdots < x_k = \infty$ such that $F(x_i) = i/k$. Now for each $x \in \R$ we get there exist $i \in \{1,\ldots,k\}$ such that $x_{i-1} \leq x \leq x_{i}$. Hence, the fact that $F_n$ and $F$ are non-decreasing yield 
\[
F_n(x)-F(x) \leq F_n(x_i) - F(x_{i-1}) = F_n(x_i)-F(x_i)+1/k
\]
and
\[
F_n(x)-F(x) \geq F_n(x_{i-1}) - F(x_{i}) = F_n(x_{i-1})-F(x_{i-1})-1/k.
\]
Thus, by the pointwise convergence $F_n(x_i) \to F_(x_i)$ for each $i$, we get
\[
\lim_{n\to \infty}\sup_{x \in [0,1]}|F_n(x)-F(x)| \leq \lim_{n\to \infty}\max_{i \in \{0,\ldots,k\}}|F_n(x_i)-F(x_i)| +1/k = 1/k.
\]
Since this equality holds for each $k \in \{1,2,\ldots\}$. The result follows trivially.
\end{proof}

\begin{lem}\label{lem:Gn-1_G-1}
Let $G_n,G$ be continuous and strictly increasing distribution functions supported on $[0,1]$ and such that $G_n \to G$ uniformly. Then the inverse functions $G_n^{-1},G^{-1}:[0,1]\to[0,1]$ exist and $G_n^{-1} \to G^{-1}$ uniformly.
\end{lem}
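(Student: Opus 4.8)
The plan is to prove the statement in two moves: first pin down that the inverse functions are genuinely well defined as continuous increasing bijections of $[0,1]$, and then reduce the uniform convergence $G_n^{-1}\to G^{-1}$ to a single compactness estimate, namely a uniform modulus of continuity for $G^{-1}$.

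First I would record the elementary fact that a continuous, strictly increasing distribution function $G$ supported on $[0,1]$ satisfies $G(0)=0$ (continuity forbids an atom at $0$) and $G(1)=1$, so that $G\colon[0,1]\to[0,1]$ is a continuous strictly increasing bijection (surjectivity via the intermediate value theorem). Consequently $G^{-1}\colon[0,1]\to[0,1]$ exists and is continuous and strictly increasing, and exactly the same applies to each $G_n$. In the intended application $G=F_j$ with $F_j(v)=\cl{I}_v(\alpha_j,\beta_j)$ (and $G_n=F_j^{(n)}$), so this is immediate anyway.

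The core step is the following definition: for $0<\varepsilon<1$ set
\[
\delta(\varepsilon)\;:=\;\min\bigl\{\,|G(x)-G(x')|\;:\;(x,x')\in[0,1]^2,\ |x-x'|\ge\varepsilon\,\bigr\}.
\]
I would then argue $\delta(\varepsilon)>0$: the constraint set is compact and nonempty, the map $(x,x')\mapsto|G(x)-G(x')|$ is continuous on it, and it is strictly positive there because $G$ is injective; hence the minimum is attained and is positive. Contrapositively, $|G(x)-G(x')|<\delta(\varepsilon)$ forces $|x-x'|<\varepsilon$, which is precisely a uniform modulus-of-continuity statement for $G^{-1}$.

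Finally I would combine the pieces. Given $\varepsilon\in(0,1)$, use $\sup_{x\in[0,1]}|G_n(x)-G(x)|\to0$ to choose $N$ with $\|G_n-G\|_\infty<\delta(\varepsilon)$ for all $n\ge N$. Fix such an $n$ and any $y\in[0,1]$, and put $x=G^{-1}(y)$, $x_n=G_n^{-1}(y)$, so $G(x)=y=G_n(x_n)$; then $G(x_n)-G(x)=G(x_n)-G_n(x_n)$ gives $|G(x_n)-G(x)|\le\|G_n-G\|_\infty<\delta(\varepsilon)$, whence $|x_n-x|<\varepsilon$ by the previous paragraph. Since $y$ was arbitrary, $\sup_{y\in[0,1]}|G_n^{-1}(y)-G^{-1}(y)|\le\varepsilon$ for $n\ge N$, and letting $\varepsilon\downarrow0$ yields uniform convergence (the case $\varepsilon\ge1$ being trivial as all functions take values in $[0,1]$). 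The only point requiring any thought is the compactness argument giving $\delta(\varepsilon)>0$; the rest is bookkeeping, so I expect that to be the crux of the write-up.
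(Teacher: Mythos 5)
Your proof is correct, but it follows a genuinely different route from the paper's. The paper first establishes \emph{pointwise} convergence $G_n^{-1}(x)\to G^{-1}(x)$ by a local sandwich argument: for each fixed $x$ it picks $\delta=\delta(x,\varepsilon)$ with $G^{-1}(x)-\varepsilon\le G^{-1}(x-\delta)<G^{-1}(x+\delta)\le G^{-1}(x)+\varepsilon$, uses $\|G_n-G\|_\infty<\delta$ to trap $G_n^{-1}(x)$ between $G^{-1}(x\pm\delta)$, and then upgrades pointwise to uniform convergence by a second application of P\'olya's theorem (Lemma \ref{lem:Polya_theo}) to the sequence of inverses, which are themselves continuous increasing functions on $[0,1]$. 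You instead prove uniform convergence in one shot: your $\delta(\varepsilon)=\min\{|G(x)-G(x')|:|x-x'|\ge\varepsilon\}$ is, by compactness and injectivity, a \emph{uniform} modulus of continuity for $G^{-1}$, and the identity $G(x_n)-G(x)=G(x_n)-G_n(x_n)$ with $x_n=G_n^{-1}(y)$, $x=G^{-1}(y)$ converts the uniform bound on $\|G_n-G\|_\infty$ directly into a uniform bound on $\|G_n^{-1}-G^{-1}\|_\infty$. Your version is self-contained and avoids invoking P\'olya's theorem a second time; the paper's version reuses a lemma it has already proved and needs elsewhere, at the cost of a two-stage (pointwise, then uniform) argument. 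Both are sound, and your preliminary observation that $G(0)=0$, $G(1)=1$ so that $G$ is a bijection of $[0,1]$ is the same existence step the paper dispatches as "obvious."
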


\begin{proof}
The existence of the inverse function is obvious because $G_n$ and $G$ must be bijective on $[0,1]$. Furthermore, $G_n^{-1}$ and $G^{-1}$ are also strictly increasing and continuous distribution functions.

Fix $\varepsilon > 0$ and $x \in (0,1)$. First note that since $G^{-1}$ is continuous and increasing there exits $\delta > 0$ such that $0< x-\delta < x+\delta <1$ and
\[
G^{-1}(x)-\varepsilon \leq G^{-1}(x -\delta) < G^{-1}(x) < G^{-1}(x+\delta) \leq G^{-1}(x)+\varepsilon.
\]
Now, as $G_n \to G$, by P\'olya's theorem we also have $G_n \to G$ uniformly thus there exist $N\geq 1$ such that for every $n \geq N$
\[
\sup_{y \in [0,1]}|G_n(y)-G(y)| < \delta.
\]
In particular for $n \geq N$. choosing $y = G^{-1}(x-\delta)$ and later $y = G^{-1}(x+\delta)$ we obtain
\[
|G_n(G^{-1}(x-\delta))-(x-\delta)| < \delta \text{ and } \quad |G_n(G^{-1}(x+\delta))-(x+\delta)| < \delta
\]
which yields
\[
G_n(G^{-1}(x-\delta)) < x < G_n(G^{-1}(x+\delta))
\]
and since $G_n^{-1}$ increasing we obtain
\[
G^{-1}(x-\delta) < G_n^{-1}(x) < G^{-1}(x+\delta).
\]
Putting together this equation with the first one in the proof we get
\[
|G_n^{-1}(x) < G^{-1}(x)| < \varepsilon
\]
for all $n \geq N$. This proves that $G_n^{-1} \to G^{-1}$ pointwise and the uniform convergence follows from P\'olya's theorem.
\end{proof}

\begin{lem}\label{lem:Upsilon_UC}
Let $\pi_n,\pi,\lambda_n,\lambda$, be probability measures of over $([0,1],\B_{[0,1]})$ such that the corresponding distribution functions $F_n,F,G_n,G$ are continuous and strictly increasing. Define $\Upsilon_n = G_n^{-1}\circ F_n$ and $\Upsilon = G^{-1} \circ F$. If $\pi_n \wto \pi$ and $\lambda_n \wto \lambda$, then
\[
\lim_{n \to \infty}\sup_{x \in [0,1]}|\Upsilon_n(x)-\Upsilon(x)| = 0
\]
In particular, for every $u_n \to u$ in $[0,1]$ we get $\Upsilon_n(u_n) \to \Upsilon(u)$.
\end{lem}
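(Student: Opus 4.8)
The plan is to reduce everything to uniform convergence of the relevant distribution functions and their inverses, which are already supplied by Lemmas \ref{lem:Polya_theo} and \ref{lem:Gn-1_G-1}, and then glue the pieces together using the uniform continuity of $G^{-1}$ on the compact interval $[0,1]$. The only mildly delicate point is the standard observation that weak convergence of probability measures on $[0,1]$ together with continuity of the limiting distribution function upgrades to pointwise (hence, by P\'olya, uniform) convergence of distribution functions everywhere on $[0,1]$.

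\textbf{Step 1: from weak convergence to uniform convergence of $F_n, G_n$.} First I would record that $\pi_n \wto \pi$ implies $F_n(x) \to F(x)$ at every continuity point of $F$; since $F$ is continuous on $[0,1]$ this gives $F_n \to F$ pointwise on all of $[0,1]$. Because $F$ is a continuous distribution function, Lemma \ref{lem:Polya_theo} (P\'olya) then yields $\sup_{x\in[0,1]}|F_n(x)-F(x)| \to 0$. The identical argument applied to $\lambda_n \wto \lambda$ gives $\sup_{x\in[0,1]}|G_n(x)-G(x)| \to 0$. Since $G_n$ and $G$ are continuous and strictly increasing on $[0,1]$, Lemma \ref{lem:Gn-1_G-1} applies and gives $\sup_{y\in[0,1]}|G_n^{-1}(y)-G^{-1}(y)| \to 0$.

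\textbf{Step 2: the two-term estimate.} For $x \in [0,1]$ I would write
\[
|\Upsilon_n(x)-\Upsilon(x)| = |G_n^{-1}(F_n(x)) - G^{-1}(F(x))| \le |G_n^{-1}(F_n(x)) - G^{-1}(F_n(x))| + |G^{-1}(F_n(x)) - G^{-1}(F(x))|.
\]
The first summand is bounded above by $\sup_{y\in[0,1]}|G_n^{-1}(y)-G^{-1}(y)|$, which tends to $0$ uniformly in $x$ by Step 1. For the second summand, note $G^{-1}$ is continuous on the compact interval $[0,1]$, hence uniformly continuous; given $\varepsilon>0$ pick $\delta>0$ accordingly, and then use $\sup_x|F_n(x)-F(x)| \to 0$ to force $|G^{-1}(F_n(x)) - G^{-1}(F(x))| < \varepsilon$ for all $x$ once $n$ is large. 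Combining, $\sup_{x\in[0,1]}|\Upsilon_n(x)-\Upsilon(x)| \to 0$, which is the claimed uniform convergence.

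\textbf{Step 3: the pointwise consequence.} For $u_n \to u$ in $[0,1]$, I would split
\[
|\Upsilon_n(u_n)-\Upsilon(u)| \le |\Upsilon_n(u_n)-\Upsilon(u_n)| + |\Upsilon(u_n)-\Upsilon(u)| \le \sup_{x\in[0,1]}|\Upsilon_n(x)-\Upsilon(x)| + |\Upsilon(u_n)-\Upsilon(u)|.
\]
The first term vanishes by the uniform convergence just proved, and the second vanishes because $\Upsilon = G^{-1}\circ F$ is continuous (composition of continuous maps). Hence $\Upsilon_n(u_n) \to \Upsilon(u)$, completing the proof. The main (and only real) obstacle is essentially bookkeeping: making sure the continuity/strict monotonicity hypotheses are exactly what is needed to invoke Lemma \ref{lem:Gn-1_G-1} and to guarantee $G^{-1}$ is uniformly continuous; no genuinely hard estimate is involved.
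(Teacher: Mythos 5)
Your proof is correct and follows essentially the same route as the paper: the same two-term decomposition $|G_n^{-1}(F_n(x))-G^{-1}(F_n(x))|+|G^{-1}(F_n(x))-G^{-1}(F(x))|$, with Lemma \ref{lem:Polya_theo} and Lemma \ref{lem:Gn-1_G-1} supplying the uniform convergence of $F_n$ and $G_n^{-1}$, and the same splitting for the $u_n \to u$ consequence. The only cosmetic difference is that you obtain uniform convergence of $\Upsilon_n$ directly from the uniform continuity of $G^{-1}$ on the compact $[0,1]$, whereas the paper first deduces pointwise convergence and then applies P\'olya's theorem a second time to $\Upsilon_n$; both steps are valid.
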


\begin{proof}
First note that by hypothesis $G_n \to G$ and $F_n \to F$ pointwise, and by Lemmas \ref{lem:Polya_theo} and \ref{lem:Gn-1_G-1} we get $F_n \to F$ and $G_n^{-1} \to G^{-1}$ uniformly. Now fix $x \in [0,1]$ and note that
\begin{align*}
|\Upsilon_n(x)-\Upsilon(x)| &\leq |G_n^{-1}(F_n(x))-G^{-1}(F_n(x))| + |G^{-1}(F_n(x))- G^{-1}(F(x))|\\
&\leq \sup_{y \in [0,1]}|G_n^{-1}(y)-G^{-1}(y)| + |G^{-1}(F_n(x))- G^{-1}(F(x))|.
\end{align*}
As $G_n^{-1} \to G^{-1}$ uniformly, $F_n \to F$, and $G^{-1}$ is continuous, by making $n \to \infty$, we obtain $\Upsilon_n \to \Upsilon$ pointwise. Noting that $\Upsilon_n$ and $\Upsilon$ are themselves continuous and increasing distribution functions we also have by P\'olya's theorem that $\Upsilon_n \to \Upsilon$ uniformly. In particular if $u_n \to u$ in $[0,1]$ we get
\begin{align*}
|\Upsilon_n(u_n)-\Upsilon(u)| &\leq |\Upsilon_n(u_n)-\Upsilon(u_n)| + |\Upsilon(u_n)- \Upsilon(u)|\\
&\leq \sup_{v \in [0,1]}|\Upsilon_n(v)-\Upsilon(v)| + |\Upsilon(u_n)- \Upsilon(u)|,
\end{align*}
and the result follows by taking limits as $n \to \infty$.
\end{proof}

\begin{lem}\label{lem:Bin_L2}
For every $n \geq 1$ consider a Binomial r.v.~$z_n\sim \Bin(n,p_n)$ where $p_n \to p$ in $[0,1]$.
Then, as $n \to \infty$, $z_n/n \to p$ in $\mathcal{L}_2$.
\end{lem}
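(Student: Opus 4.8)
The plan is to reduce the $\mathcal{L}_2$ statement to an elementary bias–variance decomposition. Since $z_n \sim \Bin(n,p_n)$, I would first recall the standard binomial moment formulas $\Esp[z_n] = np_n$ and $\Var(z_n) = np_n(1-p_n)$, which give $\Esp[z_n/n] = p_n$ and $\Var(z_n/n) = p_n(1-p_n)/n$.

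Next, using the identity $\Esp[(X-c)^2] = \Var(X) + (\Esp[X]-c)^2$, valid for any square-integrable $X$ and any constant $c$, I would write
\[
\Esp\!\left[\left(\tfrac{z_n}{n} - p\right)^2\right] = \Var\!\left(\tfrac{z_n}{n}\right) + \left(\Esp\!\left[\tfrac{z_n}{n}\right] - p\right)^2 = \frac{p_n(1-p_n)}{n} + (p_n-p)^2.
\]
Then I would let $n \to \infty$: the first term is bounded above by $\frac{1}{4n}$ because $p_n(1-p_n) \le \tfrac14$ for every $n$, so it tends to $0$; the second term tends to $0$ directly from the hypothesis $p_n \to p$. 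Hence $\Esp[(z_n/n - p)^2] \to 0$, i.e. $z_n/n \to p$ in $\mathcal{L}_2$, which also implies $z_n/n \to p$ in probability.

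There is essentially no obstacle here. The only point requiring a bit of care is to observe that $p_n(1-p_n)$ stays uniformly bounded (by $1/4$), so that the variance term is genuinely killed by the factor $1/n$ irrespective of what the limit $p$ is; the rest is routine.
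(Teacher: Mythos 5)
Your proposal is correct and follows essentially the same route as the paper: both arrive at $\Esp[(z_n/n-p)^2]=\frac{p_n(1-p_n)}{n}+(p_n-p)^2$ (the paper by expanding the square directly, you via the bias--variance identity) and then let $n\to\infty$. Nothing is missing.
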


\begin{proof}
For $n \geq 1$, 
\begin{equation}\label{eq:bin_L2}
\begin{aligned}
\Esp\l[\l(\frac{z_n}{n}-p\r)^2\r] & = \frac{1}{n^2}\Esp\l[z_n^2\r] -\frac{2p}{n}\Esp[z_n] + p^2\\
& = \frac{p_{n}(1-p_{n})}{n} + (p_{n}-p)^2.
\end{aligned}
\end{equation}
By taking limits as $n \to \infty$ in \eqref{eq:bin_L2} we obtain
\begin{equation*}
\lim_{n \to \infty} \Esp\l[\l(\frac{z_n}{n}-p\r)^2\r] = 0.
\end{equation*}
\end{proof}
		
\begin{lem}\label{lem:BB_trans_conv}
For $n \in \{0,1,\ldots\}$, let $N^{(n)}$, $\bm{\alpha}^{(n)} = \big(\alpha^{(n)}_j\big)_{j=1}^{\infty}$ and $\bm{\beta}^{(n)} = \big(\beta^{(n)}_j\big)_{j=1}^{\infty}$ be as in satisfying (H0), (H1) in the main paper. Let $\bmpsi^{(n)} = \big(\psi^{(n)}_j\big)_{j=1}^{\infty}$ be Beta-Binomial transitions with parameters $\big(N^{(n)},\bm{\alpha}^{(n)},\bm{\beta}^{(n)}\big)$. Then for every $u_n \to u$ in $[0,1]$, and $j \geq 1$ $\bpsi_j^{(n)}(u_n,\cdot)$ converges weakly to $\delta_{\Upsilon_j(u)}$, where $\Upsilon_j$ is as in 
\eqref{eq:Upsilon}
with $F_j(v) = \mathcal{I}_v(\alpha_j,\beta_j)$.
\end{lem}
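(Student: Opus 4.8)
The plan is to exhibit a random variable with law $\bpsi_j^{(n)}(u_n,\cdot)$ through the natural hierarchical representation behind the Beta--Binomial transition \eqref{eq:BB_trans}, and then to prove that this variable converges to the constant $\Upsilon_j(u)$ in $\mathcal{L}_2$. Since the candidate limit $\delta_{\Upsilon_j(u)}$ is degenerate, convergence in $\mathcal{L}_2$ (indeed, merely in probability) to $\Upsilon_j(u)$ is equivalent to the asserted weak convergence $\bpsi_j^{(n)}(u_n,\cdot)\wto\delta_{\Upsilon_j(u)}$, so the lemma reduces to two elementary limit computations.

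First I would control the Binomial success probability. Fix $j\ge 1$ and a sequence $u_n\to u$ in $[0,1]$. Hypothesis {\rm (H1)} gives $\alpha_i^{(n)}\to\alpha_i>0$ and $\beta_i^{(n)}\to\beta_i>0$ for $i=j,j+1$, and the Beta family depends continuously on its parameters, so $\Be(\alpha_i^{(n)},\beta_i^{(n)})\wto\Be(\alpha_i,\beta_i)$; equivalently the distribution functions $F_i^{(n)}(v)=\mathcal{I}_v(\alpha_i^{(n)},\beta_i^{(n)})$ converge pointwise to $F_i$. I would then apply Lemma \ref{lem:Upsilon_UC} with $\pi_n=\Be(\alpha_j^{(n)},\beta_j^{(n)})$ and $\lambda_n=\Be(\alpha_{j+1}^{(n)},\beta_{j+1}^{(n)})$ to obtain $\Upsilon_j^{(n)}(u_n)\to\Upsilon_j(u)$. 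Write $p_n=\Upsilon_j^{(n)}(u_n)$ and $p=\Upsilon_j(u)$, so that $p_n\to p$.

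Next I would use the hierarchical structure: by \eqref{eq:BB_trans} a random variable $V_n\sim\bpsi_j^{(n)}(u_n,\cdot)$ is obtained by drawing $z_n\sim\Bin(N^{(n)},p_n)$ and then $V_n\mid z_n\sim\Be(\alpha_{j+1}^{(n)}+z_n,\ \beta_{j+1}^{(n)}+N^{(n)}-z_n)$. By {\rm (H0)} one has $N^{(n)}\to\infty$, so $N^{(n)}\ge 1$ for $n$ large enough, and Lemma \ref{lem:Bin_L2} gives $z_n/N^{(n)}\to p$ in $\mathcal{L}_2$. Decomposing $\Esp[(V_n-p)^2]=\Esp[\Var(V_n\mid z_n)]+\Esp[(\Esp[V_n\mid z_n]-p)^2]$, the conditional variance of a $\Be(a,b)$ is at most $1/\{4(a+b+1)\}$, and here $a+b=\alpha_{j+1}^{(n)}+\beta_{j+1}^{(n)}+N^{(n)}\to\infty$, so the first summand vanishes. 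For the second, $\Esp[V_n\mid z_n]=(\alpha_{j+1}^{(n)}+z_n)/(\alpha_{j+1}^{(n)}+\beta_{j+1}^{(n)}+N^{(n)})$; splitting off the deterministic term $\alpha_{j+1}^{(n)}/(\alpha_{j+1}^{(n)}+\beta_{j+1}^{(n)}+N^{(n)})\to 0$ and writing the random part as $(z_n/N^{(n)})\cdot N^{(n)}/(\alpha_{j+1}^{(n)}+\beta_{j+1}^{(n)}+N^{(n)})$, whose second factor tends to $1$, the $\mathcal{L}_2$ convergence $z_n/N^{(n)}\to p$ forces $\Esp[V_n\mid z_n]\to p$ in $\mathcal{L}_2$. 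Hence $V_n\to p$ in $\mathcal{L}_2$, a fortiori in probability, which is exactly $\bpsi_j^{(n)}(u_n,\cdot)\wto\delta_{\Upsilon_j(u)}$.

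The step I expect to require the most care is the simultaneous passage to the limit in $u_n$ and in $n$ when invoking Lemma \ref{lem:Upsilon_UC} — this is precisely why that lemma is stated in its uniform form — whereas the remainder is a routine variance decomposition combined with Lemma \ref{lem:Bin_L2}. It is also worth recording that the degeneracy of the limit is entirely due to $N^{(n)}\to\infty$: for bounded $N^{(n)}$ the transition would instead converge to the Beta--Binomial mixture $\sum_{z}\Be(\alpha_{j+1}+z,\beta_{j+1}+N-z)\,\Bin(z\mid N,\Upsilon_j(u))$ rather than to $\delta_{\Upsilon_j(u)}$.
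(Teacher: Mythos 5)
Your proof is correct, and it reaches the conclusion by a genuinely different route in its key technical step. Both arguments share the same skeleton: use Lemma \ref{lem:Upsilon_UC} to get $\Upsilon_j^{(n)}(u_n)\to\Upsilon_j(u)$, then exploit the hierarchical representation $z_n\sim\Bin(N^{(n)},p_n)$, $V_n\mid z_n\sim\Be(\alpha_{j+1}^{(n)}+z_n,\beta_{j+1}^{(n)}+N^{(n)}-z_n)$ together with Lemma \ref{lem:Bin_L2}. Where the paper then couples $z^{(n)}$ on a common probability space (via Lemma \ref{lem:coup}) to get almost sure convergence of $z^{(n)}/N^{(n)}$, expands the conditional moment generating function $\Esp[e^{tV_n}\mid z^{(n)}]$ as a series of ratios of rising factorials, and passes to the limit term by term under dominated convergence, you instead prove $V_n\to\Upsilon_j(u)$ in $\mathcal{L}_2$ directly via the decomposition $\Esp[(V_n-p)^2]=\Esp[\Var(V_n\mid z_n)]+\Esp[(\Esp[V_n\mid z_n]-p)^2]$, killing the first term with the bound $\Var(\Be(a,b))\le 1/\{4(a+b+1)\}$ and the second with the $\mathcal{L}_2$ convergence of $z_n/N^{(n)}$. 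Your version is more elementary — it avoids both the coupling step and the interchange of limit and infinite sum in the mgf — and since the limit law is degenerate, convergence in probability immediately yields the required weak convergence. The only cosmetic point is that Lemma \ref{lem:Bin_L2} is stated with the number of trials equal to the index $n$, so strictly you are applying it along the subsequence of trial counts $N^{(n)}$; the verification $\Esp[(z_n/N^{(n)}-p)^2]=p_n(1-p_n)/N^{(n)}+(p_n-p)^2\to 0$ is immediate (and the paper commits the same identification), so this is not a gap. Your closing remark about the non-degenerate Beta--Binomial mixture limit for bounded $N^{(n)}$ correctly reflects part (i) of Theorem \ref{cor:BMSBp_conv}.
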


\begin{proof}
Fix $n,j \geq 1$ and let $\Upsilon_j^{(n)} = F_{n,j+1}^{-1}\circ F_{n,j}$ where $F_{n,j}$ is the distribution function of a $\Be\big(\alpha^{(n)}_j,\beta^{(n)}_j\big)$ distribution. Similarly let $\Upsilon_j = F_{j+1}^{-1}\circ F_{j}$ with $F_j(v) = \mathcal{I}_v(\alpha_j,\beta_j)$. Now, fix $j \geq 1$ and $u_{n} \to u \in [0,1]$, by hypothesis and Lemma \ref{lem:Upsilon_UC} we get $\Upsilon^{(n)}_j(u_n) \to \Upsilon_j(u)$, hence by Lemmas \ref{lem:coup} and \ref{lem:Bin_L2} we might construct on some probability space $\l(\Omega',\cl{F}',\Prob'\r)$ some r.v.~$z'^{(n)}\sim \Bin(n,\Upsilon_j(u_{n}))$ such that $z'^{(n)}/n \to \Upsilon_j(u)$ a.s.~as $n \to \infty$. Consider $v'^{(n)}$ such that
\[
\Prob\l[v'^{(n)}\in \mrm{d}v\mi z'^{(n)}\r] = \Be\l(\mrm{d}v\mi \alpha_{j+1}^{(n)}+z'^{(n)},\beta_{j+1}^{(n)}+n-z'^{(n)}\r),
\]
so that marginally $v'^{(n)} \sim \bpsi_j^{(n)}(u_n,\cdot)$. The moment generator function of $v'^{(n)}$  given $z'^{(n)}$ is 
\begin{equation}\label{eq:mgf_v|x}
\Esp'\l[e^{tv'^{(n)}}\mi z'^{(n)}\r] =  1 + \sum_{m=1}^{\infty}\l(\prod_{r=0}^{m-1}\frac{\alpha_{j+1}^{(n)}+z'^{(n)}+r}{\alpha^{(n)}_{j+1}+\beta_{j+1}^{(n)}+n+r}\r)\frac{t^m}{m!},
\end{equation}
for every $t \in \R$. By construction we have that $z'^{(n)}/n \to \Upsilon_j(u)$ a.s.~and by hypothesis $\alpha_{j+1}^{(n)}\to \alpha_{j+1}$ and $\beta_{j+1}^{(n)}\to \beta_{j+1}$ in $(0,\infty)$, which means that for every $r \geq 0$,
\begin{equation}\label{eq:mgf_v|x_lim}
\frac{\alpha_{j+1}^{(n)}+z'^{(n)}+r}{\alpha_{j+1}^{(n)}+\beta_{j+1}^{(n)}+n+r} = \l(\frac{\alpha_{j+1}^{(n)}+r}{n}+\frac{z'^{(n)}}{n}\r)\l(\frac{\alpha_{j+1}^{(n)}+\beta_{j+1}^{(n)}+r}{n}+1\r)^{-1} \to \Upsilon_j(u),
\end{equation}
a.s.~as $n \to \infty$. Hence by the tower property of conditional expectation, equations \eqref{eq:mgf_v|x} and \eqref{eq:mgf_v|x_lim}, and Lebesgue dominated convergence theorem (the corresponding functions are dominated by $e^t$) we obtain
\begin{align*}
\lim_{n \to \infty} \Esp'\l[e^{tv'^{(n)}}\r] & = \lim_{n \to \infty} \Esp'\l[\Esp'\l[e^{tv'^{(n)}}\mi z'^{(n)}\r]\r]\\
& = \Esp'\l[1 + \sum_{m=1}^{\infty}\l(\prod_{r=0}^{m-1}\lim_{n \to \infty}\frac{\alpha_{j+1}^{(n)}+z'^{(n)}+r}{\alpha_{j+1}^{(n)}+\beta_{j+1}^{(n)}+n+r}\r)\frac{t^m}{m!}\r]\\
& = \Esp'\l[1 + \sum_{m=1}^{\infty}\frac{(\Upsilon_j(u)t)^m}{m!}\r]\\
& = e^{t\Upsilon_j(u)},
\end{align*}
which proves $v'^{(n)} \dto \Upsilon_j(u)$, as $n \to \infty$, or equivalently $\bpsi^{(n)}_j(u_{n},\cdot) \wto \delta_{\Upsilon_j(u)}$. 
\end{proof}

\begin{proof}[Proof of Theorem  
\ref{cor:BMSBp_conv}]
(i) is evident, as for (ii), it follows from Theorem 
\ref{theo:MSB_conv} 
and Lemma \ref{lem:BB_trans_conv}.
\end{proof} 

\subsection{Proof of Proposition 
\ref{prop:BMSB_post}}\label{app:prop:BMSB_post}

\begin{proof}
As mentioned in the main document, we can introduce a sequence of r.v.~$\bm{z} = (z_j)_{j=1}^{\infty}$ such that the joint density function of $\bm{v}_{[m]} = (v_j)_{j=1}^m$ and $\bm{z}_{[m]} = (z_j)_{j=1}^{m}$ is 
\begin{equation*}
\bm{p}(\bm{v}_{[m]},\bm{z}_{[m]}) = \prod_{j=1}^{m}\Be(v_{j}\mid \alpha'_j,\beta'_j)\Bin(z_j\mid N, \Upsilon_j(v_j)),
\end{equation*}
or every $m \geq 1$, where $\alpha'_1 = \alpha_1$, $\beta'_1 = \beta_1$, $\alpha'_j = \alpha_j+z_{j-1}$ and $\beta'_j = \beta_j +N -z_{j-1}$, for $j \geq 2$. Hence, from equation 
\eqref{eq:all_var_cond} 
we get
\[
\bm{p}(\bm{v}_{[m]},\bm{z}_{[m]}\mid d_1,\ldots,d_n) \propto  \prod_{j=1}^{m}\Be(v_{j}\mid \alpha'_j,\beta'_j)\Bin(z_j\mid N, \Upsilon_j(v_j))v_j^{a_j}(1-v_j)^{b_j}.
\]
This way it is easy to see that $\bm{p}(\bm{z}_{[m]}\mid \bm{v}_{[m]},d_1,\ldots,d_n) = \prod_{j=1}^{m}\bm{p}(z_j\mid \bm{v}_{[m]},d_1,\ldots,d_n)$, with
\begin{align*}
\bm{p}(z_j\mid \bm{v}_{[m]}, d_1,\ldots,d_n) & \propto  \binom{N}{z_j}[\Upsilon_j(v_j)]^{z_j}[(1-\Upsilon_j(v_j))]^{N-z_j}\frac{v_{j+1}^{z_j}(1-v_{j+1})^{N-z_j}}{\Gamma(\alpha_{j+1}+z_j)\Gamma(\beta_{j+1}+N-z_j)} \\
& \propto \binom{N}{z_j}\frac{[\Upsilon_j(v_j)v_{j+1}]^{z_j}[(1-\Upsilon_j(v_j))(1-v_{j+1})]^{N-z_j}}{(\alpha_{j+1})_{z_j}(\beta_{j+1})_{N-z_j}},
\end{align*}
for $j < m$, and $\bm{p}(z_m\mid \bm{v}_{[m]},d_1,\ldots,d_n) = \Bin(z_m\mid N,\Upsilon_m(v_m))$. Similarly,
\[
\bm{p}(\bm{v}_{[m]}\mid \bm{z}_{[m]},d_1,\ldots,d_n) \propto \prod_{j=1}^{m} v_j^{\alpha'_j+a_j-1}(1-v_j)^{\beta'_j+b_j-1}[\Upsilon_j(v_j)]^{z_j}[(1-\Upsilon_j(v_j))]^{N-z_j}.
\]
\end{proof}

\medskip

\section{Proofs of the results in Section 
\ref{sec:LMSB}}\label{sec:app:LMSB_sup}

\smallskip

\subsection{Marginal distributions of the length variables}\label{app:prop:SS_trans_marg}

\begin{prop}\label{prop:SS_trans_marg}
Let $\bm{w}$ be a LMSBw with parameters $(\rho,\bm{\pi})$.   Then, $v_j \sim \pi_j$, for $j \geq 1$. 
\end{prop}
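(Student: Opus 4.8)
The plan is to prove by induction on $j$ that $v_j \sim \pi_j$. The base case $j=1$ is immediate since $v_1$ has initial distribution $\pi = \pi_1$ by construction. For the inductive step, assume $v_j \sim \pi_j$ and consider the one-step transition kernel \eqref{eq:SS_trans}, namely $\psi_j(v_j,\cdot) = \rho\,\delta_{\Upsilon_j(v_j)} + (1-\rho)\pi_{j+1}$. For any $B \in \B_{[0,1]}$ we have
\[
\pi_{j+1}(B) \stackrel{?}{=} \Prob[v_{j+1}\in B] = \int_{[0,1]} \psi_j(v,B)\,\pi_j(\mrm{d}v) = \rho\int_{[0,1]}\delta_{\Upsilon_j(v)}(B)\,\pi_j(\mrm{d}v) + (1-\rho)\pi_{j+1}(B).
\]
So it suffices to show that $\int_{[0,1]}\delta_{\Upsilon_j(v)}(B)\,\pi_j(\mrm{d}v) = \pi_j(\Upsilon_j^{-1}(B)) = \pi_{j+1}(B)$, i.e. that the pushforward of $\pi_j$ under $\Upsilon_j$ is $\pi_{j+1}$.

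The key step is therefore the change-of-variables argument already sketched in Section~\ref{sec:examples}: since $F_j$ and $F_{j+1}$ are continuous and strictly increasing on $[0,1]$, the map $U := F_j(v_j)$ is uniformly distributed on $(0,1)$ whenever $v_j \sim \pi_j$ (because $\Prob[F_j(v_j)\le u] = \Prob[v_j \le F_j^{-1}(u)] = F_j(F_j^{-1}(u)) = u$). Consequently $\Upsilon_j(v_j) = F_{j+1}^{-1}(F_j(v_j)) = F_{j+1}^{-1}(U)$, and $\Prob[F_{j+1}^{-1}(U)\le x] = \Prob[U \le F_{j+1}(x)] = F_{j+1}(x)$, so $\Upsilon_j(v_j) \sim \pi_{j+1}$. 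Plugging this into the displayed identity gives $\Prob[v_{j+1}\in B] = \rho\,\pi_{j+1}(B) + (1-\rho)\pi_{j+1}(B) = \pi_{j+1}(B)$, completing the induction.

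I do not anticipate a serious obstacle here: the argument is essentially the same probability-integral-transform computation used to justify \eqref{eq:trans_Upsilon} and the well-posedness of the CDSBp construction, and the laziness parameter $\rho$ plays no role because the mixture is a convex combination of two distributions that are both equal to $\pi_{j+1}$. The only minor point worth stating carefully is that the continuity and strict monotonicity of $F_j$ on all of $[0,1]$ — assumed throughout Section~\ref{sec:LMSB} — are exactly what guarantees that $F_j^{-1}$ exists as a genuine (two-sided) inverse so that the transform argument is valid without having to worry about atoms or flat stretches. One can equivalently phrase the whole proof in terms of pushforward measures: $(\Upsilon_j)_\# \pi_j = (F_{j+1}^{-1})_\#\big((F_j)_\#\pi_j\big) = (F_{j+1}^{-1})_\#\Un(0,1) = \pi_{j+1}$, and then the mixture identity is immediate.
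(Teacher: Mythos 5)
Your proof is correct and follows essentially the same route as the paper's: induction on $j$, splitting the lazy transition into its two mixture components, and using the probability-integral-transform fact that $(\Upsilon_j)_\#\pi_j = \pi_{j+1}$ (which the paper cites from its earlier discussion of the CDSBp construction rather than re-deriving). No gaps.
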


\begin{proof}
Since $v_{j+1}\mid v_j \sim \psi_j(v_j,\cdot)$, where $\psi_j$ is a lazy transition as in 
\eqref{eq:sSS_trans} 
and recalling that if $v \sim \pi_j$ then $\Upsilon_j(v) \sim \pi_{j+1}$ (as discussed in Section 
\ref{sec:examples}
and Proposition \ref{prop:BB_trans_marg}) by the tower property we get
\begin{align*}
\Prob[v_{j+1} \in B] & = \int_{[0,1]} \rho \, \delta_{\Upsilon_j(v)}(B) + (1-\rho)\pi_{j+1}(B) \,\pi_{j}(\mrm{d}v)\\
& = \rho\int_{[0,1]}  \, \delta_{\Upsilon_j(v)}(B) \pi_{j}(\mrm{d}v)+ (1-\rho)\pi_{j+1}(B) \\
& = \rho \, \pi_{j+1}(B) + (1-\rho)\pi_{j+1}(B) = \pi_{j+1}(B),
\end{align*}
i.e. $v_{j+1} \sim \pi_{j+1}$ This induction step together with the fact $v_1 \sim \pi_1$ proves that $v_j \sim \pi_j$ for every $j \geq 1$.
\end{proof}

\subsection{Proof of Theorem \ref{theo:LMSBp}}\label{app:theo:LMSBp}

\begin{proof}
The requirement that the distribution of $F_j$ of $\pi_j$ is continuous and strictly increasing assures $\pi_j$ is diffuse and supported on $[0,1]$. Thus (ii) and (iii) follow directly from Propositions 
\ref{prop:MSB_supp_points} 
and  
\ref{prop:MSB_full_supp}. 
Now assume (i.b) holds, then from equation 
\eqref{eq:sum_1_Upsilon}
 we know that for $\rho = 1$, $\bm{P}$ is proper. Also note that since $\Upsilon^{[j]}(v_1) \sim \pi_j$ with $v_1 \sim \pi_1$ and $\Upsilon^{[j]}$ as in 
 \eqref{eq:sum_1_Upsilon}, 
 by monotone convergence theorem we get
\[
\infty = \Esp\l[\sum_{j=1}^{\infty}\Upsilon^{[j]}(v)\r] = \sum_{j=1}^{\infty} \int x \,\pi_j(\mrm{d}x).
\]
This proves that the condition in (i.b) implies that in (i.a). Hence to finish the proof it is enough to prove (i.a). To this aim let $\bm{v}$ be the underlying Markov process of length variables with initial distribution $\pi = \pi_1$ and lazy transitions $\bm{\psi}$. Then, recalling that $v_j \sim \pi_j$, we can easily compute
\begin{align*}
\Esp\l[\prod_{j=1}^{m+1}(1-v_j)\r] & = \Esp\l[\Esp[(1-v_{m+1})\mid v_1,\ldots,v_{m}]\prod_{j=1}^{m}(1-v_j)\r]\\
& = \Esp\l[\l\{1-\l(\rho\Upsilon_m(v_m)+(1-\rho)\Esp[v_{m+1}]\r)\r\}\prod_{j=1}^{m}(1-v_j)\r]\\
& \leq \l\{1-(1-\rho)\Esp[v_{m+1}]\r\} \Esp\l[\prod_{j=1}^{m}(1-v_j)\r],
\end{align*}
and by induction on $m$ we obtain
\begin{equation}\label{eq:theo:LMSBp_leq_v}
0 \leq \Esp\l[\prod_{j=1}^{m}(1-v_j)\r] \leq  \prod_{j=1}^{m} \l\{1-(1-\rho)\Esp[v_{j}]\r\}.
\end{equation}
By hypothesis we know that if $\rho < 1$, $(1-\rho)\sum_{j=1}^{\infty}\Esp[v_j] = \infty$, hence the right side of \eqref{eq:theo:LMSBp_leq_v} goes to zero as $m \to \infty$, and we get $\Esp[\prod_{j=1}^{m}(1-v_j)] \to 0$.  Finally since these variables are positive and bounded by one, this yields  $1-\sum_{j=1}^m w_j = \prod_{j=1}^{m}(1-v_j) \to 0$ a.s.~and the result follows.
\end{proof}

\subsection{Proof of Theorem 
\ref{cor:LMSB_conv}}\label{app:lem:SS_trans_conv}

\begin{lem}\label{lem:SS_trans_conv}
Let $\bm{\psi}^{(n)} = \big(\psi^{(n)}_j\big)_{j=1}^{\infty}$ and  $\bm{\psi} = (\psi_j)_{j=1}^{\infty}$ be lazy transitions with parameters $\big(\rho^{(n)}, \bm{\pi}^{(n)}\big)$ and $(\rho,\bm{\pi})$, respectively. Say that as $n \to \infty$, $\rho^{(n)} \to \rho$ in $[0,1]$ and $\pi_j^{(n)} \wto\pi_j$. Then, for each $u_n \to u \in [0,1]$ and $j \geq 1$, $\psi_j^{(n)}(u_n,\cdot) \wto \psi_j(u,\cdot)$. In particular if $\rho = 0$, $\psi_j(u,\cdot) = \pi_{j+1}$ and if $\rho = 1$, $\psi_j(u,\cdot) = \delta_{\Upsilon_j(u)}$ for each $j \geq 1$, where $\Upsilon_j$ is as in 
\eqref{eq:Upsilon}.
\end{lem}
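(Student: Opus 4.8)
The plan is to reduce the claim to the uniform convergence of the quantile-composition maps $\Upsilon_j^{(n)}$ already isolated in Lemma~\ref{lem:Upsilon_UC}, exploiting that a lazy transition is an \emph{explicit} convex combination of a Dirac mass and a marginal. Writing $F_j^{(n)}$ for the (continuous, strictly increasing) distribution function of $\pi_j^{(n)}$, the transitions read
\[
\psi_j^{(n)}(v,\cdot) = \rho^{(n)}\,\delta_{\Upsilon_j^{(n)}(v)} + \bigl(1-\rho^{(n)}\bigr)\pi_{j+1}^{(n)},
\qquad
\psi_j(v,\cdot) = \rho\,\delta_{\Upsilon_j(v)} + (1-\rho)\pi_{j+1},
\]
with $\Upsilon_j^{(n)} = \bigl(F_{j+1}^{(n)}\bigr)^{-1}\circ F_j^{(n)}$ and $\Upsilon_j = F_{j+1}^{-1}\circ F_j$ as in \eqref{eq:Upsilon}.

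First I would fix $j\ge 1$ and a sequence $u_n\to u$ in $[0,1]$ and invoke Lemma~\ref{lem:Upsilon_UC} with $\pi_n=\pi_j^{(n)}$, $\lambda_n=\pi_{j+1}^{(n)}$, $\pi=\pi_j$, $\lambda=\pi_{j+1}$: since $\pi_j^{(n)}\wto\pi_j$, $\pi_{j+1}^{(n)}\wto\pi_{j+1}$ and all four distribution functions are continuous and strictly increasing (the blanket assumption on LMSBp parameters from Section~\ref{subsec:LMSBP_1}), this gives $\Upsilon_j^{(n)}(u_n)\to\Upsilon_j(u)$. Then, for an arbitrary bounded continuous $f:[0,1]\to\R$,
\[
\int_{[0,1]} f\,\mrm{d}\psi_j^{(n)}(u_n,\cdot)
= \rho^{(n)}\,f\bigl(\Upsilon_j^{(n)}(u_n)\bigr) + \bigl(1-\rho^{(n)}\bigr)\int_{[0,1]} f\,\mrm{d}\pi_{j+1}^{(n)},
\]
and letting $n\to\infty$ the three ingredients $\rho^{(n)}\to\rho$, $f\bigl(\Upsilon_j^{(n)}(u_n)\bigr)\to f(\Upsilon_j(u))$ (continuity of $f$ plus the previous step) and $\int f\,\mrm{d}\pi_{j+1}^{(n)}\to\int f\,\mrm{d}\pi_{j+1}$ show the right-hand side converges to $\rho\,f(\Upsilon_j(u)) + (1-\rho)\int f\,\mrm{d}\pi_{j+1}=\int_{[0,1]}f\,\mrm{d}\psi_j(u,\cdot)$. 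As $f$ was arbitrary, $\psi_j^{(n)}(u_n,\cdot)\wto\psi_j(u,\cdot)$; the two displayed special cases are read off the definition by setting $\rho=0$ (killing the Dirac term) or $\rho=1$ (killing the $\pi_{j+1}$ term). This lemma is then the last missing hypothesis in Theorem~\ref{theo:MSB_conv}, which together with $\pi^{(n)}\wto\pi$ and $P_0^{(n)}\wto P_0$ yields $\bm{P}^{(n)}\dwto\bm{P}$ and $\bm{w}^{(n)}\dto\bm{w}$, and the identifications in Theorem~\ref{cor:LMSB_conv}(i)--(ii) follow from the ISBp/CDSBp definitions together with Lemma~\ref{lem:theo2}, Corollary~\ref{cor:CDSB_PY} and Proposition~\ref{cor:MSB_dec_sup}.

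There is no genuinely hard step here: the whole weight of the argument rests on Lemma~\ref{lem:Upsilon_UC}, itself a consequence of P\'olya's theorem (Lemma~\ref{lem:Polya_theo}) and the uniform convergence of inverse distribution functions (Lemma~\ref{lem:Gn-1_G-1}). The only point deserving attention is verifying that Lemma~\ref{lem:Upsilon_UC} applies, i.e.\ that the approximating marginals $\pi_j^{(n)}$, and not merely their limits, have continuous and strictly increasing distribution functions — which is precisely what the standing assumption on LMSBp guarantees, so it comes for free.
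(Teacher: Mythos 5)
Your proof is correct and follows essentially the same route as the paper's: decompose the lazy transition as $\rho^{(n)}\delta_{\Upsilon_j^{(n)}(u_n)}+(1-\rho^{(n)})\pi_{j+1}^{(n)}$, apply Lemma~\ref{lem:Upsilon_UC} to get $\Upsilon_j^{(n)}(u_n)\to\Upsilon_j(u)$, and pass to the limit against an arbitrary bounded continuous test function, with the special cases $\rho=0,1$ read off by substitution. No gaps; this matches the paper's argument in App.~\ref{app:lem:SS_trans_conv}.
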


\begin{proof}
Let $\Upsilon^{(n)}_j = F_{n,j+1}^{-1}\circ F_{n,j}$ and $\Upsilon_j =  F_{j+1}^{-1}\circ F_{j}$ where $F_{n,j}$ is the distribution function of $\pi^{(n)}_j$ and $F_j$ is the distribution function of $\pi_j$. Fix $u_n \to u$ in $[0,1]$, and note that we can write
\[
\psi^{(n)}_j(u_n,\cdot) = \rho^{(n)}\, \delta_{x_n} + \big(1-\rho^{(n)}\big)\pi^{(n)}_{j+1} \quad  \text{ and } \quad \psi_j(u,\cdot) = \rho\, \delta_{x} + (1-\rho)\pi_{j+1},
\]
with $x_n = \Upsilon^{(n)}_j(u_n)$ and $x = \Upsilon_j(u)$. Now, let $f:[0,1] \to \R$ be a continuous and bounded function, by Lemma \ref{lem:Upsilon_UC} we get $f(x_n) \to f(x)$. Moreover, by hypothesis $\pi^{(n)}_{j+1} \wto \pi_{j+1}$, i.e.
\[
\pi^{(n)}_{j+1}(f) = \int f(v) \pi_{j+1}^{(n)}(\mrm{d}v) \to \int f(v) \pi_{j+1}(\mrm{d}v) = \pi_{j+1}(f).
\]
Putting everything together
\begin{align*}
\psi_j^{(n)}(u_n,f) & = \int f(v) \psi^{(n)}_j(u_n,\mrm{d}v) = \rho^{(n)}\, f(x_n) + \big(1-\rho^{(n)}\big)\pi^{(n)}_{j+1}(f)\\ 
& \longrightarrow \rho\, f(x) + (1-\rho)\pi_{j+1}(f) =  \int f(v) \psi_j(u,\mrm{d}v) = \psi_j(u,f).
\end{align*}
Hence $\psi_j^{(n)}(u_n,\cdot) \wto   \psi_j(u,\cdot)$. The rest of the proof follows by simple substitution.
\end{proof}

\begin{proof}[Proof of Theorem \ref{cor:LMSB_conv}]
It follows easily from Lemma \ref{lem:SS_trans_conv} and Theorem \ref{theo:MSB_conv}
\end{proof}

\subsection{Proof of Proposition 
\ref{prop:LMSB_post}}\label{app:prop:LMSB_post}

\begin{proof}
From 
\eqref{eq:MSB_post_v1} 
we know
\[
\bm{p}(v_1\mid \bm{v}_{-1},d_1,\ldots,d_n) \propto v_1^{a_1}(1-v_1)^{b_1}\pi_1(v_1)\psi_1(v_1,v_2)
\]
where $\pi_1 = \Be(\alpha_1,\beta_1)$ and $\psi_1(v_1,v_2) = \rho\,\delta_{\Upsilon_1(v_1)}(v_2) + (1-\rho) \Be(v_2\mid \alpha_2,\beta_2)$. Notice that here $\psi_1(v_1,v_2)$ denotes a \emph{mixed density} with a discrete part of size $\rho$ and a diffuse part of size $(1-\rho)$. Taking this notational device into account and noting that $\delta_{\Upsilon_1(v_1)}(v_2) = \Ind\{v_2 = \Upsilon_1(v_1)\} = \Ind\{v_1 = \Upsilon_1^{-1}(v_2)\}$ we get
\begin{align*}
\bm{p}(v_1\mid \bm{v}_{-1},d_1,\ldots,d_n) & \propto \Be(v_1\mid \alpha_1+a_1,\beta_1+b_1)\l[\rho\,\Ind\{v_1 = \Upsilon_1^{-1}(v_2)\}+ (1-\rho)\Be(v_2\mid \alpha_2,\beta_2)\r]\\
& \propto q_1\, \delta_{\Upsilon_1^{-1}(v_2)}(v_1) + q_2\, \Be(v_1\mid \alpha_1+a_1,\beta_1+b_1)
\end{align*}
with
\[
q_1 = \rho\,\Be(\Upsilon_1^{-1}(v_2)\mid \alpha_1+a_1,\beta_1+b_1) \quad \text{ and } \quad
q_2 = (1-\rho)\Be(v_2\mid \alpha_2,\beta_2).
\]
Thus
\[
\bm{p}(v_1\mid \bm{v}_{-1},d_1,\ldots,d_n) = \rho_1\, \delta_{\Upsilon_1^{-1}(v_2)}(v_1) + (1-\rho_1)\, \Be(v_1\mid \alpha_1+a_1,\beta_1+b_1),
\]
where $\rho_1 = q_1/(q_1+q_2)$, meaning that, $v_1 = \Upsilon_1^{-1}(v_2)$ with probability $\rho_1$, and with probability $(1-\rho_1)$ $v_1$ is sampled from a $\Be(\alpha_1+a_1,\beta_1+b_1)$ distribution.

Now, for $j > 1$, we get from 
\eqref{eq:MSB_post_vj}
\begin{equation}\label{eq:full_cond_LMSB}
\bm{p}(v_j\mid \bm{v}_{-j},d_1,\ldots,d_n) \propto v_j^{a_j}(1-v_j)^{b_j}\psi_{j-1}(v_{j-1},v_j)\psi_j(v_j,v_{j+1}),
\end{equation}
where $\psi_i(v_i,v_{i+1}) = \rho\, \delta_{\Upsilon_i(v_i)}(v_{i+1}) + (1-\rho)\Be(v_{i+1}\mid \alpha_i,\beta_i)$.  Noting that we may write
\[
\psi_i(v_i,v_{i+1})  = \rho \,\Ind\{\Upsilon_i(v_i) = v_{i+1}\} + (1-\rho) \Ind\{\Upsilon_i(v_i) \neq v_{i+1}\}\Be(v_{i+1}\mid \alpha_i,\beta_i)
\]
a.s., because Beta distributions are non-atomic, we get
\begin{equation}\label{eq:product_SS_trans}
\begin{aligned}
\psi_{j-1}&(v_{j-1},v_j)\psi_j(v_j,v_{j+1})\\
& = \Ind\{\Upsilon_{j-1}(v_{j-1}) = v_{j} = \Upsilon_j^{-1}(v_{j+1})\}\rho^{2}\\
& \quad + \Ind\{\Upsilon_{j-1}(v_{j-1}) = v_{j}\}\Ind\{v_j\neq \Upsilon_j^{-1}(v_{j+1})\}\rho(1-\rho)\Be(v_{j+1}\mid \alpha_{j+1},\beta_{j+1})\\
& \quad + \Ind\{\Upsilon_{j-1}(v_{j-1}) \neq v_{j}\}\Ind\{v_j= \Upsilon_j^{-1}(v_{j+1})\}\rho(1-\rho)\Be(\Upsilon_{j}^{-1}(v_{j+1})\mid \alpha_j,\beta_j)\\
& \quad + \Ind\{\Upsilon_{j-1}(v_{j-1}) \neq v_{j}\}\Ind\{v_j\neq \Upsilon_j^{-1}(v_{j+1})\}(1-\rho)^2 \Be(v_{j+1}\mid \alpha_{j+1},\beta_{j+1})\Be(v_j\mid \alpha_j,\beta_j)
\end{aligned}
\end{equation}
Here we can recognize two scenarios that must be treated separately, the first one arises when $\Upsilon_{j-1}(v_{j-1}) = \Upsilon_{j}^{-1}(v_{j+1})$. In this case we know that $v_j = \Upsilon_{j-1}(v_{j-1})$ and $v_{j+1} = \Upsilon_j(v_j)$ a.s., because the probability that a Beta distributed r.v.~equals a particular value is zero. Hence, if  $\Upsilon_{j-1}(v_{j-1}) = \Upsilon_{j}^{-1}(v_{j+1})$, 
\[
\bm{p}(v_j\mid \bm{v}_{-j},d_1,\ldots,d_n) = \delta_{\Upsilon_{j-1}(v_{j-1})} = \delta_{\Upsilon_{j}^{-1}(v_{j+1})}.
\]
For the other case when $\Upsilon_{j-1}(v_{j-1}) \neq \Upsilon_{j}^{-1}(v_{j+1})$, we find that 
\begin{align*}
\Ind\{\Upsilon_{j-1}(v_{j-1}) = v_{j}\}\Ind\{v_j\neq \Upsilon_j^{-1}(v_{j+1})\} &= \Ind\{\Upsilon_{j-1}(v_{j-1}) = v_{j}\}\\
\Ind\{\Upsilon_{j-1}(v_{j-1}) \neq v_{j}\}\Ind\{v_j = \Upsilon_j^{-1}(v_{j+1})\} &= \Ind\{v_j = \Upsilon_j^{-1}(v_{j+1})\}
\end{align*}
and $\Ind\{\Upsilon_{j-1}(v_{j-1}) = v_{j} = \Upsilon_j^{-1}(v_{j+1})\} = 0$. Hence, from \eqref{eq:full_cond_LMSB} and \eqref{eq:product_SS_trans} we get
\begin{align*}
\bm{p}&(v_j\mid \bm{v}_{-j},d_1,\ldots,d_n)\\
& \propto \Ind\{\Upsilon_{j-1}(v_{j-1}) = v_{j}\}\,\rho\,\Be(v_{j+1}\mid \alpha_{j+1},\beta_{j+1})[\Upsilon_{j-1}(v_{j-1})]^{a_j}[1-\Upsilon_{j-1}(v_{j-1})]^{b_j}\\
& \quad + \Ind\{v_j= \Upsilon_j^{-1}(v_{j+1})\}\,\rho\,\Be(\Upsilon_{j}^{-1}(v_{j+1})\mid \alpha_j,\beta_j)[\Upsilon_{j}^{-1}(v_{j+1})]^{a_j}[1-\Upsilon_{j}^{-1}(v_{j+1})]^{b_j}\\
&\quad + \Ind\{\Upsilon_{j-1}(v_{j-1}) \neq v_{j}\neq \Upsilon_j^{-1}(v_{j+1})\}(1-\rho) \Be(v_{j+1}\mid \alpha_{j+1},\beta_{j+1})\Be(v_j\mid \alpha_j,\beta_j)v_j^{a_j}(1-v_j)^{b_j}.
\end{align*}
and we can write
\[
\bm{p}(v_j\mid \bm{v}_{-j},d_1,\ldots,d_n) \propto q_{j,1}\,\delta_{\Upsilon_{j-1}(v_{j-1})}(v_j) + q_{j,2}\,\delta_{\Upsilon_{j}^{-1}(v_{j+1})}(v_j) + q_{j,3}\,\Be(v_j\mid \alpha'_j,\beta'_j)
\]
where $\alpha'_j = \alpha_j+a_j$, $\beta'_j = \beta_j+b_j$, and
\begin{align*}
q_{j,1} & = \rho\,\Be(v_{j+1}\mid \alpha_{j+1},\beta_{j+1})[\Upsilon_{j-1}(v_{j-1})]^{a_j}[1-\Upsilon_{j-1}(v_{j-1})]^{b_j}\\
q_{j,2} & = \rho\,\Be(\Upsilon_{j}^{-1}(v_{j+1})\mid \alpha_j,\beta_j)[\Upsilon_{j}^{-1}(v_{j+1})]^{a_j}[1-\Upsilon_{j}^{-1}(v_{j+1})]^{b_j}\\
q_{j,3} & = (1-\rho)\Be(v_{j+1}\mid \alpha_{j+1},\beta_{j+1}) \frac{(\alpha_j)_{a_j}(\beta_j)_{b_j}}{(\alpha_j+\beta_j)_{a_j+b_j}}. 
\end{align*}
Thus we obtain
\[
\bm{p}(v_j\mid \bm{v}_{-j},d_1,\ldots,d_n) = \rho_{j,1}\,\delta_{\Upsilon_{j-1}(v_{j-1})}(v_j) + \rho_{j,2}\,\delta_{\Upsilon_{j}^{-1}(v_{j+1})}(v_j) + \rho_{j,3}\,\Be(v_j\mid \alpha'_j,\beta'_j),
\]
where $\rho_{j,i} = q_{j,i}/(q_{j,1}+q_{j,2}+q_{j,3})$ for each $i \in \{1,2,3\}$. Summarizing, we conclude that if $\Upsilon_{j-1}(v_{j-1}) = \Upsilon_{j}^{-1}(v_{j+1})$ we simply set $v_j = \Upsilon_{j-1}(v_{j-1})$. Otherwise, $v_j = \Upsilon_{j-1}(v_{j-1})$ with probability $\rho_{j,1}$, $v_j = \Upsilon_{j}^{-1}(v_{j+1})$ with probability $\rho_{j,2}$, and with probability $\rho_{j,3}$ $v_j$ is sampled from a $\Be(\alpha'_j,\beta'_j)$ distribution.
\end{proof}

\newpage

\bibliographystyle{chicago}
\bibliography{references}  

\end{document}